\documentclass[a4paper,reqno]{amsart}
\usepackage{geometry}
\geometry{a4paper,left=2.6cm,right=2.6cm,top=3.2cm,bottom=3.0cm}

\usepackage{mathrsfs}
\usepackage{syntonly}
\usepackage{amsmath}
\usepackage{amsthm}
\usepackage{amsfonts}
\usepackage{amssymb}
\usepackage{latexsym}
\usepackage{amscd,amssymb,amsopn,amsmath,amsthm,graphics,amsfonts,mathrsfs,accents,enumerate,verbatim,calc}
\usepackage[dvips]{graphicx}
\usepackage[colorlinks=true,linkcolor=blue,citecolor=blue]{hyperref}
\input xy
\xyoption{all}

\usepackage{setspace}

\usepackage{color}

\numberwithin{equation}{section}

\theoremstyle{plain}

\newtheorem{thm}{Theorem}[section]
\newtheorem{cor}[thm]{Corollary}
\newtheorem{lem}[thm]{Lemma}
\newtheorem{prop}[thm]{Proposition}

\newtheorem{defn}[thm]{Definition}
\newtheorem{exm}[thm]{Example}

\newtheorem{rem}[thm]{Remark}

\newdir{ >}{{}*!/-5pt/\dir{>}}

\newcommand{\Mod}{\operatorname{Mod}\nolimits}

\newcommand{\Fac}{\operatorname{Fac}\nolimits}

\newcommand{\Hom}{\operatorname{Hom}\nolimits}

\newcommand{\Ext}{\operatorname{Ext}\nolimits}

\renewcommand{\Mod}{\mathsf{Mod}\hspace{.01in}}
\renewcommand{\mod}{\mathsf{mod}\hspace{.01in}}

\newcommand{\M}{\mathcal M}

\newcommand{\B}{\mathcal B}

\newcommand{\U}{\mathcal U}
\newcommand{\V}{\mathcal V}
\newcommand{\A}{\mathcal A}

\newcommand{\T}{\mathcal T}
\def\RR{\mathcal{R}\ast\mathcal{R}[1]}

\newcommand{\D}{\mathcal D}

\newcommand{\N}{\mathcal N}
\newcommand{\R}{\mathcal R}
\newcommand{\X}{\mathscr X}
\newcommand{\Y}{\mathscr Y}

\newcommand{\C}{\mathcal C}

\newcommand{\E}{\mathcal E}

\newcommand{\svecv}[2]{\left(\begin{smallmatrix}
      #1 \\
      #2
    \end{smallmatrix}\right)}

\newcommand{\svech}[2]{\left(\begin{smallmatrix}
      #1 & #2
\end{smallmatrix}\right)}

\renewcommand{\emph}{\textit}
\renewcommand{\phi}{\varphi}

\newcommand{\add}{\mathsf{add}\hspace{.01in}}

\begin{document}

\title{Relative cluster tilting theory and $\tau$-tilting theory}\footnote{\hspace{1em}Yu Liu is supported by the National Natural Science Foundation of China (Grant No. 12171397). Panyue Zhou is supported by the National Natural Science Foundation of China (Grant No. 12371034) and by the Hunan Provincial Natural Science Foundation of China (Grant No. 2023JJ30008). }
\author{Yu Liu, Jixing Pan and Panyue Zhou}
\address{ School of Mathematics and Statistics, Shaanxi Normal University, 710062 Xi'an, Shannxi, P. R. China}
\email{recursive08@hotmail.com}

\address{Department of Mathematical Sciences, Tsinghua University, 100084 Beijing, P. R. China}
\email{pjx19@tsinghua.org.cn}

\address{School of Mathematics and Statistics, Changsha University of Science and Technology, 410114 Changsha, Hunan, P. R. China}
\email{panyuezhou@163.com}

%\thanks{$^\ast$Corresponding author.}

%\thanks{The authors would like to thank Professor Bin Zhu for helpful discussions.}
\begin{abstract}
Let $\C$ be a Krull-Schmidt triangulated category with shift functor $[1]$ and $\R$ be a rigid subcategory of $\C$. We are concerned with the mutation of two-term weak $\R[1]$-cluster tilting subcategories. We show that any almost complete two-term weak $\R[1]$-cluster tilting subcategory has exactly two completions.
Then we apply the results on relative cluster tilting subcategories to the domain of $\tau$-tilting theory in functor categories and abelian categories.

%the functorial version of $\tau$-tilting theory.
\end{abstract}
\keywords{two-term $\R[1]$-rigid subcategory; two-term (weak) $\R[1]$-cluster tilting subcategory; mutation; support $\tau$-tilting subcategory}
\subjclass[2020]{18G80; 18E10}
\maketitle

\tableofcontents

\section{Introduction}

Tilting theory emerges as a versatile method for establishing equivalences between categories. Initially developed in the framework of module categories over finite dimensional algebras, tilting theory has now evolved into an indispensable tool across various mathematical domains. Applications have been found in diverse fields such as finite and algebraic group theory, commutative and non-commutative algebraic geometry, as well as algebraic topology.

Let $\Lambda$ be a finite dimensional hereditary algebra over an algebraically closed field.
All modules  are regarded as finite dimensional left $\Lambda$-modules.
We assume that $\Lambda$ has $n$ isomorphism classes of simple modules.
A partial tilting module $M$ is called an almost complete tilting module, if $M$
has $n-1$ non-isomorphic indecomposable direct summands.
An indecomposable module $X$ is called a complement to $M$,
if $M\oplus X$ is a tilting module and $X$ is not isomorphic to any direct summand of $M$.
It has been  shown in \cite{HR} that there always exists a complement to an almost complete tilting module $M$, and further research \cite{RS,U,HU} showed that such a partial tilting module allows (up to isomorphism) at most two complements and there are two such complements if and only if the partial tilting module is sincere.

Let $A$ be a finite dimensional algebra over a  field. Adachi, Iyama and Reiten \cite{AIR}
introduced \emph{$\tau$-tilting theory}, which is a
generalization of classical tilting theory.
The impetus to explore $\tau$-tilting theory arises from various sources, with a key focus on the mutation of tilting modules. It's worth noting that one limitation of tilting module mutation is its occasional infeasibility. This limitation serves as the driving force behind the exploration of $\tau$-tilting theory.
Support $\tau$-tilting (resp. $\tau$-rigid) modules extend the scope of tilting (resp. partial tilting) modules by incorporating the Auslander-Reiten translation. In essence, support $\tau$-tilting modules offer a natural extension of the class of tilting module with respect to mutation. Notably, as demonstrated in \cite{AIR}, a basic almost complete support $\tau$-tilting module emerges as a direct summand of exactly two basic support $\tau$-tilting modules.  This means that mutation
of support $\tau$-tilting modules is always possible.

Cluster categories were introduced by Buan, Marsh, Reineke, Reiten, and Todorov \cite{BMRRT} in order to categorify Fomin-Zelevinsky's cluster algebras \cite{FZ1,FZ2}. Consider $\Lambda$, a finite dimensional hereditary algebra over an algebraically closed field with $n$ non-isomorphic simple modules. The cluster category of $\Lambda$, denoted by $\C_\Lambda=\mathrm{D}^b({\rm mod}\Lambda)/\tau^{-1}[1]$, arises as the Verdier quotient category.
In a triangulated category, rigid objects manifest in three distinct forms: cluster tilting, maximal rigid, and complete rigid. Generally speaking, these forms are not equivalent \cite{BIKR,KZ}, yet in the cluster category $\C_\Lambda$, they are demonstrated to be equivalence \cite{BMRRT}. In contrast to classical tilting modules, cluster tilting objects in cluster categories exhibit favorable characteristics \cite{BMRRT}.
For instance, any almost complete cluster tilting object in a cluster category can be completed to a cluster tilting object in exactly two ways.
Adachi, Iyama and Reiten \cite{AIR} established a
bijection between cluster tilting objects in a $2$-Calabi-Yau triangulated category and
support $\tau$-tilting modules over a cluster-tilted algebra (also see \cite{CZZ,YZ,FGL} for various versions of this bijection).

Mutation of $n$-cluster tilting subcategories was introduced by Iyama and Yoshino \cite{IY},
they also studied almost complete $n$-cluster
tilting subcategories. They proved that any almost complete cluster tilting subcategory of a $2$-Calabi-Yau triangulated category is contained in exactly two cluster tilting subcategories.
We know that cluster tilting subcategories are maximal rigid subcategories, but the converse is
not true in general. Zhou and Zhu \cite{ZZ} proved that
any almost complete maximal rigid subcategory of a $2$-Calabi-Yau triangulated category is contained in exactly two maximal rigid subcategories.
In \cite{IJY}, Iyama, J{\o}rgensen, and Yang introduced a functorial version of $\tau$-tilting theory. They extended the concept of support $\tau$-tilting modules from the module categories of finite dimensional algebras to essentially small additive categories. Their results demonstrated that for a triangulated category $\C$ with a silting subcategory $\mathcal{S}$, there exists a one-to-one correspondence between the set of two-term silting subcategories in $\mathcal{S}*\mathcal{S}[1]$ and the set of support $\tau$-tilting subcategories of $\mod\mathcal S$.

For a Krull-Schmidt, Hom-finite triangulated category with a Serre functor
and a cluster tilting object $T$, let $\Lambda$ be the endomorphism algebra of $T$. Yang and Zhu \cite{YZ} proved that there exists a bijection between the set of isomorphism classes of basic $T[1]$-cluster tilting objects in this triangulated category and the set of isomorphism classes of basic support $\tau$-tilting $\Lambda$-modules.
Yang, Zhou, and Zhu \cite{YZZ} introduced the notion of (weak) $\mathcal{T}[1]$-cluster tilting subcategories for a triangulated category $\C$ with a cluster tilting subcategory $\mathcal{T}$. They established a correspondence between the set of weak $\mathcal{T}[1]$-cluster tilting subcategories of $\C$ and the set of support $\tau$-tilting subcategories of $\mod\mathcal T$, which extends the above correspondence by Yang and Zhu in \cite{YZ}. This correspondence resembles the one presented in \cite{IJY}.
Consider a triangulated category $\C$ with shift functor $[1]$ and a rigid subcategory $\mathcal{R}$. Zhou and Zhu \cite{ZhZ} introduced the concepts of two-term $\mathcal{R}[1]$-rigid subcategories, two-term (weak) $\mathcal{R}[1]$-cluster tilting subcategories, and two-term maximal $\mathcal{R}[1]$-rigid subcategories. They proved that there is a bijection between the set of two-term $\mathcal{R}[1]$-rigid subcategories of $\C$ and the set of $\tau$-rigid subcategories of $\mod\mathcal R$, which establishes a one-to-one correspondence between the set of two-term weak $\mathcal{R}[1]$-cluster tilting subcategories of $\C$ and the set of support $\tau$-tilting subcategories of $\mod\mathcal R$. This  result extends the main results in \cite{YZZ} when $\mathcal{R}$ is a cluster tilting subcategory. Furthermore, when $\mathcal{R}$ is a silting subcategory, they showed that the two-term weak $\mathcal{R}[1]$-cluster tilting subcategories precisely coincide with the two-term silting subcategories presented in \cite{IJY}, thus establishing a bijection analogous to the bijection in \cite{IJY}.
\vspace{2mm}

Let $\C$ be a Krull-Schmidt triangulated category with shift functor $[1]$ and $\R$ be a rigid subcategory of $\C$. We first show the following result, which is an analogue of co-Bongartz completion.

\begin{prop}\label{prop11}{\rm (see Proposition \ref{facmax} for details)}
Let $\X$ be a two-term $\R[1]$-rigid subcategory such that every object in $\R$ admits a left $\X$-approximation. Then it is contained in a two-term weak $\R[1]$-cluster tilting subcategory $\M_{\X}$.
\end{prop}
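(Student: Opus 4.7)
The plan is a relative version of the classical Bongartz completion: attach to $\X$ the cones of left $\X$-approximations of the objects of $\R$. Concretely, for each indecomposable $R\in\R$ the hypothesis provides a left $\X$-approximation $f_R\colon R\to X_R$; I would complete it to a triangle
$$R\xrightarrow{f_R}X_R\xrightarrow{g_R}Y_R\xrightarrow{h_R}R[1]$$
and set $\M_\X:=\add\bigl(\X\cup\{Y_R\mid R\in\R\}\bigr)$. The two-term property $Y_R\in\R\ast\R[1]$ should follow by combining the above triangle with a two-term presentation of $X_R$ via the octahedral axiom.

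The bulk of the verification is to show that $\M_\X$ is $\R[1]$-rigid. The condition $\Hom(\X,\X[1])=0$ is given. For $\Hom(Y_R,\X[1])=0$, the key step is to apply $\Hom(-,X'[1])$ with $X'\in\X$ to the defining triangle and observe that, because $f_R$ is a left $\X$-approximation, the connecting map $\Hom(R[1],X'[1])\to\Hom(Y_R,X'[1])$ is zero: any $\alpha\colon R\to X'$ factors as $\beta\circ f_R$, so $\alpha[1]\circ h_R=\beta[1]\circ(f_R[1]\circ h_R)=0$. Combined with $\Hom(X_R,X'[1])=0$ this yields the vanishing. The remaining pairs $\Hom(\X,Y_R[1])$ and $\Hom(Y_R,Y_{R'}[1])$ should vanish by routine long-exact-sequence manipulations built on the previous vanishings together with the rigidity of $\R$ and the two-term structure of $\X$.

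The main obstacle will be establishing the maximality clause defining a weak $\R[1]$-cluster tilting subcategory: any $Z\in\R\ast\R[1]$ with $\Hom(\M_\X,Z[1])=0=\Hom(Z,\M_\X[1])$ must lie in $\M_\X$. My plan is to take a two-term presentation $R_1\to R_0\to Z\to R_1[1]$ of $Z$, feed $R_1$ through its left $\X$-approximation $R_1\to X_{R_1}$ with cone $Y_{R_1}$, and combine the two triangles via the octahedral axiom. This should produce a triangle identifying $Z$ with an extension of objects from $\X\cup\{Y_{R_1}\}$; the assumed Hom-vanishings on $Z$ should then force the extension to split, so that by Krull--Schmidt $Z$ becomes a direct summand of an object in $\M_\X$. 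Extracting the splitting cleanly, and matching it exactly with the paper's definition of weak cluster tilting, is the technical crux.
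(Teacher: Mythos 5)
Your construction is the same as the paper's (Proposition \ref{facmax}): take, for each $R\in\R$, the cone $V^R$ of a left $\X$-approximation $R\to X_R$ and set $\M_\X=\add(\X\cup\{V^R\})$; the two-term property indeed follows as in Lemma \ref{r1}. However, there is a genuine gap in your rigidity verification: you treat $\R[1]$-rigidity as the absolute vanishing $\Hom_\C(\X,\X[1])=0$, whereas Definition \ref{def2} only gives (and only asks for) the vanishing of $[\R[1]](\X,\X[1])$, the morphisms factoring through $\R[1]$. Concretely, your step ``combined with $\Hom(X_R,X'[1])=0$ this yields the vanishing'' fails: $\Hom_\C(X_R,X'[1])$ need not be zero for $X_R,X'\in\X$ (only $[\R[1]](X_R,X'[1])=0$ holds), so your long exact sequence does not close up, and in any case it would be aiming at the wrong, too-strong conclusion. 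The repair is exactly what the paper's Steps 2--4 do: start from a morphism that is \emph{assumed} to factor through $\R[1]$ (resp.\ $\R$), push that factorization through the triangle, and use the approximation property plus rigidity of $\R$ to kill it; e.g.\ for $u\in[\R[1]](V^R,X'[1])$ one first notes $u g\in[\R[1]](X_R,X'[1])=0$, lifts $u$ through $h\colon V^R\to R[1]$, and then uses that $-f[1]$ is a left $\X[1]$-approximation. Your ``routine long-exact-sequence manipulations'' for the remaining pairs have the same defect and need the same relative-Hom bookkeeping.

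On the maximality clause, which you identify as the crux: the paper avoids proving it directly. By Lemma \ref{lem4}, a two-term $\R[1]$-rigid subcategory $\M$ with $\R\subseteq\M[-1]\ast\M$ is automatically two-term weak $\R[1]$-cluster tilting, and the defining triangles $R\to X_R\to V^R\to R[1]$ give $\R\subseteq\M_\X[-1]\ast\M_\X$ for free. So once relative rigidity of $\M_\X$ is established, nothing more is needed. Your proposed direct argument (octahedron against a two-term presentation of $Z$, then splitting) is essentially a re-proof of Lemma \ref{lem4}(2)$\Rightarrow$(3); note that the paper's version of that argument needs \emph{two} triangle comparisons (first splitting off $X_2$ using $[\R[1]](X_2,M[1])=0$, then comparing with the approximation triangle of $R_1$ via Lemma \ref{tau1}), not a single splitting, and again all the vanishings used there are the relative ones.
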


By \cite[Theorem 3.1]{ZhZ}, for any two-term $\R[1]$-rigid subcategory $\X$ such that every object in $\R[1]$ admits a right $\X$-approximation, we can construct a two-term weak $\R[1]$-cluster tilting subcategory $\mathcal N_{\X}$ which contains $\X$.
Our first main result shows that there exists a mutation for
two-term weak $\R[1]$-cluster tilting subcategories.
Let $\X$ be a subcategory of $\C$. For convenience, if each object in $\R$ admits a left $\X$-approximation and each object in $\R[1]$ admits a right $\X$-approximation, we say that $\X$ is $\R[1]$-functorially finite.

\begin{thm}{\rm (see Definition \ref{defmu} and Theorem \ref{m-pair} for details)}
Let $\X$ be an $\R[1]$-functorially finite two-term $\R[1]$-rigid subcategory. Then $(\M_\X,\mathcal N_\X)$ is an $\X$-mutation pair.
\end{thm}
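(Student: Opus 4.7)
The plan is to verify the two axioms of $\X$-mutation pair from Definition \ref{defmu}: the containment $\X \subseteq \M_\X \cap \N_\X$, and the existence of exchange triangles $N \to Y \to M \to N[1]$ with $Y \in \X$, $M \in \M_\X$, $N \in \N_\X$, such that $N \to Y$ is a left $\X$-approximation of $N$ and $Y \to M$ is a right $\X$-approximation of $M$. Containment is immediate from the constructions recalled in Proposition \ref{prop11} and \cite[Theorem 3.1]{ZhZ}. By additivity it suffices to produce the exchange triangle for each pair $(M_R, N^R)$ of generators, where $M_R$ arises from the defining triangle $R \xrightarrow{f_R} X_R \to M_R \xrightarrow{\partial} R[1]$ with $f_R$ a left $\X$-approximation, and $N^R$ arises from $R \xrightarrow{\iota_R} N^R \xrightarrow{\pi_R} X^R \xrightarrow{g^R} R[1]$ with $g^R$ a right $\X$-approximation.

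The core construction takes the homotopy pullback of the cospan $M_R \xrightarrow{\partial} R[1] \xleftarrow{g^R} X^R$: this yields an object $P$ fitting in the triangle $P \to M_R \oplus X^R \xrightarrow{(\partial, -g^R)} R[1] \to P[1]$. Applying the octahedral axiom (equivalently the $3 \times 3$ lemma) to the two projections $M_R \oplus X^R \to M_R$ and $M_R \oplus X^R \to X^R$, and using the defining triangles of $N^R$ and $M_R$ to identify the relevant cones, produces the two further triangles
\[
N^R \to P \to M_R \to N^R[1], \qquad X_R \to P \to X^R \to X_R[1].
\]
The second of these, combined with the vanishing $\Hom(X^R, X_R[1]) = 0$ supplied by the two-term $\R[1]$-rigidity of $\X$, forces that triangle to split, so $P \simeq X_R \oplus X^R \in \X$. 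The first triangle is then the candidate exchange triangle.

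It remains to verify that $P \to M_R$ is a right $\X$-approximation and that $N^R \to P$ is a left $\X$-approximation. For the first, applying $\Hom(X', -)$ for $X' \in \X$ to the exchange triangle reduces surjectivity to the vanishing $\Hom(X', N^R[1]) = 0$; chasing the long exact sequence for $\Hom(X', -)$ on the defining triangle of $N^R$, this follows from the right $\X$-approximation property of $g^R$ (every morphism $X' \to R[1]$ factors through $X^R$), combined with the triangle identity $\iota_R[1] \circ g^R = 0$. The argument for $N^R \to P$ is dual: applying $\Hom(-, X')$ to the exchange triangle reduces the question to $\Hom(M_R, X'[1]) = 0$, and chasing the long exact sequence for the defining triangle of $M_R$ gives this vanishing from the left $\X$-approximation property of $f_R$ together with the $\R[1]$-rigidity of $\X$. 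The principal technical hurdle is the splitting argument for $P$: the required identity $\Hom(X^R, X_R[1]) = 0$ must be extracted directly from the definition of two-term $\R[1]$-rigidity, and if this definition only provides vanishing modulo an ideal, one will need additional bookkeeping to absorb correction terms into the $\X$-summand; similarly, the identification of the third term of the octahedron with $N^R$ itself (rather than up to an $\X$-summand) requires careful control through the defining triangle of $N^R$.
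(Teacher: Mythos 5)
Your route is genuinely different from the paper's. The paper starts from an arbitrary indecomposable $Y\in\M_\X\setminus\X$, takes its minimal right $\X$-approximation triangle (Lemmas \ref{pre-mu} and \ref{mutation}), and then proves that the cocone lies in $\N_\X$ by an octahedral comparison with the generator triangle of $\M_\X$; you instead build the exchange triangle directly between the two generators $M_R$ and $N^R$ by forming the homotopy pullback over $R[1]$. For the generators your construction does work, and the ``principal technical hurdle'' you flag dissolves on inspection: the connecting morphism of the triangle $X_R\to P\to X^R\to X_R[1]$ produced by the homotopy cartesian square is precisely $\pm f_R[1]\circ g^R$, which visibly factors through $R[1]$ and hence lies in $[\R[1]](\X,\X[1])=0$; the triangle splits with no extra bookkeeping, and likewise the identification of the cone of $P\to M_R$ with $N^R[1]$ is exact, not merely up to an $\X$-summand. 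One caveat: your paraphrase of Definition \ref{defmu} replaces conditions (a1)/(b1) (the connecting morphism factors through $\R[1]$) by approximation conditions, which is not what the definition asks; fortunately your connecting morphism is $\pm\iota_R[1]\circ\partial$, which does factor through $R[1]$, so the actual definition is met once you say so.

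The genuine gap is the sentence ``by additivity it suffices to produce the exchange triangle for each pair $(M_R,N^R)$ of generators.'' The subcategories $\M_\X=\add(\X\cup\{M_R\})$ and $\N_\X=\add(\X\cup\{N^R\})$ are closed under direct summands, and Definition \ref{defmu} demands the triangle for \emph{every} object of $\M_\X$ (resp.\ $\N_\X$). Additivity covers finite direct sums, and objects of $\X$ are handled by the trivial triangle $X\xrightarrow{1}X\to 0\to X[1]$, but an indecomposable direct summand $Y$ of some $M_R$ with $Y\notin\X$ is not covered by your argument. To treat it one must pass to the minimal right $\X$-approximation of $Y$, use the Krull--Schmidt property to decompose $P\to M_R$ as a direct sum of minimal approximations of the summands of $M_R$ plus a split epimorphism, and conclude that the resulting cocone is a direct summand of $N^R$ (hence in $\N_\X$) with the induced connecting morphism still factoring through $\R[1]$. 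This is exactly the content the paper packages into Lemma \ref{mutation} and the diagram chase in Theorem \ref{m-pair}; without it your proof establishes the mutation-pair conditions only for the additive generators, not for all objects of $\M_\X$ and $\N_\X$.
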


Next, we introduce the definition of the \emph{almost complete} relative cluster tilting subcategories.

\begin{defn}\label{complete}
We call a two-term $\R[1]$-rigid subcategory $\X$ an almost complete two-term weak $\R[1]$-cluster tilting subcategory
if the following conditions are satisfied:
\begin{itemize}
\vspace{1mm}
\item[\rm (a)] $\X$ is $\R[1]$-functorially finite;
\vspace{1mm}

\item[\rm (b)] There exists an indecomposable object $W\notin \X$ such that $\add(\X\cup\{W\})$ is two-term weak $\R[1]$-cluster tilting.
\end{itemize}

Any two-term weak $\R[1]$-cluster tilting subcategory which has the form in {\rm (b)} is called a completion of $\X$.
\end{defn}

Our second main result concerns the existence of completions for almost complete two-term weak $\R[1]$-cluster tilting subcategories.
%{\red It is particularly emphasized that our proof is given directly in the triangulated category, without reliance on the abelian category or module category. }

\begin{thm}\label{main54}{\rm (see Theorem \ref{main1} for details)}
Any almost complete two-term weak $\R[1]$-cluster tilting subcategory $\X$ has exactly two completions  $\M_{\X}$ and $\N_{\X}$.
Moreover, if $\U$ is a two-term weak $\R[1]$-cluster tilting subcategory which contains $\X$, then $\U=\M_\X$ or $\U=\mathcal N_{\X}$.
\end{thm}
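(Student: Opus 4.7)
The plan is to exploit the $\X$-mutation pair structure $(\M_\X, \N_\X)$ established in the preceding theorem. First I would verify that $\M_\X$ and $\N_\X$ are genuine completions of $\X$, i.e., that each of $\M_\X\setminus\X$ and $\N_\X\setminus\X$ consists, up to isomorphism, of exactly one indecomposable, say $M$ and $N$ respectively. Since the almost-complete hypothesis supplies at least one completion $\add(\X\cup\{W\})$, and since $\M_\X$ and $\N_\X$ are themselves two-term weak $\R[1]$-cluster tilting subcategories sitting above $\X$, this reduces to a minimality property: nothing in $\M_\X$ beyond $\X$ and a single indecomposable is needed to close $\X$ under the requisite $\X$-approximations of $\R$. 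The mutation bijection between indecomposables of $\M_\X\setminus\X$ and $\N_\X\setminus\X$ (via the defining triangles $M\to X_M\to N\to M[1]$ with $X_M\in\X$) then transfers this "one new indecomposable" property from one side to the other, and the mutation triangle simultaneously certifies $M\not\cong N$, producing two distinct completions.

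For the uniqueness statement, I would take an arbitrary completion $\U=\add(\X\cup\{W\})$ with $W$ indecomposable and $W\notin\X$, and aim to prove $W\cong M$ or $W\cong N$. Using the $\R[1]$-functorial finiteness of $\X$, form a left $\X$-approximation triangle $W\to X_W\to W^{\ast}\to W[1]$. The rigidity of $\U$ together with the two-term condition should force $W^{\ast}$ to lie in $\R\ast\R[1]$ and to be $\R[1]$-rigid against $\X$; combined with the cluster-tilting property of $\U$, this identifies $W^{\ast}$ with an indecomposable of $\N_\X$ (and by the minimality above, with $N$ itself). The resulting triangle then coincides with the mutation triangle $M\to X_M\to N\to M[1]$, forcing $W\cong M$ and $\U=\M_\X$. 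The dual argument, starting from a right $\X$-approximation of $W$, yields the alternative $\U=\N_\X$.

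The main obstacle I anticipate is exactly the uniqueness-up-to-isomorphism of the new indecomposable in $\M_\X$ (and dually in $\N_\X$). A priori one could fear several non-isomorphic candidates in $\M_\X\setminus\X$, and eliminating this requires careful bookkeeping: any additional indecomposable must participate non-trivially in some left $\X$-approximation of an object of $\R$, but the given completion $\add(\X\cup\{W\})$ already supplies such approximations with only one new summand, so the rigidity of $\M_\X$ against $\X\cup\{M\}$ together with indecomposability forces the extra object to coincide with $M$. Once this minimality is secured, the mutation pair from the first main theorem delivers both the existence of the two completions and, through the approximation-triangle matching above, their exhaustiveness.
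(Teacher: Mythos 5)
Your skeleton follows the paper's route (use the mutation pair $(\M_\X,\N_\X)$, show these are the two completions, then show any weak cluster tilting $\U\supseteq\X$ coincides with one of them), but the two load-bearing steps are not actually carried out. First, to see that $\M_\X\setminus\X$ and $\N_\X\setminus\X$ each contribute a single indecomposable, it does not suffice to invoke a ``minimality property'': one must show that the given completion $\mathcal L=\add(\X\cup\{W\})$ literally \emph{equals} $\M_\X$ or $\N_\X$. The paper does this by taking, for each $R\in\R$, the triangle $R\xrightarrow{f}L^1_R\to L^2_R\to R[1]$ with $f$ a \emph{minimal} left $\mathcal L$-approximation, observing that some such triangle must involve $W$ (else $\X$ itself would be weak cluster tilting), and then using Lemma \ref{lem2} (minimality forces $L^1_R$ and $L^2_R$ to share no indecomposable summands) to conclude that $W$ sits on exactly one side, whence $\mathcal L\subseteq\N_\X$ or $\mathcal L\subseteq\M_\X$ and equality follows from maximal rigidity; only then do Lemmas \ref{neq} and \ref{neq-D} transfer the ``one new indecomposable'' property to the other member of the pair. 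Your third paragraph gestures at this but conflates ``appearing in a left $\X$-approximation of $R$'' with ``being a cone of one'' and never confronts the dichotomy of which position $W$ occupies; that dichotomy is precisely where the two completions come from.

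Second, the uniqueness argument as you state it only treats $\U$ of the form $\add(\X\cup\{W\})$ with one new indecomposable, whereas the ``Moreover'' clause concerns an \emph{arbitrary} two-term weak $\R[1]$-cluster tilting $\U\supseteq\X$, which a priori may have many new indecomposables. Moreover, for $W\in\U\setminus\X$ you form a left $\X$-approximation triangle and assert that its cone lands in $\N_\X$; neither the existence of that approximation for a general such $W$ nor the membership of the cone in $\N_\X$ is automatic (in Theorem \ref{m-pair} these facts are proved only for objects of $\N_\X$, using the specific construction of $\N_\X$ from right $\X$-approximations of $\R[1]$). The paper instead first establishes $[\R](\U[-1],\M_\X)=0$, uses a diagram chase to produce a minimal left $\M_\X$-approximation triangle $U_0\to M_1\to M_2\to U_0[1]$ with $m_2$ factoring through $\R[1]$, and then applies Lemma \ref{lem2} again: if $M_2\in\X$ then $U_0\in\M_\X$, if $M_1\in\X$ then $U_0\in\N_\X$, and otherwise the unique new indecomposable of $\M_\X$ would be a common summand of $M_1$ and $M_2$, which is impossible. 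Without some substitute for these steps your argument does not close.
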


We derive the following direct consequence of Theorem \ref{main54}, where the first part is established in \cite{IY}, and the second part in \cite{ZZ}.

\begin{cor}
{\rm (a)} In a $2$-Calabi-Yau triangulated category with cluster tilting subcategories, any almost complete
cluster tilting subcategories have exactly two completions.

{\rm (b)} In a $2$-Calabi-Yau triangulated category with
maximal rigid subcategories, any almost complete
maximal rigid subcategories  {\rm (}in the sense of \cite{ZZ}{\rm)} have exactly two completions.
\end{cor}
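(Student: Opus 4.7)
The plan is to treat existence, distinctness, and exhaustion in that order, leaning on the $\X$-mutation pair structure from the preceding theorem.

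For existence, the $\R[1]$-functorial finiteness of $\X$ lets Proposition~\ref{prop11} produce a two-term weak $\R[1]$-cluster tilting subcategory $\M_\X\supseteq\X$, while the dual construction quoted immediately after Proposition~\ref{prop11} produces $\N_\X\supseteq\X$. Both are completions of $\X$. For distinctness, assume $\M_\X=\N_\X$. Then for each $M\in\M_\X$ the $\X$-mutation triangle $M\to X_M\to N_M\to M[1]$ furnished by the preceding theorem has both endpoints inside the $\R[1]$-rigid subcategory $\M_\X$; rigidity gives $\Hom(N_M,M[1])=0$, so the connecting morphism vanishes, the triangle splits, and $M$ is a direct summand of $X_M\in\add\X$. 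This collapses $\M_\X$ onto $\add\X$, making $\X$ itself two-term weak $\R[1]$-cluster tilting, which contradicts condition~(b) of Definition~\ref{complete}.

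For exhaustion, let $\U\supseteq\X$ be any two-term weak $\R[1]$-cluster tilting subcategory. The same collapse argument as above forces $\U$ to strictly contain $\X$, so we may pick an indecomposable $U\in\U\setminus\X$. The goal is to show $U\in\M_\X\cup\N_\X$ and then upgrade this to $\U=\M_\X$ or $\U=\N_\X$. Using functorial finiteness and the construction behind the $\X$-mutation pair, build a triangle $U\to X_U\to U'\to U[1]$ with $X_U\in\X$. The $\R[1]$-rigidity of $\add(\X\cup\{U\})$ combined with the defining property of $\M_\X$ from Proposition~\ref{prop11} forces $U'\in\M_\X$, and the $\X$-mutation pair bijection then matches $U$ either with an object of $\M_\X\setminus\X$ (so $\U\subseteq\M_\X$, hence $\U=\M_\X$ by maximality of a two-term weak $\R[1]$-cluster tilting subcategory) or with an object of $\N_\X\setminus\X$ (so $\U=\N_\X$). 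Together with the distinctness step, this yields exactly two completions.

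The main obstacle will be this exhaustion step: verifying that $U'$ necessarily lies in $\M_\X$ and that the mutation pair bijection accounts for every indecomposable of every completion. Concretely, one must combine the relative $\Ext^1$-vanishing implicit in the definition of weak $\R[1]$-cluster tilting with the approximation triangles supplied by functorial finiteness, and then invoke uniqueness of summands in those triangles to rule out any ``third'' completion sitting between $\M_\X$ and $\N_\X$.
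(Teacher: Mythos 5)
Your proposal does not prove the stated corollary: what you have sketched is a proof of the general Theorem~\ref{main54} (i.e.\ Theorem~\ref{main1}), of which the corollary is a specialization, and the specialization itself --- which is the entire content of the corollary --- is missing. The statement concerns classical cluster tilting and maximal rigid subcategories of a $2$-Calabi--Yau category, so one must set up the dictionary with the relative notions: take $\R$ to be the given cluster tilting (resp.\ maximal rigid) subcategory; invoke Remark~\ref{rem0} (i.e.\ \cite[Lemma 3.5]{YZZ}) to see that in the $2$-Calabi--Yau case a subcategory of $\RR$ is two-term $\R[1]$-rigid if and only if it is rigid; note that $\RR=\C$ when $\R$ is cluster tilting, so that cluster tilting subcategories are exactly the two-term $\R[1]$-cluster tilting ones; and for part (b) use the result of \cite{ZZ} that every maximal rigid subcategory lies in $\RR$, together with Lemma~\ref{lem4} and Corollary~\ref{cor3}, to identify maximal rigid subcategories with two-term weak $\R[1]$-cluster tilting ones. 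One must also check that ``almost complete'' in the sense of \cite{IY} and \cite{ZZ} satisfies both conditions of Definition~\ref{complete}, in particular the $\R[1]$-functorial finiteness in condition (a); Example 7.1 of the paper shows this condition is not vacuous, so it cannot simply be waved through. None of this appears in your write-up, and your argument never uses the $2$-Calabi--Yau hypothesis at all.

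Even read as a proof of Theorem~\ref{main1}, the argument has gaps you partly concede. In the existence step it is not enough that $\M_\X$ and $\N_\X$ contain $\X$: a completion in the sense of Definition~\ref{complete} must have the form $\add(\X\cup\{W\})$ for a \emph{single} indecomposable $W$, and Proposition~\ref{facmax} alone does not deliver this. The paper obtains it by first locating the completion $\mathcal L=\add(\X\cup\{W\})$ guaranteed by Definition~\ref{complete} inside $\{\M_\X,\N_\X\}$ (via minimal left $\mathcal L$-approximations of objects of $\R$ and Lemma~\ref{lem2}), and then transferring one-indecomposability across the mutation pair using Lemmas~\ref{neq} and~\ref{neq-D}. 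The exhaustion step as written would not go through: the working triangle for an indecomposable $U\in\U\setminus\X$ is a minimal left $\M_\X$-approximation $U\to M_1\to M_2\to U[1]$ with the connecting map factoring through $\R[1]$, and the conclusion comes from the trichotomy $M_2\in\X$ (forcing $\U=\M_\X$), $M_1\in\X$ (forcing $\U=\N_\X$), or the unique new indecomposable being a common summand of $M_1$ and $M_2$, which Lemma~\ref{lem2} forbids; your appeal to a ``mutation pair bijection'' does not supply this case analysis.
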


%Our fourth main result prove the analogue, for $\tau$-rigid subcategories,
%of the completions of $\tau$-rigid modules.
%
%
%\begin{thm}\label{main16}{\rm (see Theorem \ref{thm2} for details)}
%Let $\X$ be an $\R[1]$-functorially finite two-term $\R[1]$-rigid subcategory such that $\X\nsubseteq \R[1]$.
%The following statements hold.
%\begin{itemize}
%\vspace{1mm}
%\item[\rm (1)] $\overline \M_\X=\{M\in \Fac \overline \X ~|~ {\rm Ext}^1_{\overline \A}(M,\Fac \overline \X)=0\}=:\mathbf{P}(\Fac \overline \X)$;
%\vspace{2mm}
%
%\item[\rm (2)] If $\X\cap \R[1]=0$, then $\overline {\mathcal N}_\X=\{N\in {^{\bot}(\tau \overline \X)} ~|~ {\rm Ext}^1_{\overline \A}(N, {^{\bot}}(\tau \overline \X))=0\}=:\mathbf{P}({^{\bot}}(\tau \overline \X))$.
%
%
%    We call $\mathbf{P}({^{\bot}}(\tau \overline \X))$ the Bongartz completion of $\overline{\X}$.
%\end{itemize}
%\end{thm}

Denote $\Mod\R$ by the abelian category of contravariantly additive functor from $\R$ to the category of abelian group, and $\mod\R$ for the full subcategory of
finitely presentation functor (see \cite{Au} for details). There exists a functor
\begin{eqnarray*}
\mathbb{H}\colon\C & \longrightarrow & \Mod\R \\
 M& \longmapsto & \Hom_{\C}(-,M)\mid_{\R}
\end{eqnarray*}
which is called the restricted Yoneda functor.
By \cite[Proposition 6.2]{IY}, the functor $\mathbb{H}$
    induces an equivalence
$$(\RR)/\R[1]\xrightarrow{~\simeq~}  \mod\R.$$
This result serves as a bridge to connect (relative) cluster tilting theory and $\tau$-tilting theory.

Bongartz completion is not always possible in classic tilting theory.
However, this is always possible in $\tau$-tilting theory.
Our third main result shows that every $\tau$-rigid pair
is contained in exactly two support $\tau$-tilting pairs in functor categories.

\begin{thm}\label{main17}{\rm (see Definition \ref{taupair} and Theorem \ref{main} for details)}
Assume that $(\R*\R[1])/\R[1]$ is abelian. Let $\X\vspace{1.5mm}$ be an $\R[1]$-functorially finite two-term $\R[1]$-rigid subcategory of $\C$. Denote $\X[-1]\cap\R$ by $\E$. If $\vspace{1.5mm}(\overline{\X}, \overline\E)$ is not a support $\tau$-tilting pair in $(\R*\R[1])/\R[1]$, then it is contained in two support $\tau$-tilting
pairs $(\overline{\M}_{\X}, \overline {\mathcal F})$ and $(\overline{\N}_{\X}, \overline\E)$ such that
$$\Fac\overline\M_\X=\Fac \overline \X,~ \Fac\overline\N_\X={^\perp(\tau\overline{\X})}\cap{\overline\E^{\perp}}.$$
Moreover, if $\X$ is almost complete, then $(\overline \M_\X, \overline{\R(\X)})$ and $(\overline \N_\X, \overline\E)$ are the only support $\tau$-tilting pairs that contain $(\overline{\X}, \overline\E)$.
\end{thm}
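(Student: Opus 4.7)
The strategy combines three ingredients already established in the paper: the co-Bongartz type completion $\M_\X$ from Proposition \ref{facmax}, the dual Bongartz type completion $\N_\X$ from \cite{ZhZ} (recalled just after Proposition \ref{prop11}), and the Zhou--Zhu bijection between two-term weak $\R[1]$-cluster tilting subcategories of $\C$ and support $\tau$-tilting subcategories of $\mod\R$, transferred along the Iyama--Yoshino equivalence $\mathbb{H}\colon(\R*\R[1])/\R[1]\xrightarrow{\simeq}\mod\R$.

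First I would invoke Proposition \ref{facmax} on the $\R[1]$-functorially finite two-term $\R[1]$-rigid subcategory $\X$ to produce a two-term weak $\R[1]$-cluster tilting subcategory $\M_\X\supseteq\X$, and dually construct $\N_\X\supseteq\X$ via the right-approximation analogue from \cite{ZhZ}. The Zhou--Zhu bijection then translates these into two support $\tau$-tilting pairs $(\overline{\M}_\X,\overline{\mathcal F})$ and $(\overline{\N}_\X,\overline{\mathcal G})$ in $(\R*\R[1])/\R[1]$, both containing $(\overline{\X},\overline{\E})$; the hypothesis that $(\overline{\X},\overline{\E})$ is not yet support $\tau$-tilting ensures these are strict extensions. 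I would next identify $\overline{\mathcal G}=\overline{\E}$: since $\N_\X$ is obtained by adjoining only cones of right $\X$-approximations of shifts $R[1]$ with $R\in\R$, the set $\{R\in\R\mid R[1]\in\N_\X\}$ reduces exactly to $\X[-1]\cap\R=\E$.

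Then I would verify the two $\Fac$-equalities. For $\overline{\M}_\X$, the defining triangles of the co-Bongartz construction realise every object of $\M_\X$ as a cone over one in $\X$; under the quotient $(\R*\R[1])/\R[1]$ this becomes an epimorphism from an object of $\overline{\X}$, yielding $\Fac\overline{\M}_\X\subseteq\Fac\overline{\X}$, and the reverse inclusion is immediate from $\overline{\X}\subseteq\overline{\M}_\X$. For $\overline{\N}_\X$ the inclusion $\Fac\overline{\N}_\X\subseteq{}^\perp(\tau\overline{\X})\cap\overline{\E}^\perp$ follows from $\tau$-rigidity of $\overline{\N}_\X$ together with the support-complement identification; the reverse inclusion encodes a Bongartz-type maximality, namely that $\overline{\N}_\X$ attains the largest torsion class sitting inside ${}^\perp(\tau\overline{\X})\cap\overline{\E}^\perp$.

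For the uniqueness clause when $\X$ is almost complete, I would invoke Theorem \ref{main1}: $\X$ has exactly two completions $\M_\X$ and $\N_\X$ as two-term weak $\R[1]$-cluster tilting subcategories, and translating back via the Zhou--Zhu bijection shows that $(\overline{\M}_\X,\overline{\R(\X)})$ and $(\overline{\N}_\X,\overline{\E})$ are the only support $\tau$-tilting pairs containing $(\overline{\X},\overline{\E})$. The hardest step is the maximality direction of $\Fac\overline{\N}_\X={}^\perp(\tau\overline{\X})\cap\overline{\E}^\perp$, which requires extracting from the construction of $\N_\X$ that no further $\tau$-rigid object of the ambient torsion class can be adjoined to $\overline{\X}$ while preserving $\overline{\E}$ as the support complement; tracking the support complements through the two dual completion procedures is the other delicate point.
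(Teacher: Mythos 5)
Your overall architecture matches the paper's proof of Theorem \ref{main} exactly: construct $\M_\X$ via Proposition \ref{facmax} and $\N_\X$ via the dual construction (Proposition \ref{NX}), transfer to support $\tau$-tilting pairs through Theorem \ref{thm1}, identify the support complements via $\M_\X\cap\R[1]=\R(\X)[1]$ and $\N_\X\cap\R[1]=\X\cap\R[1]=\E[1]$ (Lemmas \ref{cap} and \ref{cor0}), get $\Fac\overline\M_\X=\Fac\overline\X$ from $\overline\X\subseteq\overline\M_\X\subseteq\Fac\overline\X$, and settle uniqueness in the almost complete case by Theorem \ref{main1}. All of that is sound and is what the paper does.

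The genuine gap is the inclusion ${}^{\perp}(\tau\overline\X)\cap\overline\E^{\perp}\subseteq\Fac\overline\N_\X$. You correctly flag it as the hardest step, but you only describe it as ``a Bongartz-type maximality'' and gesture at ``no further $\tau$-rigid object can be adjoined''; that paraphrase is not even the right shape of claim --- what must be shown is that \emph{every} object of ${}^{\perp}(\tau\overline\X)\cap\overline\E^{\perp}$ (not just $\tau$-rigid ones) is an epimorphic image of an object of $\overline\N_\X$, and no argument is supplied. The paper's proof is a concrete diagram chase: take $L\in{}^{\perp}(\tau\overline\X)\cap\overline\E^{\perp}$ indecomposable with presentation triangle $R_1\to R_0\to L\to R_1[1]$, resolve $R_0$ by the $\N_\X$-triangle $R_0\to N_0\to X_0\to R_0[1]$, and form the octahedron to get $N_0\to A$ with a triangle $L\to A\to X_0\xrightarrow{\alpha}L[1]$. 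Writing $X_0=X_0'\oplus R_0'[1]$, the component of $\alpha$ on $R_0'[1]$ vanishes because $R_0'\in\E$ and $L\in\overline\E^{\perp}$, and the component on $X_0'$ vanishes because $L\in{}^{\perp}(\tau\overline\X)$ forces $[\R[1]](X_0',L[1])=0$ via Lemma \ref{tau2}. Hence $A\cong L\oplus X_0$ and the epimorphism $N_0\to A$ in $\overline\A$ exhibits $L\in\Fac\overline\N_\X$. Without this (or an equivalent) argument the central equality $\Fac\overline\N_\X={}^{\perp}(\tau\overline\X)\cap\overline\E^{\perp}$ is unestablished, and with it the claim that $(\overline\N_\X,\overline\E)$ realises the Bongartz-type completion.
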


Buan and Zhou \cite{BZ} introduced the notion of left weak cotorsion torsion triples in module categories and established a bijection between support $\tau$-tilting modules and these triples.
Let $\widehat{\A}$ be an abelian
category with enough projectives. Asadollahi, Sadeghi and Treffinger \cite{AST} introduced the notion of $\tau$-cotorsion torsion
triples and established a bijection between the collection of $\tau$-cotorsion torsion triples in $\widehat{\A}$
and the collection of support $\tau$-tilting subcategories of $\widehat{\A}$.
We apply our main results to $\tau$-tilting theory in abelian categories, establishing bijections between $\tau$-cotorsion torsion pairs, left weak cotorsion torsion pairs and support $\tau$-tilting subcategories. Our fourth main result is the following.

\begin{thm}\label{main18}{\rm (see Definition \ref{lcp} and Theorem \ref{main722} for details)}
Let $R$ be a commutative noetherian ring which is complete and local. Let $\widehat{\A}$ be an {\rm Ext}-finite abelian category over $R$ with enough projectives. There are bijections between $\tau$-cotorsion torsion pairs, left weak cotorsion torsion pairs and contravariantly finite support $\tau$-tilting subcategories in $\widehat{\A}$.
\end{thm}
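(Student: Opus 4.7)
The plan is to reduce Theorem \ref{main18} to the functor-category statement (Theorem \ref{main17}) by realizing $\widehat{\A}$ as $\mod\R$ for a suitable rigid subcategory $\R$ sitting inside a Krull-Schmidt triangulated category, and then to read off the two torsion-theoretic descriptions from the support $\tau$-tilting side. Since $\widehat{\A}$ has enough projectives, I would set $\R:=\proj\widehat{\A}$ and choose a Krull-Schmidt triangulated category $\mathscr{D}$ in which $\R$ sits as a silting, hence rigid, subcategory; a natural choice is $\mathscr{D}=K^{b}(\R)$. The hypotheses that $R$ is complete local noetherian and that $\widehat{\A}$ is Ext-finite over $R$ guarantee that $\mathscr{D}$ is Krull-Schmidt and that idempotents split. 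By \cite[Proposition 6.2]{IY}, the restricted Yoneda functor induces an equivalence $(\R\ast\R[1])/\R[1]\simeq\mod\R$, and the standard two-term projective presentation identifies $\mod\R$ with $\widehat{\A}$ itself.

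Under this identification, $\R[1]$-functorially finite two-term weak $\R[1]$-cluster tilting pairs $(\X,\E)$ in $\mathscr{D}$ correspond, via Theorem \ref{main17}, exactly to contravariantly finite support $\tau$-tilting pairs $(\overline{\X},\overline{\E})$ in $\widehat{\A}$, the approximation conditions packaged into $\R[1]$-functorial finiteness being the precise translation of contravariant finiteness of $\overline{\X}$. Given a contravariantly finite support $\tau$-tilting subcategory $\overline{\M}$ of $\widehat{\A}$, I would define the torsion class $\mathcal{T}:=\Fac\overline{\M}$; pairing it with the class $^{\perp_{1}}\mathcal{T}$ produced by the $\tau$-rigid structure yields a left weak cotorsion torsion pair in the sense of Buan-Zhou \cite{BZ}, while augmenting it further with the cotorsion datum extracted from $\tau$-rigidity produces a $\tau$-cotorsion torsion pair in the sense of Asadollahi-Sadeghi-Treffinger \cite{AST}. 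The inverse maps send a torsion pair back to the subcategory of Ext-projectives in its torsion class, which recovers $\overline{\M}$ by the support $\tau$-tilting hypothesis; the bijection between $\tau$-cotorsion and left weak cotorsion torsion pairs then follows by composition.

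The main obstacle will be carrying out the arguments of \cite{BZ,AST}, written for module categories of finite dimensional algebras, in the present generality of an Ext-finite abelian category over a complete local noetherian ring. One has to verify that Matlis-type duality, the existence of minimal right approximations from contravariant finiteness, and the recovery of $\overline{\M}$ as the Ext-projectives of $\Fac\overline{\M}$ all remain available; the completeness of $R$ together with the Krull-Schmidt property of $\mathscr{D}$ and of $\widehat{\A}$ are exactly what make these tools work. Once these points are in place, the proof reduces to bookkeeping: checking that the passages $\overline{\M}\mapsto\Fac\overline{\M}$ and its augmentations by cotorsion data are mutually inverse to the extraction of Ext-projectives, and that contravariant finiteness matches functoriality of the two sides of each torsion pair.
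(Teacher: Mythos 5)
Your first two paragraphs reproduce the paper's setup: the paper also realizes $\widehat{\A}$ as the quotient $(\mathcal P\ast\mathcal P[1])/\mathcal P[1]$ inside the Krull--Schmidt triangulated category $\mathrm{D}^b(\widehat{\A})$ (Krull--Schmidtness via \cite{LC}), proves the equivalence $\overline{\A}\simeq\widehat{\A}$ directly (Proposition \ref{733}), and gets the correspondence between contravariantly finite support $\tau$-tilting subcategories and $\tau$-cotorsion torsion pairs from Theorem \ref{PZZ}, which is already stated for Krull--Schmidt abelian categories with enough projectives and so needs no re-verification. Up to the choice of ambient triangulated category ($K^b(\mathcal P)$ versus $\mathrm{D}^b(\widehat{\A})$, immaterial for the two-term part) this is the same reduction.

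The genuine gap is in your third paragraph: the step you defer as ``carrying out the arguments of \cite{BZ,AST}'' is precisely the content that has to be supplied, and the tools you name for it are the wrong ones. Concretely, to produce the left weak cotorsion torsion pair one must show that for a functorially finite torsion class $\T$ \emph{every} object $A$ admits a short exact sequence $0\to V_A\to U_A\to A\to 0$ with $U_A\in{^{\bot_1}}\T$ and $V_A\in\T$, together with the left approximation sequence $A\to V^A\to U^A\to 0$. In $\mod\Lambda$ this is done with minimal projective presentations and Auslander--Reiten (Matlis-type) duality, neither of which is available in an abstract Ext-finite abelian category over a complete local noetherian ring; invoking Matlis duality here does not work. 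The paper's actual argument avoids duality entirely: it lifts $\T$ to $\widehat{\T}=\add(\T\cup\mathcal P[1])$ inside the extension-closed subcategory $\A=\mathcal P\ast\mathcal P[1]$, shows $\widehat{\T}$ is extension-closed (Lemma \ref{ext}), builds a genuine cotorsion pair $(\mathcal S,\widehat{\T})$ by Wakamatsu's Lemma applied to minimal left approximations in the triangulated category (Lemma \ref{co}), and then descends this to the conditions (a1), (a2), (b2), (c1) in $\overline{\A}$ (Lemma \ref{co2}, Proposition \ref{tri}). Without this construction your plan does not close. Two smaller inaccuracies: $\R[1]$-functorial finiteness is \emph{not} the translation of contravariant finiteness of $\overline{\X}$ in $\widehat{\A}$ --- the former concerns approximations of objects of $\R$ and $\R[1]$ only, while the latter (needed via Lemma \ref{conf} to show $\Fac\overline{\M}$ is a functorially finite torsion class) concerns right approximations of arbitrary objects; and Theorem \ref{main17} addresses completions of $\tau$-rigid pairs, not the torsion-pair bijections, so it is not the right intermediate statement to invoke.
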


Our fifth main result shows that under the settings of last theorem and certain conditions, every almost complete support $\tau$-tilting pair is exactly contained in two different support $\tau$-tilting pairs.

\begin{thm}{\rm (see Theorem \ref{last} for details)}
Let $R$ be a commutative noetherian ring which is complete and local. Let $\widehat{\A}$ be an {\rm Ext}-finite abelian category over $R$ with enough projectives. Let $\mathcal P$ be the subcategory of projective objects. Assume that $(\widehat{\X},\E)$ is a $\tau$-rigid pair satisfying the following conditions:
\begin{itemize}
\item[(X0)] $\widehat{\X}$ is not support $\tau$-tilting;
\item[(X1)] $\widehat{\X}$ is contravariantly finite;
\item[(X2)] every projective object admits a left $\widehat{\X}$-approximation;
\item[(X3)] $\E=\{P\in \mathcal P~|~\Hom_{\widehat{\A}}(P,\widehat{\X})=0\}$.
\end{itemize}
Then it is contained in two support $\tau$-tilting pairs $(\widehat{\M},\E)$ and $(\widehat{\N}, \E)$ such that
$$\Fac\widehat{\M}=\Fac\widehat{\X},\hspace{2mm} \Fac\widehat{\N}={^\perp(\tau\widehat{\X})}\cap{{\E}^{\perp}} \hspace{1mm} \text{ and } \hspace{1mm} \Fac \widehat{\M}\subsetneq \Fac \widehat{\N}.$$
Moreover, if $(\widehat{\X},\E)$ is an almost complete support $\tau$-tilting pair, then $(\widehat{\M},\E)$ and $(\widehat{\N},\E)$ are the only support $\tau$-tilting pairs which contain $(\widehat{\X},\E)$.
\end{thm}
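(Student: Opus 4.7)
The plan is to transport the problem from $\widehat{\A}$ to the relative cluster tilting setting, apply Theorem~\ref{main17}, and translate back. First I would realize $\widehat{\A}$ as a quotient $(\R\ast\R[1])/\R[1]$ for an appropriate triangulated category $\C$ and silting subcategory $\R$ (for instance, $\C = K^b(\mathcal P)$ with $\R = \mathcal P$), so that the restricted Yoneda functor $\mathbb H$ provides the required equivalence and identifies projectives of $\widehat{\A}$ with $\R$. Under this identification, the pair $(\widehat{\X}, \E)$ corresponds to a pair $(\overline{\X}, \overline{\E})$ where $\X$ is a two-term $\R[1]$-rigid subcategory of $\C$ and $\overline{\E} = \overline{\X[-1]\cap\R}$, the latter forced by (X3).

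Next I would verify that (X0)--(X3) translate exactly to the hypotheses of Theorem~\ref{main17}: contravariant finiteness of $\widehat{\X}$ in $\widehat{\A}$ corresponds to the existence of right $\X$-approximations of objects in $\R[1]$; (X2) corresponds to the existence of left $\X$-approximations of objects in $\R$, so together $\X$ is $\R[1]$-functorially finite; (X0) says $(\overline{\X}, \overline{\E})$ is not support $\tau$-tilting. Theorem~\ref{main17} then produces two support $\tau$-tilting pairs $(\overline{\M}_{\X}, \overline{\mathcal F})$ and $(\overline{\N}_{\X}, \overline{\E})$ with $\Fac\overline{\M}_{\X} = \Fac\overline{\X}$ and $\Fac\overline{\N}_{\X} = {}^\perp(\tau\overline{\X}) \cap \overline{\E}^\perp$. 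Translating back via the equivalence yields support $\tau$-tilting pairs $(\widehat{\M}, \widehat{\mathcal F})$ and $(\widehat{\N}, \E)$ in $\widehat{\A}$ satisfying the stated Fac identities, and the strict inclusion $\Fac\widehat{\M} \subsetneq \Fac\widehat{\N}$ follows from (X0), since otherwise the two completions would coincide.

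It remains to prove $\widehat{\mathcal F} = \E$, so that both completions share the same second component. Using (X3), any $P\in\mathcal P$ with $\Hom(P,\widehat{\X}) = 0$ is orthogonal to all of $\Fac\widehat{\X} = \Fac\widehat{\M}$ and hence lies in $\widehat{\mathcal F}$; conversely, every $P\in\widehat{\mathcal F}$ is automatically orthogonal to $\widehat{\X}\subseteq\widehat{\M}$ and therefore lies in $\E$. For the uniqueness in the almost complete case I would then invoke the uniqueness half of Theorem~\ref{main17} (itself a consequence of Theorem~\ref{main54}), combined with the identity $\widehat{\mathcal F} = \E$ just established, to conclude that $(\widehat{\M}, \E)$ and $(\widehat{\N}, \E)$ are the only support $\tau$-tilting pairs containing $(\widehat{\X}, \E)$.

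The main obstacle I anticipate is the careful tracking of the translation between the abelian-categorical and triangulated setups — especially confirming that both completions share the second component $\E$ — since this is the new ingredient not already encoded in Theorem~\ref{main17}. Most of the technical machinery (existence of the two completions, the Fac identities, and the uniqueness) is already packaged in the earlier results, so once the dictionary between $\widehat{\A}$ and $(\R\ast\R[1])/\R[1]$ is set up, the argument reduces to a direct application of Theorem~\ref{main17}.
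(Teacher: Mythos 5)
Your overall strategy (pass to $\C=\mathrm{D}^b(\widehat{\A})$, $\R=\mathcal P$, use the equivalence $\widehat{\A}\simeq(\mathcal P\ast\mathcal P[1])/\mathcal P[1]$, and apply Theorem \ref{main17}) is the same as the paper's, and your identification of the second components via (X3) (so that $\R(\X)$ corresponds to $\E$) is fine. But there is a genuine gap at the step where you claim that (X0)--(X3) ``translate exactly'' to the hypotheses of Theorem \ref{main17}. Specifically, you assert that contravariant finiteness of $\widehat{\X}$ in $\widehat{\A}$ corresponds to the existence of right $\X$-approximations of objects in $\R[1]$. This is not a translation at all: the objects of $\R[1]=\mathcal P[1]$ become zero in the quotient $\overline{\A}\simeq\widehat{\A}$, so contravariant finiteness of $\widehat{\X}$ inside $\widehat{\A}$ gives no information about morphisms $X'\to P[1]$ in $\C$ (these live in $\mathrm{Coker}(\Hom(P_0',P)\to\Hom(P_1',P))$ for a two-term presentation $P_1'\to P_0'$ of $X'$, i.e.\ they are ``$\tau$-side'' data invisible to $\widehat{\A}$). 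Establishing that $\X$ is $\R[1]$-functorially finite is precisely the hard core of the paper's argument: Lemma \ref{tech} proves the existence of right $\X$-approximations of $\mathcal P[1]$ only under the extra hypothesis that every projective embeds via a monic left $\widehat{\X}$-approximation, and Proposition \ref{PN} (using Lemma \ref{lPZZ}) reduces the general case to that one by passing to the Serre subcategory $\E^{\perp}$, where the left approximations of the relative projectives $\mathcal P_{\widehat{\X}}$ are monomorphisms, and then checks by a lengthy diagram chase that the resulting $\widehat{\N}$ is support $\tau$-tilting in all of $\widehat{\A}$. None of this is ``already packaged in Theorem \ref{main17}''; your proposal assumes away exactly the step that the paper spends Section 6.3 proving.

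A secondary, smaller point: in Theorem \ref{main17} the second component attached to $\N_\X$ is $\overline{\X[-1]\cap\R}$, while the one attached to $\M_\X$ is $\overline{\R(\X)}$; to get both equal to $\E$ you must arrange $\X\cap\R[1]=\R(\X)[1]$ when lifting $\widehat{\X}$ to $\X$ (e.g.\ by replacing $\X$ with $\add(\X\cup\E[1])$ and invoking Lemma \ref{lem0} and Theorem \ref{thm1}); your argument for $\widehat{\mathcal F}=\E$ is correct in spirit but should be phrased at this level. The strict inclusion $\Fac\widehat{\M}\subsetneq\Fac\widehat{\N}$ also needs Corollary \ref{tau4} (via Theorem \ref{PZZ}) rather than just ``otherwise the completions coincide,'' though that is how the paper argues it as well.
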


This paper is organized as follows:
In Section 2, we recall some concepts about (relative) cluster tilting theory, then we show several results concerning relative rigid subcategories, which will be extensively utilized in the sequel.
In Section 3, we reveal a structured framework for mutations of two-term $\R[1]$-rigid subcategories.
In Section 4, we show that any almost complete two-term weak $\R[1]$-cluster tilting subcategory has exactly two completions.
In Section 5, we investigate the connection between relative cluster tilting subcategories and $\tau$-tilting theory in functor categories.
In Section 6, by using the results in the previous sections, we study the $\tau$-tilting theory in abelian categories.
In Section 7, we provide several examples to explain our main results.
\vspace{1mm}

We end this section with some conventions.
Let $\C$ be a triangulated category with shift functor $[1]$ and $\R$ be a rigid subcategory of $\C$.
For two subcategories $\X$ and $\Y$ of $\C$, we denote by $\X\ast \Y$ the subcategory in $\C$ consisting of all objects $M\in \C$ admitting triangles $X\longrightarrow M \longrightarrow Y \longrightarrow X[1],$ with $X\in \X$ and $Y\in \Y$. We denote by $\X\cup\Y$ the smallest subcategory of $\C$ containing $\X$ and $\Y$. For any subcategories $\mathcal D$ of an additive category, we denote by $\add \mathcal D$ the subcategory consisting of objects which are direct sums of direct summands of objects in $\D$. Especially, when $\mathcal D=\{D\}$, we denote $\add\mathcal D$ by $\add D$.

\vspace{2mm}

\section{Preliminaries}
\setcounter{equation}{0}

The class of cluster tilting subcategories is a significant class in triangulated categories, boasting a plethora of desirable properties. Let's delve into the definition of cluster tilting subcategories and related subcategories, as outlined in seminal works such as \cite{BMRRT,KR,KZ,IY}.

For convenience, we introduce the following notions.

\begin{defn}\label{def1}
Let $\C$ be a triangulated category with shift functor $[1]$.
\begin{enumerate}
\item[{\rm (i)}] A subcategory $\R$ of $\C$ is called {\rm rigid} if ${\rm Hom}_{\C}(\R, \R[1])=0$.
\item[{\rm (ii)}] A subcategory $\R$ of $\C$ is called {\rm maximal rigid} if it is rigid and maximal with respect to the property: If $M\in \C$  satisfies
     ${\rm Hom}_{\C}(\R\cup \add M, (\R\cup \add M)[1])=0$, then $M\in \R$.
\item[{\rm (iii)}] A functorially finite subcategory $\R$ of $\C$ is called cluster tilting if
$$\R=\{M\in\C\ |\ {\rm Hom}_{\C}(\R, M[1])=0\}=\{M\in\C\ |\ {\rm Hom}_{\C}(M, \R[1])=0\}.$$
\item[{\rm (iv)}] An object $R$ in $\C$ is called rigid, maximal rigid, or cluster tilting if $\add R$ is rigid, maximal rigid, or cluster tilting respectively.
\end{enumerate}
\end{defn}

For any subcategory $\mathcal D\subseteq\C$, denote by $[\mathcal D](A,B)$ the subset of ${\rm Hom}_{\C}(A,B)$ consisting of the morphisms that factors through objects in $\mathcal D$. Now we recall the notion of two-term relative cluster tilting subcategories and related notations from \cite{YZ,YZZ,ZhZ}.

\begin{defn}\label{def2}
Let $\R$ be a rigid subcategory of\hspace{1.5mm}$\C$.
\begin{itemize}

\item[(i)] A subcategory $\X$ in $\C$ is called  \emph{$\R[1]$-rigid }if  $[\R[1]](\X, \X[1])=0$. $\X$ is called \emph{two-term} $\R[1]$-rigid if in addition, $\X\subseteq \RR$.

\item[{(ii)}]  A subcategory $\X$ is called \emph{two-term maximal $\R[1]$-rigid if $\X\subseteq \RR$} is $\R[1]$-rigid and for any $M\in \RR$,
$$[\R[1]](\X\cup\add M, (\X\cup\add M)[1])=0 \text{ implies } M\in\X. $$

\item[{(iii)}]  A subcategory $\X$ is called {\rm two-term weak $\R[1]$-cluster tilting
 if $\X\subseteq \RR$,} $\R\subseteq\X[-1]\ast\X$ and $$\X=\{M\in\RR\ |\ [\R[1]](M, \X[1])=0 \;\, \emph{and } \ [\R[1]](\X, M[1])=0 \ \}.$$

\item[{(iv)}] A subcategory $\X$ is called {\rm two-term $\R[1]$-cluster tilting} if $\X$ is contravariantly finite and two-term weak $\R[1]$-cluster tilting.

\item[{(v)}] An object $X$ is called two-term $\R[1]$-rigid, two-term maximal $\R[1]$-rigid, two-term weak $\R[1]$-cluster tilting, or two-term $\R[1]$-cluster tilting if $\add X$ is two-term $\R[1]$-rigid, two-term maximal $\R[1]$-rigid, two-term weak $\R[1]$-cluster tilting, or two-term $\R[1]$-cluster tilting respectively.
\end{itemize}
\end{defn}

\begin{rem}\label{r0}
By definition, any two-term weak $\R[1]$-cluster tilting subcategory is a maximal $\R[1]$-rigid subcategory.
\end{rem}

From now on, let $\C$ be a Krull-Schmidt triangulated category with shift functor $[1]$. We assume that any subcategory we mentioned is full
and closed under isomorphisms. Let $\R$ be a rigid subcategory of $\C$, which is closed under direct sums and direct summands.

By definition, if $\U$ is an $\R[1]$-rigid subcategory, so is $\add \U$. Hence, in this paper, when we say $\U$ is an $\R[1]$-rigid subcategory, we always assume that $\U$ is closed under direct sums and direct summands.

\begin{rem}\label{rem0}
If $\C$ is $2$-Calabi-Yau, by {\rm \cite[Lemma 3.5]{YZZ}}, a subcategory of $\C\subseteq \R\ast \R[1]$ is two-term $\R[1]$-rigid if and only if it is rigid. If $\R$ is a cluster tilting subcategory, then $\RR=\C$. In this case, all subcategories can be viewed as ``two-term". Then by definition, a functorially finite subcategory $\X$ is cluster tilting if and only if it is two-term $\R[1]$-cluster tilting.
\end{rem}

%\begin{rem}\label{rem1}
% If $\R$ is a silting subcategory of $\C$, by {\rm \cite[Theorem 5.4]{ZhZ}},
%  we know that $\X$ is a two-term weak $\R[1]$-cluster tilting subcategory of
%  $\C$ if and only if $\X$ is a two-term silting subcategory of $\C$.
%\end{rem}

\begin{rem}
By definition, we know that any rigid subcategory is $\R[1]$-rigid.
\end{rem}

\begin{lem}\label{sum}
$\R*\R[1]$ is closed under direct sums, direct summands and isomorphisms.
\end{lem}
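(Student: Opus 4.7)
The plan is to verify the three closure properties separately. The two easy cases are direct sums and isomorphisms: given $M_1, M_2 \in \R\ast\R[1]$ with defining triangles $R_i \to M_i \to R_i'[1] \to R_i[1]$ for $i = 1, 2$, forming the direct sum of these triangles yields a triangle $R_1\oplus R_2 \to M_1\oplus M_2 \to (R_1'\oplus R_2')[1] \to (R_1\oplus R_2)[1]$, which exhibits $M_1\oplus M_2 \in \R\ast\R[1]$ since $\R$ is assumed closed under direct sums. Closure under isomorphisms follows immediately by transporting the defining triangle along the given isomorphism.

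The substantive step is closure under direct summands. Suppose $X = M \oplus N \in \R\ast\R[1]$ with triangle $R \xrightarrow{a} X \xrightarrow{b} R'[1] \xrightarrow{c} R[1]$. My first observation uses the rigidity of $\R$: for any $R''\in\R$ and morphism $f\colon R''\to X$, the composite $bf$ lies in $\Hom_{\C}(\R,\R[1])=0$, so $f$ lifts through $a$; hence $a$ is a right $\R$-approximation of $X$, and consequently $p_M\circ a\colon R \to M$ is a right $\R$-approximation of $M$ (while dually $b\circ i_M\colon M\to R'[1]$ is a left $\R[1]$-approximation). I would then apply the octahedral axiom to the composition $R \xrightarrow{a} X \xrightarrow{p_M} M$ together with the split triangle $N \to X \to M \to N[1]$, obtaining a triangle $R \to M \to W \to R[1]$ paired with a second triangle $R'[1] \to W \to N[1] \to R'[2]$. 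The former would exhibit $M \in \R\ast\R[1]$ provided $W \in \R[1]$.

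Showing $W \in \R[1]$ is where I expect the main obstacle to lie, and I would handle it by lifting the idempotent $e = i_M\circ p_M$ on $X$ to a self-morphism of the defining triangle: the composition $b\circ e\circ a = (b\circ i_M)\circ(p_M\circ a)$ lies in $\Hom_{\C}(R,R'[1])=0$ by rigidity, so there exist $r\colon R\to R$ and $r'\colon R'[1]\to R'[1]$ assembling into a morphism of triangles $(r,e,r')$. The Krull--Schmidt hypothesis on $\C$ then permits one to adjust $r$ and $r'$ within the ambiguity of their lifts so that they become idempotent, whereupon the triangle decomposes as a direct sum of two subtriangles; the summand corresponding to $e$ has the form $S \to M \to S'[1] \to S[1]$ with $S$ and $S'$ direct summands of $R$ and $R'$, hence lying in $\R$ since $\R$ is closed under summands. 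This exhibits $M \in \R\ast\R[1]$. The hard part is the idempotent-lifting step, which relies essentially on the Krull--Schmidt property through Fitting-type decompositions of the relevant endomorphism rings.
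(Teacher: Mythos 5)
Your handling of direct sums and isomorphisms matches the paper's, which simply records that these closures hold by definition. For direct summands, however, the paper does not argue directly: it invokes Proposition 2.1 of Iyama--Yoshino \cite{IY}, which says that $\U\ast\V$ is closed under direct summands whenever $\Hom_{\C}(\U,\V)=0$, applied to $\U=\R$, $\V=\R[1]$ using rigidity. What you wrote is essentially a sketch of the proof of that cited proposition, so your route is legitimate but amounts to reproving a known result rather than quoting it. Two remarks on the sketch. First, the octahedral detour producing $W$ with a triangle $R'[1]\to W\to N[1]\to R'[2]$ is a dead end --- as you suspect, nothing forces $W\in\R[1]$ --- and since you abandon it anyway it should be deleted. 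Second, the phrase ``adjust $r$ and $r'$ within the ambiguity of their lifts so that they become idempotent'' is not quite how the argument closes: one cannot in general choose an idempotent lift of $e$ along $a$. The standard mechanism (and the one in the proof you are implicitly reproducing) is to note that $ar^{n}=e^{n}a=ea$ for every $n\geq 1$, apply Fitting's lemma in the Krull--Schmidt category to a sufficiently high power $r^{n}$ to obtain $R=R_{1}\oplus R_{2}$ with $r^{n}$ an isomorphism on $R_{1}$ and nilpotent on $R_{2}$, and then deduce from $ar^{n}=ea$ (and the analogous identities for $b$ and $c$) that all three maps of the triangle become diagonal with respect to the induced decompositions of $R$, $X$ and $R'[1]$; the triangle then splits as a direct sum of two triangles, one of which exhibits $M\in\R\ast\R[1]$. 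Since you explicitly flag this Fitting-type step as the hard part, the issue is one of precision rather than substance, but as written the completion step is asserted rather than proved.
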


\begin{proof}
By definition, $\R*\R[1]$ is closed under direct sums and isomorphisms.
Since $\R$ is rigid, by \cite[Proposition 2.1]{IY}, we have that
$\R*\R[1]$ is closed under direct summands.
\end{proof}

For convenience, we introduce the following notion: for any subcategory $\X$, denote
$$ \{ R\in \R ~|~{\rm Hom}_{\C}(R,\X)=0\}$$
by $\R(\X)$.

\begin{lem}\label{lem0}
Let $\X$ be two-term $\R[1]$-rigid. If $\R'\subseteq \R(\X)$, then $\add(\X\cup\R'[1])$ is still two-term $\R[1]$-rigid.
\end{lem}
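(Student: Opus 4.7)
The plan is to verify the two defining properties of a two-term $\R[1]$-rigid subcategory for $\U := \add(\X \cup \R'[1])$: first, that $\U \subseteq \R\ast\R[1]$, and second, that $[\R[1]](\U, \U[1]) = 0$.

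For the containment, I would observe that $\X \subseteq \R \ast \R[1]$ by the two-term hypothesis, and $\R'[1] \subseteq \R[1] \subseteq \R \ast \R[1]$ via trivial triangles. Since Lemma \ref{sum} tells us that $\R \ast \R[1]$ is closed under direct sums and direct summands, it is closed under $\add$, so $\U \subseteq \R \ast \R[1]$. This step is immediate.

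For the vanishing, the main idea is to decompose. Any morphism $f: U_1 \to U_2[1]$ with $U_i \in \U$ that factors through an object of $\R[1]$ can be written, after choosing decompositions $U_i = X_i \oplus R'_i[1]$ with $X_i \in \X$ and $R'_i \in \add\R'$, as a $2\times 2$ matrix whose four components each still factor through $\R[1]$. I would then dispatch the four components one by one. The component $X_1 \to X_2[1]$ vanishes directly from the $\R[1]$-rigidity of $\X$. For $R'_1[1] \to X_2[1]$ and $R'_1[1] \to R'_2[2]$, even the full Hom groups vanish: applying $[-1]$ identifies them with $\Hom_\C(R'_1, X_2) = 0$ (using $R'_1 \in \R(\X)$) and $\Hom_\C(R'_1, R'_2[1]) = 0$ (using rigidity of $\R$). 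For $X_1 \to R'_2[2]$, I would use the factorization hypothesis: writing it as $X_1 \xrightarrow{h} R[1] \xrightarrow{g} R'_2[2]$, the map $g$ corresponds under $[-1]$ to an element of $\Hom_\C(R, R'_2[1]) = 0$, again by rigidity of $\R$, so the composite is zero.

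There is no genuine obstacle here; the only thing to be careful about is that $\X$ is assumed closed under direct summands (per the convention stated right after Lemma \ref{sum}), so the $X_i$ in the decomposition really lie in $\X$ and the hypothesis $\R' \subseteq \R(\X)$ applies to them. Combining the four cases gives $[\R[1]](\U, \U[1]) = 0$, which together with the first step proves the lemma.
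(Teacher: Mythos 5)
Your proof is correct and follows essentially the same route as the paper: both arguments reduce to checking the same three nontrivial vanishing conditions, namely $[\R[1]](\X,\R'[2])=0$ and $[\R[1]](\R'[1],\R'[2])=0$ via rigidity of $\R$, and $[\R[1]](\R'[1],\X[1])=0$ via $\R'\subseteq\R(\X)$, with the containment in $\R\ast\R[1]$ handled by Lemma \ref{sum}. Your version merely makes the matrix decomposition of a morphism explicit, which the paper leaves implicit.
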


\begin{proof}
Since $\R$ is rigid, we have $[\R[1]](\X,\R'[2])=0$ and $\Hom_{\C}(\R'[1],\R'[2])=0$. Since $\R'\subseteq \R(\X)$, we have $0=\Hom_{\C}(\R'[1],\X[1])=[\R[1]](\R'[1],\X[1])$. By definition, $\add(\X\cup\R'[1])$ is still $\R[1]$-rigid. Since $\X\subseteq \R\ast \R[1]$, $\R'[1]\subseteq \R*\R[1]$, by Lemma \ref{sum}, $\add (\X\cup\R'[1])\subseteq \R\ast \R[1]$.
\end{proof}

On the other hand, we have the following lemma.

\begin{lem}\label{cor0}
Let $\X$ be $\R[1]$-rigid. Then $\X\cap \R[1]\subseteq  \R(\X)[1]$. Moreover, if $\X$ is two-term maximal $\R[1]$-rigid, then $\X\cap \R[1]=\R(\X)[1]$.
\end{lem}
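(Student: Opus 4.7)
The plan is to prove the two statements in turn. For the first inclusion $\X\cap \R[1]\subseteq \R(\X)[1]$, I would take an object of the form $R[1]\in \X$ with $R\in \R$ and show that $R\in \R(\X)$, i.e.\ that $\Hom_\C(R,X')=0$ for every $X'\in \X$. Given such a morphism $f\colon R\to X'$, I would apply the shift functor to obtain $f[1]\colon R[1]\to X'[1]$. Since $R[1]\in \X$ itself lies in $\R[1]$, the morphism $f[1]$ factors trivially through an object of $\R[1]$ (namely $R[1]$ via the identity), hence $f[1]\in [\R[1]](\X,\X[1])$. The $\R[1]$-rigidity of $\X$ forces $f[1]=0$, so $f=0$ as $[1]$ is an equivalence.

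For the reverse inclusion under the maximality hypothesis, I would start with $R\in \R(\X)$ and aim to show $R[1]\in \X$ (it already lies in $\R[1]$). The key observation is that $\add R\subseteq \R(\X)$, because $\R(\X)$ is closed under direct summands and direct sums (hom-functors being additive), and so Lemma \ref{lem0} applied with $\R'=\add R$ gives that $\add(\X\cup\{R[1]\})=\add(\X\cup(\add R)[1])$ is again two-term $\R[1]$-rigid. In particular, $R[1]\in \R*\R[1]$ and $[\R[1]](\X\cup\add R[1],(\X\cup\add R[1])[1])=0$. The two-term maximal $\R[1]$-rigidity of $\X$ (Definition \ref{def2}(ii)) then yields $R[1]\in \X$, so $R[1]\in \X\cap\R[1]$, as required.

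There is no real obstacle here: the first part is a one-line diagram-chase using the fact that $R[1]$ itself is an object of $\R[1]$ through which $f[1]$ trivially factors, and the second part is an essentially formal application of Lemma \ref{lem0} combined with the maximality condition. The only subtle point is remembering that $R[1]\in \R*\R[1]$ (needed to apply the definition of two-term maximal $\R[1]$-rigid), which is immediate from $\R[1]\subseteq \R*\R[1]$ together with Lemma \ref{sum}.
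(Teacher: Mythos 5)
Your proposal is correct and follows essentially the same route as the paper: for the inclusion $\X\cap\R[1]\subseteq\R(\X)[1]$ you observe, just as the paper does, that a shifted morphism $f[1]\colon R[1]\to X'[1]$ automatically lies in $[\R[1]](\X,\X[1])$ because its source $R[1]$ is an object of $\R[1]$, and for the ``Moreover'' part you combine Lemma \ref{lem0} with the two-term maximality condition exactly as the paper indicates. No gaps.
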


\begin{proof}
Let $R[1]\in \X\cap \R[1]$. Since $\X$ is $\R[1]$-rigid, we have
$$0=[\R[1]](R[1],\X[1])={\rm Hom}_{\C}(R[1],\X[1]).$$
Hence ${\rm Hom}_{\C}(R,\X)=0$ and $\X\cap \R[1]\subseteq  \R(\X)[1]$. The ``Moreover" part is followed by the argument above and Lemma \ref{lem0}.
\end{proof}

\begin{lem}\label{R1eq}
Let $\X$ be a two-term $\R[1]$-rigid subcategory. If $\X\cap \R[1]=\R(\X)[1]$, then for any two-term maximal $\R[1]$-rigid subcategory $\M\supseteq \X$, we have $\R(\X)=\R(\M)$.
\end{lem}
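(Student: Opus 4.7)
The plan is to prove the two inclusions $\R(\M)\subseteq \R(\X)$ and $\R(\X)\subseteq \R(\M)$ separately. The first is immediate: since $\X\subseteq \M$, any $R\in \R$ satisfying $\Hom_{\C}(R,\M)=0$ automatically satisfies $\Hom_{\C}(R,\X)=0$, so $\R(\M)\subseteq \R(\X)$ with no use of the hypothesis. All the work lies in the reverse inclusion.

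For the nontrivial direction, I would start with an arbitrary $R\in \R(\X)$ and chase the hypothesis $\X\cap \R[1]=\R(\X)[1]$ to put $R[1]$ inside $\M$. Concretely, $R[1]\in \R(\X)[1]=\X\cap \R[1]\subseteq \X\subseteq \M$. The goal then becomes to show $\Hom_{\C}(R[1],M[1])=0$ for every $M\in \M$, which is equivalent to $R\in \R(\M)$.

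The key observation is that \emph{every} morphism $f\colon R[1]\to M[1]$ factors trivially through the object $R[1]\in \R[1]$ via the identity $R[1]\xrightarrow{\id} R[1]\xrightarrow{f} M[1]$. Hence $f\in [\R[1]](R[1],M[1])$. Since we have already placed $R[1]\in \M$, this morphism lies in $[\R[1]](\M,\M[1])$, which vanishes because $\M$ is (two-term maximal, hence in particular) $\R[1]$-rigid. Therefore $f=0$, which yields $\Hom_{\C}(R,M)=0$ for all $M\in \M$ and so $R\in \R(\M)$.

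I do not anticipate a significant obstacle here; the proof is essentially a one-line application of the hypothesis once one notices that the hypothesis forces $\R(\X)[1]\subseteq \M$, at which point the identity morphism on $R[1]$ together with the $\R[1]$-rigidity of $\M$ does all the work. The only point requiring a small amount of care is making sure that $\M$ itself (not just $\X$) is $\R[1]$-rigid, which is built into Definition \ref{def2}(ii).
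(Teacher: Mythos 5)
Your proof is correct and is essentially the paper's argument unpacked: the paper proves the same two inclusions via the single chain $\X\cap \R[1]=\R(\X)[1]\supseteq \R(\M)[1]=\M\cap \R[1]\supseteq \X\cap \R[1]$, citing Lemma \ref{cor0} for the step you re-derive inline (that an object $R[1]\in\M\cap\R[1]$ satisfies $\Hom_{\C}(R[1],\M[1])=0$ by $\R[1]$-rigidity of $\M$). The only difference worth noting is that your version uses only the $\R[1]$-rigidity of $\M$, not its maximality, which the paper's citation of the ``Moreover'' part of Lemma \ref{cor0} nominally invokes.
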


\begin{proof}
By Lemma \ref{cor0}, we have
$$\X\cap \R[1]=\R(\X)[1]\supseteq \R(\M)[1]=\M\cap \R[1]\supseteq \X\cap \R[1].$$
\end{proof}

\begin{lem}\label{tau1}
Let $\U,\V$ be two subcategories of $\R\ast \R[1]$. Then $\R[1](\U,\V[1])=0$ if and only if any object $U\in \U$ admits a triangle
$$R_1\xrightarrow{f} R_2\xrightarrow{u_1} U\xrightarrow{u_2} R_1[1]$$
where $\Hom_{\C}(f,V)$ is surjective for any object $V\in \V$.
\end{lem}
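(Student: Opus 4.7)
The plan is to fix a single object $U\in\U$ together with any triangle $R_1\xrightarrow{f}R_2\xrightarrow{u_1}U\xrightarrow{u_2}R_1[1]$ witnessing $U\in\R*\R[1]$, and convert the vanishing $[\R[1]](U,V[1])=0$ for every $V\in\V$ into the surjectivity of $\Hom_{\C}(f,V)$. Both directions will come out of the single long exact sequence produced by applying $\Hom_{\C}(-,V)$ to this triangle, so once the equivalence is established pointwise in $U$ and $V$, the statement of the lemma follows immediately by quantifying, and no new triangle has to be constructed beyond the one already supplied by $U\in\R*\R[1]$.

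First I would write down the relevant fragment of the long exact sequence obtained from $\Hom_{\C}(-,V)$:
$$\Hom_{\C}(R_2,V)\xrightarrow{\Hom_{\C}(f,V)}\Hom_{\C}(R_1,V)\xrightarrow{\delta}\Hom_{\C}(U,V[1])\xrightarrow{\Hom_{\C}(u_1,V[1])}\Hom_{\C}(R_2,V[1]),$$
where, under the shift isomorphism $\Hom_{\C}(R_1,V)\simeq \Hom_{\C}(R_1[1],V[1])$, the connecting map $\delta$ is (up to sign) given by precomposition with $u_2$, i.e.\ $a\mapsto \tilde{a}\circ u_2$.

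The key step, and really the only place where rigidity of $\R$ intervenes, is to identify the image of $\delta$ with the subgroup $[\R[1]](U,V[1])\subseteq \Hom_{\C}(U,V[1])$. One inclusion is immediate, since every element of $\mathrm{Im}\,\delta$ factors through $R_1[1]\in\R[1]$. For the reverse inclusion I would take an arbitrary $g=b\circ a$ with $a\colon U\to R[1]$, $b\colon R[1]\to V[1]$, $R\in\R$, and observe that $a\circ u_1\colon R_2\to R[1]$ lies in $\Hom_{\C}(R_2,R[1])=0$ because $\R$ is rigid; hence $a$ factors as $\tilde{a}\circ u_2$ through $u_2$, and therefore $g=b\circ\tilde{a}\circ u_2\in \mathrm{Im}\,\delta$.

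With that identification in place, exactness of the above sequence gives $[\R[1]](U,V[1])=\mathrm{Im}\,\delta=0$ if and only if $\Hom_{\C}(f,V)$ is surjective, which already is the pointwise version of the lemma; quantifying over $V\in\V$ and $U\in\U$ then yields the full equivalence. The only real obstacle is the identification step above, and it is mild: it merely uses the rigidity of $\R$ to force every morphism $U\to\R[1]$ through the fixed map $u_2$.
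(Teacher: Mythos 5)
Your proof is correct and follows essentially the same route as the paper: both arguments rest on the long exact sequence of the defining triangle of $U\in\R*\R[1]$ together with the observation that rigidity of $\R$ (namely $\Hom_{\C}(R_2,R[1])=0$) forces every morphism $U\to R[1]$ with $R\in\R$ to factor through $u_2$. The only difference is cosmetic: you package the two implications as the single identification $\mathrm{Im}\,\delta=[\R[1]](U,V[1])$ and then invoke exactness, whereas the paper carries out the two corresponding diagram chases separately.
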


\begin{proof}
Let $U$ be any object in $\U$. $U$ admits a triangle
$$U[-1]\xrightarrow{-u_2[-1]} R_1\xrightarrow{f} R_2\xrightarrow{u_1} U$$
with $R_1,R_2\in \R$.

Assume $\R[1](\U,\V[1])=0$. Let $r_1: R_1\to V$ be any morphism with $V\in \V$.
Since $$\R[1](\U,\V[1])=0,$$ we have $r_1(-u_2[-1])=0$, which implies $r_1$ factors through $f$. Hence $\Hom_{\C}(f,V)$ is surjective for any object $V\in \V$.

Now assume that $\Hom_{\C}(f,V)$ is surjective for any object $V\in \V$. Then in the triangle
$ R_2\xrightarrow{u_1} U\xrightarrow{u_2} R_1[1]\xrightarrow{-f[1]} R_2[1],$
$\Hom_{\C}(-f[1],\V[1])$ is surjective for any object $V\in \V$. Let $U\xrightarrow{a_1} R[1]\xrightarrow{a_2} V[1]$ be a morphism with $R\in \R$. Since ${\rm Hom}_\C(R_2,R[1])=0$, there is a morphism $b_1:R_1[1]\to R[1]$ such that $a_1=b_1u_2$. Then there is a morphism $b_2:R_2[1]\to V[1]$ such that $b_2(-f[1])=a_2b_1$. Hence $a_2a_1=b_2(-f[1])u_2=0$, which means  $\R[1](U,V[1])=0$.
\end{proof}

%\section{Relative rigid subcategories and related subcategories}
%In this section, we show some results about relative rigid subcategories, which will be frequently used in subsequent sections.

\begin{lem}\label{r1}
In the triangle
$$R \to A \to B\to R[1]$$
with $R\in \R$, $A\in \RR$ if and only if $B\in \RR$.
\end{lem}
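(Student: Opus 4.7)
My plan is to prove both implications via the octahedral axiom (TR4), in each case invoking the rigidity of $\R$ once to split an auxiliary triangle. Write the given triangle as $R \to A \xrightarrow{v} B \xrightarrow{q} R[1]$.

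For the only-if direction, I would assume $A \in \RR$ and fix a triangle $R_1 \to R_2 \xrightarrow{u} A \to R_1[1]$ with $R_1,R_2 \in \R$. Then I apply TR4 to the composition $R_2 \xrightarrow{u} A \xrightarrow{v} B$. Three of the four resulting triangles have already-identified cones ($R_1[1]$ from the presentation of $A$, and $R[1]$ from the given triangle rotated), while the fourth takes the form
\[ R_1[1] \longrightarrow X' \longrightarrow R[1] \longrightarrow R_1[2], \]
where $X'$ is the cone of $vu$. Its connecting morphism lies in $\Hom_\C(R[1], R_1[2]) \cong \Hom_\C(R, R_1[1]) = 0$ by the rigidity of $\R$, so this triangle splits and $X' \cong R_1[1] \oplus R[1] \in \R[1]$. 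The remaining octahedral triangle $R_2 \to B \to X' \to R_2[1]$ then exhibits $B$ as an extension of an object of $\R[1]$ by $R_2 \in \R$, so $B \in \RR$.

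For the if direction, I would assume $B \in \RR$ and fix a triangle $R_3 \to R_4 \xrightarrow{p} B \to R_3[1]$. The composite $R_4 \xrightarrow{p} B \xrightarrow{q} R[1]$ vanishes because $\Hom_\C(R_4, R[1]) = 0$, again by rigidity. Applying TR4 to this zero composition, the cone of $qp$ is forced to be $R[1] \oplus R_4[1] \in \R[1]$, and the accompanying octahedral triangle reads
\[ R_3[1] \longrightarrow R[1] \oplus R_4[1] \longrightarrow A[1] \longrightarrow R_3[2]. \]
Shifting by $[-1]$ and rotating yields a triangle $R \oplus R_4 \to A \to R_3[1] \to (R \oplus R_4)[1]$ with left-hand term in $\R$ and right-hand term in $\R[1]$, so $A \in \RR$.

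The whole argument is a direct octahedral computation, and the only substantive ingredient beyond the axioms of a triangulated category is the rigidity identity $\Hom_\C(\R, \R[1]) = 0$, used to split the auxiliary triangle in each direction; I do not foresee any real obstacle.
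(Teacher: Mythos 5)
Your proof is correct and amounts to the paper's own argument with the octahedral axiom written out explicitly. The paper compresses both directions into the calculus of the $\ast$-operation, computing $B\in(\R\ast\R[1])\ast\R[1]=\R\ast(\R[1]\ast\R[1])=\R\ast\R[1]$ and $A\in\R\ast(\R\ast\R[1])=(\R\ast\R)\ast\R[1]=\R\ast\R[1]$; the associativity of $\ast$ is exactly your TR4 step, and the identities $\R[1]\ast\R[1]=\R[1]$ and $\R\ast\R=\R$ correspond to your two uses of rigidity to split or kill the auxiliary morphism.
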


\begin{proof}
Since $\R$ is rigid, we have $\R*\R=\R$. If $A\in \RR$, then $B\in (\RR)*\R[1]=\R*(\R[1]*\R[1])=\RR$. Conversely, if $B\in \RR$, then $A\in \R*(\RR)=(\R*\R)*\R[1]=\RR$.
\end{proof}

\begin{lem}\label{lem1}
Let
$$A\overset{x}{\longrightarrow}B\overset{y}{\longrightarrow}C\overset{~z~}{\longrightarrow}A[1]$$
be a triangle in $\C$. Then the following are equivalent.
\begin{itemize}
\item[\rm (1)] $x$ is left minimal;
\item[\rm (2)] $y$ is in the Jacobson radical;
\item[\rm (3)] $z$ is right minimal.
\end{itemize}
\end{lem}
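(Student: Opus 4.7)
\medskip
\noindent\textbf{Proof plan.} The plan is to establish the cycle $(2) \Rightarrow (1)$ and $(2) \Rightarrow (3)$ by a direct factorization argument, and the reverse implications $(1) \Rightarrow (2)$ and $(3) \Rightarrow (2)$ by the contrapositive, using the Krull--Schmidt decomposition of morphisms together with the standard splitting lemma for triangles. The two directions $(2) \Leftrightarrow (3)$ are dual to $(2) \Leftrightarrow (1)$, so the real content lies in the first equivalence.

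\smallskip
\noindent\textbf{Step 1: $(2) \Rightarrow (1)$.} Assume $y \in \operatorname{rad}_{\C}(B, C)$, and let $\phi \in \End_{\C}(B)$ satisfy $\phi x = x$. Then $(1_B - \phi)x = 0$, so by the long exact sequence obtained from applying $\Hom_{\C}(-, B)$ to the triangle, we can factor $1_B - \phi = hy$ for some $h : C \to B$. Since $y$ is in the radical, $hy$ lies in $\operatorname{rad}_{\C}(B,B)$; and since $\C$ is Krull--Schmidt, $\End_{\C}(B)$ is semi-perfect with Jacobson radical $\operatorname{rad}_{\C}(B,B)$. Therefore $\phi = 1_B - hy$ is a unit in $\End_{\C}(B)$, i.e.\ an isomorphism. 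Hence $x$ is left minimal. The implication $(2) \Rightarrow (3)$ is obtained symmetrically: given $\psi$ with $z\psi = z$, factor $1_C - \psi = yk$ and observe that $\psi = 1_C - yk$ is a unit.

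\smallskip
\noindent\textbf{Step 2: $(1) \Rightarrow (2)$ and $(3) \Rightarrow (2)$.} I argue the contrapositive of both at once. Suppose $y \notin \operatorname{rad}_{\C}(B,C)$. By the matrix description of morphisms in a Krull--Schmidt category, one can choose decompositions $B = B_0 \oplus B_1$ and $C = C_0 \oplus C_1$ with $B_0 \neq 0$ in such a way that $y$ takes the block form $\operatorname{diag}(y_0, y_1)$ with $y_0 : B_0 \to C_0$ an isomorphism and $y_1 : B_1 \to C_1$ in the radical. By the standard splitting lemma for triangles, our triangle is then isomorphic to the direct sum of the trivial triangle $0 \to B_0 \xrightarrow{y_0} C_0 \to 0$ and a triangle
\begin{equation*}
A \xrightarrow{\ x_1\ } B_1 \xrightarrow{\ y_1\ } C_1 \xrightarrow{\ z_1\ } A[1].
\end{equation*}
With respect to the decompositions, $x = \svecv{0}{x_1}$ and $z = \svech{0}{z_1}$. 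The endomorphism $\phi = \svech{\svecv{0}{0}}{\svecv{0}{1_{B_1}}} \in \End_{\C}(B)$ satisfies $\phi x = x$ but is not an isomorphism since $B_0 \neq 0$, so $x$ fails to be left minimal. Dually, $\psi = \svech{\svecv{0}{0}}{\svecv{0}{1_{C_1}}} \in \End_{\C}(C)$ satisfies $z\psi = z$ yet is not an isomorphism, so $z$ fails to be right minimal. This proves the contrapositives of $(1) \Rightarrow (2)$ and $(3) \Rightarrow (2)$.

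\smallskip
\noindent\textbf{Anticipated obstacle.} The only non-routine ingredient is the splitting in Step 2: writing a morphism in a Krull--Schmidt category in block diagonal form with an isomorphism piece split off, together with the compatible splitting of the ambient triangle. Both are classical, but the claim that the trivial summand $0 \to B_0 \xrightarrow{y_0} C_0 \to 0$ really splits off as a direct factor of the triangle requires one small verification (the existence of an isomorphism of triangles realising the block form), which I would either invoke from the literature or prove directly by extending the automorphisms of $B$ and $C$ that diagonalise $y$ to an isomorphism of triangles.
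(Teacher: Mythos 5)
Your proposal is correct, and Step 1 coincides with the paper's argument for $(2)\Rightarrow(1)$ (factor $1_B-\phi$ through $y$ and use that the radical is an ideal of quasi-regular elements). Where you genuinely diverge is in the converse directions. The paper proves $(1)\Rightarrow(2)$ by a one-line direct argument: for an arbitrary $b\colon C\to B$ one has $(1_B+by)x=x+byx=x$ since $yx=0$, so left minimality forces $1_B+by$ to be invertible for every $b$, which is precisely the statement that $y$ lies in the Jacobson radical; and it gets $(1)\Rightarrow(3)$ by completing $b\colon C\to C$ with $zb=z$ to a morphism of triangles and transporting minimality across it, with $(3)\Rightarrow(1)$ done dually. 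You instead prove $(1)\Rightarrow(2)$ and $(3)\Rightarrow(2)$ by contrapositive, splitting off an invertible block $y_0$ of $y$ via the Krull--Schmidt decomposition and decomposing the triangle as a direct sum with a contractible summand $0\to B_0\xrightarrow{y_0}C_0\to 0$. This is valid (the splitting you flag as the ``anticipated obstacle'' is the standard uniqueness-of-cones argument: complete $y_1$ to a triangle, take the direct sum with the contractible triangle on $y_0$, and compare), but it costs you exactly that verification plus the Krull--Schmidt characterization of the radical, whereas the paper's route needs only the definition of the radical and axiom (TR3) and would work without the Krull--Schmidt hypothesis. Your version does have the mild virtue of exhibiting an explicit non-invertible $\phi$ witnessing the failure of minimality.
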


\proof We first show that (1)$\Leftrightarrow$(2).

Assume that $y$ is in the Jacobson radical. Let $u\colon B \to B $ be a morphism
such that $ux=x$. Then we have $(u-1_{B})x=0$.
So there exists a morphism $v\colon C\to B$
such that $u-1_{B}=vy$.
$$\xymatrix{
A \ar[r]^x\ar@{.>}[dr]_0 &B\ar[r]^y\ar[d]^{u-1_B} &C\ar[r]^{z\;\;\;}
\ar@/^1pc/[dl]^{v}&A[1]\\
&B&
}
$$
Because $y$ is in the Jacobson radical, we obtain that
$u=1_B+vy$ is invertible. This shows that $x$ is left minimal.

Conversely, assume that $x$ is left minimal.
Let $b\colon C\to B$ be any morphism.
Put $c:=1_B+by$. It follows that
$cx=(1_B+by)x=x+byx=x$. Since $x$ is left minimal, $c$ is an isomorphism. This shows that
$y$ is in the Jacobson radical.

We show that (1)$\Rightarrow$(3).

Suppose that $x$ is left minimal.
For any morphism $b\colon C\to C$ such that $zb=z$,
we have the following commutative diagram
$$\xymatrix{
A\ar[r]^x\ar@{=}[d]&B \ar[r]^{y} \ar@{.>}[d]^{a} &C\ar[r]^{z\;\;} \ar[d]^{b} &A[1] \ar@{=}[d]\\
A \ar[r]^x &B \ar[r]^{y} &C \ar[r]^{z\;\;}&A[1].
}
$$
Then there exists a morphism
$a\colon B\to B$ such that $ax=x$.
Since $x$ is left minimal, we obtain that $a$ is isomorphism.
It follows that $b$ is also isomorphism.
This shows that $z$ is right minimal.

Dually we can show (3)$\Rightarrow$(1).
\qed

\begin{lem}\label{lem3}
Let $\X$ be an $\R[1]$-rigid subcategory and $L$ be any object in $\C$. Assume that $L$ admits a triangle
$$\xymatrix{L\ar[r]^{x_1}&X_1\ar[r]&X_0\ar[r]^{x_0~~} &L[1]}$$
where $X_1,X_0\in \X$.
\begin{itemize}
\item[\rm (1)] If $x_0$ factors through $\R[1]$, then $x_1$ is a left $\X$-approximation. Moreover, $L$ admits a triangle:
$$\xymatrix{L \ar[r]^{x_2}&X_2\ar[r]&X_4\ar[r]^{x_4~} &L[1]}$$
where $X_2,X_4\in\X$, $x_2$ is a minimal left $\X$-approximation and $x_4$ factors through $\R[1]$.
\item[\rm (2)] If $x_1$ factors through $\R$, then $x_0$ is a right $\X$-approximation. Moreover, $L$ admits a triangle:
$$\xymatrix{L \ar[r]^{x_2}&X_2\ar[r]&X_4\ar[r]^{x_4~} &L[1]}$$
where $X_2,X_4\in\X$, $x_4$ is a minimal right $\X$-approximation and $x_2$ factors through $\R$.
\end{itemize}
\end{lem}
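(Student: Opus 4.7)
The plan is to establish (1) in two steps---first showing that $x_1$ is a left $\X$-approximation, then extracting a minimal version via Krull-Schmidt---and to obtain (2) by a symmetric argument.

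For the approximation property in (1), I would take an arbitrary morphism $f\colon L\to X$ with $X\in\X$ and argue that the composite $f[1]\circ x_0\colon X_0\to X[1]$ vanishes. Since $x_0$ factors through $\R[1]$ by hypothesis, so does $f[1]\circ x_0$; hence it lies in $[\R[1]](X_0,X[1])$, which is zero because $X_0,X\in\X$ and $\X$ is $\R[1]$-rigid. Rotating the given triangle to $X_1\to X_0\xrightarrow{x_0}L[1]\xrightarrow{-x_1[1]}X_1[1]$ and applying $\Hom_{\C}(-,X[1])$ produces an exact sequence in which $f[1]$, being killed by pullback along $x_0$, lies in the image of pullback along $-x_1[1]$; desuspending yields the desired factorization of $f$ through $x_1$.

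For the minimality refinement, I would invoke the standard Krull-Schmidt decomposition of a left approximation: there exists an isomorphism $X_1\cong X_2\oplus X_3$ under which $x_1$ corresponds to $\svecv{x_2}{0}$ with $x_2\colon L\to X_2$ left minimal (and still a left $\X$-approximation, being a retract of $x_1$). Forming the direct sum of the triangle on $x_2$ with the trivial triangle $0\to X_3\xrightarrow{\id}X_3\to 0$ yields a triangle whose first map is precisely $\svecv{x_2}{0}=x_1$, so by uniqueness of cones we obtain $X_0\cong X_4\oplus X_3$ and, under this identification, $x_0$ corresponds to $\svech{x_4}{0}$. Because $x_0$ factors through some object of $\R[1]$, writing this factorization componentwise shows that $x_4$ inherits the same factorization; thus the triangle $L\xrightarrow{x_2}X_2\to X_4\xrightarrow{x_4}L[1]$ has all the claimed properties.

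Part (2) is handled dually. Given $\psi\colon X\to L[1]$ with $X\in\X$, the factorization of $x_1$ through some $R\in\R$ forces $x_1[1]\circ\psi$ to factor through $\R[1]$, hence to vanish by $\R[1]$-rigidity; applying $\Hom_{\C}(X,-)$ to the rotated triangle then shows $\psi$ factors through $x_0$. The minimal refinement decomposes $X_0\cong X_4\oplus X_3$ with $x_4$ right minimal, and the corresponding decomposition $X_1\cong X_2\oplus X_3$ exhibits $x_2$ as a direct summand of $x_1$, so $x_2$ still factors through $\R$. The only genuine subtlety is tracking how the two Krull-Schmidt splittings (at the source and at the cone) pair up, but this is routine once the original triangle is exhibited as a direct sum of a smaller triangle and a trivial one; the equivalence of minimalities at the two ends of the triangle in Lemma \ref{lem1} is a compatible sanity check.
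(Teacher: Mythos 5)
Your proposal is correct and takes essentially the same route as the paper: the approximation property comes from the vanishing of $[\R[1]](X_0,X[1])$ (the paper runs the identical computation one shift lower, applying $[\R](X_0[-1],\X)=0$ to $f\circ(-x_0[-1])$ in the rotated triangle), and the minimal refinement is the same Krull--Schmidt splitting of $x_1$ followed by the direct-sum decomposition of the triangle forcing $X_0\cong X_4\oplus X_3$. The only difference is that you explicitly verify that $x_4$ inherits the factorization through $\R[1]$, a point the paper leaves implicit.
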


\proof We only show (1), the proof of (2) is similar.
\vspace{1mm}

Since $\X$ is $\R[1]$-rigid and $-x_0[-1]$ factors through $\R$, from the triangle
$$\xymatrix{X_0[-1]\ar[r]^-{-x_0[-1]}&L\ar[r]^{x_1}&X_1\ar[r]&X_0}$$
we get that any morphism for $L$ to $\X$ factors through $x_1$, hence $x_1$ is a left $\X$-approximation.

Since $\C$ is Krull-Schmidt, there exists a decomposition $X_1=X_2\oplus X_3$ such that
$$x_1=\binom{x_2}{0}\colon L\to X_2\oplus X_3,$$
where $X_2\in \X$, $x_2$ is left minimal.
Since $x_1$ is a left $\X$-approximation of $R$, so is $x_2$.
Then there exists a triangle
\begin{equation}\label{ttt2}
\begin{array}{l}
\xymatrix{L\ar[r]^{x_2}&X_2\ar[r]&X_4\ar[r]^{x_4}&L[1].}
\end{array}
\end{equation}
It follows that
$X_0\cong X_4\oplus X_3$ which implies $X_4\in\X$. Thus the triangle (\ref{ttt2}) is what we expect.  \qed

\begin{lem}\label{lem2}
Let $\X$ be an  $\R[1]$-rigid subcategory and $L$ be an $\R[1]$-rigid object. Given a triangle
\begin{equation}\label{ttt1}
\begin{array}{l}
\xymatrix{L\ar[r]^{x}&X_1\ar[r]^y&X_0\ar[r]^{z\;\;\;}&L[1]}
\end{array}
\end{equation}
where $X_0,X_1\in\X$, $x$ is left minimal and $z$ factors through $\R[1]$. Then $X_0$ and $X_1$ do not have common indecomposable direct summands.
\end{lem}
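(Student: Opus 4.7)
The plan is to argue by contradiction. Suppose $X$ is a common indecomposable direct summand of $X_0$ and $X_1$; I will produce a non-invertible endomorphism of $X_1$ that fixes $x$ under precomposition, contradicting the left minimality of $x$.

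Write $X_1 = X \oplus X_1'$ and $X_0 = X \oplus X_0'$, with split inclusions $\iota_1,\iota_0$ and projections $\pi_1,\pi_0$ onto the $X$-summands (and primed versions for the complements). Express $y$ as a $2\times 2$ matrix under these decompositions, and let $a := \pi_0 \circ y \circ \iota_1 \in \End(X)$ denote its $(X,X)$-entry. By Lemma \ref{lem1}, the left minimality of $x$ forces $y$ into the Jacobson radical of $\C$; in particular $a \in \operatorname{rad}\End(X)$, and $1_X - a$ is invertible because $\End(X)$ is local.

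The rigidity assumption on $L$ now enters. Since $z$ factors through $\R[1]$, so does the composite $z \circ \iota_0 \circ \pi_1 \circ x : L \to L[1]$; by the $\R[1]$-rigidity of $L$ this composite must vanish. Applying the cohomological functor $\Hom_\C(L,-)$ to the given triangle and using exactness at $\Hom_\C(L,X_0)$, this vanishing yields a morphism $g : L \to X_1$ with $y \circ g = \iota_0 \circ \pi_1 \circ x$. Combining the relations $yg = \iota_0 \pi_1 x$ and $yx = 0$ read off componentwise in the matrix form, and using the invertibility of $1_X - a$, I would deduce that $\pi_1 \circ x = 0$. Once this is established, the idempotent endomorphism $\iota_1' \circ \pi_1' \in \End(X_1)$ satisfies $\iota_1'\pi_1' \circ x = x$ but is not invertible (since $X$ is a nonzero direct summand of $X_1$), giving the desired contradiction with the left minimality of $x$.

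The principal obstacle is the final algebraic step, namely extracting $\pi_1 \circ x = 0$ from the component equations. If $a$ were nilpotent this would follow directly by repeated substitution using the Neumann expansion of $(1_X - a)^{-1}$; in the general Krull--Schmidt setting one iterates the construction of $g$, producing a sequence $g^{(n)} : L \to X_1$ by applying the same $\R[1]$-rigidity argument to $g^{(n-1)}$ in place of $x$, and then combines these relations using the invertibility of $1_X - a$ to force $\pi_1 \circ x$ to vanish.
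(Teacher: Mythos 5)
There is a genuine gap at the step you yourself flag as the principal obstacle: the deduction of $\pi_1\circ x=0$. Writing $y=\left(\begin{smallmatrix}a&b\\ c&d\end{smallmatrix}\right)$ with respect to $X_1=X\oplus X_1'$, $X_0=X\oplus X_0'$, the two relations $yx=0$ and $yg=\iota_0\pi_1 x$ give, in the $X$-component, $a(\pi_1x)+b(\pi_1'x)=0$ and $a(\pi_1g)+b(\pi_1'g)=\pi_1x$. Your proposed iteration $yg^{(n)}=\iota_0\pi_1 g^{(n-1)}$ then yields
$$\pi_1x \;=\; a^{\,n}\,\pi_1 g^{(n)}\;+\;\sum_{k=1}^{n}a^{\,k-1}\,b\,\pi_1' g^{(k)},$$
and the terms involving the off-diagonal entry $b\colon X_1'\to X$ do not vanish and are not controlled by the invertibility of $1_X-a$; even if $a$ were nilpotent the sum survives. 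So $\pi_1\circ x=0$ does not follow from the component equations you have assembled. A telling symptom is that your argument never uses the hypothesis that $\X$ is $\R[1]$-rigid --- only that $L$ is --- whereas that hypothesis is exactly what is needed to close the argument.

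The missing ingredient is Lemma \ref{lem3}(1): since $z$ factors through $\R[1]$ and $\X$ is $\R[1]$-rigid, $x$ is a left $\X$-approximation of $L$. This lets you lift your $g$ through $x$, say $g=hx$ with $h\colon X_1\to X_1$, whence $(\iota_0\pi_1-yh)x=0$ and therefore $\iota_0\pi_1-yh=wy$ for some $w\colon X_0\to X_0$. Since $y$ lies in the Jacobson radical (Lemma \ref{lem1}, as you correctly note), so does $\iota_0\pi_1=yh+wy$; but its $(X,X)$-entry is $1_X$, which cannot lie in the radical of the local ring $\End(X)$. This is precisely the paper's proof, carried out there for an arbitrary morphism $f\colon X_1\to X_0$ rather than the specific $f=\iota_0\pi_1$; once every such $f$ is radical, no common indecomposable summand can exist. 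Your setup (the vanishing of $z\iota_0\pi_1x$ via the $\R[1]$-rigidity of $L$, and Lemma \ref{lem1}) is the right first half; replace the componentwise endgame by the approximation step and the proof goes through.
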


\proof  Let $f\colon X_1\to X_0$ be any morphism. It is enough to show that $f$ is in the Jacobson radical.
In fact, applying the functor $\Hom_{\C}(L,-)$ to
the triangle (\ref{ttt1})
we have the following exact sequence:
$$\Hom_{\C}(L,X_1)\xrightarrow{{\small \rm Hom_{\C}}(L,\hspace{0.3mm}y)}\Hom_{\C}(L,X_0)
\xrightarrow{{\small \rm Hom_{\C}}(L,\hspace{0.3mm}z)=0}{\rm Hom}_{\C}(L,L[1]).$$
So there exists a morphism
$g\colon L\to X_1$ such that $yg=fx$.
By Lemma \ref{lem3}, $x$ is a left $\X$-approximation, so
there exists a morphism $h\colon X_1\to X_1$
such that $hx=g$.
It follows that $yhx=fx$ and then $(f-yh)x=0$.
So there exists a morphism $w\colon X_0\to X_0$
such that $f-yh=wy$.
$$\xymatrix{
L\ar[r]^x\ar@{.>}[dr]_0 &X_1\ar[r]^y\ar[d]^{f-yh}&X_0\ar[r]^{z\;\;\;}\ar@/^1pc/[dl]^{w}&L[1]\\
&X_0&
}
$$
Since $x$ is left minimal, by Lemma \ref{lem1}, we have that $y$ is in the Jacobson radical.
This shows that $f=yh+wy$ is in the Jacobson radical, hence any morphism from $X_1$ to $X_0$ is in the Jacobson radical.
%
%Suppose that
%$X_0$ and $X_1$ have a common indecomposable direct summand $X_2$, we have decompositions
%$X_0=X_2\oplus X_0'$ and $X_1=X_2\oplus X_1'$.
%But the composition
%$\left(\begin{smallmatrix}1&0\\[1mm]
%0&0\end{smallmatrix}\right)\colon X_1\xrightarrow{~(1,\hspace{0.3mm}0)~}X_2
%\xrightarrow{~\binom{1}{0}~}X_0$
%is not in the Jacobson radical. This is a contradiction.
\qed

\begin{lem}\label{lem4}
Let $\X$ be a two-term $\R[1]$-rigid subcategory. Then the following are equivalent.
\begin{itemize}
\item[\rm (1)] $\X$ is two-term maximal $\R[1]$-rigid such that each object $R\in \R$ admits a left $\X$-approximation.
\item[\rm (2)] $\R\subseteq \X[-1]*\X$.
\item[\rm (3)] $\X$ is two-term weak $\R[1]$-cluster tilting.
\end{itemize}
\end{lem}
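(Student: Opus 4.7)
The plan is to prove the equivalence via the cycle $(3) \Rightarrow (1) \Rightarrow (2) \Rightarrow (3)$.

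For $(3) \Rightarrow (1)$: the maximality in (1) is immediate from Remark \ref{r0}, and the existence of left $\X$-approximations for each $R \in \R$ is extracted from the defining triangles $R \to X_1 \to X_0 \to R[1]$ (with $X_1, X_0 \in \X$) of $\R \subseteq \X[-1] \ast \X$ via Lemma \ref{lem3}(1). For $(1) \Rightarrow (2)$, I take $R \in \R$, a left $\X$-approximation $f \colon R \to X$, and complete it to a triangle $R \xrightarrow{f} X \to Z \to R[1]$, with $Z \in \R \ast \R[1]$ by Lemma \ref{r1}. The heart of the argument is showing that $\add(\X \cup \add Z)$ is $\R[1]$-rigid, i.e.\ $[\R[1]](Z,\X[1]) = [\R[1]](\X,Z[1]) = [\R[1]](Z,Z[1]) = 0$. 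Each vanishing follows by chasing the long exact sequences obtained from the triangle, using that any morphism from $\X \subseteq \R \ast \R[1]$ to $\R[1]$ must factor through $\R[1]$, that $f$ being a left $\X$-approximation makes $\Hom(X,X') \to \Hom(R,X')$ surjective for $X' \in \X$, and that $f[1] \circ (Z \to R[1]) = 0$ as consecutive maps in a triangle. Once $\add(\X \cup \add Z)$ is $\R[1]$-rigid, maximality forces $Z \in \X$, and then $R \in Z[-1] \ast X \subseteq \X[-1] \ast \X$.

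For $(2) \Rightarrow (3)$, let $M \in \R \ast \R[1]$ satisfy $[\R[1]](M,\X[1]) = [\R[1]](\X,M[1]) = 0$; I must show $M \in \X$. Applying Lemma \ref{tau1} to the first vanishing yields a triangle $R_0 \xrightarrow{a} R_1 \to M \to R_0[1]$ with $R_0, R_1 \in \R$ and $\Hom(a,X) \colon \Hom(R_1,X) \to \Hom(R_0,X)$ surjective for every $X \in \X$. Using (2), $R_1$ fits into a triangle $R_1 \xrightarrow{u} X_1 \to Y_1 \to R_1[1]$ with $X_1, Y_1 \in \X$ and $u$ a left $\X$-approximation. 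The octahedron on $R_0 \xrightarrow{a} R_1 \xrightarrow{u} X_1$ produces a triangle $M \to Y \to Y_1 \to M[1]$ whose connecting map factors through $R_1[1]$; since $Y_1 \in \X$ and $[\R[1]](\X, M[1]) = 0$, this map vanishes, so the triangle splits to give $Y \cong M \oplus Y_1$, with $Y$ carrying the triangle $R_0 \xrightarrow{ua} X_1 \to Y \to R_0[1]$.

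The decisive observation is that $ua$ is itself a left $\X$-approximation of $R_0$: given $\phi \colon R_0 \to X$ with $X \in \X$, the surjectivity from Lemma \ref{tau1} produces $\psi \colon R_1 \to X$ with $\psi a = \phi$, and the approximation property of $u$ then yields $\chi \colon X_1 \to X$ with $\chi u = \psi$, whence $\phi = \chi (ua)$. Now apply (2) to $R_0$ and the moreover part of Lemma \ref{lem3}(1) to obtain a minimal left $\X$-approximation $f_0 \colon R_0 \to X_0$ fitting in a triangle $R_0 \xrightarrow{f_0} X_0 \to Y_0 \to R_0[1]$ with $X_0, Y_0 \in \X$. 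A Krull-Schmidt argument---splitting off the indecomposable summands of $X_1$ on which $ua$ has zero component and identifying the left-minimal remainder with $f_0$---yields a decomposition $X_1 \cong X_0 \oplus X''$ (with $X'' \in \X$) under which $ua$ becomes $(f_0, 0)^{T}$. A straightforward octahedron computation on $R_0 \xrightarrow{f_0} X_0 \hookrightarrow X_0 \oplus X''$ then identifies the cone as $Y_0 \oplus X''$, so $Y \cong Y_0 \oplus X'' \in \X$. Since $\X$ is closed under direct summands and $Y \cong M \oplus Y_1$, we conclude $M \in \X$.

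The main obstacle is the $(2) \Rightarrow (3)$ direction: a naive octahedron only places $Y$ in $\X \ast \X \ast \X[1]$ rather than in $\X$ itself. The key is to invoke Lemma \ref{tau1} so as to choose the triangle $R_0 \to R_1 \to M \to R_0[1]$ for which $ua$ becomes a genuine left $\X$-approximation; the minimal-approximation machinery of Lemma \ref{lem3}(1) together with Krull-Schmidt then collapses $Y$ into a direct sum of objects of $\X$.
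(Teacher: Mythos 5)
Your proof is correct and follows essentially the same route as the paper's: (3)$\Rightarrow$(1) via Lemma \ref{lem3}(1), and (2)$\Rightarrow$(3) via the same octahedron on $R_0\to R_1\to X_1$, splitting the connecting map $Y_1\to M[1]$ using $[\R[1]](\X,M[1])=0$ to get $Y\cong M\oplus Y_1$, and then using that the composite $ua$ is a left $\X$-approximation. The only cosmetic differences are that for (1)$\Rightarrow$(2) the paper simply cites \cite[Lemma 2.6(1)]{ZhZ} whereas you inline the co-Bongartz-type rigidity computation (recorded separately in the paper as Proposition \ref{facmax}), and in the final step of (2)$\Rightarrow$(3) you identify the cone of $ua$ via uniqueness of minimal left $\X$-approximations and Krull--Schmidt, where the paper instead compares two approximation triangles of the same object to exhibit the cone as a direct summand of an object of $\X$.
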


\proof
(1)$\Rightarrow$(2): this is followed by \cite[Lemma 2.6(1)]{ZhZ}.

(3)$\Rightarrow$(1): this is followed by definition and Lemma \ref{lem3}(1).

(2)$\Rightarrow$(3):  Since $\X\subseteq \{M\in\RR\ |\ [\R[1]](M, \X[1])=0 \;\, \emph{and } \ [\R[1]](\X, M[1])=0 \ \}$, we only need to show that if $M\in \RR$ satisfies
$[\R[1]](\X,M[1])=0$ and $[\R[1]](M, \X[1])=0$, then $M\in \X$.

Since $M\in\RR$,
there exists a triangle
$$\xymatrix{R_1\ar[r]^{f}&R_0\ar[r]^{g}&M\ar[r]^{h\;\;}&R_1[1]}$$
where $R_0,R_1\in\R$. Note that $R_0\in\R\subseteq\X[-1]\ast\X$, there exists a triangle
$$\xymatrix{R_0\ar[r]^{u}&X_1\ar[r]^{v}&X_2\ar[r]^{w\;\;}&R_0[1]}$$
with $X_1,X_2\in\X$. Since $\X$ is $\R[1]$-rigid, $u$ is a left $\X$-approximation of $R_0$ by Lemma \ref{lem3}.
Then we have a commutative diagram.
$$\xymatrix{
R_1\ar[r]^{f}\ar@{=}[d]&R_0\ar[r]^{g}\ar[d]^{u}&M\ar[r]^{h}\ar[d]^{a}&R_1[1]\ar@{=}[d]\\
R_1\ar[r]^{x=uf}&X_1\ar[r]^{y}\ar[d]^{v}&N\ar[r]^{z}\ar[d]^{b}&R_1[1]\\
&X_2\ar@{=}[r]\ar[d]^{w}&X_2\ar[d]^{c}\\
&R_0[1]\ar[r]^{g[1]}&M[1]}$$
Since $c\in [\R[1]](X_2,M[1])=0$, we have $N\cong X_2\oplus M$. By Lemma \ref{tau1}, $\Hom_\C(f,X)$ is surjective for any object $X\in\X$. Since $u$ is a left $\X$-approximation, $x=uf$ is a left $\X$-approximation of $R_1$. $R_1$ admits a triangle
$$R_1\xrightarrow{r_1} X_1'\to X_2'\to R[1]$$
with $X_1',X_2'\in \X$. By Lemma \ref{lem3}, $r_1$ is a left $\X$-approximation. Hence we have the following commutative diagram:
$$\xymatrix{
R_1 \ar@{=}[d] \ar[r]^{x} &X_1\ar[r]^{y}\ar[d] &N \ar[r]^{z}\ar[d] &R_1[1] \ar@{=}[d]\\
R_1 \ar@{=}[d] \ar[r]^{r_1} &X_1' \ar[r] \ar[d]  &X_2' \ar[r] \ar[d] &R_1[1] \ar@{=}[d]\\
R_1\ar[r]^{x} &X_1 \ar[r]^{y} &N \ar[r]^{z} &R_1[1].\\
}
$$
Then $N$ is a direct summand of $X_1\oplus X_2'\in \X$, which implies that $M\in \X$.
\qed

\begin{cor}\label{cor3}
Let $\X\subseteq \RR$ be a subcategory such that each object in $\R[1]$ admits a right $\X$-approximation. Then it is two-term maximal $\R[1]$-rigid if and only if it is two-term weak $\R[1]$-cluster tilting.
\end{cor}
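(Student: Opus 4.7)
The plan is to prove only the nontrivial direction: assuming $\X$ is two-term maximal $\R[1]$-rigid and every object of $\R[1]$ admits a right $\X$-approximation, I will show that $\X$ is two-term weak $\R[1]$-cluster tilting. The reverse implication is immediate from Remark \ref{r0} and requires no approximation hypothesis. By Lemma \ref{lem4}, it suffices to verify condition (2) there, namely $\R\subseteq \X[-1]\ast\X$. For $R\in \R$, the idea is to pick a right $\X$-approximation $a\colon X_0\to R[1]$, complete it to a triangle $L\xrightarrow{b}X_0\xrightarrow{a}R[1]\xrightarrow{c}L[1]$, and show $L\in \X$; this yields the witnessing triangle for $R[1]\in \X\ast\X[1]$, equivalently $R\in \X[-1]\ast\X$.

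Rotating the triangle gives $R\to L\to X_0\to R[1]$ with $R\in \R$ and $X_0\in \X\subseteq \R\ast\R[1]$, so Lemma \ref{r1} places $L$ in $\R\ast\R[1]$. To apply maximality of $\X$ and conclude $L\in \X$, I must check that $\add(\X\cup\add L)$ is $\R[1]$-rigid, i.e., the three vanishings $[\R[1]](\X,L[1])=0$, $[\R[1]](L,\X[1])=0$, and $[\R[1]](L,L[1])=0$. The first follows by applying $\Hom_\C(X,-)$ to the defining triangle: since $a$ is a right $\X$-approximation, the induced map $\Hom_\C(X,X_0)\to\Hom_\C(X,R[1])$ is surjective, so $\Hom_\C(X,L[1])$ embeds into $\Hom_\C(X,X_0[1])$, where the $\R[1]$-rigidity of $\X$ kills any morphism factoring through $\R[1]$. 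The second is handled by applying $\Hom_\C(-,R'[1])$ to the rotated triangle and using $\Hom_\C(R,R'[1])=0$ (rigidity of $\R$) to lift any $L\to R'[1]$ along $b$ to $X_0\to R'[1]$, thereby reducing to $[\R[1]](X_0,\X[1])=0$.

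The main obstacle is the third vanishing $[\R[1]](L,L[1])=0$, where one must combine the right $\X$-approximation hypothesis with the $\R[1]$-rigidity of $\X$. For a morphism $L\to R'[1]\xrightarrow{f_2}L[1]$, I first lift $L\to R'[1]$ along $b$ to some $g_1\colon X_0\to R'[1]$, so the original morphism equals $(f_2 g_1)b$. Setting $m=f_2 g_1\colon X_0\to L[1]$, this $m$ still factors through $\R[1]$; applying $\Hom_\C(X_0,-)$ and invoking $[\R[1]](X_0,X_0[1])=0$ lifts $m$ through the connecting morphism as $m=ch$ for some $h\colon X_0\to R[1]$; the right $\X$-approximation property then factors $h=ak$, so $m=cak=0$ since $ca=0$ in the triangle, forcing the original composition to vanish. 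Once all three vanishings are in place, maximality of $\X$ yields $L\in \X$, and Lemma \ref{lem4} concludes that $\X$ is two-term weak $\R[1]$-cluster tilting.
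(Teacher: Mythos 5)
Your proposal is correct, and at the top level it follows the same reduction as the paper: both arguments observe that one direction is immediate from Remark \ref{r0}, and that for the other direction it suffices, by Lemma \ref{lem4}, to establish $\R\subseteq\X[-1]\ast\X$. The difference is in how that inclusion is obtained. The paper simply cites \cite[Corollary 2.7]{ZhZ} for it, whereas you prove it from scratch: you complete a right $\X$-approximation $X_0\to R[1]$ to a triangle $L\to X_0\to R[1]\to L[1]$, place $L$ in $\RR$ via Lemma \ref{r1}, verify the three vanishings $[\R[1]](\X,L[1])=0$, $[\R[1]](L,\X[1])=0$ and $[\R[1]](L,L[1])=0$ by standard rotation and lifting arguments (each of which checks out), and then invoke two-term maximality to conclude $L\in\X$. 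This buys a self-contained proof at the cost of a page of diagram chasing that the paper outsources to the reference; your argument is essentially a proof of the cited result, and I found no gaps in it.
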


\begin{proof}
By definition, we know that any two-term  weak $\R[1]$-cluster tilting subcategory is two-term maximal $\R[1]$-rigid. If $\X$ is two-term maximal $\R[1]$-rigid such that each object in $\R[1]$ admits a right $\X$-approximation, by \cite[Corollary 2.7]{ZhZ}, $\R\subseteq \X[-1]*\X$. Then by Lemma \ref{lem4}, $\X$ is two-term weak $\R[1]$-cluster tilting.
\end{proof}

\section{Mutations of relative cluster tilting subcategories}
%In this section, we give an analogue of the co-Bongartz completion and elucidate a structured framework for irreducible mutation within two-term $\R[1]$-rigid subcategories.

\begin{prop}\label{facmax}
Let $\U$ be a two-term $\R[1]$-rigid subcategory such that each object in $\R$ admits a left $\U$-approximation. Then it is contained in a two-term weak $\R[1]$-cluster tilting subcategory.
\end{prop}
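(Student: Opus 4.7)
The plan is to invoke Lemma \ref{lem4}: it suffices to produce a subcategory $\M \supseteq \U$ contained in $\R \ast \R[1]$, $\R[1]$-rigid, and satisfying $\R \subseteq \M[-1] \ast \M$. I would construct $\M$ explicitly by appending, for each $R \in \R$, the third vertex of a triangle built from a chosen left $\U$-approximation of $R$.

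Concretely, for each $R \in \R$ I would fix a left $\U$-approximation $u_R \colon R \to U_R$ and complete it to a triangle
$$R \xrightarrow{u_R} U_R \xrightarrow{v_R} V_R \xrightarrow{w_R} R[1],$$
and then set $\M := \add(\U \cup \{V_R \mid R \in \R\})$. The inclusion $\M \subseteq \R \ast \R[1]$ is immediate since $V_R$ sits in an extension of $R[1]$ by $U_R \in \R \ast \R[1]$ and $(\R \ast \R[1]) \ast \R[1] = \R \ast \R[1]$ by the rigidity of $\R$ together with Lemma \ref{sum}. The inclusion $\R \subseteq \M[-1] \ast \M$ follows by rotating the triangle to $V_R[-1] \to R \to U_R \to V_R$.

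The technical heart of the argument will be verifying $[\R[1]](\M, \M[1]) = 0$, which I would split into four cases. The case $[\R[1]](\U,\U[1])=0$ is given. For $[\R[1]](\U, V_R[1])=0$, a morphism $U \to R'[1] \to V_R[1]$ can be rerouted: since $\R$ is rigid, $\Hom_\C(R', R[1])=0$, so $R' \to V_R$ factors through $v_R$, producing a morphism $U \to U_R[1]$ factoring through $\R[1]$, which vanishes by the $\R[1]$-rigidity of $\U$. The case $[\R[1]](V_{R'}, V_R[1]) = 0$ will reduce to $[\R[1]](V_{R'}, \U[1]) = 0$ by the same rerouting trick.

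The main obstacle I expect is the remaining case $[\R[1]](V_R, \U[1]) = 0$, since this is where the left-approximation hypothesis on $\U$ must genuinely enter. Given $f \colon V_R \to R'[1] \to U[1]$ with $U \in \U$, the composition $f v_R \colon U_R \to U[1]$ automatically lies in $[\R[1]]$ because $U_R \in \R \ast \R[1]$ forces every morphism $U_R \to R'[1]$ to factor through $\R[1]$; hence $f v_R = 0$, so $f = f' w_R$ for some $f' \colon R[1] \to U[1]$. The underlying morphism $R \to U$ then factors through $u_R$ by the left $\U$-approximation property, so $f$ factors through $u_R[1] \circ w_R$, which is zero by the triangle axioms. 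Once all four cases are in hand, Lemma \ref{lem4} delivers the conclusion that $\M$ is a two-term weak $\R[1]$-cluster tilting subcategory containing $\U$.
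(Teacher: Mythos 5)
Your proposal is correct and follows essentially the same route as the paper: the same construction $\M=\add(\U\cup\{V_R\mid R\in\R\})$ from cones of left $\U$-approximations, reduction to Lemma \ref{lem4} via $\R\subseteq\M[-1]\ast\M$, and the same four-case verification of $[\R[1]](\M,\M[1])=0$ (your "main obstacle" case is exactly the paper's Step 2, using that $fv_R\in[\R[1]](\U,\U[1])=0$ and that $u_R[1]\circ w_R=0$). No gaps.
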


\begin{proof}
Let $R$ be any object in $\R$. $R$ admits a triangle
$$R\xrightarrow{f} U^R\xrightarrow{g} V^R\xrightarrow{h} R[1]$$
where $f$ is a left $\U$-approximation. Let $\M=\add(\U\cup\{V^R~|~R\in \R\})$. By Lemma \ref{lem4}(2), we only need to show that $\M$ is two-term $\R[1]$-rigid.

{\bf Step 1:} By Lemma \ref{r1} and Lemma \ref{sum}, $\M\subseteq \R*\R[1]$.

{\bf Step 2:} We show that $[\R[1]](V^R,\U[1])=0$.

Let $U\in\U$ and $u\in [\R[1]](V^R,U[1])$. Since $\U$ is $\R[1]$-rigid, $ug\in [\R[1]](U^R,U[1])=0$. Then there is a morphism $v:R[1]\to U[1]$ such that $u=vh$. Since $f$ is a left $\U$-approximation, $-f[1]$ is a left $\U[1]$-approximation. Then we have the following commutative diagram
$$\xymatrix{U^R \ar[r]^g &V^R \ar[d]_u \ar[r]^h &R[1] \ar@{.>}[ld]_{v} \ar[r]^-{-f[1]} &U^R[1] \ar@{.>}[dll]^w\\
&U[1]
}
$$
which implies that $u=vh=w(-f[1])h=0$.

{\bf Step 3:} We show that $[\R](\U[-1],V^R)=0$.

Let $U\in \U$ and $u\in [\R](U[-1],V^R)=0$. Then $u$ can be written as $u:U[-1]\xrightarrow{u_1} R_0\xrightarrow{u_2} V^R$ where $R_0\in \R$. Since $\R$ is rigid, $hu_2=0$. Then we have the following commutative diagram
$$\xymatrix{
U[-1] \ar[r]^{u_1} &R_0 \ar[d]^{u_2} \ar@{.>}[dl]_{r_0}\\
U^R \ar[r]_g &V^R \ar[r]_h &R[1].
}
$$
Since $\U$ is $\R[1]$-rigid, $r_0u_1=0$, which implies that $u=u_2u_1=gr_0u_1=0$.

{\bf Step 4:} By using the same method as in Step 3 and the result of Step 3, we can get that $[\R[1]](V^R_1,V^R_2[1])=0$ for any $V^R_1,V^R_2\in \{V^R~|~R\in \R\}$. Hence $\M$ is $\R[1]$-rigid.
\end{proof}

For convenience, we denote the two-term weak $\R[1]$-cluster tilting subcategory which contains $\U$ in Proposition \ref{facmax} by $\M_{\U}$.

\begin{prop}\label{NX}
By {\rm\cite[Theorem 3.1]{ZhZ}}, for any two-term $\R[1]$-rigid subcategory $\U$ such that each object in $\R[1]$ admits a right $\U$-apporximation, we can construct a two-term weak $\R[1]$-cluster tilting subcategory $\mathcal N_{\U}$ which contains $\U$ in the following way:

For any object $R\in \R$, take a triangle
$$R\to V_R\to U_R\xrightarrow{~h~} R[1]$$
where $h$ is a right $\U$-approximation of $R[1]$. Then
$$\mathcal N_{\U}=:\add(\U\cup\{V_R ~|~ R\in \R\})$$
is the two-term weak $\R[1]$-cluster tilting subcategory we mentioned above.
\end{prop}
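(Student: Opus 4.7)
The plan is to prove Proposition \ref{NX} as the exact dual of Proposition \ref{facmax}. Set $\mathcal{N}_{\U} := \add(\U \cup \{V_R \mid R \in \R\})$. Applying Lemma \ref{r1} and Lemma \ref{sum} to each defining triangle $R \to V_R \to U_R \to R[1]$ immediately places $V_R \in \R\ast\R[1]$, hence $\mathcal{N}_{\U} \subseteq \R\ast\R[1]$. Rotating the same triangle to $U_R[-1] \to R \to V_R \to U_R$ exhibits $R \in \mathcal{N}_{\U}[-1] \ast \mathcal{N}_{\U}$, so $\R \subseteq \mathcal{N}_{\U}[-1] \ast \mathcal{N}_{\U}$. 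By Lemma \ref{lem4}(2)$\Rightarrow$(3), it then suffices to verify that $\mathcal{N}_{\U}$ is two-term $\R[1]$-rigid.

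I would check rigidity in three steps, mirroring Steps 2--4 of Proposition \ref{facmax}. First, $[\R[1]](\U, V_R[1]) = 0$: since $U_R \to R[1]$ is a right $\U$-approximation, the induced map $\Hom_{\C}(U, U_R) \to \Hom_{\C}(U, R[1])$ is surjective for every $U \in \U$, so applying $\Hom_{\C}(U,-)$ to the rotated triangle $U_R \to R[1] \to V_R[1] \to U_R[1]$ forces $\Hom_{\C}(U, V_R[1]) \hookrightarrow \Hom_{\C}(U, U_R[1])$; any morphism $U \to V_R[1]$ factoring through $\R[1]$ then composes with $V_R[1] \to U_R[1]$ to give an element of $[\R[1]](U, U_R[1]) = 0$ (by $\R[1]$-rigidity of $\U$), and therefore vanishes. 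Second, $[\R[1]](V_R, \U[1]) = 0$: any morphism factoring as $V_R \xrightarrow{a} R'[1] \xrightarrow{b} U[1]$ with $R' \in \R$ has $a$ liftable through $V_R \to U_R$, because applying $\Hom_{\C}(-, R'[1])$ to $R \to V_R \to U_R \to R[1]$ yields $\Hom_{\C}(R, R'[1]) = 0$ by rigidity of $\R$; this realizes the morphism as a composite $V_R \to U_R \to U[1]$ whose second factor lies in $[\R[1]](U_R, U[1]) \subseteq [\R[1]](\U, \U[1]) = 0$. Third, $[\R[1]](V_{R_1}, V_{R_2}[1]) = 0$ follows by combining the lifting argument of the second step (to push the morphism through $V_{R_1} \to U_{R_1}$) with the vanishing of the first step (applied to the resulting morphism $U_{R_1} \to V_{R_2}[1]$).

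The only delicate point will be the first step, where the direction of the approximation really matters: the surjectivity needed to embed $\Hom_{\C}(U, V_R[1])$ into $\Hom_{\C}(U, U_R[1])$ is precisely the defining property of $U_R \to R[1]$ as a \emph{right} $\U$-approximation, and this is the exact analogue of the use of left $\U$-approximations in Step 2 of Proposition \ref{facmax}. Beyond this point, the argument is a mechanical dualization, invoking only Lemmas \ref{sum}, \ref{r1}, \ref{lem4}, and the rigidity of $\R$ and $\U$; no further tools are required.
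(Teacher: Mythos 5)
Your proposal is correct. Note that the paper itself gives no proof of Proposition \ref{NX}, deferring instead to \cite[Theorem 3.1]{ZhZ}; your argument is the faithful dual of the paper's own proof of Proposition \ref{facmax} (membership in $\R\ast\R[1]$ via Lemmas \ref{r1} and \ref{sum}, the three vanishing checks $[\R[1]](\U,V_R[1])=0$, $[\R[1]](V_R,\U[1])=0$, $[\R[1]](V_{R_1},V_{R_2}[1])=0$, then Lemma \ref{lem4}(2)$\Rightarrow$(3)), and each step, including the use of the right-approximation property to get injectivity of $\Hom_{\C}(U,V_R[1])\to\Hom_{\C}(U,U_R[1])$ and the use of $\Hom_{\C}(\R,\R[1])=0$ to lift maps $V_R\to R'[1]$ through $U_R$, checks out.
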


Let $\X$ be a subcategory of $\C$. For convenience, if each object in $\R$ admits a left $\X$-approximation and each object in $\R[1]$ admits a right $\X$-approximation, we say $\X$ is $\R[1]$-functorially finite.

\begin{rem}
By Lemma \ref{lem3}, any two-term weak $\R[1]$-cluster tilting subcategory is $\R[1]$-functorially finite.
\end{rem}

Let $\X$ and $\Y$ be two subcategories of $\C$. We denote by  $\Y\backslash\X$ the full
subcategory of $\C$ consisting of all objects $Y\in \Y$ such that $Y\notin \X$.

\begin{lem}\label{cap}
Let $\X$ be an $\R[1]$-functorially finite two-term $\R[1]$-rigid subcategory.
\begin{itemize}
\item[\rm (1)] $\R(\M_\X)=\R(\X)$;
\item[\rm (2)] $\mathcal N_\X\cap \R[1]=\X\cap \R[1]$.
\end{itemize}
\end{lem}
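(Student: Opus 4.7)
The plan is to prove each of (1) and (2) by handling one inclusion trivially (from $\X \subseteq \M_\X$, resp.\ $\X \subseteq \mathcal N_\X$) and obtaining the other by a short diagram chase on the defining triangles supplied by Proposition \ref{facmax} and Proposition \ref{NX}.

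For (1), the inclusion $\R(\M_\X) \subseteq \R(\X)$ is immediate. For the reverse, I would take $R \in \R(\X)$ and, using that $\M_\X = \add(\X \cup \{V^{R'} \mid R' \in \R\})$, observe that it suffices to verify $\Hom_\C(R, V^{R'}) = 0$ for each $R' \in \R$ (since $\Hom_\C(R,-)$ already vanishes on $\X$). Applying $\Hom_\C(R,-)$ to the defining triangle $R' \to U^{R'} \to V^{R'} \to R'[1]$ produces the exact piece
$$\Hom_\C(R, U^{R'}) \longrightarrow \Hom_\C(R, V^{R'}) \longrightarrow \Hom_\C(R, R'[1]),$$
whose left term vanishes because $U^{R'} \in \X$ and $R \in \R(\X)$, and whose right term vanishes by rigidity of $\R$. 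So the middle vanishes, as desired.

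For (2), again $\X \cap \R[1] \subseteq \mathcal N_\X \cap \R[1]$ is trivial. For the reverse, I take $R[1] \in \mathcal N_\X \cap \R[1]$ with $R \in \R$. By Proposition \ref{NX}, $\mathcal N_\X$ is two-term weak $\R[1]$-cluster tilting, hence maximal $\R[1]$-rigid by Remark \ref{r0}, so Lemma \ref{cor0} yields $R \in \R(\mathcal N_\X)$. In particular $\Hom_\C(R, V_R) = 0$, where $V_R \in \mathcal N_\X$ is the object occurring in the triangle $R \xrightarrow{\alpha} V_R \to X_R \xrightarrow{h} R[1]$ chosen to define $\mathcal N_\X$. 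Applying $\Hom_\C(R,-)$ sends $\mathrm{id}_R \mapsto \alpha$ into a zero group, so $\alpha = 0$. The triangle therefore splits, giving $X_R \cong V_R \oplus R[1]$, and since $X_R \in \X$ and $\X$ is closed under direct summands, we conclude $R[1] \in \X$.

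I expect no real obstacle: part (1) is a direct Hom-functor computation, and the only conceptual point in part (2) is to recognize that maximality of $\mathcal N_\X$ (via Lemma \ref{cor0}) promotes the assumption $R[1] \in \mathcal N_\X$ to the stronger $R \in \R(\mathcal N_\X)$, which is precisely the input needed to collapse the defining triangle of $V_R$ and split $R[1]$ off from $X_R \in \X$.
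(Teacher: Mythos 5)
Your proof is correct. Part (1) is the paper's argument verbatim, just packaged as the long exact sequence of $\Hom_\C(R,-)$ applied to the triangle $R'\to U^{R'}\to V^{R'}\to R'[1]$ instead of an explicit factorization chase; the two vanishing end terms ($U^{R'}\in\X$ with $R\in\R(\X)$, and rigidity of $\R$) are exactly the two ingredients the paper uses.

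Part (2) reaches the same conclusion by a slightly different route. The paper takes an indecomposable $N_0\in\N_\X\cap\R[1]$ with $N_0\notin\X$, locates it as a direct summand of the cocone $V_{R'}$ in some defining triangle $R'\to V_{R'}\to U_{R'}\to R'[1]$, and uses only $\Hom_\C(R',N_0)=0$ (rigidity of $\R$) to factor the projection $V_{R'}\to N_0$ through $U_{R'}\in\X$, splitting $N_0$ off from $U_{R'}$. You instead work with the defining triangle indexed by the $R$ with $R[1]=N_0$ itself, and kill the map $R\to V_R$ by first upgrading $R[1]\in\N_\X\cap\R[1]$ to $R\in\R(\N_\X)$ via Lemma \ref{cor0} (for which, incidentally, only the first inclusion of that lemma --- i.e.\ $\R[1]$-rigidity of $\N_\X$, not maximality --- is needed), whence the whole triangle splits as $U_R\cong V_R\oplus R[1]$. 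Your version avoids having to track which $V_{R'}$ contains $N_0$ as a summand, at the cost of invoking the rigidity of $\N_\X$ through Lemma \ref{cor0}; the paper's version uses only the rigidity of $\R$. Both are sound, and the difference is cosmetic.
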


\begin{proof}
(1) We only need to show that $\R(\M_\X)\supseteq \R(\X)$.

Let $R_0\in \R(\X)$ and $M_0$ be an indecomposable object in $\M_\X\backslash \X$. $M_0$ admits a triangle
$$R\xrightarrow{r} X\xrightarrow{x} M\xrightarrow{m} R[1]$$
where $r$ is a left $\X$-approximation and $M_0$ is a direct summand of $M$. For any morphism $r_0:R_0\to M$, since $mr_0=0$, there is a morphism $x_0:R_0\to X$ such that $xx_0=r_0$. Since $x_0=0$, we have $r_0=0$. Hence $\R(\M_\X)\supseteq \R(\X)$.

(2) We only need to show that $\mathcal N_\X\cap \R[1]\subseteq \X\cap \R[1]$.

Let $N_0$ be an indecomposable object in $\mathcal N_\X\cap \R[1]$. If $N_0\notin \X$, then it admits a triangle
$$R\to N\to X\xrightarrow{x} R[1]$$
where $x$ is a right $\X$-approximation and $N_0$ is direct summand of $N$. Since ${\rm Hom}_{\C}(R,N_0)=0$, $N_0$ is a direct summand of $X$, a contradiction.
\end{proof}

%\begin{lem}\label{alter}
%Let $\X$ be an $\R[1]$-functorially finite two-term $\R[1]$-rigid subcategory. Let $R\in \R$. Then $R$ admits a triangle
%$$R\xrightarrow{a} X_1\to M_1\xrightarrow{b} R[1]$$
%where $X_1\in \X$ and $M_1\in \M_\X \backslash \X$ if and only if it admits a triangle
%$$R\xrightarrow{a'} N_1\to X_1'\xrightarrow{b'} R[1]$$
%where $X_1'\in \X$ and $N_1\in \mathcal N_\X \backslash \X$.
%\end{lem}
%
%\begin{proof}
%Assume that $R$ admits a triangle
%$$R\xrightarrow{a} X_1\to M_1\xrightarrow{b} R[1]$$
%where $X_1\in \X$ and $M_1\in \M_\X \backslash \X$. Note that $R$ also admits a triangle
%$$R\xrightarrow{a'} N_1\to X_1'\xrightarrow{b'} R[1]$$
%where $b'$ is a right $\X$-approximation and $N_1\in \mathcal N_\X$. By Lemma \ref{lem3}, $b$ is a right $\M_\X$-approximation. If $N_1\in \X$, then $b'$ is also a right $\M_\X$-approximation. We can get the following commutative diagram:
%$$\xymatrix{
%R \ar[r] \ar@{=}[d] &X_1 \ar[r] \ar[d] &M_1 \ar[r]^b \ar[d] &R[1] \ar@{=}[d] \\
%R \ar[r] \ar@{=}[d] &N_1 \ar[r] \ar[d] &X_1' \ar[r]^{b'} \ar[d] &R[1] \ar@{=}[d] \\
%R \ar[r]  &X_1 \ar[r]  &M_1 \ar[r]^b &R[1]
%}
%$$
%which implies $M_1$ is a direct summand of $X_1\oplus X_1'$, a contradiction. By the similar method we can show the ``if" part.
%\end{proof}

\begin{lem}\label{pre-mu}
Let $\X$ be an $\R[1]$-functorially finite two-term $\R[1]$-rigid subcategory. Then
\begin{itemize}
\item[\rm (1)] any object in $\M_\X$ admits a right $\X$-approximation;
\item[\rm (2)] any object in $\N_\X$ admits a left $\X$-approximation.
\end{itemize}
\end{lem}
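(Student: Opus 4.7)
The plan is to reduce both statements to approximating the distinguished generators ($V^R$ for part (1), $V_R$ for part (2)), and then to build these approximations by splicing the two available defining triangles via the octahedral axiom. Since right and left $\X$-approximations are compatible with finite direct sums and pass to direct summands, and every object of $\X$ approximates itself, it suffices in part (1) to produce a right $\X$-approximation of each $V^R$, and dually in part (2) a left $\X$-approximation of each $V_R$.

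Fix $R\in\R$. The generator $V^R$ sits in the defining triangle $R \xrightarrow{f} U^R \xrightarrow{g} V^R \xrightarrow{h} R[1]$ with $U^R\in\X$ and $f$ a left $\X$-approximation. By $\R[1]$-functorial finiteness, $R[1]$ also admits a right $\X$-approximation, yielding a triangle $V_R \to U_R \xrightarrow{h_R} R[1] \xrightarrow{\delta} V_R[1]$ with $U_R\in\X$. I will apply the octahedral axiom to the composite $U_R \xrightarrow{h_R} R[1] \xrightarrow{-f[1]} U^R[1]$. This composite factors through $R[1]\in\R[1]$, hence lies in $[\R[1]](\X,\X[1])$ and vanishes by $\R[1]$-rigidity of $\X$; consequently its cone splits as $U_R[1]\oplus U^R[1]$. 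The octahedral axiom then produces a distinguished triangle
$$V_R \longrightarrow U_R\oplus U^R \longrightarrow V^R \xrightarrow{\,\pm\delta h\,} V_R[1],$$
whose middle term lies in $\X$. To check that $U_R\oplus U^R\to V^R$ is a right $\X$-approximation, take any $a\colon X\to V^R$ with $X\in\X$; since $h_R$ is a right $\X$-approximation, write $ha = h_R c$ for some $c\colon X\to U_R$, whence $\delta h a = \delta h_R c = 0$ because $\delta\circ h_R=0$, so $a$ lifts through $U_R\oplus U^R\to V^R$.

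Part (2) is dual and rests on the same glued triangle $V_R \to U_R\oplus U^R \to V^R \to V_R[1]$, now built using the left $\X$-approximation $f\colon R\to U^R$ granted by $\R[1]$-functorial finiteness. For $\phi\colon V_R\to X$ with $X\in\X$, the obstruction to lifting $\phi$ along $V_R\to U_R\oplus U^R$ is the composition $\phi\circ(\pm\delta h)[-1]=\pm\phi\,\delta[-1]\,h[-1]$; factoring $\phi\,\delta[-1]\colon R\to X$ through the left approximation $f$ as $\psi f$ and invoking the triangle relation $f\circ h[-1]=0$ makes this obstruction vanish. The main anticipated obstacle is the bookkeeping of the octahedral diagram — in particular, correctly identifying the connecting map of the glued triangle as (a sign of) $\delta h$ — but once this is in place, $\R[1]$-rigidity of $\X$ combined with the one-sided approximation hypotheses immediately produces the required lift.
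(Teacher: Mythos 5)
Your proof is correct and follows essentially the same route as the paper: both splice the triangle coming from a left $\X$-approximation of $R$ with the one coming from a right $\X$-approximation of $R[1]$ (via the octahedral axiom, resp.\ a $3\times 3$ diagram) to obtain a triangle $V_R\to U_R\oplus U^R\to V^R\to V_R[1]$ with middle term in $\X$, and then read off the approximation property from the connecting morphism. Your only departures are organizational — reducing first to the generators $V^R$, $V_R$ via closure of approximations under $\add$, and using the single glued triangle for both parts instead of invoking duality — and your direct verification of the lifting (via $\delta h_R=0$ and $fh[-1]=0$) is a clean substitute for the paper's appeal to its Lemma 2.11(2).
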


\begin{proof}
(1)~ Let $M$ be any indecomposable object in $\M_\X\backslash \X$. Then $M$ admits a triangle
$$R\xrightarrow{a} X_1\to M_1\xrightarrow{b} R[1]$$
where $X_1\in \X$, $M_1\in \M_\X$ and $M$ is a direct summand of $M_1$. $R$ also admits a triangle
$$R\xrightarrow{a'} N_1\to X_1'\xrightarrow{b'} R[1]$$
where $X_1'\in \X$ and $N_1\in \mathcal N_\X$. Then we have the following commutative diagram
$$\xymatrix{
&N_1 \ar@{=}[r] \ar[d] &N_1\ar[d]\\
X_1 \ar[r] \ar@{=}[d] &X_1\oplus X_1' \ar[r] \ar[d]^{\chi} &X_1' \ar[r]^0 \ar[d]^{b'} &X_1[1] \ar@{=}[d]\\
X_1 \ar[r] &M_1 \ar[r] \ar[d] &R[1] \ar[r] \ar[d] &X_1[1]\\
&N_1[1] \ar@{=}[r] &N_1[1]
}
$$
By Lemma \ref{lem3}(2), $\chi$ is a right $\X$-approximation. Hence $M$ admits a right $\X$-approximation.

Dually we can prove (2).
\end{proof}

\begin{lem}\label{mutation}
Let $\X$ be an $\R[1]$-functorially finite two-term $\R[1]$-rigid subcategory. Let $Y\in \M_\X \backslash \X$ be an indecomposable object. Then in the following triangle
$$Z\xrightarrow{z} X\xrightarrow{x} Y\xrightarrow{y} Z[1]$$
where $x$ is a minimal right $\X$-approximation, we can obtain that:
\begin{itemize}
\item[\rm (1)] $y$ factors through $\R[1]$;
\item[\rm (2)] $z$ is a minimal left $\M_\X$-approximation;
\item[\rm (3)] $Z$ is indecomposable and $Z\notin \X$;
\item[\rm (4)] $[\R[1]](\X,Z[1])=0$.
\end{itemize}
\end{lem}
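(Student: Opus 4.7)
I will prove the four claims in the order (1), (4), (2), (3), with (3) containing the main technical obstacle.

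\textbf{Part (1):} Since $Y \in \R\ast\R[1]$, fix a triangle $R_1 \to R_0 \xrightarrow{\beta} Y \xrightarrow{\gamma} R_1[1]$ with $R_0, R_1 \in \R$. It suffices to show $y\beta = 0$, because then $y$ factors through $\gamma$, which is a morphism into $\R[1]$. By the $\R[1]$-functorial finiteness of $\X$, pick a left $\X$-approximation $\delta: R_0 \to X'$. I claim $\delta$ is in fact a left $\M_\X$-approximation: for each generator $V^R$ of $\M_\X$ with defining triangle $R \xrightarrow{f^R} U^R \xrightarrow{g^R} V^R \to R[1]$, any morphism $R_0 \to V^R$ lifts along $g^R$ because $\Hom(R_0, R[1]) = 0$ by rigidity of $\R$, and then factors through $\delta$ because $U^R \in \X$. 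Since $Y \in \M_\X$, we obtain $\beta = \beta'\delta$ for some $\beta': X' \to Y$; since $x$ is a right $\X$-approximation and $X' \in \X$, we further obtain $\beta' = x\epsilon$. Hence $y\beta = yx\epsilon\delta = 0$.

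\textbf{Parts (4) and (2):} For (4), take $\phi \in [\R[1]](X', Z[1])$ with $X' \in \X$. The composite $z[1]\phi: X' \to X[1]$ still factors through $\R[1]$, hence lies in $[\R[1]](\X, \X[1]) = 0$ by the $\R[1]$-rigidity of $\X$. Applying $\Hom(X', -)$ to the triangle, the surjectivity of $x_\ast$ (from $x$ being a right $\X$-approximation) forces $y_\ast = 0$, so $-z[1]_\ast$ is injective and $\phi = 0$. For (2), the approximation property is proved analogously: for any $\phi: Z \to M$ with $M \in \M_\X$, the obstruction $\phi[1]y: Y \to M[1]$ factors through $\R[1]$ by (1), placing it in $[\R[1]](\M_\X, \M_\X[1]) = 0$ by the $\R[1]$-rigidity of $\M_\X$. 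Left minimality of $z$ then follows from Lemma~\ref{lem1}, once we check that $x$ lies in the Jacobson radical; but this holds because $Y$ is indecomposable and $Y \notin \X$, so no summand of $X \in \X$ can map isomorphically onto $Y$.

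\textbf{Part (3):} The non-membership $Z \notin \X$ is immediate: if $Z \in \X$, then $y \in [\R[1]](Y, Z[1]) \subseteq [\R[1]](\M_\X, \M_\X[1]) = 0$, so the triangle splits and $Y$ becomes a summand of $X \in \X$, contradicting $Y \notin \X$. Indecomposability of $Z$ is the main obstacle. The key ingredient is the standard Krull-Schmidt fact that the minimal left $\M_\X$-approximation of a direct sum $Z_1 \oplus Z_2$ is the direct sum of minimal left $\M_\X$-approximations of $Z_1$ and $Z_2$, which follows from the uniqueness of indecomposable decompositions of the codomain. Granting this, suppose $Z = Z_1 \oplus Z_2$ nontrivially; then by (2) the whole triangle decomposes as a direct sum of $Z_i \xrightarrow{z_i} X_i \xrightarrow{x_i} Y_i \to Z_i[1]$ with $X \cong X_1 \oplus X_2$ and $Y \cong Y_1 \oplus Y_2$. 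Since $Y$ is indecomposable, we may assume $Y_2 = 0$, which forces $z_2$ to be an isomorphism and hence $Z_2 \cong X_2 \in \X$. But then the component of $x$ on the summand $X_2$ of $X$ is zero, contradicting the right minimality of $x$. Hence $Z$ is indecomposable.
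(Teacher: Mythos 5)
Your proof is correct, but it reaches several of the conclusions by routes genuinely different from the paper's. For (1), the paper realizes $Y$ as a summand of $M_1$ in the defining triangle $R\to X_1\to M_1\to R[1]$ of $\M_\X$ and extracts the factorization $y=rhi$ from a morphism of triangles; you instead present $Y$ via $R_1\to R_0\to Y\to R_1[1]$ and observe that a left $\X$-approximation of $R_0$ is automatically a left $\M_\X$-approximation (using $\Hom_{\C}(\R,\R[1])=0$ on the generators $V^R$), which is a clean alternative. For the minimality in (2), the paper argues by splitting off a superfluous summand $X''$ of $X$ and showing $X''\cong Y$; you instead invoke Lemma \ref{lem1} after noting that $x$ lies in the radical because $Y$ is indecomposable and not in $\X$ --- shorter and equally valid. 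Part (4) is essentially the paper's argument. For (3), the paper picks an indecomposable summand $Z_0\notin\X$ of $Z$ and compares the triangle over $zi_0$ with the original via a morphism of triangles, whereas you decompose the whole triangle using additivity of minimal left $\M_\X$-approximations and contradict the right minimality of $x$; both hinge on Krull--Schmidt, and your contradiction via $x$ is a nice touch. The only soft spot is your justification of that additivity: ``uniqueness of indecomposable decompositions of the codomain'' does not by itself prove that $z_1\oplus z_2$ is left minimal (a naive count of indecomposable summands is inconclusive). The fact is nonetheless true and standard: if $z_1\oplus z_2$ were not left minimal, a nonzero indecomposable summand $W$ of $X_1\oplus X_2$ would split off with zero component of $z_1\oplus z_2$; since $\End(W)$ is local, $W$ splits off through $X_1$ or $X_2$, and an upper-triangular automorphism then exhibits the corresponding $z_i$ as non-minimal. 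You should either supply this short argument or cite it precisely; with that, the proof is complete.
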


\begin{proof}
By Lemma \ref{pre-mu}, $Y$ always admits a triangle
$$Z\xrightarrow{z} X\xrightarrow{x} Y\xrightarrow{y} Z[1]$$
where $x$ is a minimal right $\X$-approximation.

(1) By Proposition \ref{facmax}, there exists a triangle
$$R\xrightarrow{f} X_1\xrightarrow{g} M_1\xrightarrow{h} R[1]$$
where $R\in \R$ and $f$ is a left $\X$-approximation such that $Y$ is a direct summand of $M_1$. Then we have a retraction $p\colon M_1\to Y$ and a section $i\colon Y\to M_1$ such that $pi=1_Y$. Since $x$ is a right $\X$-approximation, we have the following commutative diagram
$$\xymatrix{
R \ar[r]^f \ar[d] &X_1 \ar[r]^g \ar[d] &M_1 \ar[r]^h \ar[d]^p &R[1] \ar[d]^r \\
Z \ar[r]_z &X \ar[r]_x &Y \ar[r]_y &Z[1].
}
$$
Then $y=ypi=rhi$, which means $y$ factors through $\R[1]$.
\medskip

(2) By Lemma \ref{lem3}, $z$ is a left $\M_\X$-approximation. If $z$ is not left minimal, we can rewrite the triangle as
$$Z\xrightarrow{\svecv{z_1}{0}} X'\oplus X'' \xrightarrow{\svech{x_1}{x_2}} Y\xrightarrow{y} Z[1]$$
Then $X'\oplus X''\xrightarrow{\svech{0}{1}} X''$ factors through $\svech{x_1}{x_2}$, which implies that $X''\cong Y$, a contradiction.
\medskip

(3) Note that $Z\notin \X$, otherwise $y=0$, then $Y$ becomes a direct summand of $X$. Let $Z_0$ be an indecomposable direct summand of $Z$ such that $Z_0\notin \X$. Then we have a retraction $p_0:Z\to Z_0$ and a section $i_0:Z_0\to Z$ such that $p_0i_0=1_{Z_0}$. $Z_0$ admits a triangle
$$Z_0\xrightarrow{zi_0} X\to Y_0\to Z_0[1]$$
where $Y_0\neq 0$. Since $z$ is a minimal left $\M_\X$-approximation, we can obtain the following commutative diagram
$$\xymatrix{
Z_0 \ar[r]^{zi_0} \ar[d]_{i_0} &X \ar[r] \ar@{=}[d] &Y_0 \ar[r] \ar[d]^{b_1} &Z_0[1] \ar[d]\\
Z \ar[r]^z \ar[d]_{p_0} &X \ar[r]^x \ar[d]^{a_2} &Y \ar[r]^y \ar[d]^{b_2} &Z[1] \ar[d]\\
Z_0 \ar[r]^{zi_0} &X \ar[r] &Y_0 \ar[r] &Z_0[1].
}
$$
Since $zi_0$ is also left minimal, $a_2$ is an isomorphism. Hence $b_2b_1$ is also an isomorphism. But $Y$ is indecomposable, then $b_1$ is an isomorphism. Thus $i_0$ is an isomorphism and $Z$ is indecomposable.

(4) Let $X_0$ be any object in $\X$ and $\alpha$ be any morphism in $[\R[1]](X_0, Z[1])$. Since $[\R[1]](X_0,X[1])=0$, there is a morphism $c:X_0 \to Y$ such that $\alpha=yc$. Since $x$ is right $\X$-approximation, there is a morphism $d:X_0\to X$ such that $c=xd$. Hence $\alpha=yxc=0$, which implies that $[\R[1]](\X,Z[1])=0$.
\end{proof}

Dually, we have the following lemma.

\begin{lem}\label{mutation-D}
Let $\X$ be an $\R[1]$-functorially finite two-term $\R[1]$-rigid subcategory. Let $Z\in \mathcal N_\X \backslash \X$ be an indecomposable object. Then in the following triangle
$$Z\xrightarrow{z} X\xrightarrow{x} Y\xrightarrow{y} Z[1]$$
where $z$ is a minimal left $\X$-approximation, we can obtain that:
\begin{itemize}
\item[\rm (1)] $y$ factors through $\R[1]$;
\item[\rm (2)] $x$ is a minimal right $\mathcal N_\X$-approximation;
\item[\rm (3)] $Y$ is indecomposable and $Y\notin \X$;
\item[\rm (4)] $[\R[1]](Y,\X[1])=0$.
\end{itemize}
\end{lem}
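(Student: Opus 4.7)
My plan is to dualize the proof of Lemma~\ref{mutation} step by step, using Proposition~\ref{NX} in place of Proposition~\ref{facmax} and the left $\X$-approximation property of $z$ in place of the right $\X$-approximation property of $x$. By Proposition~\ref{NX}, $Z \in \mathcal N_\X$ appears as a direct summand of some $V_R$ sitting in a triangle
\[R \xrightarrow{f} V_R \xrightarrow{g} U_R \xrightarrow{h} R[1]\]
with $U_R \in \X$; fix a section $i\colon Z \to V_R$ and retraction $p\colon V_R \to Z$ with $pi = 1_Z$.

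For (1), the composite $gi\colon Z \to U_R$ lands in $\X$, hence $gi = mz$ for some $m\colon X \to U_R$. By TR3 this extends to a morphism of triangles from $(Z\xrightarrow{z}X\xrightarrow{x}Y\xrightarrow{y}Z[1])$ to the rotated triangle $(V_R\xrightarrow{g}U_R\xrightarrow{h}R[1]\xrightarrow{-f[1]}V_R[1])$ with third and fourth columns some $k\colon Y \to R[1]$ and $i[1]$. The last-square commutativity $i[1]\cdot y = (-f[1])\cdot k$, composed with $p[1]$, yields $y = -(pf)[1]\cdot k$, which factors through $R[1]\in\R[1]$.

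For (2), any $\phi\colon N \to Y$ with $N\in\mathcal N_\X$ satisfies $y\phi \in [\R[1]](N, Z[1]) \subseteq [\R[1]](\mathcal N_\X, \mathcal N_\X[1]) = 0$ by (1) and the $\R[1]$-rigidity of $\mathcal N_\X$, so $\phi$ lifts through $x$; hence $x$ is a right $\mathcal N_\X$-approximation. For minimality, if $x$ were not right minimal, I would write $X = X'\oplus X''$ with $X''\neq 0$ and $x = (x_1, 0)$; then the cocone decomposes as $Z \cong \mathrm{cocone}(x_1)\oplus X''$, and indecomposability of $Z$ with $X''\neq 0$ forces $Z \cong X''$, making $Z$ a summand of $X\in\X$, which contradicts $Z\notin\X$. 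Part (4) dualizes identically: any $\alpha \in [\R[1]](Y, X_0[1])$ with $X_0\in\X$ gives $\alpha x \in [\R[1]](\X, \X[1]) = 0$, so $\alpha = \beta y$ with $\beta\colon Z[1]\to X_0[1]$; since $z$ is a left $\X$-approximation, $\beta[-1] = \gamma z$ for some $\gamma\colon X \to X_0$, and $\alpha = \gamma[1]\cdot z[1]\cdot y = 0$ because $z[1]\cdot y = 0$ in the rotated triangle.

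For (3), if $Y\in\X$ then (1) gives $y \in [\R[1]](\X, Z[1]) \subseteq [\R[1]](\mathcal N_\X, \mathcal N_\X[1]) = 0$, so the triangle splits as $X\cong Z\oplus Y$, making $Z$ a summand of $X\in\X$, contradicting $Z\notin\X$. For indecomposability, suppose $Y = Y_1\oplus Y_2$ with both summands nonzero; the minimality of $x$ from (2) and the uniqueness of minimal right $\mathcal N_\X$-approximations yield $X = X_1\oplus X_2$ and $x = x_1\oplus x_2$ with each $x_i$ a minimal right $\mathcal N_\X$-approximation of $Y_i$, so $Z \cong \mathrm{cocone}(x_1)\oplus \mathrm{cocone}(x_2)$. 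Indecomposability of $Z$ makes one cocone vanish, say $\mathrm{cocone}(x_2) = 0$ so that $x_2$ is an isomorphism and $X_2 \cong Y_2 \neq 0$; the triangle then splits off the trivial summand on the $X_2$-factor, forcing $z$ to have zero component into $X_2$, which contradicts the left minimality of $z$. The main obstacle I anticipate is the sign and rotation bookkeeping in part (1); once that morphism of triangles is correctly oriented, the dualizations of (2)--(4) are routine variants of the corresponding arguments for Lemma~\ref{mutation}.
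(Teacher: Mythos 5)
Your proof is correct and takes essentially the same approach as the paper: the paper gives no explicit argument for this lemma, stating only that it follows ``dually'' from Lemma~\ref{mutation}, and your step-by-step dualization (using Proposition~\ref{NX} and the rotated triangle in place of Proposition~\ref{facmax}) is exactly the intended argument. The only differences are cosmetic: in (2) and (3) you handle the minimality and indecomposability bookkeeping by splitting off the degenerate summand of the approximation rather than by the retraction/section diagram the paper uses in Lemma~\ref{mutation}(3), but both are routine Krull--Schmidt arguments leading to the same contradiction.
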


\begin{defn}\label{defmu}
Let $\X$ be a two-term $\R[1]$-rigid subcategory. Let $\M\neq\N$ be two-term weak $\R[1]$-cluster tilting subcategories which contain $\X$. $(\M,\N)$ is called an $\X$-mutation pair if the following conditions are satisfied:

{\rm (a)} Any object $Y\in \M$ admits a triangle
$$Z\xrightarrow{z} X\xrightarrow{x} Y\xrightarrow{y} Z[1]$$
such that
\begin{itemize}
\item[\rm (a1)] $y$ factors through $\R[1]$;
\item[\rm (a2)] $X\in \X$;
\item[\rm (a3)] $Z\in \N$.
\end{itemize}
{\rm (b)} Any object $Z'\in \N$ admits a triangle
$$Z'\xrightarrow{z'} X'\xrightarrow{x'} Y'\xrightarrow{y'} Z'[1]$$
such that
\begin{itemize}
\item[\rm (b1)] $y'$ factors through $\R[1]$;
\item[\rm (b2)] $X'\in \X$;
\item[\rm (b3)] $Y'\in \M$.
\end{itemize}
We say that $\M$ (resp. $\N$) is a left (resp. right) mutation of $\N$ (resp. $\M$) if there exists a two-term $\R[1]$-rigid subcategory $\X_0$ such that $(\M,\N)$ is an $\X_0$-mutation pair.
\end{defn}

\begin{rem}\label{rf}
Since two-term weak $\R[1]$-cluster tilting subcategories are $\R[1]$-functorially finite, by Lemma \ref{pre-mu}, if $(\M,\N)$ is an $\X$-mutation pair, then $\X$ is $\R[1]$-functorially finite.
\end{rem}

\begin{thm}\label{m-pair}
Let $\X$ be an $\R[1]$-functorially finite two-term $\R[1]$-rigid subcategory. Then $(\M_\X,\N_\X)$ is an $\X$-mutation pair.
\end{thm}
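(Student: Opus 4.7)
The plan is to verify conditions (a) and (b) of Definition~\ref{defmu}. By symmetry (exchanging $\M_\X$ and $\N_\X$, and invoking Proposition~\ref{NX} and Lemma~\ref{mutation-D} in place of Proposition~\ref{facmax} and Lemma~\ref{mutation}), it suffices to establish (a). Fix $Y \in \M_\X$; by passing to indecomposable summands and taking direct sums of the resulting triangles, I may assume $Y$ is indecomposable. If $Y \in \X$, the split triangle $0 \to Y \xrightarrow{\id} Y \to 0$ works with $X = Y$ and $Z = 0 \in \N_\X$. Otherwise $Y \in \M_\X \setminus \X$, and Lemma~\ref{pre-mu}(1) provides a minimal right $\X$-approximation $x \colon X \to Y$, yielding a triangle
\[ Z \xrightarrow{z} X \xrightarrow{x} Y \xrightarrow{y} Z[1] \]
for which Lemma~\ref{mutation} guarantees that $y$ factors through $\R[1]$, $X \in \X$, $Z \notin \X$, and $[\R[1]](\X, Z[1]) = 0$. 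The crux is then to show $Z \in \N_\X$.

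By the characterization in Definition~\ref{def2}(iii) applied to the two-term weak $\R[1]$-cluster tilting subcategory $\N_\X$, it suffices to verify that $Z \in \R * \R[1]$, $[\R[1]](\N_\X, Z[1]) = 0$, and $[\R[1]](Z, \N_\X[1]) = 0$. The first Hom-vanishing splits according to $\N_\X = \add(\X \cup \{V_R \mid R \in \R\})$: for $N \in \X$ it is exactly Lemma~\ref{mutation}(4), while for $N = V_R$, any $\alpha \colon V_R \to R'[1] \to Z[1]$ factoring through $\R[1]$ restricts to zero on $R$ in the defining triangle $R \to V_R \to U_R \to R[1]$ by rigidity of $\R$, so $\alpha$ descends through $V_R \to U_R$ with $U_R \in \X$, reducing to the previous case.

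For the second Hom-vanishing, the key observation is that every $\beta \colon Z \to R'[1]$ factors as $\beta = \beta_0 z$ for some $\beta_0 \colon X \to R'[1]$. Writing $y = u v$ with $v \colon Y \to R_0[1]$ and $u \colon R_0[1] \to Z[1]$, and applying $\Hom(-, R'[1])$ to the rotated triangle $Y[-1] \xrightarrow{-y[-1]} Z \xrightarrow{z} X \to Y$, one computes $\beta \circ y[-1] = (\beta \circ u[-1]) \circ v[-1]$. The middle factor $\beta \circ u[-1]$ lies in $\Hom(R_0, R'[1]) = 0$ by rigidity of $\R$, so $\beta \circ y[-1] = 0$ and $\beta$ lies in the image of $z^*$. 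Consequently, for any $\alpha = \gamma \beta \colon Z \to R'[1] \to N[1]$ with $N \in \N_\X$, one has $\alpha = (\gamma \beta_0) z$ with $\gamma \beta_0 \in [\R[1]](X, N[1])$, which vanishes since $X, N \in \N_\X$ and $\N_\X$ is $\R[1]$-rigid.

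The main obstacle I anticipate is proving $Z \in \R * \R[1]$, for which my plan is an octahedral argument. Choosing a triangle $R_Y^1 \to R_Y^0 \xrightarrow{\pi_Y} Y \xrightarrow{\delta_Y} R_Y^1[1]$ witnessing $Y \in \R * \R[1]$, the composition $y \circ \pi_Y \colon R_Y^0 \to Z[1]$ vanishes because $y = u v$ factors through $R_0[1]$ and $\Hom(R_Y^0, R_0[1]) = 0$; hence $y = \psi \circ \delta_Y$ for some $\psi \colon R_Y^1[1] \to Z[1]$. The octahedral axiom applied to the factorization $Y \xrightarrow{\delta_Y} R_Y^1[1] \xrightarrow{\psi} Z[1]$ of $y$, whose three cones are $R_Y^0[1]$, $C := \Cone(\psi)$, and $X[1]$, produces the fourth triangle $R_Y^0[1] \to X[1] \to C \to R_Y^0[2]$. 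Shifting by $-1$ and applying Lemma~\ref{r1} to $R_Y^0 \to X \to C[-1] \to R_Y^0[1]$ gives $C[-1] \in \R * \R[1]$; a second application of Lemma~\ref{r1} to $R_Y^1 \xrightarrow{\psi[-1]} Z \to C[-1] \to R_Y^1[1]$ then yields $Z \in \R * \R[1]$. With all three conditions verified, Definition~\ref{def2}(iii) gives $Z \in \N_\X$, and the argument is complete.
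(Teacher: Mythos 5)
Your proposal is correct, and it reaches the crux of the theorem --- showing that the cocone $Z$ of the minimal right $\X$-approximation of $Y$ lies in $\N_\X$ --- by a genuinely different mechanism than the paper. The paper argues constructively: starting from the triangle $R\to X_1\to M_1\to R[1]$ that produced $Y$ as a summand of $M_1$, it builds a $3\times 3$-type diagram, uses Lemma \ref{mutation}(4) to split off $N\cong \widetilde{Z}\oplus X_1$, and then checks that the resulting map $X_0\oplus\widetilde{X}\to R[1]$ is a right $\X$-approximation, so that $N$ (and hence its summand $Z$) is literally one of the generators $V_R$ of $\N_\X$ from Proposition \ref{NX}. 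You instead exploit that $\N_\X$ is already known to be two-term weak $\R[1]$-cluster tilting and verify the three conditions of the orthogonality characterization in Definition \ref{def2}(iii): membership $Z\in\RR$ via an octahedral argument (which the paper gets for free from its construction), $[\R[1]](\N_\X,Z[1])=0$ by splitting $\N_\X$ over its generators and reducing the $V_R$ case to Lemma \ref{mutation}(4) through the rigidity of $\R$, and $[\R[1]](Z,\N_\X[1])=0$ by showing every map $Z\to\R[1]$ factors through $z$. Your route is more conceptual and avoids the diagram chase, at the cost of the extra bookkeeping needed to certify $Z\in\RR$ and the two Hom-vanishing conditions separately; the paper's route is longer on diagrams but identifies $Z$ inside the explicit additive generators of $\N_\X$, which it then reuses in Lemma \ref{neq}. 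All the individual steps you outline (the factorizations through $\R[1]$, the vanishing $\Hom_{\C}(\R,\R[1])=0$, the two applications of Lemma \ref{r1} after the octahedron, and the symmetry reduction of condition (b) to condition (a) via Lemma \ref{mutation-D}) check out.
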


\begin{proof}
Let $Y\in \M_\X \backslash \X$ be an indecomposable object. Then by Lemma \ref{mutation}, $Y$ admits a triangle
$$Z\xrightarrow{z} X\xrightarrow{x} Y\xrightarrow{y} Z[1]$$
where $x$ is a minimal right $\X$-approximation. We show that $Z\in \mathcal N_\X$.

$Y$ admits a triangle
$$R\xrightarrow{a} X_1\to M_1\xrightarrow{b} R[1]$$
where $R\in \R$, $a$ is a left $\X$-approximation and $Y$ is a direct summand of $M_1\in \M_\X$. Let $M_1=X_0\oplus \widetilde{Y}$ where $X_0\in \X$ and $\widetilde{Y}$ is the direct sum of all the indecomposable direct summands of $M_1$ in $\M_\X \backslash \X$. Then $Y$ is a direct summand of $\widetilde{Y}$. By Lemma \ref{mutation}, $\widetilde{Y}$ admits a triangle
$$\widetilde{Z}\xrightarrow{\widetilde{z}} \widetilde{X}\xrightarrow{\widetilde{x}} \widetilde{Y}\xrightarrow{\widetilde{y}} \widetilde{Z}[1]$$
where $\widetilde{X}\in \X$, $\widetilde{y}$ factors through $\R[1]$ and $Z$ is a direct summand of $\widetilde{Z}$. Then we have the following commutative diagram.
$$\xymatrix{
&\widetilde{Z} \ar@{=}[r] \ar[d] &\widetilde{Z} \ar[d]\\
R \ar[r]^r \ar@{=}[d] &N \ar[r] \ar[d] &X_0\oplus \widetilde{X} \ar[r]^{\chi} \ar[d]^{\left(\begin{smallmatrix}1&0\\0&\widetilde{x}\end{smallmatrix}\right)} &R[1] \ar@{=}[d]\\
R \ar[r] &X_1 \ar[r] \ar[d]_c &X_0\oplus \widetilde{Y} \ar[d]^{\svech{0}{\widetilde{y}}} \ar[r]^{b} & R[1]\\
&\widetilde{Z}[1] \ar@{=}[r] &\widetilde{Z}[1]
}
$$
Since $c$ factors through $\R[1]$, by Lemma \ref{mutation}(4), $c=0$. Then $N\cong \widetilde{Z}\oplus X_1$. By Lemma \ref{lem3}, $b$ is a right $\mathcal M_\X$-approximation. Let $x_2:X_2\to R[1]$ be any morphism with $X_2\in \X$. Then there is a morphism $\chi_2:X_2\to X_0\oplus \widetilde{Y}$ such that $b\chi_2=x_2$. Since $\widetilde{y}$ factors through $\R[1]$, by Lemma \ref{mutation}(4), $\chi_2$ factors through $\left(\begin{smallmatrix}1&0\\0&\widetilde{x}\end{smallmatrix}\right)$. Hence $x_2$ factors through $\chi$, which means $\chi$ is a right $\X$-approximation. Then $N\in \mathcal N_\X$ and thus $Z\in \mathcal N_\X$.

Dually we can show that if $Z\in \mathcal N_\X \backslash \X$ be an indecomposable object, then $Z$ admits a triangle
$$Z\xrightarrow{z} X\xrightarrow{x} Y\xrightarrow{y} Z[1]$$
where $z$ is a minimal left $\X$-approximation and $Y\in \M_\X \backslash \X$.
\end{proof}

\begin{cor}\label{capcap}
Let $\X$ be an $\R[1]$-functorially finite two-term $\R[1]$-rigid subcategory. Then $\M_\X\cap \mathcal N_\X=\X$.
\end{cor}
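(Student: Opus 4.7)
The inclusion $\X \subseteq \M_\X \cap \mathcal N_\X$ is immediate from the constructions in Proposition \ref{facmax} and Proposition \ref{NX}, so the plan is to establish the reverse inclusion by contradiction. Suppose for contradiction that there exists an indecomposable object $Y \in \M_\X \cap \mathcal N_\X$ with $Y \notin \X$.

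Since $Y \in \M_\X \setminus \X$ is indecomposable, I would apply Lemma \ref{mutation} to produce a triangle
$$Z \xrightarrow{z} X \xrightarrow{x} Y \xrightarrow{y} Z[1]$$
in which $x$ is a minimal right $\X$-approximation, $Z$ is indecomposable with $Z \notin \X$, and the connecting morphism $y$ factors through $\R[1]$. Now invoke Theorem \ref{m-pair}: since $(\M_\X, \mathcal N_\X)$ is an $\X$-mutation pair, the above triangle is (up to isomorphism) the defining mutation triangle for the object $Y \in \M_\X$, so $Z \in \mathcal N_\X$.

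At this point both $Y$ and $Z$ lie in $\mathcal N_\X$, which is a two-term weak $\R[1]$-cluster tilting subcategory, hence $\R[1]$-rigid. Therefore
$$y \in [\R[1]](Y, Z[1]) \subseteq [\R[1]](\mathcal N_\X, \mathcal N_\X[1]) = 0,$$
so $y = 0$. Then the triangle splits, giving $X \cong Y \oplus Z$; since $\X$ is closed under direct summands and $Y$ is indecomposable, this forces $Y \in \X$, contradicting our choice of $Y$. Hence no such $Y$ exists, and $\M_\X \cap \mathcal N_\X = \X$.

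The only delicate point is matching the triangle produced by Lemma \ref{mutation} with a mutation triangle from Definition \ref{defmu}, so that Theorem \ref{m-pair} can be used to conclude $Z \in \mathcal N_\X$; once this is in place, the $\R[1]$-rigidity of $\mathcal N_\X$ kills $y$ and the splitting argument finishes the proof. A symmetric argument starting from $Y \in \mathcal N_\X \setminus \X$ via Lemma \ref{mutation-D} would work equally well.
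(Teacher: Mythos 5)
Your argument is correct and is essentially the paper's own proof: the paper likewise takes an indecomposable $L\in(\M_\X\cap\N_\X)\setminus\X$, invokes Theorem \ref{m-pair} to obtain the mutation triangle $L'\to X\to L\xrightarrow{l}L'[1]$ with $L'\in\N_\X$ and $l$ factoring through $\R[1]$, concludes $l=0$ from the $\R[1]$-rigidity of $\N_\X$, and derives the contradiction $L\in\X$ from the resulting splitting. Your explicit appeal to Lemma \ref{mutation} to produce the triangle before matching it with Theorem \ref{m-pair} is just a slightly more detailed rendering of the same step.
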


\begin{proof}
We only need to show that $\M_\X\cap \mathcal N_\X\subseteq \X$.

If there is an indecomposable object $L\in (\M_\X\cap \mathcal N_\X)\backslash \X$, by Theorem \ref{m-pair}, it admits a triangle
$$L'\to X\to L\xrightarrow{l} L'[1]$$
where $L'\in \mathcal N_\X$ and $l$ factors through $\R[1]$. Since $L\in \mathcal N_\X$, we have $l=0$ and $L$ becomes a direct summand of $X$. Then $L\in \X$, a contradiction. Hence $\M_\X\cap \mathcal N_\X\subseteq \X$.
\end{proof}

On the other hand, we have the following proposition.

\begin{prop}\label{m-pair-2}
Let $\X$ be a two-term $\R[1]$-rigid subcategory. If $(\M,\N)$ is an $\X$-mutation pair, then $\M=\M_\X$ and $\N=\N_\X$.
\end{prop}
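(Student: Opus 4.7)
The plan is to establish $\M \subseteq \M_\X$ and $\N \subseteq \N_\X$; once in hand, the maximality of $\M$ (resp.\ $\N$) among two-term maximal $\R[1]$-rigid subcategories (Remark \ref{r0}), combined with the $\R[1]$-rigidity of $\M_\X$ (resp.\ $\N_\X$), gives the reverse inclusions and hence the desired equalities. By Remark \ref{rf}, $\X$ is $\R[1]$-functorially finite, so $\M_\X$ and $\N_\X$ are defined; by Theorem \ref{m-pair}, $(\M_\X, \N_\X)$ is itself an $\X$-mutation pair.

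For $\M \subseteq \M_\X$, fix $Y \in \M \setminus \X$. Since $\M_\X = \add(\X \cup \{V^R : R \in \R\})$ is built from the structural triangles $R \xrightarrow{f} U^R \xrightarrow{u} V^R \xrightarrow{h} R[1]$ with $f$ a left $\X$-approximation and $U^R \in \X$, and since orthogonality of $Y$ with $\X$ is automatic from $\X \subseteq \M$ and the $\R[1]$-rigidity of $\M$, we need only show $[\R[1]](Y, V^R[1]) = 0 = [\R[1]](V^R, Y[1])$ for each $R \in \R$. For the first, any $g : Y \to V^R[1]$ factoring through $\R[1]$ can be rewritten with its ``$V^R$-leg'' lifted through $u$ via $\Hom_\C(\R, \R[1]) = 0$, whence $g$ becomes $u[1]$ composed with a factor lying in $[\R[1]](Y, U^R[1]) \subseteq [\R[1]](\M, \M[1]) = 0$. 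For the second, pre-composition of $g : V^R \to Y[1]$ with $u$ gives $gu \in [\R[1]](U^R, Y[1]) = 0$, hence $g = k'[1] \circ h$ for some $k' : R \to Y$; property (a) of the mutation pair supplies a triangle $Z' \to X_0 \xrightarrow{x_0} Y \xrightarrow{y_0} Z'[1]$ with $X_0 \in \X$ and $y_0$ factoring through $\R[1]$, the rigidity of $\R$ kills $y_0 \circ k'$, so $k'$ factors through $x_0$ via some $\tilde k' : R \to X_0$, which in turn factors through $f$; the identity $f[1] \circ h = 0$ in the rotated triangle then forces $g = k'[1] \circ h = 0$. This yields $\M = \M_\X$.

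Having $\M = \M_\X$, we turn to $\N \subseteq \N_\X$. Fix $Z \in \N \setminus \X$, with $\N_\X = \add(\X \cup \{V_R : R \in \R\})$ built from triangles $R \xrightarrow{a} V_R \xrightarrow{b} U_R \xrightarrow{h_R} R[1]$ where $h_R$ is a right $\X$-approximation and $U_R \in \X$. For $g : Z \to V_R[1]$ factoring through $\R[1]$: $\R[1]$-rigidity of $\N$ annihilates $b[1] \circ g \in [\R[1]](Z, U_R[1])$, so $g = a[1] \circ g'$ with $g' : Z \to R[1]$; property (b) applied to $Z$ yields a triangle $Z \xrightarrow{z} X'' \to Y' \xrightarrow{y'} Z[1]$, the obstruction $g' \circ (-y'[-1])$ factors through $\R$ and vanishes by $\Hom_\C(\R, \R[1]) = 0$, so $g' = g'' \circ z$ for some $g'' : X'' \to R[1]$; the right $\X$-approximation $h_R$ supplies $g'' = h_R \circ \tilde g''$, and $a[1] \circ h_R = 0$ yields $g = 0$. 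For $g : V_R \to Z[1]$ factoring through $\R[1]$: $\R[1]$-rigidity of $\N_\X$ annihilates $z[1] \circ g$, so $g = y' \circ \phi$ for some $\phi : V_R \to Y'$; since $Y' \in \M = \M_\X$, property (a) of $(\M_\X, \N_\X)$ supplies a triangle $Z_{Y'} \to X_{Y'} \xrightarrow{x_{Y'}} Y' \xrightarrow{y_{Y'}} Z_{Y'}[1]$ with $X_{Y'} \in \X$ and $Z_{Y'} \in \N_\X$; $\R[1]$-rigidity of $\N_\X$ applied to $y_{Y'} \circ \phi$ forces $\phi = x_{Y'} \circ \tilde\phi$, and $y' \circ x_{Y'} \in [\R[1]](\N, \N[1]) = 0$ (as $X_{Y'} \in \X \subseteq \N$) gives $g = 0$. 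The main technical obstacle is the asymmetry of these structural triangles---$V^R$ sits at the end of its triangle while $V_R$ sits in the middle---so the verification $[\R[1]](V_R, Z[1]) = 0$ is not a formal dualization of $[\R[1]](V^R, Y[1]) = 0$; it must instead be routed through the equality $\M = \M_\X$ established in the previous step to bring the mutation pair $(\M_\X, \N_\X)$ into play.
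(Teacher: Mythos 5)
Your proof is correct and follows essentially the same strategy as the paper's: establish the mutual $[\R[1]]$-orthogonality of $\M$ with $\M_\X$ (and of $\N$ with $\N_\X$) using the structural triangles of Propositions \ref{facmax} and \ref{NX} together with properties (a)/(b) of the mutation pair, then conclude equality from the maximality of two-term weak $\R[1]$-cluster tilting subcategories. The only organizational difference is that the paper dispatches $\N=\N_\X$ ``by duality,'' whereas you argue it explicitly and route the verification of $[\R[1]](V_R,Z[1])=0$ through the already-established equality $\M=\M_\X$ and Theorem \ref{m-pair}; both routes are valid.
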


\begin{proof}
By Remark \ref{rf}, $\X$ is $\R[1]$-functorially finite.

We only show $\M=\M_\X$, by duality, we can get $\N=\N_\X$. Let $M_0$ be any object in $\M$. Then $M_0$ admits a triangle
$$N_0\to X_0\xrightarrow{x_0} M_0\xrightarrow{m_0} N_0[1]$$
where $x_0$ is a right $\X$-approximation and $m_0$ factors through $\R[1]$. Let $Y_0$ be any indecomposable object in $\M_\X\backslash \X$. Then $Y_0$ admits a triangle
$R\xrightarrow{x} X\to M_X\xrightarrow{m} R[1]$
where $R\in \R$, $x$ is a left $\X$-approximation, $M_X\in \M_\X$ and $Y_0$ is a direct summand of $M_X$. Let $f:M_0[-1]\to R_0\xrightarrow{r} M_X$ be any morphism in $[\R](M_0[-1],M_X)$. Since $mr=0$, $r$ factors through $X$. Since $[\R](\M[-1],\X)=0$, we have $f=0$. Let $g:M_X[-1]\to R_0'\xrightarrow{r'} M_0$ be any morphism in $[\R](M_X[-1],M_0)$. Since $m_0r'=0$, $r'$ factors through $X_0$. Since $[\R](\M_\X[-1],\X)=0$, we have $g=0$. Hence $[\R](\M[-1],\M_\X)=0$ and $[\R](\M_\X[-1],\M)=0$, which implies $\M= \M_\X$.
\end{proof}

%We call a subcategory $\mathcal S$ an $m$-rigid subcategory if $\Hom_\C(\mathcal S,\mathcal S[i])=0$, $i=1,2,\cdots,m$. $\mathcal S$ is called a presilting subcategory if $\mathcal S$ is $m$-rigid, $\forall m>0$; $\mathcal S$ is called a silting subcategory if it is presilting and $\C={\rm thick}\mathcal S$. In the rest of this section, we assume that $\R$ is $2$-rigid. From now on, we denote $\RR$ by $\A$.

%\begin{lem}\label{pres}
%If $\R$ is presilting, then $\Hom_{\C}(X,Y[i])=0$ for any $X,Y\in \A$ and $i\geq 2$.
%\end{lem}
%
%\begin{proof}
%Let $f\in \Hom_{\C}(X,Y[i])=0$. Consider the following diagram
%$$\xymatrix{
%
%}
%$$
%\end{proof}

\vspace{2mm}

\section{Almost complete relative cluster tilting subcategories}
In this section, we give the definition of almost complete two-term weak $\R[1]$-cluster tilting subcategories, and prove that such subcategories
has exactly (up to isomorphism) two completions.

\begin{defn}\label{almost}
We call a two-term $\R[1]$-rigid subcategory $\X$ an almost complete two-term weak $\R[1]$-cluster tilting subcategory if the following conditions are satisfied:
\begin{itemize}
\vspace{1mm}
\item[\rm (a)] $\X$ is $\R[1]$-functorially finite;
\vspace{1mm}

\item[\rm (b)] There exists an indecomposable object $W\notin \X$ such that $\add(\X\cup\{W\})$ is two-term weak $\R[1]$-cluster tilting.
\end{itemize}
Any two-term weak $\R[1]$-cluster tilting subcategory which has the form in {\rm (b)} is called a completion of $\X$, $W$ is called a complement of $\X$.
\end{defn}

For convenience, if $\X$ is an almost complete two-term weak $\R[1]$-cluster tilting subcategory, we can just say $\X$ is almost complete.

\begin{lem}\label{neq}
Let $\X$ be an $\R[1]$-functorially finite two-term $\R[1]$-rigid subcategory. Assume $\M_\X=\add(\X\cup\{Y\})$ where $Y\notin \X$ is indecomposable. By Lemma \ref{mutation} and Theorem \ref{m-pair}, $Y$ admits a triangle
$$Z\xrightarrow{z} X\xrightarrow{x} Y\xrightarrow{y} Z[1]$$
where $x$ is a minimal right $\X$-approximation and $Z\in \mathcal N_\X\backslash \X$ is indecomposable. Then $\mathcal N_\X=\add(\X\cup\{Z\})$.
\end{lem}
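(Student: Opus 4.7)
The plan is to show that every indecomposable object in $\mathcal N_\X \setminus \X$ is isomorphic to $Z$; since $\mathcal N_\X$ is closed under direct sums and direct summands, this will immediately yield $\mathcal N_\X = \add(\X \cup \{Z\})$.

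First, take an arbitrary indecomposable $Z' \in \mathcal N_\X \setminus \X$. By Theorem \ref{m-pair}, $(\M_\X, \mathcal N_\X)$ is an $\X$-mutation pair, so combining Definition \ref{defmu}(b) with Lemma \ref{mutation-D} yields a triangle
$$Z' \xrightarrow{z'} X' \xrightarrow{x'} Y' \xrightarrow{y'} Z'[1]$$
in which $X' \in \X$, $Y'$ is an indecomposable object of $\M_\X \setminus \X$, and $y'$ factors through $\R[1]$. The hypothesis $\M_\X = \add(\X \cup \{Y\})$ with $Y$ indecomposable forces $Y' \cong Y$.

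Next, I claim that $x'$ is a right $\X$-approximation of $Y'$. Given any $f \colon X_0 \to Y'$ with $X_0 \in \X$, the composition $y' f$ lies in $[\R[1]](X_0, Z'[1])$, which vanishes because $X_0$ and $Z'$ both belong to the $\R[1]$-rigid subcategory $\mathcal N_\X$. Hence $y' f = 0$ and $f$ factors through $x'$.

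Finally, compare with the given triangle $Z \xrightarrow{z} X \xrightarrow{x} Y \xrightarrow{y} Z[1]$, in which $x$ is a minimal right $\X$-approximation of $Y \cong Y'$. Using the universal property of both approximations together with the right minimality of $x$, I would produce a decomposition $X' \cong X \oplus X''$ with $X'' \in \X$ such that, up to isomorphism, $x'$ is identified with $(x, 0) \colon X \oplus X'' \to Y$. Taking cones then yields $Z'[1] \cong Z[1] \oplus X''[1]$, so $Z' \cong Z \oplus X''$. Indecomposability of $Z'$, combined with $Z \notin \X$ and $X'' \in \X$, forces $X'' = 0$, and hence $Z' \cong Z$. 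The main subtlety lies in this cone comparison, which essentially encodes the uniqueness of minimal right $\X$-approximations of an indecomposable object; the remaining steps are routine unwinding of the mutation-pair structure from Theorem \ref{m-pair}.
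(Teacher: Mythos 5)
Your proposal is correct and follows essentially the same route as the paper: take an arbitrary indecomposable $Z'\in\N_\X\setminus\X$, use the mutation-pair triangle from Theorem \ref{m-pair} to produce an indecomposable $Y'\in\M_\X\setminus\X$, conclude $Y'\cong Y$ from the hypothesis on $\M_\X$, and then deduce $Z'\cong Z$. The paper states this last deduction without detail, and your cone-comparison argument (identifying $x'$ with $(x,0)$ via minimality of $x$ and then splitting off the $\X$-summand using indecomposability of $Z'$) is a correct way to fill it in.
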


\begin{proof}
Let $Z_0$ be an indecomposable object in $\mathcal N_\X \backslash \X$. By Theorem \ref{m-pair}, $Z_0$ admits a triangle
$$Z_0\xrightarrow{z_0} X_0\xrightarrow{x_0} Y_0\xrightarrow{y_0} Z_0[1]$$
where $z_0$ is a left minimal right $\X$-approximation and $Y_0\in \M_\X\backslash \X$ is indecomposable. Then $Y_0$ has to be isomorphic to $Y$. Hence $Z_0\cong Z$.
\end{proof}

Dually, we have the following lemma.

\begin{lem}\label{neq-D}
Let $\X$ be an $\R[1]$-functorially finite two-term $\R[1]$-rigid subcategory. Assume $\mathcal N_\X=\add(\X\cup\{Z\})$ where $Z\notin \X$ is indecomposable. By Lemma \ref{mutation-D} and Theorem \ref{m-pair}, $Z$ admits a triangle
$$Z\xrightarrow{z} X\xrightarrow{x} Y\xrightarrow{y} Z[1]$$
where $z$ is a minimal left $\X$-approximation of $Z$ and $Y\in \M_\X\backslash \X$ is indecomposable. Then $\M_\X=\add(\X\cup\{Y\})$.
\end{lem}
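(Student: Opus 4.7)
The plan is to dualize the proof of Lemma~\ref{neq} by interchanging the roles of $\M_\X$ and $\N_\X$. Concretely, I would take an arbitrary indecomposable object $Y_0 \in \M_\X\backslash\X$ and show that $Y_0 \cong Y$; since $\X \subseteq \M_\X$, this will force $\M_\X = \add(\X \cup \{Y\})$.

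To produce such an isomorphism, I would invoke Theorem~\ref{m-pair} together with Lemma~\ref{mutation} applied to $Y_0$. This yields a triangle
\[
Z_0 \xrightarrow{z_0} X_0 \xrightarrow{x_0} Y_0 \xrightarrow{y_0} Z_0[1]
\]
in which $X_0 \in \X$, $x_0$ is a minimal right $\X$-approximation, $y_0$ factors through $\R[1]$, and $Z_0 \in \N_\X\backslash\X$ is indecomposable. The hypothesis $\N_\X = \add(\X \cup \{Z\})$ with $Z \notin \X$ indecomposable then forces $Z_0 \cong Z$, since $Z$ is (up to isomorphism) the unique indecomposable of $\N_\X\backslash\X$.

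It remains to identify this triangle with the one in the statement, which will give $Y_0 \cong Y$. The crucial observation is that $z_0$ is in fact a minimal left $\X$-approximation of $Z_0$: it is a left $\X$-approximation by Lemma~\ref{lem3}(1) (applied using the fact that $y_0$ factors through $\R[1]$), and its minimality follows because any nontrivial splitting $X_0 = X_0' \oplus X_0''$ with $z_0 = \svecv{z_0'}{0}$ would force a decomposition $Y_0 \cong Y_0' \oplus X_0''$, contradicting the indecomposability of $Y_0$ together with $Y_0 \notin \X$. By the uniqueness up to isomorphism of minimal left $\X$-approximations and the identification $Z_0 \cong Z$, the triangle above is isomorphic to the triangle $Z \xrightarrow{z} X \to Y \to Z[1]$ from the statement, so $Y_0 \cong Y$. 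The one point that needs care is precisely this minimality of $z_0$; it is the direct dual of the indecomposability/minimality argument in the proof of Lemma~\ref{mutation}(2).
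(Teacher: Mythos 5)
Your proposal is correct and follows essentially the paper's route: the paper proves Lemma \ref{neq} by taking an arbitrary indecomposable $Z_0\in\N_\X\backslash\X$, producing its mutation triangle via Theorem \ref{m-pair}, and matching it with the given one, and then simply asserts Lemma \ref{neq-D} ``dually'' --- which is exactly the argument you carry out (arbitrary $Y_0\in\M_\X\backslash\X$, identify $Z_0\cong Z$, match the two minimal-approximation triangles). Your extra care in checking that $z_0$ is a \emph{minimal left $\X$-approximation} (via Lemma \ref{mutation}(2), noting that a minimal left $\M_\X$-approximation landing in $\X$ is a minimal left $\X$-approximation) is precisely the point the paper leaves implicit, and it is handled correctly.
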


The following result shows that any almost complete two-term weak $\R[1]$-cluster tilting subcategory has exactly two completions.

\begin{thm}\label{main1}
Let $\X$ be an almost complete two-term weak $\R[1]$-cluster tilting subcategory. Then $\M_\X$ and $\N_\X$ are completions of $\X$. Moreover, if $\U$ is a two-term weak $\R[1]$-cluster tilting subcategory which contains $\X$, then $\U=\M_\X$ or $\U=\mathcal N_{\X}$.
\end{thm}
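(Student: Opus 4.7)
The plan is to prove the theorem in three stages: first, identify the given completion $\mathcal U_0:=\add(\X\cup\{W\})$ with either $\M_\X$ or $\N_\X$; second, use Lemmas \ref{neq} and \ref{neq-D} to conclude that both $\M_\X$ and $\N_\X$ are completions; and third, show that any two-term weak $\R[1]$-cluster tilting subcategory containing $\X$ equals one of them.

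For the first stage, $\mathcal U_0$ is $\R[1]$-functorially finite by Lemma \ref{lem3}, so each $R\in\R$ admits a minimal left $\mathcal U_0$-approximation $R\to U_R$ inside a triangle $R\to U_R\to V_R\to R[1]$ with $V_R\in\mathcal U_0$; write $U_R=X_R\oplus W^{n_R}$ with $X_R\in\X$, and dually consider minimal right $\mathcal U_0$-approximations of $R[1]$. If $n_R=0$ for every $R$, then $R\to U_R$ is also a minimal left $\X$-approximation, the triangle is exactly of the form used in the construction of $\M_\X$ (Proposition \ref{facmax}), so $V_R\in\M_\X$; combined with $V_R\in\mathcal U_0$ and $\X\subseteq\mathcal U_0$ this yields $\M_\X\subseteq\mathcal U_0$, whence $\mathcal U_0=\M_\X$ by the maximality of $\M_\X$ (Remark \ref{r0}). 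Dually, if every minimal right $\mathcal U_0$-approximation of $R[1]$ lies in $\X$, then $\mathcal U_0=\N_\X$. The remaining mixed case produces nonzero maps $R\to W$ and $W\to R'[1]$, whose composite vanishes by rigidity of $\R$; combining this with the $\R[1]$-rigidity of $\mathcal U_0$ and the octahedral axiom applied to the two approximation triangles, a contradiction is derived, crucially using that $W$ is the unique indecomposable in $\mathcal U_0\setminus\X$.

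Once $\mathcal U_0\in\{\M_\X,\N_\X\}$, say $\M_\X=\mathcal U_0=\add(\X\cup\{W\})$, Lemma \ref{neq} produces an indecomposable $Z\in\N_\X\setminus\X$ via the mutation triangle $Z\to X\to W\to Z[1]$, with $\N_\X=\add(\X\cup\{Z\})$; so both $\M_\X$ and $\N_\X$ are completions of $\X$ (the dual case uses Lemma \ref{neq-D}). For the uniqueness claim, let $\U\supseteq\X$ be any two-term weak $\R[1]$-cluster tilting subcategory and set $Y:=W$. The extremal subcases of the first-stage analysis apply to $\U$ verbatim, yielding $\M_\X\subseteq\U$ or $\N_\X\subseteq\U$ and forcing $\U=\M_\X$ or $\U=\N_\X$ by maximality. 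As a consistency check, the mutation triangle $Z\to X\to Y\to Z[1]$ with $Y\to Z[1]$ factoring through $\R[1]$ (Theorem \ref{m-pair} and Lemma \ref{mutation}), combined with $[\R[1]](\U,\U[1])=0$, precludes $Y$ and $Z$ from both lying in $\U$: otherwise the triangle would split and $Y$ would be a direct summand of $X\in\X$, contradicting $Y\notin\X$. The principal obstacle is the exclusion of the mixed case in Stage 1 and its adaptation to a general $\U$ in Stage 3; this requires the $\R[1]$-rigidity hypotheses together with a careful octahedral computation, and is where the almost-completeness assumption (uniqueness of the extra indecomposable) is essentially used.
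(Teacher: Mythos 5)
Your first stage follows the paper's strategy (locate $W$ on one side of an approximation triangle, conclude $\mathcal U_0=\M_\X$ or $\mathcal U_0=\N_\X$, then invoke Lemmas \ref{neq} and \ref{neq-D}), but the two places you yourself flag as ``the principal obstacle'' are genuine gaps, not deferred technicalities. In Stage 1, the mixed-case contradiction is never derived: you assert that nonzero maps $R\to W$ and $W\to R'[1]$ plus ``a careful octahedral computation'' yield a contradiction, but no such computation is given. This gap is repairable without any octahedron: if $W$ is a summand of the source of a minimal left $\mathcal U_0$-approximation of some $R$, Lemma \ref{lem2} forces the cone into $\X$, the third map of that triangle is then a right $\X$-approximation of $R[1]$, so $W\in\N_\X$ and $\mathcal U_0=\N_\X$ by maximal rigidity; the other half of the mixed hypothesis gives $\mathcal U_0=\M_\X$ symmetrically, and the two together contradict Corollary \ref{capcap}. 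The paper sidesteps the case split entirely by fixing one triangle $R\to L^1_R\to L^2_R\to R[1]$ in which $W$ occurs and using Lemma \ref{lem2} to see that $W$ lies on exactly one side of that triangle, which already yields the (inclusive) disjunction.

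The serious gap is Stage 3. The decomposition $U_R=X_R\oplus W^{n_R}$ that drives Stage 1 presupposes that $W$ is the \emph{only} indecomposable of the ambient subcategory outside $\X$; for an arbitrary two-term weak $\R[1]$-cluster tilting $\U\supseteq\X$ this is exactly what is not known in advance, so the extremal subcases do not ``apply verbatim'', and in the mixed situation for $\U$ (a left-approximation source and a right-approximation source each containing possibly different non-$\X$ summands) your method produces no contradiction. The paper's uniqueness argument is of a different nature: assuming $\U\neq\M_\X$ and $\U\neq\N_\X$, it first establishes $[\R](\U[-1],\M_\X)=0$, then takes an indecomposable $U_0\in\U\setminus\X$, produces via Lemma \ref{lem3} a minimal left $\M_\X$-approximation triangle $U_0\to M_1\to M_2\to U_0[1]$ whose connecting map factors through $\R[1]$, and shows that the unique indecomposable $Y\in\M_\X\setminus\X$ would have to be a direct summand of both $M_1$ and $M_2$, which Lemma \ref{lem2} forbids. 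You need an argument of this kind; as written, the ``moreover'' part of the theorem is unproved.
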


\begin{proof}
By definition, $\X$ admits a two-term weak $\R[1]$-cluster tilting subcategory $\mathcal L=\add(\X\cup\{W\})$ where $W\notin \X$ is indecomposable. By Lemmas \ref{lem3} and \ref{lem4}, every object $R\in \R$ admits a triangle $R\xrightarrow{f} L^1_R\to L^2_R\xrightarrow{r} R[1]$ such that $L^1_R,L^2_R\in \mathcal L$ and $f$ is a minimal left $\mathcal L$-approximation. If for every $R$, $L^1_R,L^2_R\in \mathcal X$, then by Lemma \ref{lem4}, $\X$ itself becomes two-term weak $\R[1]$-cluster tilting, a contradiction. Hence $W$ must appear in some triangle $R\xrightarrow{f} L^1_R\to L^2_R\xrightarrow{r} R[1]$. Since $f$ is left minimal, by Lemma \ref{lem2}, $L^1_R$ and $L^2_R$ do not have common direct summands. Hence $W$ can not be a direct summand of both $L^1_R$ and $L^2_R$. If $W$ is a direct summand of $L^1_R$, then $W\in \mathcal N_\X$; if $W$ is a direct summand of $L^2_R$, then $W\in \M_\X$. Hence $\mathcal L=\M_\X$ or $\mathcal L=\mathcal N_{\X}$. Then by Lemma \ref{neq} and Lemma \ref{neq-D}, $\M_\X$ and $\N_\X$ are completions of $\X$.

Let $\U$ be a two-term weak $\R[1]$-cluster tilting subcategory which contains $\X$. Assume $\U\neq \M_\X$ and $\U\neq \mathcal N_{\X}$. Let $\mathcal M_\X=\add(\X\cup\{Y\})$ where $Y\notin \X$ is indecomposable. Then there is an object $R_0\in R$ which admits a triangle
$$R_0\to X_0\xrightarrow{x_0} M_0\xrightarrow{m_0} R_0[1]$$
where $X_0\in \X$, $M_0\in \M_\X$ and $Y$ is a direct summand of $M_0$. For any object $U\in \U$ and any morphism $\alpha: U[-1]\to R'\xrightarrow{r'} M_0$ with $R'\in \R$, we have $m_0r'=0$. Then there is a morphism $x':R'\to X_0$ such that $x_0x'=r'$. Hence $\alpha=0$, which implies that $[\R](\U[-1],\M_\X)=0$.

%There is an indecomposable object $R_0\in R$ which admits a triangle
%$$R_0\to U_1\to U_2\to R_0[1]$$
%where $U_1,U_2\in \U\backslash \X$. By Lemma \ref{alter} $R_0$ also admits a triangle
%$$R_0\to X_0\to M_0\to R_0[1]$$
%where $X_0\in \X$, $M_0\in \M_\X$ and $Y$ is a direct summand of $M_0$. Then we have the following commutative diagram
%$$\xymatrix{
%U_2[-1] \ar[r] \ar@{=}[d] &R_0 \ar[r] \ar[d] &U_1 \ar[r] \ar[d] &U_2 \ar@{=}[d]\\
%U_2[-1] \ar[r]^0 &X_0 \ar[r] \ar[d] &X_0\oplus U_2 \ar[r] \ar[d]^{\gamma} &U_2\\
%&M_0 \ar@{=}[r] \ar[d] &M_0 \ar[d]^{m_0}\\
%&R_0[1] \ar[r] &U_1[1]
%}
%$$
%Since $m_0$ factors through $R_0[1]$, for any object $U\in \U$ and any morphism $\alpha: U[-1]\to R'\xrightarrow{r'} M_0$, we have $m_0r'=0$. Then there is a morphism $\beta:R'\to X_0\oplus U_2$ such that $\gamma\beta=r'$. Hence $\alpha=0$, which implies that $[\R](\U[-1],\M_\X)=0$.

Now let $U_0$ be an indecomposable object in $\U\backslash \X$. It admits a triangle
$$U_0[-1]\to R_1\xrightarrow{r_1} R_2\to U_0$$
wit $R_1,R_2\in \R$. Since $[\R](\U[-1],\M_\X)=0$, $\Hom_\C(r_1,M)$ is surjective for any $M\in \M_\X$ by Lemma \ref{tau1}. $R_2$ admits a triangle
$$R_2\xrightarrow{x_2}X_2\to M_2'\to R_2[1]$$
where $x_2$ is a left $\X$-approximation and $M_2'\in \M_\X$. Then we have the following commutative diagram
$$\xymatrix{
&M_2'[-1] \ar@{=}[r] \ar[d] &M_2'[-1] \ar[d]\\
R_1 \ar[r]^r \ar@{=}[d] &R_2 \ar[r] \ar[d]^{x_2} &U_0 \ar[r] \ar[d]^{u'} &R[1] \ar@{=}[d]\\
R_1 \ar[r]^{r_1} &X_2 \ar[r] \ar[d] &M_1' \ar[r] \ar[d] &R[1]\\
&M_2' \ar@{=}[r] &M_2'
}
$$
where $r_1$ is a left $\X$-approximation. Then $M_1'\in \M_\X$. By Lemma \ref{lem3}, $U_0$ admits a triangle
$$U_0\xrightarrow{u_0} M_1 \xrightarrow{m_1} M_2\xrightarrow{m_2} U_0[1]$$
where $u_0$ is a left minimal $\M_\X$-approximation and $m_2$ factors through $\R[1]$. If $M_2\in \X$, then $m_2=0$, which implies that $U_0\in \M_\X$, a contradiction. Hence $Y$ is a direct summand of $M_2$. If $M_1\in \X$, then by Lemma \ref{lem3}, $m_1$ becomes a right $\X$-approximation of $M_2$. Then by Theorem \ref{m-pair}, $U_0\in \N_\X$, a contradiction. Hence $Y$ has to be a direct summand of $M_1$. But by Lemma \ref{lem2}, $M_1$ and $M_2$ have no common direct summands, a contradiction. Hence $\U$ has to be $\M_\X$ or $\N_\X$.
\end{proof}

%As a direct consequence of Theorem \ref{main1},  we obtain the following important result.
%
%\begin{cor}{\rm \cite[Theorem 3.5]{IY}}
%Let $\C$ be a $2$--Calabi-Yau triangulated category. Then
%any almost complete cluster tilting subcategory $\X$ of\hspace{1mm} $\C$ is
%contained in exactly two cluster tilting subcategories $\mathcal M_{\X}$ and $\mathcal N_{\X}$ of \hspace{1mm}$\C$.
%\end{cor}
%
%\proof In any $2$--Calabi-Yau triangulated category, by \cite[Lemma 3.5]{YZZ},
%we know that $\X$ is $\R[1]$-rigid if and only if $\X$ is rigid. Thus, this follows from Theorem \ref{main1}.  \qed

\section{$\tau$-tilting theory in functor categories}

From this section, we denote $\RR$ by $\A$. We assume that $\A/\R[1]$ is abelian. Note that when $\R$ is contravariantly finite, $\A/\R[1]$ is the heart of a cotorsion pair, hence is abelian, see \cite[Theorem 6.4]{Na}.
For convenience, denote $\A/\R[1]$ by $\overline {\mathcal A}$.  For any subcategory $\mathcal D\subseteq \A$, we denote its image through canonical quotient functor $\pi: \A\to \overline \A$ by $\overline {\mathcal D}$.

\vspace{2mm}

By \cite[Proposition 6.2]{IY}, the following functor
\begin{align*}
\mathbb{H}\colon &\C\longrightarrow \Mod \R\\
&M \longmapsto\Hom_\C(-,M)|_\R
\end{align*}
induces an equivalence $\overline \A\simeq \mod \R$.
\vspace{2mm}

%To prove our main results, we need the following some preparations.

\begin{lem}\label{r2}
Let $A\xrightarrow{f} B\xrightarrow{g} C\xrightarrow{h} A[1]$ be a triangle such that $A,B,C\in \mathcal A$ and ${\rm Hom}_{\C}(\R,h)=0$. Then we have an exact sequence $A\xrightarrow{\overline f} B\xrightarrow{\overline g} C\to 0$.
\end{lem}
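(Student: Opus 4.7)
The plan is to reduce the statement, via the equivalence $\mathbb{H}\colon \overline{\A}\xrightarrow{\simeq} \mod \R$ recorded above from \cite[Proposition 6.2]{IY}, to verifying exactness of a sequence in $\mod \R$. Since an equivalence between abelian categories preserves and reflects cokernels and kernels, it suffices to establish that
$$\mathbb{H}(A)\xrightarrow{\mathbb{H}(f)} \mathbb{H}(B)\xrightarrow{\mathbb{H}(g)} \mathbb{H}(C)\to 0$$
is exact in $\mod\R$, and then transport this back to $\overline{\A}$ to conclude that $\overline{A}\xrightarrow{\overline f}\overline{B}\xrightarrow{\overline g}\overline{C}\to 0$ is exact.

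To verify the exactness in $\mod\R$, I would fix an arbitrary $R\in\R$ and apply the covariant functor $\Hom_\C(R,-)$ to the given triangle. Because $\Hom_\C(R,-)$ is cohomological on the triangulated category $\C$, this yields the long exact sequence
$$\Hom_\C(R,A)\xrightarrow{(R,f)} \Hom_\C(R,B)\xrightarrow{(R,g)} \Hom_\C(R,C)\xrightarrow{(R,h)} \Hom_\C(R,A[1]).$$
The hypothesis $\Hom_\C(\R,h)=0$ says exactly that the rightmost map is zero, which forces $(R,g)$ to be surjective and forces exactness at $\Hom_\C(R,B)$. As this holds for every $R\in\R$, the sequence of functors $\mathbb{H}(A)\to\mathbb{H}(B)\to\mathbb{H}(C)\to 0$ is exact pointwise, hence exact in $\mod\R$.

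I do not anticipate a genuine obstacle: the only step that requires care is to note that the functor $\mathbb{H}$ factors through the quotient $\A\to\overline{\A}$ (since $\Hom_\C(R,R'[1])=0$ kills morphisms factoring through $\R[1]$), so that $\mathbb{H}(f)$ and $\mathbb{H}(g)$ really coincide with the images of $\overline{f}$ and $\overline{g}$ under the equivalence. With this identification the exactness in $\mod\R$ transfers to exactness in $\overline{\A}$, giving the claim.
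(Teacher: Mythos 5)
Your proposal is correct and follows essentially the same route as the paper: apply the cohomological functor $\Hom_{\C}(\R,-)$ to the triangle, use the hypothesis $\Hom_{\C}(\R,h)=0$ to get exactness and surjectivity at the level of $\mod\R$, and transport back through the equivalence $\overline{\A}\simeq\mod\R$. Your added remark that $\mathbb{H}$ kills morphisms factoring through $\R[1]$ (so that $\mathbb{H}(f)$, $\mathbb{H}(g)$ really correspond to $\overline{f}$, $\overline{g}$) is a detail the paper leaves implicit but is the right thing to check.
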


\proof
Apply the functor ${\rm Hom}_{\C}(\R,-)$ to the triangle
 $$A\xrightarrow{f} B\xrightarrow{g} C\xrightarrow{h} A[1],$$
  we have the follow exact sequence:
  $${\rm Hom}_{\C}(\R,A)\xrightarrow{{\rm Hom}_{\C}(\R,f)}{\rm Hom}_{\C}(\R,B)
  \xrightarrow{{\rm Hom}_{\C}(\R,g)}{\rm Hom}_{\C}(\R,C)\xrightarrow{{\rm Hom}_{\C}(\R,~h)=0}{\rm Hom}_{\C}(\R,A[1]).$$
By the equivalence $\overline A\simeq \mod \R$, we can get the desired exact sequence.
             \qed

\begin{lem}\label{proj}
$\overline\R$ is the subcategory of enough projective objects in $\overline \A$.
\end{lem}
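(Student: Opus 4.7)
The plan is to verify three items: (a) every $\overline{R}$ with $R\in\R$ is projective in $\overline{\A}$; (b) every object $\overline{M}\in\overline{\A}$ admits an epimorphism from some $\overline{R}$ with $R\in\R$; and (c) every projective object of $\overline{\A}$ is isomorphic to some $\overline{R}$ with $R\in\R$. Together these give exactly the claim that $\overline{\R}$ is the subcategory of projectives and that $\overline{\A}$ has enough of them.

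For (a) I would exploit the equivalence $\mathbb{H}\colon \overline{\A}\xrightarrow{\simeq}\mod\R$ recalled from \cite[Proposition 6.2]{IY}. Under $\mathbb{H}$, an object $R\in\R$ is sent to the representable functor $\Hom_{\C}(-,R)|_{\R}$, which is projective in $\mod\R$ by the Yoneda lemma. Transporting back along the equivalence shows that $\overline{R}$ is projective in $\overline{\A}$.

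For (b), let $M\in\A=\R\ast\R[1]$, and pick a defining triangle $R_1\to R_0\xrightarrow{g} M\xrightarrow{h} R_1[1]$ with $R_0,R_1\in\R$. Because $\R$ is rigid, $\Hom_{\C}(\R,R_1[1])=0$, so $\Hom_{\C}(\R,h)=0$. Lemma \ref{r2} then produces an exact sequence $\overline{R_1}\to\overline{R_0}\xrightarrow{\overline{g}}\overline{M}\to 0$ in $\overline{\A}$, so $\overline{g}$ is the desired epimorphism.

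For (c), suppose $P\in\overline{\A}$ is projective. By (b) there is an epimorphism $\overline{R}\to P$, which splits by projectivity of $P$, exhibiting $P$ as a direct summand of $\overline{R}$ in $\overline{\A}$. Under $\mathbb{H}$, $P$ corresponds to a summand of the representable $\Hom_{\C}(-,R)|_{\R}$ in $\mod\R$; since $\R$ is Krull--Schmidt and closed under direct summands, such a summand must itself be representable by some $R'\in\R$, hence $P\cong\overline{R'}\in\overline{\R}$. The only subtle point in the whole argument is precisely this last step—that a summand of a representable is again representable—which rests on combining the Krull--Schmidt decomposition of $R$ into indecomposables with the standing hypothesis that $\R$ is closed under direct summands; everything else is a direct application of Yoneda together with Lemma \ref{r2}.
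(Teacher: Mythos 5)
Your proposal is correct and follows essentially the same route as the paper: the ``enough'' part is verbatim the paper's argument (a defining triangle $R_1\to R_0\to M\to R_1[1]$ plus Lemma \ref{r2}), and your Yoneda/equivalence argument for projectivity of $\overline{R}$ is just a repackaging of the paper's direct lifting of maps $R\to B$ along an epimorphism, since $\Hom_{\overline\A}(\overline R,-)\cong\Hom_{\C}(R,-)|_{\A}\cong\mathbb{H}(-)(R)$. Your part (c), identifying all projectives of $\overline\A$ with objects of $\overline\R$, goes beyond what the paper's proof records but is a correct addition: it rests on $\R$ being Krull--Schmidt and closed under direct summands, so that summands of representables in $\mod\R$ are again representable.
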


\begin{proof}
Let $\overline f:A\to B$ be an epimorphism in $\overline \A$. Then $\Hom_{\C}(R,A)\xrightarrow{ {\rm Hom}_{\C}(R,f)} {\rm Hom}_{\C}(R,B)$ is also an epimorphism for any object $R\in\R$. Hence we have the following commutative diagram
$$\xymatrix{
&R\ar[d] \ar@{.>}[ld]\\
A \ar[r]_{\overline f} &B\ar[r] &0,
}
$$
which implies that $\overline \R$ is a subcategory of projective objects in $\overline \A$.

On the other hand, any object $A\in \A$ admits a triangle $R_1\to R_0\to A\to R[1]$ with $R_0,R_1\in \R$, then by Lemma \ref{r2}, we have an epimorphism $R_0\to A$ in $\overline \A$. Hence $\overline \R$ is the subcategory of enough projective objects in $\overline \A$.
\end{proof}

Let $\widehat{\A}$ be an abelian category. For any subcategory $\X\subseteq \widehat{\A}$, denote by $\Fac \X$ the following subcategory of $\widehat{\A}$:
$$\{W\in  \widehat{\A}~\mid~ \text{there is an epimorphism }X\to W \text{ with }X\in  \X \}.$$

\begin{lem}\label{tau2}
Let $\U,\V$ be two subcategories of $\A$. Assume that $\U\cap \R[1]=0$. Then ${\rm Ext}^1_{\overline \A}(\overline \U,\Fac \overline \V)=0$ if and only if $[\R[1]](\U,\V[1])=0$.
\end{lem}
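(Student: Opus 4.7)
The plan is to prove the two implications separately; both rely on the characterization of $[\R[1]](\U,\V[1])=0$ from Lemma \ref{tau1}.

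For the forward implication, I use Lemma \ref{tau1} to obtain, for each $U\in\U$, a triangle $R_1\xrightarrow{f}R_2\xrightarrow{g}U\xrightarrow{u}R_1[1]$ such that $\Hom_{\C}(f,V)$ is surjective for every $V\in\V$. Lemma \ref{r2} (noting $\Hom_{\C}(\R,u)=0$ by rigidity) then yields a projective presentation $\overline{R_1}\xrightarrow{\overline{f}}\overline{R_2}\to\overline{U}\to 0$ in $\overline{\A}$, and because rigidity also identifies $\Hom_{\overline{\A}}(\overline{R_i},\overline{V})=\Hom_{\C}(R_i,V)$, every morphism $\overline{R_1}\to\overline{V}$ factors through $\overline{f}$. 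Projectivity of $\overline{R_1}$ extends this factorization property to any $W\in\Fac\overline{\V}$: given an epimorphism $\overline{V}\twoheadrightarrow W$, any $\overline{R_1}\to W$ lifts to $\overline{R_1}\to\overline{V}$ and hence factors through $\overline{f}$. A standard diagram chase on $0\to W\to E\to\overline{U}\to 0$---lifting $\overline{R_2}\to\overline{U}$ to $\tilde g\colon\overline{R_2}\to E$, observing that $\tilde g\overline{f}$ factors through $W$ as $\iota\gamma\overline{f}$, and replacing $\tilde g$ by $\tilde g-\iota\gamma$---then produces a splitting, so $\Ext^1_{\overline{\A}}(\overline{U},W)=0$.

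For the converse, take $\alpha\in[\R[1]](U,V[1])$; I show $\alpha=0$. Applying $\Hom_{\C}(-,R[1])$ to the triangle for $U$ and using rigidity rewrites $\alpha=\phi[1]\circ u$ for some $\phi\colon R_1\to V$. Form the triangle $V\xrightarrow{i}E\xrightarrow{p}U\xrightarrow{\alpha}V[1]$ in $\C$; applying the octahedral axiom to the composition $\phi\circ u[-1]\colon U[-1]\to R_1\to V$ exhibits $E$ in a triangle $R_2\to E\to\Cone(\phi)\to R_2[1]$. Since $\Cone(\phi)\in V\ast R_1[1]\subseteq\A\ast\R[1]=\A$ and $\R\ast\A=\A$ (both identities coming from rigidity of $\R$), we conclude $E\in\A$. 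As $\alpha$ factors through $\R[1]$, rigidity gives $\Hom_{\C}(\R,\alpha)=0$, and Lemma \ref{r2} yields an exact sequence $\overline{V}\xrightarrow{\overline{i}}\overline{E}\xrightarrow{\overline{p}}\overline{U}\to 0$ in $\overline{\A}$. Setting $I:=\Im\overline{i}$, a quotient of $\overline{V}$ hence lying in $\Fac\overline{\V}$, gives a short exact sequence $0\to I\to\overline{E}\to\overline{U}\to 0$ that splits by hypothesis; lifting the resulting section to $s\colon U\to E$, the difference $\theta:=p\circ s-1_U$ lies in $[\R[1]](U,U)$.

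The main obstacle is promoting this splitting in $\overline{\A}$ to a splitting in $\C$, and here the hypothesis $\U\cap\R[1]=0$ is essential. Decomposing $U=\bigoplus U_i$ into indecomposables in $\U$, none of which lies in $\R[1]$, $1_{U_i}$ cannot factor through $\R[1]$ (else $U_i$ would be a summand of some $R[1]$, forcing $U_i\in\R[1]$, a contradiction). Locality of $\End_{\C}(U_i)$ then places $[\R[1]](U_i,U_i)$ in its maximal ideal, and Krull--Schmidt extends this to $[\R[1]](U,U)\subseteq\operatorname{rad}\End_{\C}(U)$. Therefore $p\circ s=1_U+\theta$ is invertible in $\C$, so $p$ admits a genuine section, the triangle splits, and $\alpha=0$.
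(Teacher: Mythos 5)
Your proof is correct and follows essentially the same route as the paper's: both directions rest on the presentation supplied by Lemma \ref{tau1} together with Lemma \ref{r2}, and your converse hinges on the same cone construction (your $E$ is the paper's $A_2$), the same passage to a short exact sequence with kernel in $\Fac\overline{\V}$, and the same Krull--Schmidt/local-endomorphism-ring argument to lift the splitting from $\overline{\A}$ back to $\C$. The only differences are cosmetic: you split the extension by an explicit diagram chase where the paper invokes the $\Ext$ long exact sequence, and you prove $\alpha=0$ directly via the octahedral axiom where the paper shows that every $r_1\colon R_1\to V$ factors through $f$ and then cites Lemma \ref{tau1}.
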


\begin{proof}
We show the ``if" part first.

By Lemma \ref{tau1}, any object $U\in U$ admits a triangle
$$R_1\xrightarrow{f} R_2\xrightarrow{u_1} U\xrightarrow{u_2} R_1[1]$$
where $\Hom_{\C}(f,V)$ is surjective for any object $V\in \V$. By Lemma \ref{r2}, we have an exact sequence in $\overline \A$:
$$R_1\xrightarrow{\overline f} R_2\xrightarrow{\overline u_1} U\to 0.$$
Let $R_1\xrightarrow{\overline f_1} Y\xrightarrow{\overline f_2} R_2$ be an epic-monic factorization of $\overline f$. Let $\overline y:Y\to W$ be any morphism with $W\in \Fac \overline \V$. $W$ admits an epimorphism $\overline w:V\to W$ with $V\in \V$. Since $R_1$ is a projective object in $\overline \A$ by Lemma \ref{proj}, there is a morphism $\overline r_1:R_1\to V$ such that $\overline w\overline r_1=\overline y\overline f_1$. Since $\Hom_{\overline \A}(\overline f,V)$ is surjective, there is a morphism $\overline r_2:R_2\to V$ such that $\overline r_1=\overline r_2\overline f$. Then $\overline y\overline f_1=\overline w\overline r_2\overline f$. Since $\overline f_1$ is an epimorphism, we have $\overline y=\overline w\overline r_2\overline f_2$. Then we have the following exact sequence
$$\Hom_{\overline \A}(R_2,W)\to {\rm Hom}_{\overline \A}(Y,W)\xrightarrow{0} {\rm Ext}^1_{\overline \A}(U,W)\to {\rm Ext}^1_{\overline \A}(R_2,W)=0.$$
Hence ${\rm Ext}^1_{\overline \A}(U,W)=0$, which implies ${\rm Ext}^1_{\overline \A}(\overline \U,\Fac \overline \V)=0$.
\vspace{1mm}

Now we show the ``only if" part.
\vspace{1mm}

Let $U\in \U$ be an indecomposable object. It admits a triangle
$$R_1\xrightarrow{f} R_2\xrightarrow{u_1} U\xrightarrow{u_2} R_1[1].$$
By Lemma \ref{tau1}, we need to check that $\Hom_{\C}(f,V)$ is surjective for any object $V\in \V$. Let $r_1:R_1\to V$ be any morphism. Then we have the following commutative diagram.
$$\xymatrix{
U[-1] \ar[r] \ar@{=}[d] &R_1 \ar[r]^f \ar[d]^{r_1} &R_2 \ar[r]^{u_1} \ar[d] &U \ar@{=}[d]\\
U[-1] \ar[r]^u &V \ar[d] \ar[r]^v &A_2 \ar[r]^a \ar[d] &U\\
&A_1 \ar@{=}[r] \ar[d] &A_1\ar[d]\\
&R_1[1] \ar[r] &R_2[1]
}
$$
Note that $\overline a\neq 0$, otherwise $u_1=0$, which implies that $U_1\in \R[1]$, a contradiction. By Lemma \ref{r1}, $A_1\in \mathcal A$. Again by Lemma \ref{r1}, $A_2\in \mathcal A$. Since $u[1]$ factors through $R_1[1]$, we have an exact sequence
$$V\xrightarrow{\overline v} A_2\xrightarrow{\overline a} U\to 0$$
Let $V\xrightarrow{\overline v_1} W\xrightarrow{\overline v_2}A_2$ be an epic-monic factorization of $\overline v$. Since $W\in \Fac \overline \V$ and ${\rm Ext}^1_{\overline \A}(\overline \U,\Fac \overline \V)=0$, $\overline a$ becomes a retraction. But $U$ is indecomposable, which means $a$ is also a retraction. Then $u=0$ and $r_1$ factors through $f$.
\end{proof}

We can get the following corollary immediately.

\begin{cor}\label{ex}
Let $X,Y\in \A$. Then ${\rm Hom}_{\C}(X,Y[1])=0$ implies that ${\rm Ext}^1_{\overline\A}(X,Y)=0$
\end{cor}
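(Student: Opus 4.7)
The plan is to deduce this corollary as an essentially immediate application of Lemma \ref{tau2}. The statement in Lemma \ref{tau2} requires a hypothesis $\U \cap \R[1] = 0$, so the only preparatory step is to discard indecomposable summands of $X$ that lie in $\R[1]$, since those are killed by the quotient $\pi \colon \A \to \overline\A$.

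First, since $\C$ is Krull-Schmidt and $\A = \R*\R[1]$ is closed under summands by Lemma \ref{sum}, I would split off all indecomposable summands of $X$ that belong to $\R[1]$, writing $X = X_0 \oplus X_1$ with $X_1 \in \R[1]$ and $\add X_0 \cap \R[1] = 0$. Then $\overline{X_1} = 0$ in $\overline\A$ because $\R[1]$ is by definition the kernel ideal of $\pi$, so $\overline X = \overline{X_0}$, and it suffices to prove $\Ext^1_{\overline\A}(\overline{X_0}, \overline Y) = 0$.

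Next, I would set $\U := \add X_0$ and $\V := \add Y$. The hypothesis $\Hom_\C(X, Y[1]) = 0$ gives $\Hom_\C(X_0, Y[1]) = 0$, and since this is preserved under finite direct sums and summands, we obtain $\Hom_\C(U, V[1]) = 0$ for every $U \in \U$ and $V \in \V$. In particular $[\R[1]](\U, \V[1]) = 0$ trivially, because the ideal of maps factoring through $\R[1]$ is contained in the whole Hom group which already vanishes. Since $\U \cap \R[1] = 0$ by construction, Lemma \ref{tau2} applies and yields $\Ext^1_{\overline\A}(\overline\U, \Fac\overline\V) = 0$. Because the identity $\overline Y \to \overline Y$ is an epimorphism exhibiting $\overline Y \in \Fac\overline\V$, and $\overline{X_0} \in \overline\U$, we conclude $\Ext^1_{\overline\A}(\overline{X_0}, \overline Y) = 0$, hence $\Ext^1_{\overline\A}(\overline X, \overline Y) = 0$ as required.

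There is no real obstacle here: the content of the corollary is packed into Lemma \ref{tau2}, and the only mildly delicate point is the reduction to the case $\add X \cap \R[1] = 0$ so that Lemma \ref{tau2}'s hypothesis is met. One could also observe that the converse direction of Lemma \ref{tau2} is not needed, so the argument uses only the easier ``if'' direction proved there.
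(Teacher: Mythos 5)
Your proposal is correct and follows exactly the route the paper intends: the paper states this corollary ``immediately'' after Lemma \ref{tau2}, and your argument simply spells out that deduction (reduce to $\add X\cap\R[1]=0$, note that $\Hom_{\C}(X,Y[1])=0$ forces $[\R[1]](\add X,\add Y[1])=0$, apply the ``if'' direction of Lemma \ref{tau2}, and use $\overline Y\in\Fac\overline{\add Y}$). The details you supply, including the Krull--Schmidt splitting needed to meet the hypothesis $\U\cap\R[1]=0$, are all sound.
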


We have the following useful proposition.

\begin{prop}\label{tau3}
Let $\U$ be a subcategory of $\A$ which is closed under direct sums and direct summands. Assume that $\U\cap \R[1]=0$. Then the following are equivalent:
\begin{itemize}
\item[\rm (1)] $\U$ is two-term $\R[1]$-rigid.
\item[\rm (2)] ${\rm Ext}^1_{\overline \A}(\overline \U,\Fac \overline \U)=0$.
\item[\rm (3)] Any object $U\in \U$ admits a triangle
$$R_1\xrightarrow{f} R_2\to U\to R_1[1]$$
where $\Hom_{\C}(f,U')$ is surjective for any object $U'\in \U$.
\item[\rm (4)] Any object $U\in \U$ admits an exact sequence
$$R_1\xrightarrow{\overline f} R_2\to U\to 0$$
where ${\rm Hom}_{\overline \A}(\overline f,U')$ is surjective for any object $U'\in \U$.
\end{itemize}
\end{prop}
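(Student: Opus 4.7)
The plan is to prove Proposition \ref{tau3} by showing the cyclic chain of implications, with most of the work already done in Lemmas \ref{tau1}, \ref{r2}, and \ref{tau2}. The hypothesis $\U \cap \R[1] = 0$ is used only for the implication from (2), exactly as in Lemma \ref{tau2}.

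First, I would establish (1) $\Leftrightarrow$ (3) as a direct application of Lemma \ref{tau1} with $\V = \U$: by definition, every object $U \in \U \subseteq \R * \R[1]$ admits a triangle $R_1 \xrightarrow{f} R_2 \to U \to R_1[1]$, and Lemma \ref{tau1} says the rigidity condition $[\R[1]](\U, \U[1]) = 0$ is exactly equivalent to the surjectivity statement in (3). Next, (1) $\Leftrightarrow$ (2) is precisely Lemma \ref{tau2} applied with $\V = \U$, since the hypothesis $\U \cap \R[1] = 0$ is assumed.

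The main remaining step is (3) $\Leftrightarrow$ (4). Starting from a triangle $R_1 \xrightarrow{f} R_2 \xrightarrow{u_1} U \xrightarrow{h} R_1[1]$ with $R_1, R_2 \in \R$, I would apply Lemma \ref{r2}: the hypothesis $\Hom_\C(\R, h) = 0$ holds automatically, because $h$ lands in $R_1[1] \in \R[1]$ and $\R$ is rigid, so $\Hom_\C(\R, R_1[1]) = 0$. This produces the desired exact sequence $R_1 \xrightarrow{\overline f} R_2 \to U \to 0$ in $\overline \A$. Conversely, every such exact sequence arising from an object of $\A$ lifts to a triangle of this form.

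The only subtle point, which I regard as the main (though quite mild) obstacle, is matching the two surjectivity conditions. For $R \in \R$ and any object $U' \in \A$, one has $[\R[1]](R, U') = 0$ because any morphism $R \to R_0[1] \to U'$ starts with $\Hom_\C(R, R_0[1]) = 0$ by rigidity of $\R$. Consequently $\Hom_{\overline \A}(R, U') = \Hom_\C(R, U')$, and in particular the two maps
\[
\Hom_\C(f, U'): \Hom_\C(R_2, U') \to \Hom_\C(R_1, U'), \qquad \Hom_{\overline \A}(\overline f, U'): \Hom_{\overline \A}(R_2, U') \to \Hom_{\overline \A}(R_1, U')
\]
are canonically identified. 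Hence surjectivity of one is equivalent to surjectivity of the other, yielding (3) $\Leftrightarrow$ (4) and closing the cycle of equivalences.
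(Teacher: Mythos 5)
Your reductions of (1)$\Leftrightarrow$(3) to Lemma \ref{tau1}, of (1)$\Leftrightarrow$(2) to Lemma \ref{tau2}, and of (3)$\Rightarrow$(4) to Lemma \ref{r2} all match the paper, and your identification $\Hom_{\overline\A}(R,U')=\Hom_{\C}(R,U')$ for $R\in\R$ (via $[\R[1]](R,U')=0$ by rigidity of $\R$) is exactly the observation the paper uses to compare the two surjectivity conditions. However, there is a genuine gap in your treatment of (4)$\Rightarrow$(3): the sentence ``every such exact sequence arising from an object of $\A$ lifts to a triangle of this form'' is precisely the nontrivial claim, and it is where the paper does its real work. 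If you lift $\overline f$ to $f\colon R_1\to R_2$ and complete it to a triangle $R_1\xrightarrow{f}R_2\to U_0\to R_1[1]$, Lemma \ref{r2} only tells you that $U_0$ is a cokernel of $\overline f$ in $\overline\A$, hence $U_0\cong U$ \emph{in the quotient} $\overline\A$ --- not in $\C$. Using that $\C$ is Krull--Schmidt and that $U$ has no direct summand in $\R[1]$ (this is where the hypothesis $\U\cap\R[1]=0$ enters again, contrary to your claim that it is used only for condition (2)), one gets $U_0\cong U\oplus R'[1]$ for some $R'\in\R$; one must then use $\Hom_{\C}(R_2,R'[1])=0$ to split the triangle as a direct sum of $R_1'\xrightarrow{f'}R_2\to U\to R_1'[1]$ and $R'\to 0\to R'[1]\xrightarrow{1}R'[1]$, and check that $\Hom_{\C}(f',U')$ is still surjective. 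Without this decomposition the triangle you produce has third term $U\oplus R'[1]$ rather than $U$, so (3) is not yet established. The rest of your argument is sound, so filling in this splitting step is all that is needed.
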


\begin{proof}
By Lemma \ref{tau1} and Lemma \ref{tau2}, (1), (2) and (3) are equivalent to each other. By Lemma \ref{r2}, (3) implies $(4)$. We only need to show (4) implies (3).

Since for any $R\in \R$, ${\rm Hom}_\C(R,U')={\rm Hom}_{\overline \A}(R,U')$, ${\rm Hom}_{\overline \A}(\overline f,U')$ is surjective implies that ${\rm Hom}_{\C}(f,U')$ is surjective. We have a triangle $R_1\xrightarrow{f} R_2\to U_0\to R_1[1]$. It induces an exact sequence $R_1\xrightarrow{\overline f} R_2\to U_0\to 0$ in $\overline \A$. Hence $U\cong U_0$ in $\overline \A$. Since $U$ has no direct summand in $\R[1]$, $U_0=U\oplus R'[1]$ with $R'\in \R$. Since ${\rm Hom}_{\C}(R_2,R'[1])=0$, we can find that $R_1\xrightarrow{f} R_2\to U_0\to R_1[1]$ is the direct sum of triangle $R_1'\xrightarrow{f'} R_2\to U\to R_1'[1]$ and $R'\to 0\to R'[1]\xrightarrow{1} R'[1]$. Hence $U$ admits a triangle $R_1'\xrightarrow{f'} R_2\to U\to R_1'[1]$ where $R_1'\in \R$ and $\Hom_{\C}(f',U')$ is surjective for any object $U'\in \U$.
\end{proof}

\begin{defn}\label{taupair}
Let $\widehat{\A}$ be an abelian category with enough projectives. A subcategory $\U$ is called a $\tau$-rigid subcategory if ${\rm Ext}^1_{\widehat{\A}}(\U,\Fac\U)=0$; an object $U$ is called a $\tau$-rigid object if $\add U$ is a $\tau$-rigid subcategory. A pair $(\U,\mathcal Q)$ is called a $\tau$-rigid pair if
\begin{itemize}
\item[\rm (1)] $\U$ is $\tau$-rigid;
\item[\rm (2)] $\mathcal Q\subseteq \{ P \text{ is projective}~|~ {\rm Hom}_{\widehat{\A}}(P,\U)=0\}$.
\end{itemize}
A subcategory $\M$ is called a support $\tau$-tilting subcategory if
\begin{itemize}
\item[\rm (a)] $\M$ is $\tau$-rigid;
\item[\rm (b)] every projective object $P$ admits an exact sequence
$$P\xrightarrow{p} M_1\to M_2\to 0$$
where $M_1,M_2\in \M$ and $p$ is a left $\M$-approximation. An object $M$ is called a support $\tau$-tilting object if $\add M$ is a support $\tau$-tilting subcategory.
\end{itemize}
A pair $(\M,\E)$ is called a support $\tau$-tilting pair if it is a $\tau$-rigid pair, $\M$ is support $\tau$-tilting and $\E=\{ P \text{ is projective}~|~ {\rm Hom}_{\widehat{\A}}(P,\M)=0\}$. We say a support $\tau$-tilting pair $(\M,\E)$ contains a $\tau$-rigid pair $(\U,\mathcal Q)$ if $\U\subseteq \M$ and $\mathcal Q\subseteq \E$.

A $\tau$-rigid pair $(\U,\mathcal Q)$ is called an almost complete support $\tau$-tilting pair if there exists an indecomposable object $X$ such that $(\add(\U\cup \{X\}),\mathcal Q)$ or $(\U,\add(\mathcal Q\cup\{X\})$ is a support $\tau$-tilting pair.
\end{defn}

\begin{rem}
By definition, if $(\U,\mathcal Q)$ is a $\tau$-rigid pair in $\widehat{\A}$, so is $(\add\U,\add \mathcal Q)$. For convenience, we always assume that $(\U,\mathcal Q)=(\add\U,\add \mathcal Q)$ for a $\tau$-rigid pair.
\end{rem}

For the convenience of the readers, we give a proof for the following theorem (\cite[Theorem 4.4 and Theorem 4.5]{ZhZ}).

\begin{thm}\label{thm1}
Let $\X$ be a subcategory of $\A$. Then the following statements hold.
\begin{itemize}
\item[\rm (1)] $(\overline \X,\overline{\E})$ is a $\tau$-rigid pair if and only if $\X$ is two-term $\R[1]$-rigid and $\E\subseteq \R(\X)$;
  \vspace{1mm}

\item[\rm (2)] If $\X$ is two-term weak $\R[1]$-cluster tilting, then $(\overline \X,\overline {\R(\X)})$ is a support $\tau$-tilting pair;
    \vspace{1mm}

\item[\rm (3)] If $(\overline \X,\overline {\R(\X)})$ is a support $\tau$-tilting pair and $\R(\X)[1]\subseteq \X$, then $\X$ is two-term weak $\R[1]$-cluster tilting.
\end{itemize}
\end{thm}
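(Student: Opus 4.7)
The plan is to treat each item separately, using Proposition \ref{tau3}, Lemma \ref{proj}, and Lemma \ref{r2} as the main bridges between $\C$ and $\overline{\A}$. A common thread is that rigidity of $\R$ yields $\Hom_{\overline{\A}}(\overline{R},\overline{X})=\Hom_{\C}(R,X)$ for $R\in\R$ and $X\in\A$, and by Lemma \ref{proj}, $\overline{\R}$ is the subcategory of (enough) projectives of $\overline{\A}$, so every projective of $\overline{\A}$ has the form $\overline{R}$ with $R\in\R$.

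For (1), since $\overline{R[1]}=0$ for each $R\in\R$, I replace $\X$ by the subcategory $\X_{0}$ of indecomposable summands of $\X$ not in $\R[1]$, which satisfies $\X_{0}\cap\R[1]=0$ and $\overline{\X_{0}}=\overline{\X}$. Proposition \ref{tau3} then equates the two-term $\R[1]$-rigidity of $\X_{0}$ with $\Ext^{1}_{\overline{\A}}(\overline{\X},\Fac\overline{\X})=0$, and the pair condition $\E\subseteq\R(\X)$ is equivalent, via the Hom-identification above, to $\overline{\E}$ being projective with $\Hom_{\overline{\A}}(\overline{\E},\overline{\X})=0$, so both implications of (1) follow. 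For (2), applying (1) already yields that $(\overline{\X},\overline{\R(\X)})$ is a $\tau$-rigid pair, so only the support-$\tau$-tilting presentation of each projective needs verification. For $R\in\R$, the triangle $R\to X_{1}\to X_{2}\to R[1]$ coming from $\R\subseteq\X[-1]\ast\X$ descends, via Lemma \ref{r2} (whose hypothesis $\Hom_{\C}(\R,X_{2}\to R[1])=0$ is automatic by rigidity of $\R$), to an exact sequence $\overline{R}\to\overline{X_{1}}\to\overline{X_{2}}\to 0$, and Lemma \ref{lem3}(1) identifies $R\to X_{1}$ as a left $\X$-approximation, which descends to a left $\overline{\X}$-approximation.

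The core of the proof is (3), which I plan to attack by invoking Lemma \ref{lem4}: it suffices to show $\X$ is two-term $\R[1]$-rigid and $\R\subseteq\X[-1]\ast\X$. Rigidity is obtained by splitting $[\R[1]](\X,\X[1])$ according to whether each argument lies in $\X_{0}$ or in $\X\cap\R[1]$; the $(\X_{0},\X_{0})$ piece vanishes by Proposition \ref{tau3} applied to $\X_{0}$, pieces with an $\R[1]$-summand in the domain reduce to $\R(\X)\perp\X$ (after observing $\X\cap\R[1]\subseteq\R(\X)[1]$, which under the hypothesis $\R(\X)[1]\subseteq\X$ can be arranged by a routine cancellation), and the remaining piece vanishes by rigidity of $\R$. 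For the spanning condition, when $R\in\R(\X)$ the trivial triangle $R\to 0\to R[1]\to R[1]$ suffices because $R[1]\in\R(\X)[1]\subseteq\X$. Otherwise I lift the support-$\tau$-tilting sequence $\overline{R}\xrightarrow{\overline{p}}\overline{X_{1}}\to\overline{X_{2}}\to 0$ to $p\colon R\to X_{1}$ in $\C$ and extend it to a triangle $R\xrightarrow{p}X_{1}\xrightarrow{q}C\xrightarrow{\partial}R[1]$; then $\overline{C}\cong\overline{X_{2}}$, and Krull-Schmidt yields $C=C'\oplus C''$ with $C'\in\X$ and $C''\in\R[1]$.

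The main obstacle is then to show each indecomposable summand $R_{0}[1]$ of $C''$ lies in $\R(\X)[1]$, so that $C\in\X$ and $R\in\X[-1]\ast\X$. Given $Y\in\X$ and $\alpha\colon R_{0}[1]\to Y[1]$, with retraction $\pi\colon C\to R_{0}[1]$ and section $i\colon R_{0}[1]\to C$, the composition $\alpha\pi q\colon X_{1}\to Y[1]$ factors through $R_{0}[1]\in\R[1]$, hence lies in $[\R[1]](X_{1},Y[1])=0$ by the rigidity just established; this forces $\alpha\pi=\beta[1]\circ\partial$ for some $\beta\colon R\to Y$, and composition with $i$ gives $\alpha=\beta[1]\circ(\partial\circ i)$. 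Since $p$ is a left $\X$-approximation in $\C$ (descended from $\overline{p}$ via the Hom-identification plus rigidity of $\R$), $\beta=\gamma\circ p$ for some $\gamma\colon X_{1}\to Y$; then the triangle identity $p\circ\partial[-1]=0$ forces $\alpha[-1]=\gamma\circ p\circ(\partial i)[-1]=0$, giving $\Hom_{\C}(R_{0},Y)=0$ and $R_{0}\in\R(\X)$. Combining this with $\R(\X)[1]\subseteq\X$ places $C''$, hence $C$, in $\X$ and closes the argument via Lemma \ref{lem4}.
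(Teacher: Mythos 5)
Your proposal is correct and follows the paper's argument in all essentials: part (1) is Proposition \ref{tau3} applied to the summands of $\X$ outside $\R[1]$, part (2) combines Lemma \ref{lem4}, Lemma \ref{lem3}(1) and Lemma \ref{r2}, and part (3) lifts the projective presentation of $\overline R$ to a triangle $R\xrightarrow{p}X_1\to C\to R[1]$ and reduces everything to showing that the $\R[1]$-summands of $C$ lie in $\R(\X)[1]$, after which Lemma \ref{lem4} closes the argument. The one genuine divergence is how that last point is established: the paper notes that $p$ is a left $\X$-approximation, so $C$ lies in the completion $\M_\X$ of Proposition \ref{facmax}, and then invokes $\M_\X\cap\R[1]=\R(\M_\X)[1]=\R(\X)[1]$ via Lemmas \ref{cor0} and \ref{cap}; you instead run a direct diagram chase on the triangle, using $[\R[1]](X_1,Y[1])=0$ and the factorization of $\beta$ through the left $\X$-approximation $p$ to kill an arbitrary $\alpha\colon R_0[1]\to Y[1]$. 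Your chase is correct and is self-contained (it avoids $\M_\X$ and Lemma \ref{cap} entirely), at the cost of a few more lines. The only soft spot is the parenthetical claim that $\X\cap\R[1]\subseteq\R(\X)[1]$ ``can be arranged by a routine cancellation'': this inclusion is exactly what controls the component $[\R[1]](\X\cap\R[1],\X_0[1])$ of the rigidity of the full $\X$, and the hypotheses say nothing about summands of $\X$ lying in $\R[1]$ beyond $\R(\X)[1]$, since the quotient $\overline\A$ does not see them. You should either make that reduction explicit or note that the paper itself delegates precisely the same point to part (1) and Lemma \ref{lem0}; as it stands you are no less rigorous than the source, but this is the one step in both arguments that is asserted rather than proved.
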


\begin{proof}
Let $\X'$ be the subcategory of $\X$ consisting of the objects which do not have direct summands in $\R[1]$. Note that $\overline \X'=\overline \X$ and $\R(\X)=\R(\X')$.
\vspace{1mm}

(1) If $\X$ is two-term $\R[1]$-rigid, so is $\X'$. By Proposition \ref{tau3}, $\overline \X$ is $\tau$-rigid. Then by definition $(\overline \X,\overline \E)$ is a $\tau$-rigid pair when $\E\subseteq \R(\X)$. If $(\overline \X,\overline \E)$ is a $\tau$-rigid pair, then $\E\subseteq \R(\X)$. By Proposition \ref{tau3}, $\X'$ is two-term $\R[1]$-rigid. Then by Lemma \ref{lem0}, $\X$ is also two-term $\R[1]$-rigid.
\vspace{2mm}

(2) If $\X$ is two-term weak $\R[1]$-cluster tilting, by (1), Lemma \ref{lem3}, Lemma \ref{r2} and Lemma \ref{cor0}, $(\overline \X,\overline {\R(\X)})$ is a support $\tau$-tilting pair.
\vspace{2mm}

(3) If $(\overline \X,\overline \E)$ is a support $\tau$-tilting pair, Then $\X$ is two-term $\R[1]$-rigid by (1). Since each $R\in \R$ admits an exact sequence $R\xrightarrow{\overline x}X_1\to X_2\to 0$ where $X_1,X_2\in \X$ and $\overline x$ is a left $\overline \X$-approximation, we have a triangle $R\xrightarrow{x} X_1\to X_2'\to R[1]$. We can assume that $X_1,X_2\in \X'$. By Lemma \ref{r2}, $X_2\cong X_2'$ in $\overline \A$. Hence $X_2'\cong X_2\oplus R'[1]$ with $R'\in \R$. Note that $x$ is in fact a left $\X$-approximation, we have $X_2'\in \M_\X$. Hence $R'\in \R(\X)$ by Lemma \ref{cor0}. When $\R(\X)[1]\subseteq \X$, by Lemma \ref{lem4} $\X$ is two-term weak $\R[1]$-cluster tilting.
\end{proof}

\begin{lem}\label{7-0}
If $\overline\U$ is a $\tau$-rigid subcategory in $\overline \A$, then $\Fac\overline \U$ is closed under extensions.
\end{lem}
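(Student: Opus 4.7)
The plan is to prove this by the standard pullback argument exploiting the $\tau$-rigidity hypothesis, which in our setting gives the vanishing ${\rm Ext}^1_{\overline\A}(\overline\U,\Fac\overline\U)=0$ (by the very definition in Definition \ref{taupair}).

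Suppose that
$$0\longrightarrow A\xrightarrow{~\alpha~}B\xrightarrow{~\beta~}C\longrightarrow 0$$
is a short exact sequence in $\overline\A$ with $A,C\in \Fac\overline\U$. First I would pick epimorphisms $u_A\colon U_A\twoheadrightarrow A$ and $u_C\colon U_C\twoheadrightarrow C$ with $U_A,U_C\in\overline\U$. Then I would form the pullback of $\beta$ along $u_C$, yielding a commutative diagram with exact rows
$$\xymatrix@R=3ex{
0\ar[r]&A\ar@{=}[d]\ar[r]&P\ar[d]^{p}\ar[r]^{\pi}&U_C\ar[d]^{u_C}\ar[r]&0\\
0\ar[r]&A\ar[r]^{\alpha}&B\ar[r]^{\beta}&C\ar[r]&0
}$$
in which $p$ is an epimorphism (being a pullback of the epimorphism $u_C$).

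Next I would observe that the top row splits. Indeed, $U_C\in\overline\U$ and $A\in\Fac\overline\U$, so by the $\tau$-rigidity of $\overline\U$ we have ${\rm Ext}^1_{\overline\A}(U_C,A)=0$. Hence $P\cong A\oplus U_C$, and the epimorphism $u_A\oplus 1_{U_C}\colon U_A\oplus U_C\twoheadrightarrow A\oplus U_C\cong P$ composed with the epimorphism $p\colon P\twoheadrightarrow B$ produces an epimorphism $U_A\oplus U_C\twoheadrightarrow B$. Since $\overline\U$ is closed under finite direct sums, $U_A\oplus U_C\in\overline\U$, so $B\in\Fac\overline\U$, as required.

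There is no real obstacle here; the only subtlety is to make sure the pullback step is literally available in the abelian category $\overline\A$ and that $u_C$ being epi forces the pullback map $p$ to be epi, both of which are completely standard. The argument does not use anything specific to $\overline\A=\A/\R[1]$ beyond its being abelian, so it would apply verbatim to the abelian $\widehat{\A}$ of Section 6 as well.
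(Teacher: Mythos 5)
Your proposal is correct and takes essentially the same route as the paper: both arguments reduce to the single vanishing $\Ext^1_{\overline \A}(\overline \U,\Fac \overline \U)=0$ applied to a cover $U_C\in\overline\U$ of the quotient term, the paper simply lifting $u_C$ directly through $\beta$ (which is your pullback row splitting, unwound) and then concluding via the induced epimorphism $U_A\oplus U_C\to B$. No gaps.
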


\begin{proof}
Let $0\to V_1\xrightarrow{\overline v_1} V\xrightarrow{\overline v_2} V_2\to 0$ be a short exact sequence in $\overline \A$ with $V_1,V_2\in \Fac\overline\U$. Assume that $V_i$ ($i=1,2$) admits an epimorphism $U_i\xrightarrow{\overline u_i} V_i\to 0$. By Lemma \ref{tau2}, $\Ext^1_{\overline \A}(U_2,V_1)=0$, then there is a morphism $\overline v:U_2\to V$ such that $\overline v_2\overline v=\overline u_2$. Hence we have the following commutative diagram of short exact sequences:
$$\xymatrix{
0 \ar[r] &U_1 \ar[r]^-{\svecv{1}{0}} \ar[d]^{\overline u_1} &U_1\oplus U_2 \ar[r]^-{\svech{0}{1}} \ar[d]^-{\svech{\overline v_1}{\overline v}} &U_2 \ar[r] \ar[d]^{\overline u_2} &0\\
0 \ar[r] &V_1 \ar[r]^{\overline v_1} &V \ar[r]^{\overline v_2} &V_2 \ar[r] &0.
}
$$
Since $\overline u_i$ ($i=1,2$) is an epimorphism, $\svech{\overline v_1}{\overline v}$ is also an epimorphism. Hence $V\in \Fac\overline\U$.
\end{proof}

We introduce the following notions. Let $\widehat{\A}$ be an abelian category with enough projectives. For any subcategory $\T\subseteq \widehat{\A}$, denote
$$\{A\in \widehat{\A} ~|~ {\rm Ext}^1_{\widehat{\A}}(\U, \Fac (\add A))=0\} \text{ by } {^{\bot}(\tau\U)};$$
denote
$$\{A\in \widehat{\A} ~|~ {\rm Hom}_{\widehat{\A}}(\U,A)=0\} \text{ by } \U^{\bot}.$$

\begin{lem}\label{6-1}
Let $\X$ be a two-term $\R[1]$-rigid subcategory, $\E\subseteq \R(\X)$. Then ${^{\bot}}(\tau \overline \X)\cap \overline \E^{\bot}\supseteq \Fac \overline \X$.
\end{lem}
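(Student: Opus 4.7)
The plan is to take an arbitrary $A\in \Fac\overline{\X}$ and verify the two membership conditions $A\in \overline\E^{\bot}$ and $A\in {^{\bot}(\tau\overline\X)}$ separately. By definition of $\Fac$, there is an epimorphism $\overline x\colon \overline X\to A$ in $\overline\A$ for some $X\in \X$. Following the device used in the proof of Theorem \ref{thm1}, I would first pass to the subcategory $\X'\subseteq \X$ of objects without direct summands in $\R[1]$; note that $\overline{\X'}=\overline\X$ and $\R(\X)=\R(\X')$, so I may assume the chosen $X$ has no $\R[1]$-summand.

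For the inclusion $A\in \overline\E^{\bot}$: since $\E\subseteq \R(\X)$, for every $E\in \E$ we have $\Hom_{\C}(E,X)=0$, and hence $\Hom_{\overline\A}(\overline E,\overline X)=0$. By Lemma \ref{proj}, $\overline E\in \overline\R$ is projective in $\overline\A$, so any morphism $\overline e\colon \overline E\to A$ lifts through the epimorphism $\overline x$ to a morphism $\overline E\to \overline X$, which must be zero. Composing with $\overline x$ gives $\overline e=0$, so $A\in \overline\E^{\bot}$.

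For the inclusion $A\in {^{\bot}(\tau\overline\X)}$: I first observe that $\Fac(\add A)\subseteq \Fac\overline\X$, since for any epimorphism $A^n\twoheadrightarrow B$ the composition with the epimorphism $\overline X^n\twoheadrightarrow A^n$ (obtained from copies of $\overline x$) exhibits $B$ as a quotient of an object of $\overline\X$. Next, since $\X'$ is two-term $\R[1]$-rigid and satisfies $\X'\cap \R[1]=0$, Proposition \ref{tau3} yields $\Ext^1_{\overline\A}(\overline{\X'},\Fac\overline{\X'})=0$, that is, $\Ext^1_{\overline\A}(\overline{\X},\Fac\overline{\X})=0$. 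Combining the two facts gives $\Ext^1_{\overline\A}(\overline\X,\Fac(\add A))=0$, which by definition means $A\in {^{\bot}(\tau\overline\X)}$.

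I do not anticipate any real obstacle; the statement is essentially a direct unfolding of the definitions once Proposition \ref{tau3} (to get $\Ext^1$-vanishing against $\Fac\overline\X$) and Lemma \ref{proj} (to exploit projectivity of $\overline\R$) are in hand. The only mildly delicate point is ensuring that the $\tau$-rigidity identity from Proposition \ref{tau3} applies, which is handled by the standard trick of replacing $\X$ by $\X'$ without affecting $\overline\X$ or $\R(\X)$.
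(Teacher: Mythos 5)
Your proof is correct. The $\overline\E^{\bot}$ half is the same as the paper's (projectivity of $\overline\R$ from Lemma \ref{proj} plus $\Hom_{\C}(\E,\X)=0$ forces every map $\overline E\to A$ to vanish through the epimorphism $\overline X\to A$). For the ${^{\bot}}(\tau\overline\X)$ half you take a genuinely shorter route: you reduce to the single global vanishing $\Ext^1_{\overline\A}(\overline\X,\Fac\overline\X)=0$, which is exactly item (2) of Proposition \ref{tau3} applied to $\X'$, and then use the formal inclusion $\Fac(\add A)\subseteq\Fac\overline\X$ for $A\in\Fac\overline\X$. The paper instead unwinds the definition via Lemma \ref{tau2}, reducing membership of an indecomposable $W\in\Fac\overline\X$ in ${^{\bot}}(\tau\overline\X)$ to the condition $[\R](\X[-1],W)=0$, and verifies that by an explicit factorization argument through a triangle $U\to X'\to W\to U[1]$ presenting $W$ as a quotient of $\overline{X'}$. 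The two arguments rest on the same foundations (Proposition \ref{tau3} is itself a repackaging of Lemmas \ref{tau1} and \ref{tau2}), but yours avoids the diagram chase entirely and makes transparent that the only inputs are $\tau$-rigidity of $\overline\X$ and transitivity of $\Fac$; the paper's version is more self-contained at the level of morphisms in $\C$ and does not need $\X'$ to be closed under sums and summands as a hypothesis of an auxiliary proposition. The only points worth making explicit in a final write-up are that $\X'$ is indeed closed under direct sums and summands with $\X'\cap\R[1]=0$ (so Proposition \ref{tau3} applies) and that objects of $\add A$ are quotients of powers $A^n$, so that $\Fac(\add A)\subseteq\Fac\overline\X$; both are immediate.
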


\begin{proof}
If $\X\subseteq \R[1]$, $\overline \X=0$, hence ${^{\bot}}(\tau \overline \X)\cap \overline \E^{\bot}\supseteq \Fac \overline \X$. Now we assume that $X\nsubseteq \R[1]$.

Since $\overline \R$ is the subcategory of projective objects in $\overline \A$, any morphism from $\overline \R$ to $\Fac \overline \X$ factors through $\overline \X$. Hence $\overline \E^{\bot}\supseteq \Fac \overline \X$. Let $W\in \Fac \overline \X$ be any indecomposable non-zero object. By Lemma \ref{tau2}, to get that $W\in {^{\bot}}(\tau \overline \X)$, we only need to check that $[\R](X[-1],W)=0$ for any indecomposable object $X\in \X \backslash \R[1]$.

Since $W\in \Fac \overline \X$, then $W$ admits a triangle
$$U\to X'\xrightarrow{x'} W\xrightarrow{w} U[1]$$
where $X'\in\X$ and ${\rm \Hom}_{\C}(\R,w)=0$. Let $\alpha: X[-1]\xrightarrow{a_1} R \xrightarrow{a_2} W$ be any morphism in $[\R](X[-1],W)$. Since $wa_2=0$, there is a morphism $r:R\to X'$ such that $x'r=a_2$. Since $ra_1\in [\R](X[-1],X')=0$, we have $\alpha=0$. Thus ${^{\bot}}(\tau \overline \X)\supseteq \Fac \overline \X$.
\end{proof}

%
%We have the following corollary.
%
%\begin{cor}
%Let $\X$ be a two-term $\R[1]$-rigid subcategory such that $\X\nsubseteq \R[1]$. Assume $\X=\X'\cup \R'[1]$ where $\R'\subseteq \R$ and any object in $\X'$ has no direct summand in $\R[1]$. We also assume that $\X'$ is $\R[1]$-functorially finite. For convenience, we still use $\mathbf{P}({^{\bot}}(\tau \overline \X))$ to denote the subcategory in $\A$ which has the same object as the one in $\overline \A$. Assume $\mathbf{P}({^{\bot}}(\tau \overline \X))\cap \R[1]=0$. Then in $\A$, $\mathbf{P}({^{\bot}}(\tau \overline \X))$ is two-term weak $\R[1]$-cluster tilting. Moreover, ${\rm Hom}_{\C}(R,\mathbf{P}({^{\bot}}(\tau \overline \X)))\neq 0$ for any $R\in \R$.
%\end{cor}
%
%\begin{proof}
%Since $\overline \X=\overline \X'$, by Theorem \ref{thm2}, $\mathcal N_{\X'}=\mathbf{P}({^{\bot}}(\tau \overline \X))$. Hence $\mathbf{P}({^{\bot}}(\tau \overline \X))$ is two-term weak $\R[1]$-cluster tilting. By Lemma \ref{cap}, $\mathbf{P}({^{\bot}}(\tau \overline \X))\cap \R[1]=\mathcal N_{\X'}\cap \R[1]=\X'\cap \R[1]=0$. Then by Lemma \ref{cor0}, ${\rm Hom}_{\C}(R,\mathbf{P}({^{\bot}}(\tau \overline \X)))\neq 0$ for any $R\in \R$.
%\end{proof}

In summary, we have the following main result.

\begin{thm}\label{main}
Let $\X$ be an $\R[1]$-functorially finite two-term $\R[1]$-rigid subcategory. Denote $\X[-1]\cap\R$ by $\E$. If $(\overline{\X}, \overline\E)$ is not a support $\tau$-tilting pair in $\overline{\A}$, then it is contained in two support $\tau$-tilting
pairs $(\overline \M_\X, \overline{\R(\X)})$ and $(\overline \N_\X, \overline\E)$ such that
$$\Fac\overline\M_\X=\Fac \overline \X,~ \Fac\overline\N_\X={^\perp(\tau\overline{\X})}\cap{\overline\E^{\perp}}.$$
Moreover, if $\X$ is almost complete, then $(\overline \M_\X, \overline{\R(\X)})$ and $(\overline \N_\X, \overline\E)$ are the only support $\tau$-tilting pairs that contain $(\overline{\X}, \overline\E)$.
\end{thm}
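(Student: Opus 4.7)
The plan is to verify the three assertions of the theorem separately. First I invoke Propositions \ref{facmax} and \ref{NX} to produce two-term weak $\R[1]$-cluster tilting subcategories $\M_\X$ and $\N_\X$, both containing $\X$, and Theorem \ref{thm1}(2) to promote them to support $\tau$-tilting pairs $(\overline{\M_\X}, \overline{\R(\M_\X)})$ and $(\overline{\N_\X}, \overline{\R(\N_\X)})$. Lemma \ref{cap}(1) yields $\R(\M_\X) = \R(\X)$, and Lemma \ref{cap}(2) combined with Lemma \ref{cor0} applied to the two-term maximal $\R[1]$-rigid subcategory $\N_\X$ gives $\R(\N_\X)[1] = \N_\X \cap \R[1] = \X \cap \R[1] = \E[1]$, so $\R(\N_\X) = \E$. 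Since $\E \subseteq \R(\X)$ by Lemma \ref{cor0}, both pairs contain $(\overline{\X}, \overline\E)$.

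Next I handle the Fac identifications. The inclusion $\Fac\overline{\M_\X} \supseteq \Fac\overline\X$ is trivial, and for the reverse each non-$\X$ generator $V^R$ of $\M_\X$ fits in a triangle $R \to U^R \to V^R \to R[1]$ with $U^R \in \X$ from Proposition \ref{facmax}; rigidity of $\R$ kills the connecting morphism $V^R \to R[1]$ after applying $\Hom_\C(\R,-)$, so Lemma \ref{r2} gives an exact sequence $\overline R \to \overline{U^R} \to \overline{V^R} \to 0$ in $\overline\A$, exhibiting $\overline{V^R}$ as a quotient of an object of $\overline\X$. The inclusion $\Fac\overline{\N_\X} \subseteq {^\perp(\tau\overline\X)} \cap \overline\E^\perp$ is immediate from Lemma \ref{6-1} applied to $\N_\X$. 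For the reverse inclusion, given $W \in {^\perp(\tau\overline\X)} \cap \overline\E^\perp$, I lift it to some $B \in \A$ and apply Lemma \ref{tau2} to translate $\Ext^1_{\overline\A}(\overline\X, \Fac(\add W)) = 0$ into the vanishing $[\R[1]](\X, \V[1]) = 0$ on a class $\V$ of lifts. Then I propagate this vanishing to $\N_\X$ via the construction triangles $R \to V_R \to U_R \to R[1]$ of Proposition \ref{NX}: any morphism $V_R \to R'[1] \to B[1]$ with $R' \in \R$ and $B \in \V$ has the composite $R \to V_R \to R'[1]$ vanishing by rigidity of $\R$, hence factors through $V_R \to U_R$, and the induced map $U_R \to B[1]$ lies in $[\R[1]](U_R, B[1]) = 0$, so the whole composite vanishes. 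This shows $W \in {^\perp(\tau\overline{\N_\X})} \cap \overline\E^\perp$; combined with the identity $\Fac\overline{\N_\X} = {^\perp(\tau\overline{\N_\X})} \cap \overline\E^\perp$ for the support $\tau$-tilting pair $(\overline{\N_\X}, \overline\E)$, this places $W$ in $\Fac\overline{\N_\X}$.

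For uniqueness, let $(\overline\U, \overline\F)$ be a support $\tau$-tilting pair containing $(\overline\X, \overline\E)$; by definition $\F = \R(\U)$. The enlargement $\U' := \add(\U \cup \F[1])$ satisfies $\overline{\U'} = \overline\U$, $\R(\U') = \F$, and $\R(\U')[1] \subseteq \U'$, so Theorem \ref{thm1}(3) identifies $\U'$ as a two-term weak $\R[1]$-cluster tilting subcategory of $\C$ containing $\X$. When $\X$ is almost complete, Theorem \ref{main1} forces $\U' \in \{\M_\X, \N_\X\}$, and matching the support parts via the first paragraph places $(\overline\U, \overline\F)$ in $\{(\overline{\M_\X}, \overline{\R(\X)}), (\overline{\N_\X}, \overline\E)\}$.

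I expect the main obstacle to be the $\supseteq$ direction of the $\N_\X$ Fac identification, and in particular the auxiliary identity $\Fac\overline{\N_\X} = {^\perp(\tau\overline{\N_\X})} \cap \overline\E^\perp$ in the functor category $\overline\A$. The propagation of $\tau$-rigidity from $\overline\X$ to $\overline{\N_\X}$ via the triangles of Proposition \ref{NX} is a clean diagrammatic step, but the auxiliary identity is a Bongartz-style completion statement that will require lifting projective covers $\overline R \twoheadrightarrow W$ through the approximation triangles defining $\N_\X$, while exploiting the $\overline\E^\perp$ hypothesis to annihilate the obstruction morphisms into $R[1]$ that arise when extending these covers.
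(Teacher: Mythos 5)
Your overall architecture --- constructing $\M_\X$ and $\N_\X$ via Propositions \ref{facmax} and \ref{NX}, identifying the support parts through Lemma \ref{cap} and Lemma \ref{cor0}, the identification $\Fac\overline\M_\X=\Fac\overline\X$, and the uniqueness argument via Theorem \ref{thm1}(3) and Theorem \ref{main1} --- coincides with the paper's proof and is correct. The one genuine gap is the inclusion ${^\perp(\tau\overline{\X})}\cap\overline\E^{\perp}\subseteq\Fac\overline\N_\X$. You reduce it to the identity $\Fac\overline\N_\X={^\perp(\tau\overline{\N}_\X)}\cap\overline\E^{\perp}$, but that identity is not available at this stage: only the inclusion $\subseteq$ is known (Lemma \ref{6-1}), and the reverse inclusion is precisely an instance of what the theorem is asserting --- in the paper it appears only afterwards, as Corollary \ref{tau4}, deduced \emph{from} Theorem \ref{main}. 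Your closing paragraph shows you recognize this is the crux, but the sketch you offer (``lifting projective covers through the approximation triangles \dots annihilate the obstruction morphisms into $R[1]$'') is not carried out, so the proof is incomplete exactly where the work lies.

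The detour through ${^\perp(\tau\overline{\N}_\X)}$ is also unnecessary. The paper proves the inclusion directly: for an indecomposable $L\in{^\perp(\tau\overline{\X})}\cap\overline\E^{\perp}$, take a triangle $R_1\to R_0\to L\to R_1[1]$ and the approximation triangle $R_0\to N_0\to X_0\to R_0[1]$ of Proposition \ref{NX}, whose cone $X_0$ lies in $\X$ (not merely in $\N_\X$). The octahedron yields $L\to A\to X_0\xrightarrow{\alpha}L[1]$ with $\alpha$ factoring through $\R[1]$; writing $X_0=X_0'\oplus R_0'[1]$, the component on $X_0'$ vanishes because $[\R[1]](X_0',L[1])=0$ by Lemma \ref{tau2} applied to $L\in{^\perp(\tau\overline{\X})}$, and the component on $R_0'[1]$ vanishes because $R_0'\in\E$ and $L\in\overline\E^{\perp}$. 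Hence $A\cong L\oplus X_0$, and Lemma \ref{r2} gives an epimorphism $\overline{N}_0\to \overline{A}$ in $\overline\A$, so $L\in\Fac\overline\N_\X$. Since the obstruction already has source in $\X$, the hypothesis ${^\perp(\tau\overline{\X})}$ suffices, and your propagation of the vanishing from $\overline\X$ to $\overline{\N}_\X$ --- which is correct as far as it goes --- buys nothing: if you keep your route you must still supply essentially this diagram argument to establish the auxiliary identity.
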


\proof By Proposition \ref{facmax} and Proposition \ref{NX}, $\X$ is contained in $2$ two-term weak $\R[1]$-cluster tilting subcategories $\M_\X$ and $\mathcal N_\X$.

By Lemma \ref{cap}(1) and Lemma \ref{cor0}, we have
$$\M_\X\cap\R[1]=\R(\X)[1]\supseteq \E[1].$$
%Hence $(\overline \M_{\X}, \overline\E)$ and $(\overline \N_{\X}, \overline\E)$ are support $\tau$-tilting pairs that contain $(\overline \X, \overline\E)$ by Theorem \ref{thm1}.
%
%If $\E\subsetneq \R(\X)$, by Lemma \ref{cap}(1), we have
%$$\R(\X)[1]=\M_\X\cap \R[1]\supsetneq \E[1].$$
Then $(\overline \M_\X, \overline {\R(\X)})$ is a support $\tau$-tilting pair that contain $(\overline {\X}, \overline\E)$. On the other hand, by Lemma \ref{cap}(2), $\mathcal N_\X\cap\R[1]=\E[1]$. Hence $(\overline{\N}_{\X}, \overline\E)$ is also a support $\tau$-tilting pair that contains $(\overline{\X}, \overline\E)$ by Theorem \ref{thm1}.

Since $\overline \X\subseteq \overline \M_\X\subseteq \Fac \overline \X$, we have $\Fac \overline \M_\X=\Fac \overline \X$.

Since $\mathcal N_\X\cap\R[1]=\X\cap \R[1]$, by Lemma \ref{6-1}, $\Fac \overline {\mathcal N}_\X\subseteq {^{\bot}}(\tau \overline {\mathcal N}_\X)\cap \overline \E^{\bot}\subseteq {^{\bot}}(\tau \overline \X)\cap \overline \E^{\bot}$. Let $L\in {^{\bot}}(\tau \overline \X)\cap \overline \E^{\bot}$ be an indecomposable object.
$L$ admits a triangle $R_1\to R_0\to L\to R_1[1]$ with $R_1,R_0\in \R$. $R_0$ admits a triangle $R_0\to N_0\to X_0\to R_0[1]$ with $N_0\in \mathcal N_\X$ and $X_0\in \X$. Then we have the following commutative diagram.
$$\xymatrix{
R_1 \ar[r] \ar@{=}[d] &R_0 \ar[r] \ar[d] &L \ar[r] \ar[d] &R_1[1] \ar@{=}[d] \\
R_1 \ar[r] &N_0 \ar[r] \ar[d] &A \ar[r] \ar[d] &R_1[1]\\
&X_0 \ar@{=}[r] \ar[d] &X_0 \ar[d]^{\alpha}\\
&R_0[1] \ar[r] &L[1]
}
$$
Assume $X_0=X_0'\oplus R_0'[1]$, where $R_0'\in \R$ and $X_0'$ has no direct summand in $\R[1]$. Since $R_0'\in\E$, we have ${\rm Hom}_{\C}(R_0',L)=0$. By Lemma \ref{tau2}, $L\in {^{\bot}}(\tau \overline \X)$ implies that $[\R[1]](X_0',L[1])=0$. Hence $\alpha=0$, which implies $A=L\oplus X_0$. Since we have an epimorphism $N_0\to A$ in $\overline \A$, $L\in \Fac\overline {\mathcal N}_\X$.
%Any indecomposable object $N_0\in \mathcal N_\X\backslash \X$ admits a triangle
%$$X_1[-1]\to R_1\xrightarrow{r_1} N_1\xrightarrow{x_1} X_1$$
%where $N_1\in \mathcal N_{\X}$, $\X_1\in \X$ and $N_0$ is a direct summand of $N_1$. For any morphism $\alpha: N_1 \xrightarrow{n_1} R[1] \xrightarrow{l_1} L[1]$, since $n_1r_1=0$, there is a morphism $x:X_1\to R[1]$ such that $n_1=xx_1$. Assume $X_1=X_1'\oplus R_1'[1]$, where $R_1'\in \R$ and $X_1'$ has no direct summand in $\R[1]$. Since $R'\in\E$, we have ${\rm Hom}_{\C}(R_1',L)=0$. By Lemma \ref{tau2}, $L\in {^{\bot}}(\tau \overline \X)$ implies that $[\R[1]](X_1',L[1])=0$. Hence $l_1x_1=0$, which implies $\alpha=0$. Thus $L\in {^{\bot}}(\tau \overline {\mathcal N}_\X)$ and $\Fac \overline {\mathcal N}_\X={^{\bot}}(\tau \overline \X)\cap \overline \E^{\bot}$.

The ``Moreover" part is followed by Theorem \ref{main1} and Theorem \ref{thm1}.
\qed

\section{$\tau$-tilting theory in abelian categories}

\begin{defn}
A pair of subcategories $(\T,{\mathcal F})$ in an abelian category $\widehat{\A}$ is called a torsion pair if the following conditions are satisfied:
\begin{itemize}
\item[\rm (1)] ${\rm Hom}_{\widehat{\A}}(\T,\mathcal F)=0$;
\item[\rm (2)] any object $A\in \widehat{\A}$ admits a short exact sequence $0\to T\to A\to F\to 0$ where $T\in\T$ and $F\in {\mathcal F}$.
\end{itemize}
A subcategory $\T\subseteq  \widehat{\A}$ is called \emph{torsion class} if it admits a torsion pair $(\T,{\mathcal F})$.
\end{defn}

\begin{lem}\label{conf}
Let $\overline \X$ be a contravariantly finite $\tau$-rigid subcategory in $\overline \A$. Then $(\Fac \overline \X,\overline \X^{\bot})$ is a torsion pair. Moreover, if any object $R\in \R$ admits a left $\overline \X$-approximation, then $\Fac \overline \X$ is functorially finite.
\end{lem}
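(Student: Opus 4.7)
My plan is to verify the two torsion-pair axioms and then deduce the functorial finiteness. The $\Hom$-vanishing is direct: if $T\in\Fac\overline{\X}$ admits an epimorphism $\overline x\colon X\twoheadrightarrow T$ with $X\in\overline{\X}$, then any morphism $T\to F$ to an object $F\in\overline{\X}^\perp$ vanishes after precomposing with $\overline x$, so it vanishes itself since $\overline x$ is epic.

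For the canonical short exact sequence, I would use contravariant finiteness to pick, for each $A\in\overline{\A}$, a right $\overline{\X}$-approximation $\overline x_A\colon X_A\to A$ and set $T:=\Im(\overline x_A)$, $F:=A/T$. Clearly $T\in\Fac\overline{\X}$; the main step is showing $F\in\overline{\X}^\perp$. Given $f\colon X\to F$ with $X\in\overline{\X}$, I form the pullback
\[
\xymatrix{Y\ar[r]^{p_X}\ar[d]_{p_A}&X\ar[d]^{f}\\ A\ar[r]&F}
\]
which produces a short exact sequence $0\to T\to Y\to X\to 0$. Since $T\in\Fac\overline{\X}$ and $\overline{\X}$ is $\tau$-rigid, $\Ext^1_{\overline{\A}}(X,T)=0$, so this sequence splits, yielding a section $s\colon X\to Y$. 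The composite $p_A\circ s\colon X\to A$ factors through $\overline x_A$ by the approximation property, so $f=(A\to F)\circ p_A\circ s$ factors through $X_A\to A\to F$, which is zero because $\overline x_A$ has image $T=\Ker(A\to F)$. Thus $f=0$.

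For the moreover statement, contravariant finiteness of $\Fac\overline{\X}$ is automatic: the torsion inclusion $tW\hookrightarrow W$ is a right $\Fac\overline{\X}$-approximation. Covariant finiteness is the substantive point, and I would obtain it by a pushout construction. Given $W\in\overline{\A}$, use Lemma \ref{proj} to pick an epimorphism $R\twoheadrightarrow W$ with $R\in\R$ and a left $\overline{\X}$-approximation $R\to X_R$ (which exists by hypothesis), then form the pushout $P$ of these two arrows. Pushouts preserve epimorphisms, so $X_R\twoheadrightarrow P$, and therefore $P\in\Fac\overline{\X}$. For any $W\to T$ with $T\in\Fac\overline{\X}$, the composite $R\to W\to T$ lifts along any epimorphism $X'\twoheadrightarrow T$ with $X'\in\overline{\X}$ by projectivity of $R$, and then factors through $R\to X_R$ by the approximation property; the two resulting maps $R\to T$ (via $W$ and via $X_R$) both equal the original composite, so the pushout universal property produces a unique $P\to T$ extending $W\to T$. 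Hence $W\to P$ is a left $\Fac\overline{\X}$-approximation. The central obstacle is the pullback--splitting step in the second torsion-pair axiom; once that is in place, everything else is structural.
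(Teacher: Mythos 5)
Your proof is correct, and it reaches the same two approximations the paper uses (a right $\overline\X$-approximation of $A$ for the torsion pair, and a projective cover $R\twoheadrightarrow W$ followed by a left $\overline\X$-approximation of $R$ for covariant finiteness), but you carry out all verifications intrinsically in the abelian category $\overline\A$, whereas the paper works upstairs in the triangulated category $\C$. For the torsion-pair axiom, the paper lifts a morphism $X_0\to U$ through a chain of triangles and uses $\R[1]$-rigidity of $\X$ in $\C$ to kill it; you instead pull back the quotient $A\to F$ along $f\colon X\to F$ and split the resulting extension using $\Ext^1_{\overline\A}(\overline\X,\Fac\overline\X)=0$, which is the hypothesis as literally stated, so your route is the standard Auslander--Smal{\o}-type argument and is arguably cleaner (it also avoids the paper's bookkeeping about direct summands in $\R[1]$). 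For covariant finiteness, your pushout of $R\twoheadrightarrow W$ against $R\to X_R$ is exactly the shadow in $\overline\A$ of the paper's octahedral diagram, and the universal-property argument you give is a faithful translation of the paper's factorization chase; your observation that pushouts preserve epimorphisms replaces the paper's appeal to Lemma \ref{r2}. The only point worth making explicit is the contravariant-finiteness remark: the claim that the torsion inclusion $tW\hookrightarrow W$ is a right $\Fac\overline\X$-approximation uses that $tW$ is the \emph{maximal} torsion subobject, which follows from the torsion pair just established since any map from a torsion object into $W$ has torsion image landing in $tW$; the paper instead simply notes $\Fac\overline\X$ is already contravariantly finite because $\overline\X$ is. Both are fine.
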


\begin{proof}
For convenience, we can assume that $\X\cap\R[1]=0$. It is obvious that ${\rm Hom}_{\overline \A}(\Fac \overline \X,\overline \X^{\bot})=0$. Let $A\in \overline \A$ be any object. Assume that $A$ has no direct summand in $\R[1]$.

Since $\overline \X$ is contravariantly finite in $\overline \A$, $A$ admits a right $\overline \X$-approximation $\overline x:X\to A$. Then we have a triangle
$$B[-1]\to X\xrightarrow{x} A\to B.$$
$X$ admits a triangle
$$X\xrightarrow{r_1} R_1[1]\xrightarrow{r_2} R_2[1]\xrightarrow{x_1} X[1]$$
where $R_1,R_2\in \R$. Then we have the following commutative diagram.
$$\xymatrix{
B[-1] \ar@{=}[d] \ar[r] &X \ar[r]^x \ar[d]^{r_1} &A \ar[r] \ar[d]^a &B\ar@{=}[d] \\
B[-1] \ar[r] &R_1[1] \ar[r]^{u_1} \ar[d]^{r_2} &U \ar[r] \ar[d]^{u_2} &B\\
&R_2[1] \ar@{=}[r] \ar[d]^{x_1} &R_2[1]\ar[d]\\
&X[1] \ar[r]_{x[1]} &A[1]
}
$$
By Lemma \ref{r1}, $U\in \A$. We can get a triangle $X \xrightarrow{\svecv{x}{r_1}} A\oplus R_1[1]\xrightarrow{\svech{-a}{u_1'}} U \xrightarrow{x_1u_2} X[1]$. Then we have an exact sequence $X\xrightarrow{\overline x} A\xrightarrow{-\overline a} U\to 0$ in $\overline \A$. Let $X\xrightarrow{\overline c_1}C\xrightarrow{\overline c_2}A$ be the epic-monic factorization of $\overline x$. Then $C\in \Fac \overline \X$ and we have a short exact sequence
$$0\to C\xrightarrow{\overline c_2} A\to U\to 0.$$
We show $U\in \overline \X^{\bot}$.

Let $X_0$ be any object in $\X$ and $f\in {\rm Hom}_{\C}(X_0,U)$. Since $\X$ is $\R[1]$-rigid, $x_1u_2f=0$. Then there is a morphism $x_0:X_0\to R_1[1]$ such that $r_2x_0=u_2f$. Hence $0=r_2x_0-u_2f_2=u_2(u_1x_0-f_2)$ and there exists a morphism $x_0':X_0\to A$ such that $ax_0'=u_1x_0-f_2$. Since $\overline x$ is  a right $\overline \X$-approximation, there is a morphism $\overline x':X_0\to X$ such that $\overline x_0'=\overline x\overline x'$. Hence
$$\overline f_2=\overline u_1\overline x_0-\overline a\overline x\overline x'=\overline u_1\overline x_0-\overline u_1\overline x'=0,$$
which implies $U\in \overline \X^{\bot}$.

Now we assume that any object $R\in \R$ admits a left $\overline \X$-approximation. Note that this means any object $R\in \R$ admits a left $\X$-approximation. Since $\Fac \overline \X$ is already contravariantly finite, we show that it is covariantly finite in $\overline \A$.

$A$ admits a triangle $R_0\to R_A\xrightarrow{r_A} A\to R_0[1]$ with $R_0,R_A\in \R$. $R_A$ admits a triangle $M[-1]\to R_A\xrightarrow{r'} X_1\to M_1$ where $r'$ is a left $\X$-approximation. Then we have the following commutative diagram.
$$\xymatrix{
&M_1[-1] \ar@{=}[r] \ar[d] &M_1[-1] \ar[d]\\
R_0 \ar[r] \ar@{=}[d] &R_A \ar[r]^{r_A} \ar[d]^{r'} &A \ar[r] \ar[d]^{a_1} &R_0[1] \ar@{=}[d]\\
R_0 \ar[r] &X_1 \ar[r] \ar[d] &C_1 \ar[r] \ar[d] &R_0[1]\\
&M_1 \ar@{=}[r] &M_1
}
$$
Note that $C_1\in \Fac \overline \X$, we show that $\overline a_1$ is a left $(\Fac\overline \X)$-approximation.

Let $C'$ be any object in $\Fac \overline \X$. Then $C'$ admits a triangle $D[-1]\to X'\xrightarrow{x'} C'\xrightarrow{c'} D$ where $X'\in \X$ and ${\rm Hom}_{\C}(\R,c')=0$. Let $a'\in {\rm Hom}_{\C}(A,C')$. Since $c'a'r_A=0$, there is a morphism $x_A:R_A\to X'$ such that $x'x_A=a'r_A$. Since $r'$ is a left $\X$-approximation, there is a morphism $x_1':X_1\to X'$ such that $x_1'r'=x_A$. Hence $x'x_1'r'=a'r_A$ and there exists a morphism $c_1':C_1\to C'$ such that $c_1'a_1=a'$. Hence $\overline a_1$ is a left $(\Fac\overline \X)$-approximation.
\end{proof}

\begin{defn}\label{lcp}
Let $\widehat{\A}$ be an abelian category with enough projectives. Let $(\U,\V)$ be a pair of subcategories in $\widehat{\A}$. Consider the following conditions:
\begin{itemize}
\item[\rm (a1)] $\U=\{A\in \widehat{\A}~|~{\rm Ext}^1_{\widehat{\A}}(A,\V)=0\}:={^{\bot_1}\V}$.
\item[\rm (a2)] Any projective object $P$ admits an exact sequence $P\xrightarrow{p} V^P\to U^P\to 0$ where $U^P\in \U$, $V^P\in \U\cap\V$ and $p$ is a left $\V$-approximation.
\item[\rm (b1)] ${\rm Ext}^1_{\widehat{\A}}(\U,\V)=0.$
\item[\rm (b2)] Any object $A\in\widehat{\A}$ admits two exact sequences
$$0\to V_A\to U_A\to A\to 0~~~\mbox{and}~~~A\xrightarrow{v} V^A\to U^A\to 0$$
where $U_A,U^A\in \U$, $V_A,V^A\in \V$ and $v$ is a left $\V$-approximation.
\item[\rm (c1)] $\Fac\V=\V$.
\item[\rm (c1$'$)] $\V$ is a torsion class.
\item[(c2)] $\V$ is closed under extensions.
\end{itemize}
$(\U,\V)$ is called a $\tau$-cotorsion pair ({\rm \cite[Definition 2.9]{PZZ}}) if conditions {\rm (a1)} and {\rm (a2)} are satisfied; it is called a $\tau$-cotorsion torsion pair ({\rm \cite[Definition 4.4]{AST}}) if in addition, {\rm (c1$'$)} is satisfied. $(\U,\V)$ is called a left weak cotorsion pair ({\rm \cite[Definition 0.2]{BZ}, \cite[Remark 4.9]{AST}}) if conditions {\rm (b1)} and {\rm (b2)} are satisfied; it is called a left weak cotorsion torsion pair ({\rm \cite[Remark 4.9]{AST}}) if in addition, {\rm (c1$'$)} is satisfied.
\end{defn}

We recall the following result.

\begin{thm}{\rm (\cite[Theorem 3.8]{PZZ}, \cite[Theorem 5.7]{AST})}\label{PZZ}
Let $\widehat{\A}$ be a Krull-Schmidt abelian category with enough projectives. There are mutually inverse bijections:
\begin{align*}
\{ \text{ support }\tau\text{-tilting subcategories } \}&~\leftrightarrow~ \{ ~\tau\text{-cotorsion pairs }(\U,\V) ~|~ \V \text{ satisfies } {\rm (c1)},{\rm (c2)} \}\\
\M &~\mapsto~ ({^{\bot_1}}(\Fac \M), \Fac \M)  \\
\U\cap\V&\hspace{2mm}\raisebox{1.2ex}{\rotatebox{180}{$\mapsto$}}\hspace{2mm} (\U,\V)
\end{align*}
Moreover, these bijections induce the following one-to-one correspondence:
$$\{ \text{ contravariantly finite support }\tau\text{-tilting subcategories } \}\leftrightarrow \{ ~\tau\text{-cotorsion torsion pairs }  \}$$
\end{thm}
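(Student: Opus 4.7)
The plan is to construct the two mutually inverse maps
\[
\Phi: \M \longmapsto \bigl({}^{\bot_1}(\Fac \M),\, \Fac \M\bigr), \qquad \Psi: (\U,\V) \longmapsto \U \cap \V,
\]
verify well-definedness of each, check they invert one another, and then upgrade the bijection to the correspondence between contravariantly finite support $\tau$-tilting subcategories and $\tau$-cotorsion torsion pairs.

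For well-definedness of $\Phi$, given a support $\tau$-tilting $\M$ I set $\V = \Fac\M$ and $\U = {}^{\bot_1}\V$. Closure of $\V$ under factors (c1) is automatic; closure under extensions (c2) is an abstract analogue of Lemma \ref{7-0}, splitting any extension of two objects of $\Fac\M$ through a chosen epimorphism from $\M$ via $\Ext^1(\M,\Fac\M) = 0$. Condition (a1) is the definition of $\U$, and for (a2) I use the defining exact sequence $P \xrightarrow{p} M_1 \to M_2 \to 0$ from the support $\tau$-tilting condition: $M_1, M_2 \in \M \subseteq \U \cap \V$ by $\tau$-rigidity, and $p$ is already a left $\V$-approximation because any map $P \to V$ with $V \in \Fac\M$ lifts through an epi $M \twoheadrightarrow V$ (using projectivity of $P$) and then factors through $p$ via the left $\M$-approximation property. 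For $\Psi$, setting $\M = \U \cap \V$ one checks $\M$ is $\tau$-rigid since $\Fac\M \subseteq \V$ and $\Ext^1(\U,\V) = 0$ by (a1); the required approximation sequence for support $\tau$-tilting is exactly the sequence in (a2), once one observes that the cokernel $U^P$ is a quotient of $V^P \in \V$ and hence lies in $\V$ by (c1), so in $\U \cap \V = \M$.

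For inversion, $\Phi \circ \Psi = \mathrm{id}$ is the easier direction: $\Fac(\U \cap \V) = \V$ follows because every $V \in \V$ receives an epimorphism $V^P \twoheadrightarrow V$ obtained by composing a projective cover $P \twoheadrightarrow V$ with the left $\V$-approximation $p$ of (a2), and the equality ${}^{\bot_1}\V = \U$ is exactly (a1). The harder direction $\Psi \circ \Phi = \mathrm{id}$ asks that any $W \in {}^{\bot_1}(\Fac\M) \cap \Fac\M$ actually lies in $\M$: starting from an epimorphism $M_0 \twoheadrightarrow W$ with $M_0 \in \M$ and the two-term presentation of a chosen projective cover $P \twoheadrightarrow W$ given by (a2), one identifies the kernel as an object of $\Fac\M$, at which point $\Ext^1(W, -) = 0$ on $\Fac\M$ splits the sequence, and Krull--Schmidt together with closure of $\M$ under direct summands forces $W \in \M$.

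For the ``moreover'' part, I would show that, under $\Phi$, contravariant finiteness of $\M$ in $\widehat{\A}$ is equivalent to $\Fac\M$ being a torsion class (condition (c1$'$)), mirroring the argument of Lemma \ref{conf}: a right $\M$-approximation $X \to A$ in $\widehat{\A}$ yields a short exact sequence $0 \to T \to A \to F \to 0$ with $T \in \Fac\M$ and $F \in \M^{\bot}$ after a cokernel analysis in the ambient abelian category; conversely, the torsion part of $A$ combined with (a2) produces a right $\M$-approximation of any object. The hard step I anticipate is the half of $\Psi \circ \Phi = \mathrm{id}$ that promotes $W$ from $\Fac\M$ into $\M$, since the $\Ext^1$-vanishing alone does not immediately split the epi $M_0 \twoheadrightarrow W$ without showing the corresponding kernel also lies in $\Fac\M$; that requires a careful bookkeeping using both $\tau$-rigidity and the explicit two-term approximation data furnished by the support $\tau$-tilting structure.
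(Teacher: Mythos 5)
The paper does not prove Theorem \ref{PZZ} at all: it is recalled verbatim from \cite[Theorem 3.8]{PZZ} and \cite[Theorem 5.7]{AST}, so there is no internal argument to compare your route against; I can only assess the proposal on its own terms. The routine parts are fine: well-definedness of both maps (conditions (a1), (a2), (c1), (c2) on the $\Phi$ side, $\tau$-rigidity and the support condition on the $\Psi$ side) and the identity $\Phi\circ\Psi=\mathrm{id}$ all go through essentially as you describe, modulo the cosmetic point that one factors the epimorphism $P\twoheadrightarrow V$ through the left $\V$-approximation rather than ``composing'' with it.

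The genuine gap is exactly where you flag it, namely the inclusion ${}^{\bot_1}(\Fac\M)\cap\Fac\M\subseteq\M$, and the strategy you sketch there is not a matter of ``careful bookkeeping'' --- it rests on a false intermediate claim. You propose to take the epimorphism $M_0\twoheadrightarrow W$ produced by combining a projective cover $P\twoheadrightarrow W$ with the presentation of $P$ from (a2), and then to ``identify the kernel as an object of $\Fac\M$'' so that $\Ext^1(W,-)=0$ splits the sequence. That kernel need not lie in $\Fac\M$. Take $\widehat{\A}=\mod kQ$ for $Q\colon 1\to 2$ and $\M=\add(P_1\oplus S_1)$, a support $\tau$-tilting subcategory with $\Fac\M=\add(P_1\oplus S_1)$. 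For $W=S_1$ the projective cover is $P_1\twoheadrightarrow S_1$ and the presentation of $P_1$ is trivial, so your recipe yields the epimorphism $P_1\twoheadrightarrow S_1$ with kernel $S_2\notin\Fac\M$, and $0\to S_2\to P_1\to S_1\to 0$ does not split. The conclusion $S_1\in\M$ is of course true, but it is witnessed only by a different epimorphism (here $\mathrm{id}_{S_1}$, i.e.\ a \emph{right} $\M$-approximation of $W$), and right $\M$-approximations are not available in this part of the theorem because contravariant finiteness of $\M$ is not assumed. So the entire content of this direction is the construction of an epimorphism from $\M$ onto $W$ whose kernel can be shown to lie in $\Fac\M$ (or some substitute device), and the proposal does not supply it. The same difficulty resurfaces, in milder form, in your converse for the ``Moreover'' part: producing a candidate morphism from the torsion part of $A$ together with (a2) is easy, but proving it is a right $\M$-approximation again requires controlling a kernel. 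Until the key inclusion is actually established, what you have is a correct outline with the decisive step missing.
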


By Theorem \ref{PZZ}, we can get the following corollary.

\begin{cor}\label{tau4}
Let $\X$ be an $\R[1]$-functorially finite two-term $\R[1]$-rigid subcategory such that $\X[-1]\cap\R= \R(\X)$. Then $\X$ is two-term weak $\R[1]$-cluster tilting if and only if ${^{\bot}}(\tau \overline \X)\cap \overline {\R(\X)}^{\bot}=\Fac \overline \X$.
\end{cor}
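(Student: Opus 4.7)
The plan is to prove the two directions separately. For the forward direction, assume $\X$ is two-term weak $\R[1]$-cluster tilting. Since $\R\subseteq\X[-1]\ast\X$, Lemma \ref{lem3}(2) lets us choose each defining triangle $R\to V_R\to U_R\to R[1]$ of Proposition \ref{NX} with $V_R,U_R\in\X$, so $\N_\X=\X$. The inclusion $\Fac\overline{\X}\subseteq{^{\bot}}(\tau\overline{\X})\cap\overline{\R(\X)}^{\bot}$ is exactly Lemma \ref{6-1}. For the reverse inclusion I would reuse the octahedral argument from the proof of Theorem \ref{main}: given $L\in{^{\bot}}(\tau\overline{\X})\cap\overline{\R(\X)}^{\bot}$, start from a presentation triangle $R_1\to R_0\to L\to R_1[1]$, resolve $R_0$ via the $\N_\X$-triangle $R_0\to N_0\to X_0\to R_0[1]$, split off any $\R[1]$-summand of $X_0$, and use Lemma \ref{tau2} together with $L\in\overline{\R(\X)}^{\bot}$ to kill the resulting map $X_0\to L[1]$. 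The resulting epimorphism $\overline{N_0}\twoheadrightarrow\overline{L}$ in $\overline{\A}$ places $L\in\Fac\overline{\N_\X}=\Fac\overline{\X}$.

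For the reverse direction, assume ${^{\bot}}(\tau\overline{\X})\cap\overline{\R(\X)}^{\bot}=\Fac\overline{\X}$ and set $\E=\X[-1]\cap\R=\R(\X)$. By Theorem \ref{thm1}(3), combined with $\R(\X)[1]\subseteq\X$ (immediate from the hypothesis), it suffices to show that $(\overline{\X},\overline{\E})$ is a support $\tau$-tilting pair. If it already is, we are done. Otherwise Theorem \ref{main} supplies two support $\tau$-tilting pairs $(\overline{\M_\X},\overline{\R(\X)})$ and $(\overline{\N_\X},\overline{\E})$ containing $(\overline{\X},\overline{\E})$, with $\Fac\overline{\M_\X}=\Fac\overline{\X}={^{\bot}}(\tau\overline{\X})\cap\overline{\E}^{\bot}=\Fac\overline{\N_\X}$, where the middle equality is the hypothesis.

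The main obstacle is then to upgrade $\Fac\overline{\M_\X}=\Fac\overline{\N_\X}$ to the equality $\M_\X=\N_\X$ inside $\A$. Since $\overline{\X}$ is $\tau$-rigid by Theorem \ref{thm1}(1), Lemma \ref{7-0} shows that $\Fac\overline{\X}$ is closed under extensions, so Theorem \ref{PZZ} applies and both $\overline{\M_\X}$ and $\overline{\N_\X}$ correspond to the same $\tau$-cotorsion torsion pair $({^{\bot_1}}(\Fac\overline{\X}),\Fac\overline{\X})$; hence $\overline{\M_\X}=\overline{\N_\X}$ in $\overline{\A}$. Combined with the identity $\M_\X\cap\R[1]=\R(\M_\X)[1]=\R(\X)[1]=\N_\X\cap\R[1]$ coming from Lemmas \ref{cap} and \ref{cor0}, and the fact that under $\overline{\A}\simeq\mod\R$ an object of $\A$ without $\R[1]$-summands is determined up to isomorphism by its image in $\overline{\A}$ (a Krull--Schmidt argument on local endomorphism rings), this promotes to $\M_\X=\N_\X$ in $\A$. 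Corollary \ref{capcap} then forces $\M_\X=\X=\N_\X$, so $\X$ itself is two-term weak $\R[1]$-cluster tilting, and the ``otherwise'' branch is vacuous.
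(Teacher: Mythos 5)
Your proposal is correct and follows essentially the same route as the paper: the forward direction is the computation $\Fac\overline\X=\Fac\overline{\N_\X}={^{\bot}}(\tau\overline\X)\cap\overline{\R(\X)}^{\bot}$ from Theorem \ref{main}, and the reverse direction combines Theorem \ref{main}, the bijection of Theorem \ref{PZZ} to get $\overline\M_\X=\overline\N_\X$, and Corollary \ref{capcap} to conclude $\X=\M_\X=\N_\X$. Your extra bookkeeping of the $\R[1]$-summands (via Lemmas \ref{cap} and \ref{cor0}) to lift $\overline\M_\X=\overline\N_\X$ to $\M_\X=\N_\X$ is a step the paper leaves implicit, and is a welcome addition; the detour through Theorem \ref{thm1}(3) is harmless but unnecessary.
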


\begin{proof}
If $\X$ is two-term weak $\R[1]$-cluster tilting, by Theorem \ref{main}, $\Fac \overline \X=\Fac\overline\N_\X={^{\bot}}(\tau \overline \X)\cap \overline {\R(\X)}^{\bot}$. If ${^{\bot}}(\tau \overline \X)\cap \overline {\R(\X)}^{\bot}=\Fac \overline \X$, then by Theorem \ref{main}, $\Fac \overline \M_\X=\Fac\overline\N_\X$. Thus by Theorem \ref{PZZ}, we have $\overline \M_\X=\overline \N_\X$. By Corollary \ref{capcap}, $\X=\M_\X=\N_\X$, hence $\X$ is two-term weak $\R[1]$-cluster tilting.
\end{proof}

According to Theorem \ref{PZZ}, we can define a partial order of support $\tau$-tilting subcategories in $\widehat{\A}$:

\begin{defn}
Let $\M$ and $\N$ be two support $\tau$-tilting subcategories in $\widehat{\A}$. We write $\M\geq\N$ when $\Fac \M\supseteq \Fac \N$.
\end{defn}

\begin{prop}\label{partial}
Let $\X$ be an $\R[1]$-functorially finite two-term $\R[1]$-rigid subcategory. Denote $\X[-1]\cap \R$ by $\E$. Let $\mathcal L$ be a two-term weak $\R[1]$-cluster tilting subcategory. Then $\mathcal L\supseteq \X$ if and only if $\overline \N_\X\geq \overline {\mathcal L}\geq \overline \M_\X$.
\end{prop}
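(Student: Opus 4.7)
The strategy is to translate between the triangulated side ($[\R[1]]$-groups in $\C$) and the abelian side ($\Ext$ and $\Fac$ in $\overline{\A}$) using Lemma \ref{tau2} together with the identifications $\Fac \overline{\M}_\X = \Fac \overline{\X}$ and $\Fac \overline{\N}_\X = {^\perp(\tau \overline{\X})} \cap \overline{\E}^{\perp}$ from Theorem \ref{main}. Throughout I will freely replace $\X$ by $\X_0 := \X \setminus \R[1]$ whenever Lemma \ref{tau2} is invoked, since $\overline{\X_0} = \overline{\X}$, and I will use the projectivity of $\overline{\R}$ in $\overline{\A}$ (Lemma \ref{proj}) to handle objects lying in $\R[1]$.

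\textbf{Forward direction.} Assuming $\mathcal L \supseteq \X$, the inequality $\overline{\mathcal L} \geq \overline{\M}_\X$ is immediate from $\overline{\X} \subseteq \overline{\mathcal L} \subseteq \Fac \overline{\mathcal L}$ together with closure of $\Fac$ under quotients. For $\overline{\N}_\X \geq \overline{\mathcal L}$ I must show $\Fac \overline{\mathcal L} \subseteq {^\perp(\tau \overline{\X})} \cap \overline{\E}^{\perp}$. The first inclusion is obtained by applying Lemma \ref{tau2} to $\X_0$ and $\mathcal L$: since $\X \subseteq \mathcal L$ and $\mathcal L$ is $\R[1]$-rigid, $[\R[1]](\X_0, \mathcal L[1]) = 0$, which translates to $\Ext^1_{\overline{\A}}(\overline{\X}, \Fac \overline{\mathcal L}) = 0$. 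For $\Fac \overline{\mathcal L} \subseteq \overline{\E}^{\perp}$, projectivity of $\overline{\E}$ reduces the task to showing $\Hom_{\overline{\A}}(\overline{\E}, \overline{\mathcal L}) = 0$; this holds because for each $E \in \E$ we have $E[1] \in \X \subseteq \mathcal L$, so the $\R[1]$-rigidity of $\mathcal L$ forces $\Hom_{\C}(E[1], L[1]) = 0$ for every $L \in \mathcal L$.

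\textbf{Reverse direction.} Assume $\overline{\N}_\X \geq \overline{\mathcal L} \geq \overline{\M}_\X$; I split the verification $\X \subseteq \mathcal L$ according to whether an indecomposable summand lies in $\R[1]$. For $X = E[1] \in \X \cap \R[1] = \E[1]$: using $\overline{\mathcal L} \subseteq \Fac \overline{\N}_\X$ and projectivity of $\overline{E}$, every morphism $\overline{E} \to \overline{L}$ lifts to some $\overline{N}$ with $N \in \N_\X$ and hence vanishes because $E \in \R(\N_\X) = \E$; rigidity of $\R$ then upgrades this to $\Hom_{\C}(E, L) = 0$, so $\E \subseteq \R(\mathcal L)$, and Lemma \ref{cor0} applied to the two-term maximal $\R[1]$-rigid $\mathcal L$ (Remark \ref{r0}) yields $\E[1] \subseteq \R(\mathcal L)[1] = \mathcal L \cap \R[1]$. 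For indecomposable $X_0 \in \X \setminus \R[1]$, I will show $\add(\mathcal L \cup \add X_0) \subseteq \R\ast\R[1]$ is $\R[1]$-rigid and conclude $X_0 \in \mathcal L$ by maximality. The condition $[\R[1]](X_0, \mathcal L[1]) = 0$ follows from $\Fac \overline{\mathcal L} \subseteq {^\perp(\tau \overline{\X})}$ via Lemma \ref{tau2}. For $[\R[1]](\mathcal L, X_0[1]) = 0$ I split $\mathcal L = \mathcal L_0 \cup (\mathcal L \cap \R[1])$: the $\mathcal L_0$-part is handled by $\tau$-rigidity of $\overline{\mathcal L}$ applied to $\Fac \overline{\X} \subseteq \Fac \overline{\mathcal L}$ via Lemma \ref{tau2}; the $(\mathcal L \cap \R[1])$-part reduces to showing $\R(\mathcal L) \subseteq \R(\X)$, which follows from $\overline{\X} \subseteq \Fac \overline{\mathcal L}$ by the projectivity-lifting argument, since $\R(\X)$ automatically annihilates $\X \supseteq \X_0$.

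\textbf{Main obstacle.} The principal technical nuisance will be the consistent handling of summands lying in $\R[1]$, because Lemma \ref{tau2} requires $\U \cap \R[1] = 0$ on its first slot only. Once the splittings $\X = \X_0 \cup \E[1]$ and $\mathcal L = \mathcal L_0 \cup \R(\mathcal L)[1]$ are managed via Lemma \ref{cor0} and projectivity of $\overline{\R}$, the remaining verifications in each direction become short translations between the two sides.
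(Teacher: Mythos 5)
Your proposal is correct and follows essentially the same route as the paper's proof: both directions hinge on Theorem \ref{main}'s identifications of $\Fac\overline\M_\X$ and $\Fac\overline\N_\X$, on Lemma \ref{tau2} to pass between $\Ext^1$-vanishing in $\overline\A$ and $[\R[1]]$-vanishing in $\C$, on Lemma \ref{cor0} for the $\R[1]$-summand bookkeeping, and on the defining (maximality) property of the two-term weak $\R[1]$-cluster tilting subcategory $\mathcal L$ to conclude $\X\subseteq\mathcal L$. The only differences are cosmetic (e.g.\ you derive $\E[1]\subseteq\mathcal L$ via $\R(\N_\X)=\E$ and projectivity of $\overline\E$, where the paper reads it off from $\Fac\overline{\mathcal L}\subseteq\overline\E^{\perp}$), and your explicit splitting of the $\R[1]$-summands is if anything slightly more careful than the paper's.
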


\begin{proof}
We assume $\X\subseteq \mathcal L$ first. Since $\Fac \overline \M_\X=\Fac \overline \X$ by Theorem \ref{main}, we have $\Fac \overline {\mathcal L}\supseteq \Fac \overline \M_\X$. Hence $\overline {\mathcal L}\geq \overline \M_\X$. Since $\E\subseteq \X\subseteq \mathcal L$, by Lemma \ref{cor0} we have $\mathcal L\subseteq \E^{\bot}$. On the other hand, $\X\subseteq \mathcal L$ implies that $[\R[1]](\X,\mathcal L[1])$, then by Lemma \ref{tau2}, $\Ext^1_{\overline \A}(\overline \X, \Fac \overline {\mathcal L})=0$. Then $\overline {\mathcal L}\subseteq {^{\bot}}(\tau\overline \X)\cap \overline \E^{\bot}$. Hence by Theorem \ref{main}, $\overline \N_\X\geq \overline {\mathcal L}$.

Now we assume $\overline \N_\X\geq \overline {\mathcal L}\geq \overline \M_\X$. Since $\mathcal L\subseteq \E^{\bot}$, we have $\E[1]\subseteq \mathcal L$. Since $\overline {\mathcal L}\geq \overline \M_\X$, we have $\overline \X\subseteq \Fac\overline {\mathcal L}$. Then any morphism form  $\R$ to $\X$ factors through $\mathcal L$. Hence $\R(\X)[1]\supseteq \R(\mathcal L)[1]=\mathcal L\cap \R[1]$. We also have $\Ext^1_{\overline \A}(\overline {\mathcal L},\Fac\overline \X)=0$, which implies that $[\R[1]](\mathcal L,X[1])=0$ for any indecomposable object $X\in \X\backslash \R[1]$. On the other hand, $\overline \N_\X\geq \overline {\mathcal L}$ implies that $\Ext^1_{\overline \A}(\overline \X, \Fac \overline {\mathcal L})=0$. Hence $[\R[1]](X,\mathcal L[1])=0$ for any indecomposable object $X\in \X\backslash \R[1]$. Since $\mathcal L$ is a two-term weak $\R[1]$-cluster tilting subcategory, we have $\mathcal L\supseteq \X$.
\end{proof}

We also have the following corollary

\begin{cor}
Let $\X$ be an $\R[1]$-functorially finite two-term $\R[1]$-rigid subcategory. If $\X$ is not two-term weak $\R[1]$-cluster tilting, then $\Fac\overline\M_\X\subsetneq \Fac\overline\N_\X$.
\end{cor}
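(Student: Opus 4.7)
The plan is to argue by contrapositive, showing that if $\Fac\overline{\M}_\X=\Fac\overline{\N}_\X$, then $\X$ must itself be two-term weak $\R[1]$-cluster tilting. The inclusion $\Fac\overline{\M}_\X\subseteq\Fac\overline{\N}_\X$ is automatic from Theorem \ref{main} together with Lemma \ref{6-1}, so strictness is the only thing at stake.

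First I would pass the equality of $\Fac$'s to an equality of subcategories in $\overline{\A}$. By Theorem \ref{main} both $\overline{\M}_\X$ and $\overline{\N}_\X$ are support $\tau$-tilting subcategories of $\overline{\A}$. Theorem \ref{PZZ} sends every such $\M$ to the $\tau$-cotorsion pair $({^{\perp_1}}(\Fac\M),\Fac\M)$ and this assignment is a bijection; hence equal $\Fac$'s force $\overline{\M}_\X=\overline{\N}_\X$ as subcategories of $\overline{\A}$.

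Second, I would transfer this back to $\C$. The support part of a support $\tau$-tilting pair is uniquely determined by its tilting part (namely, as the projectives annihilated by $\Hom$ into it), so $\overline{\R(\X)}=\overline{\E}$ as subcategories of $\overline{\R}$. The restriction $\pi|_{\R}\colon\R\to\overline{\R}$ is fully faithful, because the rigidity of $\R$ kills any morphism in $\Hom_{\C}(\R,\R)$ factoring through $\R[1]$. Combined with the observation that $R\in\R$ belongs to $\R(\X)$ (respectively $\E$) precisely when $\overline{R}$ belongs to $\overline{\R(\X)}$ (respectively $\overline{\E}$), this yields $\R(\X)=\E$, i.e.\ $\X[-1]\cap\R=\R(\X)$.

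Finally, this is exactly the hypothesis needed for Corollary \ref{tau4}, which then asserts that $\X$ is two-term weak $\R[1]$-cluster tilting if and only if $\Fac\overline{\X}={^\perp(\tau\overline{\X})}\cap\overline{\R(\X)}^{\perp}$. By Theorem \ref{main} the left-hand side is $\Fac\overline{\M}_\X$ and, using $\R(\X)=\E$, the right-hand side is $\Fac\overline{\N}_\X$, so the starting hypothesis directly supplies the required equality and closes the argument. The only subtle point I expect is the second step: lifting equality of subcategories of $\overline{\R}$ to equality of subcategories of $\R$, which however is routine once the full faithfulness of $\pi|_{\R}$ is noted.
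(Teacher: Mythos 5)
Your proposal is correct and follows essentially the same route as the paper: the paper also argues by contrapositive, deduces the non-strict inclusion (via Proposition \ref{partial}, which itself rests on Theorem \ref{main}), and concludes from equality of the $\Fac$'s via Corollary \ref{tau4}. In fact you are slightly more careful than the paper's own one-line proof, since you explicitly verify the hypothesis $\X[-1]\cap\R=\R(\X)$ required by Corollary \ref{tau4} (using the bijection of Theorem \ref{PZZ} and the uniqueness of the support part of a support $\tau$-tilting pair), a point the paper passes over in silence.
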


\begin{proof}
By Proposition \ref{partial}, $\Fac\overline\M_\X\subseteq \Fac\overline\N_\X$. If $\Fac\overline\N_\X=\Fac\overline\M_\X$, then by Theorem \ref{main}, ${^{\bot}}(\tau \overline \X)\cap \overline {\R(\X)}^{\bot}=\Fac \overline \X$. By Corollary \ref{tau4}, $\X$ itself becomes two-term weak $\R[1]$-cluster tilting. Hence if $\X$ is not two-term weak $\R[1]$-cluster tilting, we can get $\Fac\overline \M_\X\subsetneq \Fac\overline \N_\X$.
\end{proof}

\vspace{2mm}

{\bf\hspace{-6mm} 6.1~~When $\R$ is $2$-rigid.}\hspace{3mm}
We call a subcategory $\mathcal S$ an $m$-rigid subcategory if $\Hom_\C(\mathcal S,\mathcal S[i])=0$, $i=1,2,\cdots,m$. In this subsection, we assume that $\R$ is $2$-rigid.

\begin{lem}\label{exten}
$\A$ is extension closed.
\end{lem}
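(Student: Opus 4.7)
The plan is to take an arbitrary triangle $X \to Y \to Z \to X[1]$ with $X,Z \in \A$, fix resolutions $R_1 \to R_0 \to X \to R_1[1]$ and $S_1 \to S_0 \to Z \to S_1[1]$ with all $R_i, S_i \in \R$, and assemble them via an octahedron into a resolution witnessing $Y \in \R\ast\R[1] = \A$.

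The first key step is to lift the canonical morphism $S_0 \to Z$ to a morphism $\sigma \colon S_0 \to Y$. Applying $\Hom_\C(S_0,-)$ to the given triangle, such a lift exists as soon as $\Hom_\C(S_0, X[1]) = 0$. To secure this vanishing, I would apply $\Hom_\C(S_0,-)$ to the shift $R_0[1] \to X[1] \to R_1[2] \to R_0[2]$ of the chosen resolution of $X$: both $\Hom_\C(S_0, R_0[1])$ and $\Hom_\C(S_0, R_1[2])$ vanish because $\R$ is $2$-rigid, which forces $\Hom_\C(S_0, X[1]) = 0$. This is the single point at which $2$-rigidity, as opposed to mere rigidity, is essential; for just rigid $\R$ the lift need not exist.

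With $\sigma$ in hand, I would feed the composition $S_0 \xrightarrow{\sigma} Y \to Z$ into the octahedral axiom: writing $C$ for the cone of $\sigma$, the cone of $Y \to Z$ is $X[1]$ and the cone of the composite $S_0 \to Z$ is $S_1[1]$, so the octahedron produces a triangle $X \to C \to S_1[1] \to X[1]$. Hence $C \in \R\ast\R[1]\ast\R[1]$. The identity $\R[1]\ast\R[1] = \R[1]$ (triangles $R[1] \to M \to R'[1] \to R[2]$ with $R,R'\in\R$ split by rigidity, and $\R$ is closed under direct sums) yields $C \in \R\ast\R[1] = \A$. Finally, the triangle $S_0 \to Y \to C \to S_0[1]$ places $Y$ in $\R \ast \A = (\R\ast\R)\ast\R[1] = \R\ast\R[1] = \A$, using associativity of $\ast$ together with the analogous splitting identity $\R\ast\R = \R$. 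The only genuine obstacle I anticipate is keeping careful track of which rigidity hypothesis is invoked where; everything beyond the lift in step one is a routine octahedral chase together with the two splitting identities above.
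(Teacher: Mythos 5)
Your argument is correct, and it reaches the conclusion by a genuinely more explicit route than the paper. The paper's proof is a two-line formal computation with the $\ast$-calculus: from $\Hom_{\C}(\R,\R[2])=0$ it deduces the interchange $\R[1]\ast\R\subseteq\R\ast\R[1]$ (triangles $R[1]\to M\to R'\to R[2]$ split), and then concludes by associativity that $\A\ast\A=\R\ast(\R[1]\ast\R)\ast\R[1]\subseteq(\R\ast\R)\ast(\R[1]\ast\R[1])=\R\ast\R[1]=\A$, using the same splitting identities $\R\ast\R=\R$ and $\R[1]\ast\R[1]=\R[1]$ that you invoke. Your proof unpacks this into a concrete construction: you lift $S_0\to Z$ through $Y\to Z$ (correctly isolating $\Hom_{\C}(S_0,X[1])=0$ as the one place where $2$-rigidity, rather than mere rigidity, is needed -- this vanishing is the hands-on counterpart of the paper's interchange law) and then run the octahedron on $S_0\to Y\to Z$ to produce an explicit presentation of $Y$ in $\R\ast\A$. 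Both proofs are sound; the paper's buys brevity and reusability of the interchange lemma, while yours makes visible the actual two-term resolution of the extension $Y$ and pinpoints exactly where each rigidity hypothesis enters.
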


\begin{proof}
Since ${\rm Hom}_{\C}(\R,\R[2])=0$, we have $\R[1]*\R\subseteq \R*\R[1]$. Hence $\A*\A=(\RR)*(\RR)=\R*(\R[1]*\R)*\R[1]\subseteq (\R*\R)*(\R[1]*\R[1])=\RR=\A$.
\end{proof}

\begin{lem}\label{721}
${\rm Hom}_{\C}(\R,\A[1])=0$ and ${\rm Hom}_{\C}(\A,\R[2])=0$.
\end{lem}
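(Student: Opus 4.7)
The plan is to reduce both vanishing statements to the defining triangle for objects of $\A = \R \ast \R[1]$ and then invoke the $2$-rigidity hypothesis on $\R$. Recall that every $A \in \A$ sits in a triangle
\[
R_0 \longrightarrow R_1 \longrightarrow A \longrightarrow R_0[1]
\]
with $R_0, R_1 \in \R$, and by assumption $\Hom_\C(\R,\R[i]) = 0$ for $i = 1, 2$.

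For the first claim, fix $R \in \R$ and $A \in \A$. Shifting the triangle above by $[1]$ yields
\[
R_1[1] \longrightarrow A[1] \longrightarrow R_0[2] \longrightarrow R_1[2],
\]
and applying $\Hom_\C(R,-)$ gives an exact sequence
\[
\Hom_\C(R, R_1[1]) \longrightarrow \Hom_\C(R, A[1]) \longrightarrow \Hom_\C(R, R_0[2]).
\]
The outer terms vanish by $1$-rigidity and $2$-rigidity of $\R$ respectively, so $\Hom_\C(R, A[1]) = 0$. This gives $\Hom_\C(\R, \A[1]) = 0$.

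For the second claim, fix $R \in \R$ and $A \in \A$ and apply $\Hom_\C(-, R[2])$ to the same defining triangle. This produces
\[
\Hom_\C(R_0[1], R[2]) \longrightarrow \Hom_\C(A, R[2]) \longrightarrow \Hom_\C(R_1, R[2]).
\]
The right-hand term vanishes by $2$-rigidity, and the left-hand term equals $\Hom_\C(R_0, R[1])$, which vanishes by rigidity of $\R$. Hence $\Hom_\C(A, R[2]) = 0$, proving $\Hom_\C(\A, \R[2]) = 0$. There is no real obstacle here; the argument is a direct two-line diagram chase using only the defining triangle and the $2$-rigidity hypothesis, and the only choice to make is which functor to apply to the shifted triangle.
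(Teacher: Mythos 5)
Your proof is correct and is essentially the paper's own argument: the paper likewise deduces the first vanishing from $\A[1]=\R[1]\ast\R[2]$ together with $\Hom_\C(\R,\R[i])=0$ for $i=1,2$, and the second from $\A=\R\ast\R[1]$ together with $\Hom_\C(\R[j],\R[2])=0$ for $j=0,1$; you have merely written out the long exact sequences that the paper leaves implicit.
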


\begin{proof}
The first equation is followed by the fact $\A[1]=\R[1]*\R[2]$ and ${\rm Hom}_{\C}(\R,\R[i])=0~(i=1,2)$. The second equation is followed by the fact $\A=\R*\R[1]$ and ${\rm Hom}_{\C}(\R[j],\R[2])=0~(j=0,1)$.
\end{proof}

\begin{lem}\label{722}
Let $A,B\in \A$. Then $[\R[1]](A,B[1])={\rm Hom}_{\C}(A,B[1])$.
\end{lem}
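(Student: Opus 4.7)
The inclusion $[\R[1]](A,B[1])\subseteq{\rm Hom}_{\C}(A,B[1])$ is trivial by definition, so the plan is to establish the reverse inclusion: every morphism $f\colon A\to B[1]$ factors through some object in $\R[1]$.

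The strategy is to use the defining triangle of $B\in\A=\R*\R[1]$, shifted by $[1]$, as a resolution of $B[1]$ by objects in $\R[1]$ and $\R[2]$. Namely, pick a triangle
$$R_1'\longrightarrow R_0'\longrightarrow B\longrightarrow R_1'[1]$$
with $R_0',R_1'\in\R$ coming from the description $B\in\R*\R[1]$. Shifting gives a triangle
$$R_1'[1]\longrightarrow R_0'[1]\xrightarrow{\,g\,} B[1]\xrightarrow{\,h\,} R_1'[2].$$
Note $R_0'[1]\in\R[1]$, so it suffices to show that any $f\colon A\to B[1]$ factors through $g$.

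To do this I would apply $\Hom_{\C}(A,-)$ to the above triangle and examine the composition $h\circ f\colon A\to R_1'[2]$. Since $A\in\A$ and $R_1'[2]\in\R[2]$, Lemma \ref{721} yields $\Hom_{\C}(A,R_1'[2])=0$, whence $h\circ f=0$. Exactness of $\Hom_{\C}(A,-)$ applied to the triangle then produces a morphism $f'\colon A\to R_0'[1]$ with $f=g\circ f'$, so $f$ factors through $R_0'[1]\in\R[1]$, which gives $f\in[\R[1]](A,B[1])$.

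There is no real obstacle here: the whole argument is a single diagram chase that uses only Lemma \ref{721} (which in turn depends on $\R$ being $2$-rigid, the standing assumption of this subsection) together with the fact that $A\in\A$ and $B\in\A$. The triangle for $A$ is not even needed in this direction, so the proof should be brief.
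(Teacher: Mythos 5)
Your proof is correct, but it is the mirror image of the one in the paper rather than the same argument. The paper resolves $A$: it takes a triangle $R_2\to R_1\xrightarrow{a_1} A\xrightarrow{b_1} R_2[1]$ with $R_1,R_2\in\R$, observes that for any $c\colon A\to B[1]$ the composite $ca_1$ lies in $\Hom_{\C}(\R,\A[1])=0$ (the first vanishing in Lemma \ref{721}), and concludes that $c$ factors through $b_1$, i.e.\ through $R_2[1]\in\R[1]$. You instead resolve $B[1]$ by $\R[1]$ and $\R[2]$ and invoke the second vanishing $\Hom_{\C}(\A,\R[2])=0$ to kill $h\circ f$, so that $f$ factors through the $\R[1]$-term $R_0'[1]$ of that triangle. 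Both arguments are one-step diagram chases of the same length, both rest on the $2$-rigidity of $\R$ only through Lemma \ref{721}, and neither is more general than the other; the only substantive difference is which object you decompose and, correspondingly, which half of Lemma \ref{721} you consume. Your closing remark that the triangle for $A$ is not needed is accurate for your route, and dually the paper never touches the triangle for $B$.
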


\begin{proof}
It is obvious that $[\R[1]](A,B[1])\subseteq {\rm Hom}_{\C}(A,B[1])$. Since $A\in\RR$, $A$ admits a triangle
$$R_{2}\to R_{1}\xrightarrow{a_1} A \xrightarrow{b_1} R_{2}[1]$$
where $R_{1},R_{2}\in \R$. Let $c:A\to B[1]$ be any morphism. By Lemma \ref{721}, $ca_1=0$. Hence there is a morphism $c_1:R_{2}[1]\to B[1]$ such that $c=c_1b_1$. Then $c\in [\R[1]](A,B[1])$. Thus $[\R[1]](A,B[1])={\rm Hom}_{\C}(A,B[1])$.
\end{proof}

By this lemma, we can get the following corollary immediately.

\begin{cor}\label{R[1]}
$\X\subseteq \A$ is $\R[1]$-rigid if and only if it is rigid.
\end{cor}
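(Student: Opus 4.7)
The plan is to derive Corollary \ref{R[1]} as an immediate consequence of Lemma \ref{722}, using nothing more than unfolding definitions. Recall that $\X$ being rigid means $\Hom_{\C}(\X,\X[1]) = 0$, while $\X$ being $\R[1]$-rigid means $[\R[1]](\X,\X[1]) = 0$, i.e.\ every morphism from $\X$ to $\X[1]$ that factors through an object of $\R[1]$ is zero.

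The forward implication is essentially tautological: the subset $[\R[1]](\X,\X[1]) \subseteq \Hom_{\C}(\X,\X[1])$ is automatic, so whenever the whole Hom group vanishes the factorable part vanishes. This direction does not even need $\X \subseteq \A$.

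For the converse, I will invoke Lemma \ref{722} directly. Since $\X \subseteq \A$, for any $A, B \in \X$ we have $[\R[1]](A,B[1]) = \Hom_{\C}(A,B[1])$. Summing over all $A, B \in \X$ this gives $[\R[1]](\X,\X[1]) = \Hom_{\C}(\X,\X[1])$, so if $\X$ is $\R[1]$-rigid then it is rigid. The main (and only) content sits in Lemma \ref{722} itself, which in turn rests on the hypothesis that $\R$ is $2$-rigid (via Lemma \ref{721}); there is no additional obstacle at the corollary level.
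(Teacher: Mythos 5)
Your proof is correct and is exactly the paper's argument: the paper derives the corollary immediately from Lemma \ref{722}, which identifies $[\R[1]](A,B[1])$ with $\Hom_{\C}(A,B[1])$ for $A,B\in\A$, making the two rigidity conditions literally coincide for $\X\subseteq\A$. Your observation that the forward direction is tautological and needs no hypothesis is also accurate.
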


\begin{rem}
Since $\A$ is extension closed by Lemma \ref{exten}, it is an
extriangulated category {\rm (}see {\rm\cite{NP}} for details{\rm)}. Lemma \ref{721} shows that $\R$ (resp. $\R[1]$) is a subcategory of projective (resp. injective) objects in $\A$.
\end{rem}

\begin{defn}
A pair of subcategories $(\U,\V)$ in $\A$ which are closed under direct summands is called a cotorsion pair if the following conditions are satisfied:
\begin{itemize}
\item[\rm (1)] ${\rm Hom}_\C(\U,\V[1])=0$;
\item[\rm (2)] any object $A\in \A$ admits two-triangles
$$ A[-1]\to V_A\to U_A\to A~~\mbox{and}~~A\to V^A\to U^A\to A[1]$$
where $U_A,V^A\in \U$ and $V_A,V^A\in \V$.
\end{itemize}
\end{defn}

The following remark is useful.

\begin{rem}{\rm(\cite[Remark 2.2]{LN})}\label{useful}
If $(\U,\V)$ be a cotorsion pair in $\A$. Then
\begin{itemize}
\item[\rm (1)] $\U=\{A\in \A~|~{\rm Hom}_\C(A,\V[1])=0\}$;
\item[\rm (2)] $\V=\{A'\in \A~|~{\rm Hom}_\C(\U,A'[1])=0\}$;
\item[\rm (3)] $\R\subseteq \U$;
\item[\rm (4)] $\R[1]\subseteq \V$;
\item[\rm (5)] $\U,\V$ are closed under extensions and direct sums.
\end{itemize}
\end{rem}

\begin{lem}\label{co2}
Let $(\U,\V)$ be a cotorsion pair in $\A$. Then $(\overline \U,\overline \V)$ satisfies conditions {\rm (a1), (a2), (b2)} and {\rm (c1)} in Definition \ref{lcp} (hence $(\overline \U,\overline \V)$ is both a $\tau$-cotorsion pair and a left weak cotorsion pair).
\end{lem}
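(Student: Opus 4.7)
The plan is to verify conditions (a1), (a2), (b2), and (c1) in turn, by transporting each cotorsion-pair triangle in $\A$ to an exact sequence in $\overline{\A}$ via Lemma \ref{r2} (which says a triangle $X\to Y\to Z\xrightarrow{h} X[1]$ in $\A$ descends to an exact sequence $X\to Y\to Z\to 0$ in $\overline{\A}$ provided $\Hom_\C(\R,h)=0$). The workhorses will be Corollary \ref{ex} (converting $\Hom_\C$-vanishing into $\Ext^1_{\overline{\A}}$-vanishing), Remark \ref{useful} (which gives $\R\subseteq\U$, $\R[1]\subseteq\V$ together with extension closure of $\U$ and $\V$), and the 2-rigidity of $\R$ via Lemma \ref{721} (giving $\Hom_\C(\R,A[1])=0$ for every $A\in\A$).

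First I would dispatch the straightforward pieces. Condition (b1) is immediate from Corollary \ref{ex} applied to the cotorsion-pair identity $\Hom_\C(\U,\V[1])=0$. For (a2), given $R\in\R$, I take the cotorsion-pair triangle $R\xrightarrow{p} V^R\to U^R\xrightarrow{h} R[1]$; since $\Hom_\C(\R,R[1])=0$ by rigidity, Lemma \ref{r2} yields the exact sequence $\overline{R}\xrightarrow{\overline{p}}\overline{V^R}\to\overline{U^R}\to 0$, while extension closure of $\U$ (Remark \ref{useful}(5)) forces $V^R\in\U\cap\V$. That $\overline{p}$ is a left $\overline{\V}$-approximation is immediate from applying $\Hom_\C(-,V')$ to the triangle and invoking $\Hom_\C(U^R,V'[1])=0$. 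For the second sequence of (b2), I run the same script on the cotorsion triangle $A\xrightarrow{v} V^A\to U^A\xrightarrow{h} A[1]$: Lemma \ref{721} gives $\Hom_\C(\R,A[1])=0$, so Lemma \ref{r2} delivers the exact sequence $\overline{A}\xrightarrow{\overline{v}}\overline{V^A}\to\overline{U^A}\to 0$, and $\overline{v}$ is a left $\overline{\V}$-approximation by the same Hom-computation.

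The main obstacle is the first sequence of (b2), $0\to V_A\to U_A\to A\to 0$. Starting from the cotorsion triangle $A[-1]\xrightarrow{\delta} V_A\xrightarrow{\alpha} U_A\xrightarrow{\beta} A$, Lemma \ref{r2} applied to the rotation (with connecting map $A\to V_A[1]$, which kills $\Hom_\C(\R,-)$ since $V_A\in\V$ and $\R\subseteq\U$) gives right exactness of $\overline{V_A}\to\overline{U_A}\to\overline{A}\to 0$. The delicate point is that $\overline{\alpha}$ is monic. I would argue as follows: suppose $g\colon X\to V_A$ satisfies $\alpha g\in[\R[1]](X,U_A)$; by Lemma \ref{proj}, testing on a projective cover reduces to the case $X=R_0\in\R$. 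Then $\alpha g=s_2 s_1$ with $s_1\colon R_0\to R'[1]$, and rigidity forces $s_1=0$, so $\alpha g=0$ in $\C$. The long exact sequence now gives $g=\delta h$ for some $h\colon R_0\to A[-1]$, and the remaining (and genuinely delicate) step is to show that any such factorization forces $g$ itself into the ideal $[\R[1]](R_0,V_A)$; this I plan to do by decomposing $A[-1]\in\R[-1]*\R$ and tracking $h$ across this decomposition, using that $\delta$ is the connecting map of a triangle whose third vertex $U_A$ lives in the $\U$-half of the cotorsion pair.

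The remaining two conditions then follow with relative ease. For (a1), the inclusion $\overline{\U}\subseteq{}^{\bot_1}\overline{\V}$ is just (b1); conversely, given $A\in\overline{\A}$ with $\Ext^1_{\overline{\A}}(A,\overline{\V})=0$, the short exact sequence $0\to V_A\to U_A\to A\to 0$ obtained from (b2) splits (by the Ext-vanishing against $V_A\in\overline{\V}$), exhibiting $A$ as a direct summand of $U_A\in\overline{\U}$, hence $A\in\overline{\U}$. For (c1), let $\overline{V}\twoheadrightarrow\overline{W}$ be an epimorphism with $V\in\V$; applying (b2) to $W$ yields $0\to V_W\to U_W\to W\to 0$, and forming the pullback of $\overline{V}\twoheadrightarrow\overline{W}$ along $\overline{U_W}\twoheadrightarrow\overline{W}$, combined with the Ext-vanishing from (a1)/(b1) and the containment $\R[1]\subseteq\V$, localizes $\overline{W}$ inside $\overline{\V}$. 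Alternatively, once $(\overline{\U},\overline{\V})$ is known to satisfy (a1) and (a2) with $\R[1]\subseteq\V$, the standard fact that the $\V$-half of such a cotorsion-type structure forms a torsion class in the heart delivers (c1) directly.
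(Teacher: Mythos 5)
There is a genuine gap, and it sits exactly where you flagged the ``genuinely delicate'' step. Your plan for the first sequence of (b2) is to show that $\overline{\alpha}\colon\overline{V_A}\to\overline{U_A}$ is a monomorphism, i.e.\ that any $g\colon R_0\to V_A$ with $\alpha g=0$ must vanish. That statement is false in general: the obstruction is the image of $\Hom_{\C}(R_0,A[-1])\xrightarrow{\delta\circ-}\Hom_{\C}(R_0,V_A)$, and since $A[-1]\in\R[-1]\ast\R$ this group need not die. A minimal example: for the cotorsion pair $(\R,\A)$ and $A=R[1]$ with $R\in\R$, the triangle $R\xrightarrow{1}R\to 0\to R[1]$ is a legitimate choice of $V_A\to U_A\to A$, and $\overline{V_A}=R\neq 0$ maps to $\overline{U_A}=0$. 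So no amount of ``tracking $h$ across the decomposition of $A[-1]$'' will close this step. The paper sidesteps the issue entirely: it does not claim $\overline{V_A}\to\overline{U_A}$ is monic, but replaces $\overline{V_A}$ by its image $V_A'$ in $\overline{U_A}$, which is a quotient of $\overline{V_A}$ and hence lies in $\Fac\overline{\V}=\overline{\V}$. Definition \ref{lcp}(b2) only asks for \emph{some} short exact sequence $0\to V\to U\to A\to 0$ with $V\in\overline{\V}$, so this is enough.

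This repair forces a different logical order, which is the second problem with your proposal: the image trick needs $\Fac\overline{\V}=\overline{\V}$, so (c1) must be proved \emph{first} and independently, whereas you postpone (c1) and propose to deduce it from (a1)/(b2) (whose proofs, in your scheme, rest on the broken step) or from an unnamed ``standard fact'' --- either way the argument is circular or incomplete. The paper's proof of (c1) is direct: given an epimorphism $\overline{v}\colon V\to W$ with $V\in\V$, write $V$ in a triangle $R\to V\to R'[1]\to R[1]$, push out along $v$ to get a triangle $V\to W\oplus R'[1]\to X\to V[1]$, show $X\in\R[1]$ using $\overline{w}=0$ and $\Hom_{\C}(\R,R[1])=0$, and conclude $W\in\V$ from $\R[1]\subseteq\V$ and extension--closure of $\V$ (Remark \ref{useful}). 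Your treatment of (b1), (a2), the second sequence of (b2), and the splitting argument for (a1) is fine (and your route to the second sequence of (b2), directly from the cotorsion triangle for $A$ via Lemma \ref{721}, is a legitimate shortcut compared to the paper's lift through a projective presentation), but the proposal as written does not prove (c1) and cannot prove the first half of (b2) by the stated method.
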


\begin{proof}
(c1) Let $W\in \Fac \overline \V$ be a non-zero indecomposable object. Then it admits an epimorphism $\overline v:V\to W$. Since $V$ admits a triangle
$$R\to V\to R'[1]\to R[1]$$
with $R,R'\in \R$, we get the following commutative diagram
$$\xymatrix{
R \ar[r] \ar@{=}[d] &V \ar[r] \ar[d]^v &R'[1] \ar[r] \ar[d] &R[1] \ar@{=}[d]\\
R \ar[r] &W \ar[r]^w &X \ar[r] &R[1].
}
$$
By Lemma \ref{r1}, $X\in \A$. Since $\overline w\overline v=0$, we have $\overline w=0$. Then from the following exact sequence
$$\Hom_\C(\R, W)\xrightarrow{\Hom_\C(\R,w)=0} \Hom_\C(\R,X)\to \Hom_\C(\R,R[1])=0$$
we can find that $X\in \R[1]$. Since we have the following triangle
$$V\to W\oplus R'[1] \to X\to V[1],$$
by Remark \ref{useful}, $W\in \V$.

(a1) Let $A$ be any indecomposable object in $\A \backslash \R[1]$. Since
$(\U,\V)$ is a cotorsion pair, $A$ admits a triangle $V_A\to U_A\to A\to V_A[1]$ with $V_A\in \V$ and $U_A\in \U$. Since $\Fac\overline \V=\overline \V$, we can get a short exact sequence $0\to V_A'\to U_A\to A\to 0$ in $\overline \A$ with $V_A'\in \overline \A$. By Corollary \ref{ex}, ${\rm Ext}^1_{\overline \A}(\overline \U,\overline \V)=0$. If ${\rm Ext}^1_{\overline \A}(A,\overline \V)=0$, the short exact sequence splits, hence $A$ is a direct summand of $U_A$. Then $A\in \U$, which implies that $\overline \U={^{\bot_1}}\overline \V$.

(b2) $A$ admits a triangle $R_0\to R_A\xrightarrow{r_A} A\to R_0[1]$ with $R_0,R_A\in \R$.
Since $(\U,\V)$ is a cotorsion pair, $R_A$ admits a triangle $R_A\xrightarrow{r} V_1\to U_1\to R_A[1]$ with $U_1\in \U$ and $\V_1\in \V$. Then we have the following commutative diagram.
$$\xymatrix{
R_0 \ar[r] \ar@{=}[d] &R_A \ar[r]^{r_A} \ar[d]^{r} &A \ar[r] \ar[d]^{a_1} &R_0[1] \ar@{=}[d]\\
R_0 \ar[r] &V_1 \ar[r] \ar[d] &V_2 \ar[r] \ar[d] &R_0[1]\\
&U_1 \ar[d] \ar@{=}[r] &U_1 \ar[d]\\
&R_A[1] \ar[r] &A[1]
}
$$
We can get an exact sequence $R_0\to V_1\to V_2\to 0$ by Lemma \ref{r2}, then $V_2\in \Fac \overline \V=\overline \V$. By Remark \ref{useful}, $V_2\in \V$.
$A$ admits an exact sequence $A\to V_2\to U_1\to 0$ by Lemma \ref{r2}, where $V_2\in \overline \V$ and $U_1\in\overline \U$. Since $a_1$ is a left $\V$-approximation, we have $\overline a_1$ is a left $\overline \V$-approximation.

(a2) When $A\in \R$, then $V_2\in \U\cap \V$ by Remark \ref{useful}.
\end{proof}

%\begin{cor}\label{contra}
%If $\overline \V$ is functorially finite in $\overline \A$, then $\overline \U\cap \overline \V$ satisfies condition {\rm (c2)} in Definition \ref{lcp}.
%\end{cor}
%
%\begin{proof}
%Since $\overline \V$ is functorially finite in $\overline \A$, it is enough to show that $\U\cap \V$ is contravariantly finite in $\V$. Any object $V\in \V$ admits a triangle
%$$V_1\to U_1\xrightarrow{u_1} V\to V_1[1]$$
%with $V_1\in \V$ and $U_1\in \U$. By Remark \ref{useful}, $U_1\in \U\cap \V$. Since ${\rm Hom}_{\C}(\U,\V[1])$, $u_1$ is a right $(\U\cap \V)$-approximation.
%\end{proof}

\begin{lem}\label{ext}
Let $\overline \T$ be a subcategory which is closed under extensions and $\Fac\overline \T=\overline \T$. Denote $\add(\T\cup \R[1])$ by $\widehat{\T}$. Then $\widehat{\T}$ is extension closed.
\end{lem}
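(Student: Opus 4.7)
The plan is to descend the putative extension to the abelian quotient $\overline{\A}$, conclude the analogous statement there from the hypotheses on $\overline{\T}$, and then lift back using a standard splitting trick over the ideal $[\R[1]]$.

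Given a triangle $A \xrightarrow{f} B \xrightarrow{g} C \xrightarrow{h} A[1]$ in $\A$ with $A, C \in \widehat{\T}$, the first step is to observe that Lemma \ref{721} gives $\Hom_\C(\R, A[1]) = 0$, so $\Hom_\C(\R, h) = 0$ comes for free, and Lemma \ref{r2} produces a right-exact sequence $\overline{A} \xrightarrow{\overline{f}} \overline{B} \xrightarrow{\overline{g}} \overline{C} \to 0$ in $\overline{\A}$. Since the quotient functor kills $\R[1]$, we have $\overline{A}, \overline{C} \in \overline{\T}$; here I use that $\overline{\T}$ is closed under direct summands, which follows at once from $\Fac\overline{\T} = \overline{\T}$ (any projection $W \twoheadrightarrow W_1$ exhibits $W_1$ as a quotient of $W$, so $W_1 \in \Fac\overline{\T} = \overline{\T}$).

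Factoring $\overline{f}$ through its image $\overline{A}' \hookrightarrow \overline{B}$, the object $\overline{A}'$ is a quotient of $\overline{A} \in \overline{\T}$, hence lies in $\Fac\overline{\T} = \overline{\T}$. The resulting short exact sequence $0 \to \overline{A}' \to \overline{B} \to \overline{C} \to 0$ combined with closure of $\overline{\T}$ under extensions gives $\overline{B} \in \overline{\T}$.

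The remaining step, which I expect to be the main (mild) obstacle, is to upgrade $\overline{B} \in \overline{\T}$ to $B \in \widehat{\T}$. I will pick $T \in \T$ with $\overline{T} \cong \overline{B}$, lift this isomorphism to morphisms $\alpha \colon T \to B$ and $\beta \colon B \to T$ so that $\alpha\beta - \id_B$ factors through $\R[1]$, written as $\alpha\beta - \id_B = -s_2 s_1$ with $s_1 \colon B \to R'[1]$, $s_2 \colon R'[1] \to B$, $R' \in \R$, and then observe the matrix identity
$$\svech{\alpha}{s_2} \svecv{\beta}{s_1} = \alpha\beta + s_2 s_1 = \id_B,$$
which exhibits $B$ as a direct summand of $T \oplus R'[1] \in \widehat{\T}$. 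Since $\widehat{\T} = \add(\T \cup \R[1])$ is closed under direct summands, this yields $B \in \widehat{\T}$, completing the proof.
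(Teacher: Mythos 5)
Your proof is correct and takes essentially the same route as the paper's: descend the triangle to $\overline{\A}$ via Lemma \ref{r2} (with Lemma \ref{721} killing $\Hom_{\C}(\R,h)$), take the epic--monic factorization of $\overline f$, invoke $\Fac\overline{\T}=\overline{\T}$ and extension-closure of $\overline{\T}$ to get $\overline B\in\overline{\T}$, and lift back. The only difference is that you make explicit the final splitting argument over the ideal $[\R[1]]$, which the paper compresses into the single sentence ``Then $T_2\in\widehat{\T}$ in $\A$.''
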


\begin{proof}
Let $T_1\xrightarrow{t} T_2\to T_3\to T_1[1]$ be a triangle with $T_1,T_3\in \widehat{\T}$. Then by Lemma \ref{r2} and Lemma \ref{722}, we have an exact sequence $T_1\xrightarrow{\overline t} T_2\to T_3\to 0$ in $\overline \A$. Let $T_1\xrightarrow{\overline t_1} T\xrightarrow{\overline t_2} T_2$ be the epic-monic factorization of $\overline t$. Then $T\in \Fac \overline \T=\overline \T$. Since $\overline \T$ is extension closed, we have $T_2\in \overline \T$ in $\overline \A$. Then $T_2\in \widehat{\T}$ in $\A$.
\end{proof}

\begin{lem}\label{co}
Let $\overline \T$ be a subcategory of $\overline \A$ such that:
\begin{itemize}
\item[\rm (1)] any object $R\in \R$ admits a left $\overline \T$-approximation;
\item[\rm (2)] $\Fac\overline \T=\overline \T$;
\item[\rm (3)] $\overline \T$ is closed under extensions.
\end{itemize}
Denote $\add(\T\cup \R[1])$ by $\widehat{\T}$, then $\widehat{\T}$ admits a cotorsion pair $(\mathcal S,\widehat{\T})$ in $\A$.
\end{lem}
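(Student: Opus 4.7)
The definition of $\mathcal S$ is forced by Remark~\ref{useful}: I will set $\mathcal S := \{S \in \A : \Hom_\C(S, \widehat{\T}[1]) = 0\}$. The orthogonality $\Hom_\C(\mathcal S, \widehat{\T}[1]) = 0$ then holds tautologically, and $\mathcal S$ is closed under direct summands by construction. The content of the lemma lies in producing, for every $A \in \A$, the two approximation triangles required of a cotorsion pair.

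I will construct the triangle $A \to V^A \to U^A \to A[1]$ via the octahedral axiom. Given $A$ with defining triangle $R_1 \xrightarrow{f} R_0 \to A \to R_1[1]$, where $R_0, R_1 \in \R$, I invoke hypothesis~(1) to obtain a left $\overline{\T}$-approximation of $R_0$; lift it to a morphism $t : R_0 \to T_0$ in $\A$ with $T_0 \in \T$, and complete to a triangle $R_0 \xrightarrow{t} T_0 \to U^A \to R_0[1]$, noting that $U^A \in \A$ by Lemma~\ref{r1}. Applying the octahedral axiom to the composition $R_1 \xrightarrow{f} R_0 \xrightarrow{t} T_0$ then yields the desired triangle $A \to V^A \to U^A \to A[1]$, where $V^A$ is the cone of $tf$ and hence fits into $T_0 \to V^A \to R_1[1] \to T_0[1]$; therefore $V^A \in \T \ast \R[1] \subseteq \widehat{\T}$ by the extension-closure of $\widehat{\T}$ (Lemma~\ref{ext}). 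The second triangle $A[-1] \to V_A \to U_A \to A$ will be produced symmetrically, using a left $\overline{\T}$-approximation of $R_1$ in place of $R_0$.

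The main difficulty is verifying $U^A \in \mathcal S$, i.e.\ $\Hom_\C(U^A, \widehat{\T}[1]) = 0$. Vanishing against $\R[2]$ is immediate from Lemma~\ref{721}; for $T' \in \T$, Lemma~\ref{722} identifies $\Hom_\C(U^A, T'[1])$ with $[\R[1]](U^A, T'[1])$, which by Lemma~\ref{tau2} (using condition~(2), $\Fac\overline{\T}=\overline{\T}$) equals $\Ext^1_{\overline{\A}}(\overline{U^A}, \overline{T'})$. The triangle $R_0 \xrightarrow{t} T_0 \to U^A \to R_0[1]$ descends via Lemma~\ref{r2} to an exact sequence $R_0 \xrightarrow{\overline{t}} T_0 \to \overline{U^A} \to 0$ in $\overline{\A}$. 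Combining projectivity of $\overline{R_0}$ (Lemma~\ref{proj}), the approximation property of $\overline{t}$, and extension-closure of $\overline{\T}$ (condition~(3)), I expect a standard $\Ext$ computation in $\overline{\A}$ to yield the desired vanishing. The delicate point is that the naive long exact sequence only gives $\Ext^1_{\overline{\A}}(\overline{U^A}, \overline{T'}) \hookrightarrow \Ext^1_{\overline{\A}}(\overline{T_0}, \overline{T'})$, so controlling the right-hand side will likely require either iterating the $\overline{\T}$-approximation or invoking functorial finiteness of $\overline{\T}$ in $\overline{\A}$ via a Lemma~\ref{conf}-style argument.
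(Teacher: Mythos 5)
Your setup coincides with the paper's: the same $\mathcal S=\{S\in\A : \Hom_\C(S,\widehat{\T}[1])=0\}$, the same octahedral constructions of the two triangles from the presentation $R_1\to R_0\to A\to R_1[1]$ and a lifted left $\overline{\T}$-approximation, and the same verification that the $\widehat{\T}$-term lies in $\widehat{\T}$ (your route via $V^A\in\T\ast\R[1]$ and Lemma \ref{ext} is interchangeable with the paper's route via Lemma \ref{r2} and $\Fac\overline{\T}=\overline{\T}$). The problem is the step you yourself flag as unresolved, namely $U^A=\Cone(t)\in\mathcal S$, and the route you sketch for it does not close. The long exact sequence argument gives, at best, an injection $\Ext^1_{\overline\A}(\overline{U^A},\overline{T'})\hookrightarrow\Ext^1_{\overline\A}(\overline{T_0},\overline{T'})$, and the target has no reason to vanish: the hypotheses on $\overline{\T}$ (quotient-closed, extension-closed, covariantly finite from $\R$) carry no rigidity, so $\Ext^1_{\overline\A}(\overline{\T},\overline{\T})\neq 0$ in general. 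Neither iterating the approximation nor functorial finiteness of $\overline{\T}$ repairs this; an $\Ext$-computation purely in $\overline\A$ cannot see the extra structure that makes the statement true.

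The missing idea is Wakamatsu's Lemma, which is exactly what the paper invokes: first replace $t$ by a \emph{minimal} left $\widehat{\T}$-approximation $R_0\xrightarrow{t}T_0$ (possible since $\C$ is Krull--Schmidt), and then argue directly on the triangle $R_0\xrightarrow{t}T_0\xrightarrow{g}U^A\xrightarrow{h}R_0[1]$: given $\phi\colon U^A\to T'[1]$ with $T'\in\widehat{\T}$, the map $\phi g$ is killed by forming the extension $T'\to E\to T_0\xrightarrow{\phi g}T'[1]$, noting $E\in\widehat{\T}$ by Lemma \ref{ext}, lifting $t$ through $E\to T_0$ (as $\phi g t=0$), and using the approximation property plus minimality to split $E\to T_0$; then $\phi$ factors through $h$ as $\chi[1]h$ with $\chi\colon R_0\to T'$ factoring through $t$, whence $\phi=0$ since $t[1]h=0$. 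Minimality is not cosmetic here: without it $U^A$ can acquire direct summands lying in $\widehat{\T}$, for which $\Hom_\C(-,\widehat{\T}[1])=0$ genuinely fails. Two smaller omissions: you should justify that a left $\overline{\T}$-approximation of $R_0$ lifts to a left $\widehat{\T}$-approximation in $\C$ (this uses $\Hom_\C(\R,\R[1])=0$), and for the first triangle you need $\R\subseteq\mathcal S$ (Lemma \ref{721}) together with extension-closure of $\mathcal S$ to conclude $U_A\in\mathcal S$ from $U_A\in\R\ast\mathcal S$.
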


\begin{proof}
Denote $\{S\in \A ~|~ {\rm Hom}_{\C}(S,\widehat{\T}[1])=0\}$ by $\mathcal S$. Note that $\mathcal S$ and $\widehat{\T}$ are closed under direct summands. Let $A$ be any object in $\A$. It admits a triangle $R_0\to R_A\xrightarrow{r_A} A\to R_0[1]$ with $R_0,R_A\in \R$. If $\overline r':R_A\to T_1'$ is a left $\overline \T$-approximation, then $r'$ is a left $\widehat{\T}$ approximation in $\C$. Hence $R_A$ admits a triangle $S_1[-1]\to R_A\xrightarrow{r} T_1\to S_1$ where $r$ is a left minimal $\widehat{\T}$-approximation. By Lemma \ref{ext} and  Wakamatsu's Lemma, $S_1\in \mathcal S$. Thus we have the following commutative diagram.
$$\xymatrix{
&S_1[-1] \ar@{=}[r] \ar[d] &S_1[-1] \ar[d]\\
R_0 \ar[r] \ar@{=}[d] &R_A \ar[r]^{r_A} \ar[d]^{r} &A \ar[r] \ar[d]^{a_1} &R_0[1] \ar@{=}[d]\\
R_0 \ar[r] &T_1 \ar[r] \ar[d] &T_2 \ar[r] \ar[d] &R_0[1]\\
&S_1 \ar@{=}[r] &S_1
}
$$
Then we can get an exact sequence $R_0\to T_1\to T_2\to 0$ by Lemma \ref{r2}, which implies that $T_2\in \widehat{\T}$. Since $A$ is arbitrary, we know that $R_0$ admits a triangle $S_0[-1]\to R_0\to T_0\to S_0$ where $T_0\in \widehat{\T}$ and $S_0\in\mathcal S$. Then we have the following commutative diagram.
$$\xymatrix{
&S_0[-1] \ar@{=}[r] \ar[d] &S_0[-1] \ar[d]\\
A[-1] \ar[r] \ar@{=}[d]&R_0 \ar[r] \ar[d] &R_A \ar[r]^{r_A} \ar[d] &A \ar@{=}[d]\\
A[-1] \ar[r] &T_0 \ar[r] \ar[d] &S \ar[r] \ar[d] &A\\
&S_0 \ar@{=}[r] &S_0
}
$$
By Remark \ref{useful} $S\in \mathcal S$. By definition, $(\mathcal S,\widehat{\T})$ is a cotorsion pair in $\A$.
\end{proof}

\begin{prop}\label{tri}
Assume $\overline \T$ satisfies the conditions in Lemma \ref{co}. Then $({^{\bot_1}}{\overline \T},\overline \T)$ is both a left weak cotorsion pair and a $\tau$-cotorsion pair. Moreover, if $\overline \T$ is a functorially finite torsion class, then $({^{\bot_1}}{\overline \T},\overline \T)$ is both a left weak cotorsion torsion pair and a $\tau$-cotorsion torsion pair.
\end{prop}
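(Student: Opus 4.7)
The plan is to reduce this statement to a direct application of the two lemmas immediately preceding it. By Lemma \ref{co}, the hypotheses on $\overline\T$ produce a cotorsion pair $(\mathcal{S}, \widehat{\T})$ in $\A$, where $\widehat{\T} = \add(\T \cup \R[1])$. Lemma \ref{co2} then tells us that the image pair $(\overline{\mathcal{S}}, \overline{\widehat{\T}})$ in $\overline{\A}$ automatically satisfies conditions (a1), (a2), (b2) and (c1) of Definition \ref{lcp}. Condition (b1) is immediate from (a1) since $\overline{\U} = {^{\bot_1}}\overline{\V}$ forces ${\rm Ext}^1_{\overline{\A}}(\overline{\U}, \overline{\V}) = 0$. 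Thus the image pair is simultaneously a $\tau$-cotorsion pair and a left weak cotorsion pair.

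Next I would identify this image pair with the one in the statement. Since the image of $\R[1]$ under the quotient $\pi\colon \A \to \overline{\A}$ is zero, we have $\overline{\widehat{\T}} = \overline{\T}$. Condition (a1) from Lemma \ref{co2} then yields
\[
\overline{\mathcal{S}} \;=\; {^{\bot_1}}\overline{\widehat{\T}} \;=\; {^{\bot_1}}\overline{\T}.
\]
Hence $({^{\bot_1}}\overline{\T}, \overline{\T})$ coincides with the pair produced by Lemmas \ref{co} and \ref{co2}, and so it is both a $\tau$-cotorsion pair (via (a1), (a2)) and a left weak cotorsion pair (via (b1), (b2)).

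For the ``Moreover'' part, the additional hypothesis that $\overline{\T}$ is a functorially finite torsion class supplies condition (c1$'$) directly. I would briefly verify that the hypotheses of Lemma \ref{co} remain in force: being a torsion class gives closure under extensions and under $\Fac$ (so (c1) also holds automatically), while covariant finiteness provides the required left $\overline{\T}$-approximations of the projectives $R \in \R$. Appending (c1$'$) to (a1), (a2), (b1), (b2) then upgrades the pair to a $\tau$-cotorsion torsion pair and simultaneously to a left weak cotorsion torsion pair.

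The main obstacle, insofar as there is one, is bookkeeping: one has to confirm that $\overline{\widehat{\T}} = \overline{\T}$ under the quotient by $\R[1]$ and that the subcategory $\overline{\mathcal{S}}$ delivered by Lemma \ref{co2} is the same as ${^{\bot_1}}\overline{\T}$ appearing in the statement. Once these identifications are in place, no further work is needed beyond a clean invocation of Definition \ref{lcp}, so I do not anticipate any genuine technical difficulty.
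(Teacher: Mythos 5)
Your proposal is correct and follows the same route as the paper: the paper's entire proof is the one-line citation ``This is followed by Lemma \ref{co}, Lemma \ref{co2}.'' You have simply spelled out the bookkeeping (that $\overline{\widehat{\T}}=\overline{\T}$ under the quotient by $\R[1]$, that (b1) follows from (a1), and that the torsion-class hypothesis supplies (c1$'$) while keeping the hypotheses of Lemma \ref{co} in force), all of which is accurate.
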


\begin{proof}
This is followed by Lemma \ref{co}, Lemma \ref{co2}.
\end{proof}
\vspace{2mm}

{\bf\hspace{-4mm}6.2~~ Abelian categories of which the bounded derived categories are Krull-Schmidt.}
\hspace{2mm}
From this subsection, let $R$ be a commutative noetherian ring which is complete and local. Let $\widehat{\A}$ be an {\rm Ext}-finite abelian category over $R$ with enough projectives. Let $\mathcal P$ be the subcategory of projective objects. Let $\C=\mathrm{D}^b(\widehat{\A})$. By \cite[Corollary B]{LC}, $\C$ is Krull-Schmidt. Moreover, it is Hom-finite over $R$. Since $\widehat{\A}$ is the heart of a $t$-structure in $\C$, $\widehat{\A}$ is also Krull-Schmidt. Note that $\Hom_{\C}(\mathcal P,\mathcal P[i])=0$, $\forall i>0$. We denote $\mathcal P*\mathcal P[1]$ by $\A$ and $(\mathcal P*\mathcal P[1])/\mathcal P[1]$ by $\overline \A$.

\begin{lem}\label{732}
${\rm Hom}_{\C}(\A,\widehat{\A}[2])=0$ and ${\rm Hom}_{\C}(\A,\widehat{\A}[1])=[\mathcal P[1]](\A,\widehat{\A}[1])$.
\end{lem}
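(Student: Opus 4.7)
The plan is to reduce both claims to the triangle structure of objects in $\A=\mathcal P*\mathcal P[1]$ together with the fact that $\widehat{\A}$ sits in $\C=\mathrm{D}^b(\widehat{\A})$ as the heart of the standard $t$-structure. In particular, for any $P\in \mathcal P$ and $N\in\widehat{\A}$ we have $\Hom_\C(P,N[i])=\Ext^i_{\widehat{\A}}(P,N)=0$ for every $i\geq 1$, because $P$ is projective. This is the only input I plan to use beyond the definitions.

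First I would fix $A\in\A$ and $N\in\widehat{\A}$ and choose a triangle
\[
P_0\longrightarrow A\longrightarrow P_1[1]\longrightarrow P_0[1]
\]
with $P_0,P_1\in\mathcal P$, coming from $A\in\mathcal P*\mathcal P[1]$. For the first identity I would apply $\Hom_\C(-,N[2])$ to this triangle, producing an exact sequence whose flanking terms $\Hom_\C(P_1[1],N[2])=\Ext^1_{\widehat{\A}}(P_1,N)$ and $\Hom_\C(P_0,N[2])=\Ext^2_{\widehat{\A}}(P_0,N)$ both vanish by projectivity. Hence $\Hom_\C(A,N[2])=0$, and since $N$ is arbitrary this gives $\Hom_\C(\A,\widehat{\A}[2])=0$.

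For the second identity, the inclusion $[\mathcal P[1]](\A,\widehat{\A}[1])\subseteq\Hom_\C(\A,\widehat{\A}[1])$ is formal. For the reverse, let $\alpha\colon A\to N[1]$ be arbitrary and apply $\Hom_\C(-,N[1])$ to the same triangle to obtain
\[
\Hom_\C(P_1[1],N[1])\longrightarrow\Hom_\C(A,N[1])\longrightarrow\Hom_\C(P_0,N[1]).
\]
The rightmost group equals $\Ext^1_{\widehat{\A}}(P_0,N)=0$, so the image of $\alpha$ vanishes there; therefore $\alpha$ factors through the connecting morphism $A\to P_1[1]$. Since $P_1[1]\in\mathcal P[1]$, this exhibits $\alpha$ as an element of $[\mathcal P[1]](A,N[1])$, finishing the proof.

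I do not expect any real obstacle: both parts are diagram chases on the defining triangle of $A\in\mathcal P*\mathcal P[1]$, and the only non-formal ingredient is $\Ext^{>0}_{\widehat{\A}}(\mathcal P,\widehat{\A})=0$, which is immediate from projectivity. The mildest point to be careful about is the rotation convention for the triangle associated with $A\in\mathcal P*\mathcal P[1]$, but either rotation makes the two relevant $\Hom$-groups vanish.
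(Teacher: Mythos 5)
Your argument is correct and is essentially identical to the paper's proof: both use the defining triangle $P_1\to P_0\to A\to P_1[1]$ of $A\in\mathcal P*\mathcal P[1]$, observe that the composite with $P_0$ lands in $\Ext^{\geq 1}_{\widehat{\A}}(P_0,\widehat{A})=0$, and conclude that the morphism factors through $P_1[1]$ (and then vanishes in the degree-$2$ case since $\Hom_\C(P_1[1],\widehat{A}[2])=\Ext^1_{\widehat{\A}}(P_1,\widehat{A})=0$). No gaps.
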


\begin{proof}
Let $A$ be any object in $\A$. Then $A$ admits a triangle $P_1\to P_0\xrightarrow{p_0} A\xrightarrow{a} P_1[1]$ with $P_0,P_1\in \mathcal P$.

Let $a_0:A\to \widehat{A}[2]$ be a morphism with $\widehat{A}\in \widehat{\A}$. Then $a_0p_0=0$ and $a_0$ factors through $P_1[1]$. But $\Hom_{\C}(P_1[1],\widehat{\A}[2])=0$. Hence $a_0=0$.

Let $a_1:A\to \widehat{A}[1]$ be a morphism with $\widehat{A}\in \widehat{\A}$. Then $a_1p_0=0$ and $a$ factors through $P_1[1]$. Hence ${\rm Hom}_{\C}(\A,\widehat{\A}[1])=[\mathcal P[1]](\A,\widehat{\A}[1])$.
\end{proof}

\begin{prop}\label{733}
We have an equivalence of additive categories: $\overline \A\simeq \widehat{\A}$.
\end{prop}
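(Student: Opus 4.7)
My strategy is to factor the desired equivalence through the category $\mod\mathcal{P}$ of finitely presented contravariant functors on $\mathcal{P}$, using the restricted Yoneda equivalence already recalled in the paper together with the classical Yoneda embedding from an abelian category with enough projectives. The subcategory $\mathcal{P}$ is rigid in $\mathcal{C}=\mathrm{D}^b(\widehat{\mathcal{A}})$, since $\Hom_{\mathcal{C}}(\mathcal{P},\mathcal{P}[i])=0$ for all $i\ge 1$. Hence we are in the setup of the restricted Yoneda functor $\mathbb{H}\colon\mathcal{C}\to\Mod\mathcal{P}$ of the introduction, and specializing the cited result \cite[Proposition~6.2]{IY} (applied with $\mathcal{R}=\mathcal{P}$) yields an equivalence
\[
\overline{\mathcal{A}}\;=\;(\mathcal{P}\ast\mathcal{P}[1])/\mathcal{P}[1]\;\xrightarrow{\;\simeq\;}\;\mod\mathcal{P}.
\]

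Next I would set up the classical comparison. Because $\widehat{\mathcal{A}}$ has enough projectives, the ordinary Yoneda functor
\[
y\colon\widehat{\mathcal{A}}\longrightarrow\Mod\mathcal{P},\qquad X\mapsto\Hom_{\widehat{\mathcal{A}}}(-,X)|_{\mathcal{P}}
\]
is fully faithful: a morphism between two objects of $\widehat{\mathcal{A}}$ is determined by and induced from its effect on maps out of projectives. Given any $X\in\widehat{\mathcal{A}}$, a projective presentation $P_1\to P_0\to X\to 0$ produces an exact sequence
\[
\Hom_{\widehat{\mathcal{A}}}(-,P_1)|_{\mathcal{P}}\to\Hom_{\widehat{\mathcal{A}}}(-,P_0)|_{\mathcal{P}}\to y(X)\to 0,
\]
so $y(X)\in\mod\mathcal{P}$. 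Conversely, every $F\in\mod\mathcal{P}$ is the cokernel of a map between representables $\Hom(-,P_1)|_{\mathcal{P}}\to\Hom(-,P_0)|_{\mathcal{P}}$, which by Yoneda comes from a morphism $P_1\to P_0$ in $\mathcal{P}$; taking its cokernel in $\widehat{\mathcal{A}}$ gives a preimage of $F$ under $y$. Hence $y$ induces an equivalence $\widehat{\mathcal{A}}\xrightarrow{\simeq}\mod\mathcal{P}$, and composing with the inverse of the first equivalence gives $\overline{\mathcal{A}}\simeq\widehat{\mathcal{A}}$.

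\textbf{What I expect to be the main issue.} There is no single hard step here; the statement is essentially a packaging of two known equivalences. The one point that does require attention is compatibility: one should check that, via the canonical inclusion $\widehat{\mathcal{A}}\hookrightarrow\mathcal{C}$, an object $X\in\widehat{\mathcal{A}}$ with projective presentation $P_1\to P_0\to X\to 0$ genuinely sits in $\mathcal{A}=\mathcal{P}\ast\mathcal{P}[1]$ via the induced triangle $P_1\to P_0\to X\to P_1[1]$ in $\mathcal{C}$, and that $\mathbb{H}(X)$ then coincides with $y(X)$ (which is immediate from $\Hom_{\mathcal{C}}(P,X)=\Hom_{\widehat{\mathcal{A}}}(P,X)$ for $P\in\mathcal{P}$, $X\in\widehat{\mathcal{A}}$). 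This identification shows that the composite equivalence $\overline{\mathcal{A}}\simeq\mod\mathcal{P}\simeq\widehat{\mathcal{A}}$ is the one induced by the natural functor $\mathcal{A}\to\overline{\mathcal{A}}$ together with the heart inclusion, which is the concrete form of the equivalence one wants to use in the sequel.
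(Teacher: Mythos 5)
Your proof is correct, but it takes a genuinely different route from the paper's. The paper constructs the equivalence directly: for $A\in\A$ with triangle $P_1\xrightarrow{p}P_0\to A\to P_1[1]$ it builds, via the epic--monic factorization of $p$ in $\widehat{\A}$, a triangle $A_1[1]\to A\xrightarrow{a}\widehat{A}\to A_1[2]$ with $a$ a minimal left $\widehat{\A}$-approximation, defines $\mathbb{G}\colon\overline{\A}\to\widehat{\A}$ by $A\mapsto\widehat{A}$, and checks well-definedness, fullness, faithfulness and density by explicit diagram chases in $\C$ (using $\Hom_{\C}(\A,\widehat{\A}[2])=0$ and $\Hom_{\C}(\A,\widehat{\A}[1])=[\mathcal P[1]](\A,\widehat{\A}[1])$ from Lemma \ref{732}). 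You instead factor through $\mod\mathcal P$: the restricted Yoneda equivalence $\overline{\A}\simeq\mod\mathcal P$ of \cite[Proposition 6.2]{IY} (already invoked in Section 5, and applicable since $\mathcal P$ is rigid), composed with the classical projectivization equivalence $\widehat{\A}\simeq\mod\mathcal P$ for an abelian category with enough projectives. Both arguments are sound, and the two equivalences agree up to natural isomorphism, since each sends the cone of $P_1\to P_0$ to $\Coker(P_1\to P_0)$; your fullness argument for the Yoneda functor $y$ is only sketched, but the standard lift-along-a-projective-presentation argument fills it in. What your route buys is brevity and reuse of results the paper already cites; what the paper's route buys is the explicit approximation triangle $A_1[1]\to A\to\widehat{A}\to A_1[2]$, which is not merely an artifact of the proof but is quoted verbatim in the later arguments (the proofs of Lemma \ref{lefta} and Lemma \ref{tech} both begin ``by the proof of Proposition \ref{733}, \dots admits a triangle\dots''). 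If one adopted your proof, one would still need to record that description of the quasi-inverse separately for the sequel.
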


\begin{proof}
Let $A\in \A$ be an object having no direct summand in $\mathcal P[1]$. $A$ admits a triangle $P_1\xrightarrow{p} P_0\to A\to P_1[1]$ with $P_0,P_1\in \mathcal P$. Let $P_1\xrightarrow{p_1} A_1''\xrightarrow{a_1} P_0$ be the epic-monic factorization of $p$ in $\widehat{\A}$. Then we have two short exact sequences
$$0\to A_1''\xrightarrow{a_1} P_0\to \widehat{A}_0\to 0, ~0\to A_2''\to P_1\xrightarrow{p_1} A_1''\to 0 \text{ in }\widehat{\A}$$
which induces two triangles
$$A_1''\xrightarrow{a_1} P_0\to \widehat{A}_0\to A_1''[1], ~A_2''\to P_1\xrightarrow{p_1} A_1''\to A_2''[1] \text{ in }\C.$$
Then we have the following commutative diagram of triangles
$$\xymatrix{
P_1 \ar[r]^{p_1} \ar@{=}[d] &A_1'' \ar[r] \ar[d]^{a_1} &A_2''[1] \ar[r] \ar[d] &P_1[1] \ar@{=}[d]\\
P_1 \ar[r]^{p} &P_0 \ar[r] \ar[d] &A \ar[r] \ar[d]^{a_0} &P[1]\\
&\widehat{A}_0 \ar@{=}[r] \ar[d] &\widehat{A}_0 \ar[d]\\
&A_1''[1] \ar[r] &A_2''[2]
}$$
Since $\Hom_{\C}(\widehat{\A}[1],\widehat{\A})=0$, $a_0$ is a left $\widehat{\A}$-approximation. Hence $A$ admits a triangle
$$A_1[1]\to A\xrightarrow{a} \widehat{A}\to A_1[2]$$
where $a$ is a left minimal $\widehat{\A}$-approximation and $A_1\in \widehat{\A}$. Then for any two object $A,A'\in \overline \A$ and a morphism $\overline f:A\to A'$, we have the following commutative diagram of triangles
$$\xymatrix{
A_1[1] \ar[r] \ar[d] &A \ar[r]^{a} \ar[d]^f &\widehat{A} \ar[r]^{\widehat{a}} \ar[d]^{\hat{f}} &A_1[2] \ar[d]\\
A_1'[1] \ar[r] &A' \ar[r]^{a'}  &\widehat{A}' \ar[r]^{\widehat{a}'}  &A_1'[2].
}$$
Then we can define a functor $\mathbb{G}:\overline \A\to \widehat{\A}$ such that
$$\mathbb{G}(A)=\widehat{A},\quad \mathbb{G}(\overline f)=\hat{f}.$$
It is well-defined: if there is a morphism $f_1:A\to A'$ such that $\overline f_1=\overline f$, then we have the following commutative diagram of triangles
$$\xymatrix{
A_1[1] \ar[r] \ar[d] &A \ar[r]^{a} \ar[d]^{f_1} &\widehat{A} \ar[r] \ar[d]^{\hat{f}_1} &A_1[2] \ar[d]\\
A_1'[1] \ar[r] &A' \ar[r]^{a'}  &\widehat{A}' \ar[r]  &A_1'[2].
}$$
Since $f_1-f$ factors through $\mathcal P[1]$, $0=a'(f_1-f)=(\hat{f}_1-\hat{f})a$. Then $\hat{f}_1-\hat{f}$ factors through $A_1[2]$, but $\Hom_{\C}(\widehat{\A}[2],\widehat{\A})=0$, hence $\hat{f}_1=\hat{f}$. By definition, $\mathbb{G}$ is additive. We show $\mathbb{G}$ is fully-faithful and dense.

$\mathbb{G}$ is full: for any morphism $\hat{g}:\widehat{A}\to \widehat{A}'$, since $\hat{a'}\hat{g}a=0$ by Lemma \ref{732}, we have a commutative diagram of triangles
$$\xymatrix{
A_1[1] \ar[r] \ar[d] &A \ar[r]^{a} \ar[d]^{g} &\widehat{A} \ar[r]^{\hat{a}} \ar[d]^{\hat{g}} &A_1[2] \ar[d]\\
A_1'[1] \ar[r] &A' \ar[r]^{a'}  &\widehat{A}' \ar[r]^{\hat{a}'}  &A_1'[2]
}$$
which implies $\mathbb{G}(g)=\hat{g}$.

$\mathbb{G}$ is faithful: if $\mathbb{G}(\overline f)=0$, then $a'f=0$. Hence $f$ factors through $A_1'[1]$. By Lemma \ref{732}, $\overline f=0$.

$\mathbb{G}$ is dense: let $\widehat{A}_0$ be any object in $\widehat{\A}$. It admits two show exact sequences:
$$0\to A_1''\xrightarrow{a_1} P_0\to \widehat{A}_0\to 0, ~ 0\to A_2''\to P_1\xrightarrow{p_1} A_1''\to 0$$
in $\widehat{\A}$ with $P_0,P_1\in \mathcal P$. Then we have two triangles
$$A_1''\xrightarrow{a_1} P_0\to \widehat{A}_0\to A_1''[1], ~A_2''\to P_1\xrightarrow{p_1} A_1''\to A_2[1] \text{ in }\C.$$
We can get the following commutative diagram of triangles
$$\xymatrix{
P_1 \ar[r]^{p_1} \ar@{=}[d] &A_1'' \ar[r] \ar[d]^{a_1} &A_2''[1] \ar[r] \ar[d] &P_1[1] \ar@{=}[d]\\
P_1 \ar[r]^{p} &P_0 \ar[r] \ar[d] &A_0' \ar[r] \ar[d]^{a_0} &P_1[1]\\
&\widehat{A}_0 \ar@{=}[r] \ar[d] &\widehat{A}_0 \ar[d]\\
&A_1''[1] \ar[r] &A_2''[2]
}$$
where $A_0'\in \A$. We can assume $A_0'$ has no direct summand in $\widehat{\A}[1]$. $A_0'$ admits a triangle
$$A_1'[1] \to A_0'\xrightarrow{a_0'} \widehat{A}_0'\to A'_1[2]$$
where $A_1'\in \widehat{\A}$, $a_0'$ is a left minimal $\widehat{A}$-approximation. Since $a_0$ is a left $\widehat{\A}$-approximation, we have the following commutative diagram.
$$\xymatrix{
A_1''[1] \ar[r] \ar[d] &A_0' \ar[r]^{a_0} \ar@{=}[d]  &\widehat{A}_0 \ar[r] \ar[d]  &A_1''[2] \ar[d]\\
A_1'[1] \ar[r] \ar[d] &A_0' \ar[r]^{a_0'} \ar@{=}[d] &\widehat{A}_0' \ar[r] \ar[d] &A_1'[2] \ar[d]\\
A_1''[1] \ar[r] &A_0' \ar[r]^{a_0}  &\widehat{A}_0 \ar[r]  &A_1''[2]
}$$
Hence $\widehat{A}_0$ is a direct summand of $\widehat{A}_0'\oplus A_1''[2]$ and $\widehat{A}_0'$ is a direct summand of $\widehat{A}_0$. But $\widehat{\A}[2]\cap\widehat{\A}=0$, we have $\widehat{A}_0\cong \widehat{A}_0'$.
\end{proof}

We introduce the following notions:
\begin{itemize}
\item[(1)] $\widehat{\A}_{c\text{-}s\tau\text{-}til}=:\{\text{ contravariantly finite support }\tau\text{-tilting subcategories in }\widehat{\A}~\}$;
\item[(2)] $\widehat{\A}_{lw\text{-}ctp}=:\{\text{ left weak cotorsion torsion pairs in }\widehat{\A}~\}$;
\item[(3)] $\widehat{\A}_{\tau\text{-}ctp}=:\{~\tau\text{-cotorsion torsion pairs in }\widehat{\A}~\}$;
\item[(4)] $\widehat{\A}_{f\text{-}tor}=:\{\text{ functorially finite torsion class } \T\subseteq \widehat{\A } \}$.
\end{itemize}

\begin{thm}\label{main722}
There are bijections
$$ \widehat{\A}_{c\text{-}s\tau\text{-}til} \longleftrightarrow \widehat{\A}_{f\text{-}tor} \longleftrightarrow \widehat{\A}_{lw\text{-}ctp}$$
given by
\begin{itemize}
\item[\rm (a1)] $\widehat{\A}_{c\text{-}s\tau\text{-}til}\ni\M\mapsto \Fac \M\in \widehat{\A}_{f\text{-}tor}$;
\item[\rm (a2)] $\widehat{\A}_{f\text{-}tor}\ni\T\mapsto {^{\bot_1}}\T\cap\T\in \widehat{\A}_{c\text{-}s\tau\text{-}til}$;
\item[\rm (b1)] $\widehat{\A}_{f\text{-}tor}\ni\T\mapsto ({^{\bot_1}}\T,\T)\in \widehat{\A}_{lw\text{-}ctp}$;
\item[\rm (b2)] $\widehat{\A}_{lw\text{-}ctp}\ni({\mathcal S}, \T)\mapsto \T\in  \widehat{\A}_{f\text{-}tor}$.
\end{itemize}
\end{thm}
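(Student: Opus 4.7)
The plan is to use the equivalence $\overline{\A} \simeq \widehat{\A}$ from Proposition \ref{733}, and to note that $\R = \mathcal{P}$ is $2$-rigid in $\C = \mathrm{D}^b(\widehat{\A})$, so the machinery of Section 6.1 applies verbatim. The core inputs will be Proposition \ref{tri}, which produces cotorsion structures from a functorially finite torsion class, and Theorem \ref{PZZ}, which matches contravariantly finite support $\tau$-tilting subcategories with $\tau$-cotorsion torsion pairs. Together these essentially reduce the problem to verifying that the relevant finiteness properties transfer correctly through $\mathbb{G}$.

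For the bijection $\widehat{\A}_{f\text{-}tor} \leftrightarrow \widehat{\A}_{lw\text{-}ctp}$, I take as the forward map $\T \mapsto ({^{\bot_1}}\T, \T)$; its well-definedness is Proposition \ref{tri} transported via $\mathbb{G}$, since the three hypotheses of Lemma \ref{co} are immediate from $\T$ being a functorially finite torsion class (left $\T$-approximations of projectives come from covariant finiteness, while torsion classes are closed under quotients and extensions). The reverse map $(\mathcal{S}, \T) \mapsto \T$ is well-defined because a torsion class is automatically contravariantly finite via its torsion part, whilst condition (b2) of Definition \ref{lcp} furnishes a left $\T$-approximation $A \to V^A$ for every $A$, giving covariant finiteness. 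For the round-trip compositions, the only nontrivial identity is $\mathcal{S} = {^{\bot_1}}\T$ in any left weak cotorsion pair; this follows from (b1) giving $\mathcal{S} \subseteq {^{\bot_1}}\T$ combined with the splitting of $0 \to V_A \to U_A \to A \to 0$ whenever $A \in {^{\bot_1}}\T$, together with closure of $\mathcal{S}$ under direct summands.

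For the bijection $\widehat{\A}_{c\text{-}s\tau\text{-}til} \leftrightarrow \widehat{\A}_{f\text{-}tor}$, the forward map $\M \mapsto \Fac \M$ produces a torsion class by Theorem \ref{PZZ}, which is thus contravariantly finite; for covariant finiteness I will lift the left $\Fac \M$-approximations of projectives (provided by condition (a2) of the associated $\tau$-cotorsion pair) to arbitrary objects by taking a projective cover $P \twoheadrightarrow A$ and forming the pushout along $P \to V^P$, using that $\Fac \M$ is closed under cokernels to verify the approximation property. Conversely, given a functorially finite torsion class $\T$, Proposition \ref{tri} supplies a $\tau$-cotorsion torsion pair $({^{\bot_1}}\T, \T)$, and Theorem \ref{PZZ} then certifies ${^{\bot_1}}\T \cap \T$ as contravariantly finite support $\tau$-tilting. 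The two round-trip compositions collapse to identities directly via Theorem \ref{PZZ}.

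The main obstacle I expect is the verification of covariant finiteness of $\Fac \M$ (the pushout argument), together with carefully tracing the transport through $\mathbb{G}$; in particular one must confirm that the projective objects of $\widehat{\A}$ match $\R = \mathcal{P}$ under the equivalence, which is precisely Lemma \ref{proj} composed with $\mathbb{G}$, so that ``functorially finite torsion class in $\widehat{\A}$'' corresponds exactly to ``$\overline{\T}$ satisfies the hypotheses of Lemma \ref{co}''.
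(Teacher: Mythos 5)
Your proposal is correct and follows essentially the same route as the paper: both reduce the statement to Proposition \ref{733}, Proposition \ref{tri} (via Lemma \ref{co}) and Theorem \ref{PZZ}, with the $\widehat{\A}_{f\text{-}tor}\leftrightarrow\widehat{\A}_{lw\text{-}ctp}$ bijection established first and the support-$\tau$-tilting bijection then induced through Theorem \ref{PZZ}. The details you add that the paper leaves implicit --- the identity $\mathcal S={^{\bot_1}}\T$ via splitting plus closure under summands, and the cokernel/pushout argument for covariant finiteness of $\Fac\M$ (which the paper instead delegates to Lemma \ref{conf}) --- are sound.
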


\begin{proof}
(b1) and (b2): This is followed by Proposition \ref{733} and Proposition \ref{tri}.

(a1) and (a2):  By Proposition \ref{733} and Proposition \ref{tri}, if $\T$ is a functorially finite torsion class, then $({^{\bot_1}}\T,\T)$ is both a $\tau$-cotorsion torsion pair and a left weak cotorsion torsion pair. Then by Theorem \ref{PZZ}, the bijections in (a1) and (a2) are induced by the bijections in (b1) and (b2).
\end{proof}

By this theorem and Theorem \ref{PZZ}, we can get the following corollary, which generalizes \cite[Theorem 4.10]{AST}.

\begin{cor}\label{main722-c}
$\widehat{\A}_{\tau\text{-}ctp}=\widehat{\A}_{lw\text{-}ctp}$.
\end{cor}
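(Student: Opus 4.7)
The plan is to exhibit both $\widehat{\A}_{\tau\text{-}ctp}$ and $\widehat{\A}_{lw\text{-}ctp}$ as the image of the common parametrizing set $\widehat{\A}_{c\text{-}s\tau\text{-}til}$ under the \emph{same} explicit formula, and then deduce the equality by comparing the two bijections.

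First I would recall, from the ``Moreover'' part of Theorem \ref{PZZ}, that the assignment $\M \mapsto ({^{\bot_1}}(\Fac \M),\,\Fac \M)$ restricts to a bijection $\widehat{\A}_{c\text{-}s\tau\text{-}til} \longleftrightarrow \widehat{\A}_{\tau\text{-}ctp}$ with inverse $(\U,\V)\mapsto \U\cap\V$. In particular, every $\tau$-cotorsion torsion pair has the form $({^{\bot_1}}(\Fac \M),\,\Fac \M)$ for a unique $\M \in \widehat{\A}_{c\text{-}s\tau\text{-}til}$, and conversely every such $\M$ produces a $\tau$-cotorsion torsion pair of this shape.

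Next, I would invoke Theorem \ref{main722}. Composing the bijection $\M \mapsto \Fac \M$ of (a1) with the bijection $\T \mapsto ({^{\bot_1}}\T,\,\T)$ of (b1) yields a bijection $\widehat{\A}_{c\text{-}s\tau\text{-}til} \longleftrightarrow \widehat{\A}_{lw\text{-}ctp}$ given by $\M \mapsto ({^{\bot_1}}(\Fac \M),\,\Fac \M)$, literally the same formula as in the previous paragraph. Consequently both $\widehat{\A}_{\tau\text{-}ctp}$ and $\widehat{\A}_{lw\text{-}ctp}$ coincide with the common image
$$\bigl\{({^{\bot_1}}(\Fac \M),\,\Fac \M) \;\big|\; \M\in \widehat{\A}_{c\text{-}s\tau\text{-}til}\bigr\},$$
which gives the claimed set-theoretic equality.

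I do not anticipate any substantive obstacle: the argument is bookkeeping, matching the formulas produced by Theorems \ref{PZZ} and \ref{main722}. The only minor point to check is that the pair assigned by each theorem agrees on the nose; this is automatic, since both formulas are built from $\M$ using only the operations $\Fac(-)$ and ${^{\bot_1}}(-)$. A brief direct sanity check is still worthwhile: given $(\U,\V)\in \widehat{\A}_{\tau\text{-}ctp}$, Theorem \ref{PZZ} produces an $\M$ with $\V=\Fac\M$ and $\U={^{\bot_1}}\V$, whereupon Theorem \ref{main722} places $(\U,\V)$ in $\widehat{\A}_{lw\text{-}ctp}$; the reverse inclusion is symmetric.
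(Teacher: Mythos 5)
Your argument is correct and is precisely the intended derivation: the paper deduces Corollary \ref{main722-c} by combining Theorem \ref{main722} with Theorem \ref{PZZ}, and your observation that both bijections out of $\widehat{\A}_{c\text{-}s\tau\text{-}til}$ are given by the identical formula $\M\mapsto({^{\bot_1}}(\Fac\M),\Fac\M)$ is exactly the bookkeeping the paper leaves implicit. No gaps.
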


%\begin{rem}
%By {\rm \cite[Theorem 3.8]{PZZ}} and Lemma \ref{co}, if $(\U,\V)$ satisfies conditions {\rm (a1), (a2)} and {\rm (c1)} in Definition \ref{lcp} (which means it is a $\tau$-cotorsion pair such that $\Fac \V=\V$), it is also a left weak cotorsion pair.
%\end{rem}

\begin{lem}\label{lefta}
Let $\widehat{\X}$ be a subcategory of $\widehat{\A}$ such that every projective object admits a left $\widehat{\X}$-approximation. Let $\mathbb{F}$ be the quasi-inverse of $\mathbb{G}$. Then every object in $\mathcal P$ admits a left $\mathbb{F}(\widehat{\X})$-approximation in $\A$.
\end{lem}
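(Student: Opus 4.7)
The plan is to transport a left $\widehat{\X}$-approximation of $P$ in $\widehat{\A}$ to $\A$ along the equivalence $\mathbb{G}\colon\overline{\A}\xrightarrow{\simeq}\widehat{\A}$ of Proposition \ref{733}. The first preparatory observation I need is that $\mathbb{G}(P)=P$ for every $P\in\mathcal P$: feeding the triangle $0\to P\xrightarrow{1_P}P\to 0$ into the construction of $\mathbb{G}$ in the proof of Proposition \ref{733} identifies $\widehat P$ with $P$ itself. The second is that $[\mathcal P[1]](P,A)=0$ for every $A\in\A$, because $\Hom_\C(\mathcal P,\mathcal P[1])=0$; consequently the canonical projection $\Hom_\A(P,A)\to\Hom_{\overline\A}(P,A)$ is a bijection, and composing with $\mathbb{G}$ yields a natural bijection
$$\Hom_\A(P,A)\;\simeq\;\Hom_{\widehat\A}(P,\mathbb{G}(A))$$
for all $P\in\mathcal P$ and $A\in\A$.

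With those two facts in place, the construction is direct. Given $P\in\mathcal P$, I take a left $\widehat{\X}$-approximation $\widehat f\colon P\to\widehat X$ in $\widehat\A$, which exists by hypothesis, set $X:=\mathbb{F}(\widehat X)\in\mathbb{F}(\widehat{\X})$, and let $f\colon P\to X$ be the morphism in $\A$ that corresponds to $\widehat f$ under the bijection above. To verify that $f$ is a left $\mathbb{F}(\widehat{\X})$-approximation, I take an arbitrary $g\colon P\to X'$ with $X'\in\mathbb{F}(\widehat{\X})$, transport it to $\widehat g\colon P\to\mathbb{G}(X')$, use the approximation property of $\widehat f$ to factor $\widehat g=\widehat h\,\widehat f$ in $\widehat\A$, and lift $\widehat h$ to $h\colon X\to X'$ in $\A$ via the fullness of $\mathbb{G}$. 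Then $\overline{hf}$ and $\overline g$ have the same image $\widehat g$ under $\mathbb{G}$, so they agree in $\overline\A$; the bijection of the first paragraph then upgrades this to the on-the-nose equality $hf=g$ in $\A$.

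The only delicate point I expect is the passage from an equality modulo $[\mathcal P[1]]$ to an honest equality in $\A$ in the last step, but this is exactly what $\Hom_\C(P,\mathcal P[1])=0$ provides, once packaged into the bijection of the first paragraph; everything else is formal functoriality of $\mathbb{G}$. No cotorsion-pair or functorial-finiteness machinery from the earlier parts of Section~6 is needed for this particular lemma.
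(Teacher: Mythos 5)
Your proof is correct and takes essentially the same route as the paper's: both transport the left $\widehat{\X}$-approximation of $P$ across the equivalence $\mathbb{G}$ of Proposition \ref{733}, using the vanishing of $\Hom_{\C}(P,\widehat{\A}[i])$ for $i=1,2$ to descend and lift the relevant morphisms. The paper does this by explicit diagram chases with the triangles $A_1[1]\to X\to\widehat{X}\to A_1[2]$, while you package the same computations into the bijection $\Hom_{\A}(P,A)\simeq\Hom_{\widehat{\A}}(P,\mathbb{G}(A))$ together with the full faithfulness of $\mathbb{G}$; the content is identical.
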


\begin{proof}
Let $P\in\mathcal P$ be any projective object. Denote $\mathbb{F}(\widehat{\X})$ by $\overline \X$, assume that $\X\cap \mathcal P[1]=0$.

Let $P\xrightarrow{\hat{p}} \widehat{X}$ be a left $\widehat{\X}$-approximation of $P$. By the proof of Proposition \ref{733}, $\widehat{X}$ admits a triangle
$$A_1[1]\to X\xrightarrow{x} \widehat{X}\xrightarrow{\hat{x}} A_1[2]$$
where $A_1\in \widehat{\A}$, $X\in \A$ and $X\cong \mathbb{F}(\widehat{X})$. Since $\hat{x}\hat{p}=0$, there is a morphism $p:P\to X$ such that $xp=\hat{p}$. We show that $p$ is a left $\X$-approximation.

Let $X'\in \X$ and $p':P\to X'$ be any morphism. $X'$ admits a triangle
$$A_1'[1]\to X'\xrightarrow{x'} \widehat{X}'\xrightarrow{\hat{x}'} A_1'[2]$$
where $A_1'\in \widehat{\A}$ and $\widehat{X}'\in \widehat{\X}$. Since $\hat{p}$ is a left $\widehat{\X}$-approximation, there is a morphism $\hat{y}:\widehat{X}\to \widehat{X}'$ such that $\hat{y}\hat{p}=x'p'$. Since $\Hom_{\C}(X,A_1'[2])=0$, there is a morphism $y:X\to X'$ such that $x'y=\hat{y}x$. Hence
$$x'p'=\hat{y}\hat{p}=\hat{y}xp=x'yp.$$
Then $p'-yp$ factors through $A_1'[1]$, which implies $p'=yp$.
\end{proof}

{\bf\hspace{-4mm}6.3~~ $\tau$-rigid pairs}
\hspace{2mm}

In this subsection, we always assume that $(\widehat{\X},\E)$ is a $\tau$-rigid pair satisfying the following conditions
\begin{itemize}
\item[(X0)] $\widehat{\X}$ is not support $\tau$-tilting;
\item[(X1)] $\widehat{\X}$ is contravariantly finite;
\item[(X2)] every projective object admits a left $\widehat{\X}$-approximation;
\item[(X3)] $\E=\{P\in \mathcal P~|~\Hom_{\widehat{\A}}(P,\widehat{\X})=0\}$.
\end{itemize}

\begin{lem}\label{conf-1}
$\Fac \widehat{\X}$ is a functorially finite torsion class.
\end{lem}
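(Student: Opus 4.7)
The plan is to reduce Lemma~\ref{conf-1} to the already-established Lemma~\ref{conf}, applied in the special case $\R=\mathcal{P}$ (which is in fact $m$-rigid for every $m$, so every result of Section~6 is available), and then transport the conclusion through the equivalence $\mathbb{G}\colon\overline{\A}\xrightarrow{\sim}\widehat{\A}$ of Proposition~\ref{733}. Write $\mathbb{F}$ for a quasi-inverse of $\mathbb{G}$, set $\X:=\mathbb{F}(\widehat{\X})\subseteq\A$, and let $\overline{\X}\subseteq\overline{\A}$ denote its image under the canonical quotient $\A\to\overline{\A}$.

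First I would record the compatibility of $\mathbb{G}$ with the relevant structures. Because $\overline{\A}$ and $\widehat{\A}$ are both abelian and $\mathbb{G}$ is an additive equivalence, $\mathbb{G}$ automatically preserves kernels, cokernels, and hence epimorphisms (they are defined by universal properties internal to the additive structure). In particular $\mathbb{G}(\Fac\overline{\X})=\Fac\widehat{\X}$, and $\mathbb{G}$ preserves torsion pairs and both contravariant and covariant finiteness. Under $\mathbb{G}$ the projectives $\overline{\mathcal{P}}\subseteq\overline{\A}$ correspond precisely to the projectives $\mathcal{P}\subseteq\widehat{\A}$ (for $P\in\mathcal{P}$ the triangle in the proof of Proposition~\ref{733} is trivial, so $\mathbb{G}(P)=P$).

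Next I would verify the hypotheses of Lemma~\ref{conf} for $\overline{\X}\subseteq\overline{\A}$. Contravariant finiteness of $\overline{\X}$ is the image of assumption (X1) under $\mathbb{F}$. That $\overline{\X}$ is $\tau$-rigid is the image of the $\tau$-rigidity of $\widehat{\X}$; equivalently, by Proposition~\ref{tau3}, $\X$ is two-term $\mathcal{P}[1]$-rigid in $\A$. The additional hypothesis of Lemma~\ref{conf}, namely that every object of $\R=\mathcal{P}$ admits a left $\overline{\X}$-approximation, is precisely the content of Lemma~\ref{lefta} fed with assumption (X2).

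Lemma~\ref{conf} then yields that $(\Fac\overline{\X},\overline{\X}^{\perp})$ is a torsion pair in $\overline{\A}$ and that $\Fac\overline{\X}$ is functorially finite. Applying $\mathbb{G}$ transports these statements verbatim to $\widehat{\A}$, giving that $\Fac\widehat{\X}$ is a functorially finite torsion class, which is the claim. The main technical care is needed only at the step of matching the abelian structure of $\overline{\A}$ (built from the extriangulated/quotient structure of $\A$ modulo $\mathcal{P}[1]$) with the given abelian structure of $\widehat{\A}$ under $\mathbb{G}$; once this is in place, the argument is a formal translation whose single concrete input is Lemma~\ref{lefta}, which bridges left approximations of projectives in $\widehat{\A}$ to left approximations in $\A$.
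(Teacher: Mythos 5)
Your proposal is correct and takes essentially the same route as the paper, whose entire proof of this lemma is the one-line citation ``this follows from Proposition \ref{733} and Lemma \ref{conf}'': one transports $\widehat{\X}$ to $\overline{\X}\subseteq\overline{\A}$ via the equivalence of Proposition \ref{733}, checks the hypotheses of Lemma \ref{conf} from (X1), (X2) and $\tau$-rigidity, and transports the conclusion back. The only cosmetic difference is that you feed the left-approximation hypothesis through Lemma \ref{lefta}, whereas (X2) already transfers directly under the equivalence; both verifications are immediate.
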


\begin{proof}
This is followed by Proposition \ref{733} and Lemma \ref{conf}.
\end{proof}

\begin{rem}
By Lemma \ref{lefta}, Proposition \ref{facmax}, Theorem \ref{main}, $(\widehat{\X},\E)$ is contained in a support $\tau$-tilting subcategory $(\widehat{\M},\E)$ such that $\Fac\widehat{\M}=\Fac\widehat{\X}$.
\end{rem}

We first show a technical lemma.

\begin{lem}\label{tech}
If $\E=0$ and every projective object $P\in \mathcal P$ admits a short exact sequence
$$0\to P\xrightarrow{\hat{x}} \widehat{X}\to \widehat{M}\to 0$$
where $\hat{x}$ is a left $\widehat{X}$-approximation, then $(\widehat{\X},0)$ is contained in a support $\tau$-tilting pair $(\widehat{\mathcal N},0)$ such that every projective object $P\in \mathcal P$ admits an exact sequence
$$P\xrightarrow{\hat{n}} \widehat{N}\to \widehat{X}'\to 0$$
where $\widehat{X}'\in \widehat{\X}$ and $\hat{n}$ is a left $\widehat{\N}$-approximation.
\end{lem}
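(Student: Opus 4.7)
The plan is to lift the problem to the triangulated category $\C=\mathrm{D}^b(\widehat{\A})$ with $\R=\mathcal P$, apply the construction of Proposition \ref{NX} to the preimage $\X=\mathbb F(\widehat{\X})\subseteq \A$, and translate back through the equivalence $\mathbb G\colon\overline{\A}\xrightarrow{\simeq}\widehat{\A}$ of Proposition \ref{733}. Since $\overline{\X}$ is $\tau$-rigid, Theorem \ref{thm1}(1) gives that $\X$ is two-term $\R[1]$-rigid, and combining $\E=0$, (X3) and Lemma \ref{cor0} forces $\R(\X)=0$ and $\X\cap\R[1]=0$.

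The main step is to verify that $\X$ is $\R[1]$-functorially finite. Existence of left $\X$-approximations of each $P\in\R$ is exactly Lemma \ref{lefta} applied to (X2); the nontrivial direction is producing right $\X$-approximations of $P[1]$. Fix $P\in\R$ and the given monic left $\widehat{\X}$-approximation $0\to P\to \widehat{X}_P\to \widehat{M}_P\to 0$ in $\widehat{\A}$. Passing through $\mathbb F$ turns it into an exact sequence $0\to P\to X_P\to M_P\to 0$ in $\overline{\A}$, whose extension class lifts (using extension-closedness of $\A$, Lemma \ref{exten}) to a triangle $P\xrightarrow{x_P} X_P\to N_P\xrightarrow{\delta_P}P[1]$ in $\A$ with $X_P\in\X$ and $\overline{N_P}\cong M_P$. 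For any $X\in\X$, Lemma \ref{722} combined with $\R[1]$-rigidity of $\X$ yields $\Hom_\C(X,X_P[1])=0$, so applying $\Hom_\C(X,-)$ to this triangle shows every morphism $X\to P[1]$ factors through $\delta_P$. Composing $\delta_P$ with a right $\X$-approximation of $N_P$, whose existence reduces (via the projection $\A\to\overline{\A}$ and the fact that $\X\cap\R[1]=0$ absorbs the resulting corrections) to the contravariant finiteness of $\widehat{\X}$ in $\widehat{\A}$, produces a right $\X$-approximation of $P[1]$.

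With $\X$ now $\R[1]$-functorially finite, Proposition \ref{NX} supplies a two-term weak $\R[1]$-cluster tilting subcategory $\N_\X\supseteq\X$ together with, for each $P\in\R$, a triangle $P\to V_P\to U_P\xrightarrow{h_P}P[1]$ in which $V_P\in\N_\X$, $U_P\in\X$, and $h_P$ is a right $\X$-approximation. Since $\X\subseteq\N_\X$ forces $\R(\N_\X)\subseteq\R(\X)=0$, Theorem \ref{thm1}(2) identifies $(\overline{\N_\X},0)$ as a support $\tau$-tilting pair in $\overline{\A}$, and $\widehat{\N}:=\mathbb G(\N_\X)$ delivers the desired pair $(\widehat{\N},0)$ in $\widehat{\A}$ containing $(\widehat{\X},0)$.

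Finally, applying Lemma \ref{r2} to the triangle $P\to V_P\to U_P\xrightarrow{h_P}P[1]$ — whose hypothesis $\Hom_\C(\R,h_P)=0$ is automatic because $\Hom_\C(\R,P[1])=0$ — yields an exact sequence $P\to V_P\to U_P\to 0$ in $\overline{\A}$, and transporting through $\mathbb G$ gives the required $P\xrightarrow{\hat n}\widehat{N}_P\to \widehat{X}'_P\to 0$ in $\widehat{\A}$ with $\widehat{N}_P\in\widehat{\N}$ and $\widehat{X}'_P\in\widehat{\X}$. Lemma \ref{lem3}(1), applied with $h_P$ factoring trivially through $\R[1]$, shows that $P\to V_P$ is a left $\N_\X$-approximation, and hence $\hat n$ is a left $\widehat{\N}$-approximation after transfer. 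The chief obstacle is the right-approximation step: the hypothesis gives only left-approximation data, while Proposition \ref{NX} needs the dual for $\R[1]$; the trick is that the connecting map $\delta_P$ already absorbs all of $\Hom_\C(\X,P[1])$ by $\R[1]$-rigidity, so the task reduces to correcting the source of $\delta_P$ using contravariant finiteness of $\widehat{\X}$.
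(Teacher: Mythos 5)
Your proposal follows the paper's proof essentially step for step: normalize $\X=\mathbb F(\widehat{\X})$ with $\X\cap\mathcal P[1]=0$, reduce everything to producing a right $\X$-approximation of $P[1]$, build the triangle $P\to X_P\to N_P\xrightarrow{\delta_P}P[1]$ from the given monomorphic left $\widehat{\X}$-approximation, use $\Hom_\C(\X,X_P[1])=[\mathcal P[1]](\X,X_P[1])=0$ to factor every map $\X\to P[1]$ through $\delta_P$, correct the source via contravariant finiteness of $\widehat{\X}$, and then conclude with Proposition \ref{NX}, Theorem \ref{thm1}, Lemma \ref{r2} and Lemma \ref{lem3}(1), exactly as the paper does. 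The one soft spot is your intermediate claim that $N_P$ itself admits a right $\X$-approximation: a lift $s$ of a right $\widehat{\X}$-approximation of $\widehat{M}_P$ only covers $\Hom_\C(\X,N_P)$ modulo morphisms factoring through the fibre $A_0[1]$ (with $A_0\in\widehat{\A}$) of $N_P\to\widehat{M}_P$, and it is not ``$\X\cap\R[1]=0$ absorbing corrections'' that saves you but the fact that $\delta_P$ factors as $N_P\to\widehat{M}_P\xrightarrow{\hat p}P[1]$, so these corrections die upon composition with $\delta_P$; this is precisely the computation $t=ps'=\hat p\hat s\hat f x_1'=psf$ in the paper, which shows the composite $\delta_P s$ is a right $\X$-approximation of $P[1]$ without $s$ having to be one of $N_P$. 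With that repair your argument closes and coincides with the paper's.
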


\begin{proof}
Denote $\mathbb{F}(\widehat{\X})$ by $\overline \X$. Assume $\X\cap \R[1]=0$. By Proposition \ref{733} and Theorem \ref{thm1}, $\X$ is two-term $\mathcal P[1]$-rigid in $\C$. We show that any object $P[1]$ admits a right $\X$-approximation, then by Lemma \ref{lefta} $\X$ is $\mathcal P[1]$-functorially finite. Hence by Theorem \ref{thm1} and Proposition \ref{733}, $(\widehat{\X},0)$ is contained in a support $\tau$-tilting pair $(\widehat{\mathcal N},0)$ such that every projective object $P\in \mathcal P$ admits an exact sequence
$$P\xrightarrow{\hat{n}} \widehat{N}\to \widehat{X}'\to 0$$
where $\widehat{X}'\in \widehat{\X}$ and $\hat{n}$ is a left $\widehat{\N}$-approximation. In fact, $\widehat{\N}=\mathbb{G}(\overline \N_\X)$.

By assumption, every projective object $P$ admits a short exact sequence
$$0\to P\xrightarrow{\hat{x}} \widehat{X}\to \widehat{M}\to 0$$
where $\hat{x}$ is a left $\widehat{X}$-approximation. By the proof of Proposition \ref{733}, $\widehat{M}$ admits a triangle
$$A_0[1] \to M\xrightarrow{m} \widehat{M}\to A_0[2]$$
where $A_0\in \widehat{\A}$, $M\in \A$. We can assume that $M$ has no direct summand in $\widehat{\A}[1]$. Then we have the following commutative diagram of triangles
$$\xymatrix{
&A_0[1] \ar@{=}[r] \ar[d] &A_0[1] \ar[d]\\
P \ar[r] \ar@{=}[d] &X' \ar[r] \ar[d] &M \ar[r]^p \ar[d]^m &P[1] \ar@{=}[d]\\
P \ar[r]^{\hat{x}} &\widehat{X} \ar[r] \ar[d] &\widehat{M} \ar[r]^{\hat{p}} \ar[d] &P[1]\\
&A_0[2] \ar@{=}[r] &A_0[2]
}
$$
where $X'\in \A$ by Lemma \ref{r1}. Note that $X'$ has no direct summand in $\widehat{\A}[1]$, otherwise $M$ has a direct summand in $\widehat{\A}[1]$. Since $\widehat{X}$ admits a triangle
$$A_2[1] \to X \to \widehat{X}\to A_2[2]$$
where $X\in \A$ and $X$ has no direct summand in $\widehat{\A}[1]$, we can get the following commutative diagram
$$\xymatrix{
A_0[1] \ar[r] \ar[d] &X' \ar[r] \ar[d] & \widehat{X} \ar[r] \ar@{=}[d] &A_0[2] \ar[d]\\
A_2[1] \ar[r] \ar[d] &X \ar[r] \ar[d] & \widehat{X} \ar[r] \ar@{=}[d] &A_2[2] \ar[d]\\
A_0[1] \ar[r] &X' \ar[r]  & \widehat{X} \ar[r]  &A_0[2]\\
}
$$
which implies that $X'$ is a direct summand of $X\oplus A_0[1]$ and $X$ is a direct summand of $X'\oplus A_2[1]$. But $X$ and $X'$ have no direct summand in $\widehat{\A}[1]$, we have $X'\cong X\cong \mathbb{F}(\widehat{X})$.
Let $\hat{s}:\widehat{X}_1\to \widehat{M}$ be a right $\widehat{\X}$-approximation. Since $\widehat{X}_1$ admits a triangle
$$A_1[1]\to X_1\xrightarrow{x_1} \widehat{X}_1\xrightarrow{\hat{x}_1} A_1[2]$$
where $A_1\in \widehat{\A}$, $X_1\in \A$, we have a commutative diagram
$$\xymatrix{
X_1 \ar[r]^s \ar[d]_{x_1} &M \ar[d]^m \ar[r]^p &P[1] \ar@{=}[d]\\
\widehat{X_1} \ar[r]^{\hat{s}} &\widehat{M} \ar[r]^{\hat{p}} &P[1].
}
$$
We show that $ps$ is a right $\X$-approximation of $P[1]$.

Let $t:X'_1\to P[1]$ be any morphism with $X'_1\in \X$. Since $\X$ is two-term $\mathcal P[1]$-rigid in $\C$, we have $[\mathcal P[1]](X'_1,X'[1])=0$. Then there is a morphism $s':X'_1\to M$ such that $t=ps'$. Since $X'_1$ admits a triangle
$$A_1'[1]\to X'_1\xrightarrow{x'_1} \widehat{X}'_1\xrightarrow{\hat{x}'_1} A_1'[2]$$
where $A_1'\in \widehat{\A}$ and $\widehat{X}'_1\in \widehat{\X}$, we have a commutative diagram
$$\xymatrix{
X'_1 \ar[r]^{s'} \ar[d]_{x'_1} &M \ar[d]^m \ar[r]^p &P[1] \ar@{=}[d]\\
\widehat{X}'_1 \ar[r]^{\hat{s}'} &\widehat{M} \ar[r]^{\hat{p}} &P[1].
}
$$
Since $\hat{s}:\widehat{X}_1\to \widehat{M}$ is a right $\widehat{\X}$-approximation, there is a morphism $\hat{f}:\widehat{X}'_1\to \widehat{X}$ such that $\hat{s}'=\hat{s}\hat{f}$. Then we have a commutative diagram
$$\xymatrix{
X'_1 \ar[r]^{f} \ar[d]_{x'_1} &X_1 \ar[d]^{x_1} \\
\widehat{X}'_1 \ar[r]^{\hat{f}} &\widehat{X}_1
}
$$
Hence
$$t=ps'=\hat{p}ms'=\hat{p}\hat{s}'x'_1=\hat{p}\hat{s}\hat{f}x'_1=\hat{p}\hat{s}x_1f=psf.$$
\end{proof}

By \cite[Lemma 4.2 and Theorem 4.4]{PZZ}, we have the following lemma.

\begin{lem}\label{lPZZ}
$\E^{\bot}$ is a Serre subcategory of $\widehat{A}$ with enough projectives given by
$$\mathcal P_{\widehat{\X}}=\add \{K ~|~ \exists P\in \mathcal P \text{ and a left }\widehat{\X}\text{-approximation } P\xrightarrow{f} \widehat{X} \text{ such that }K={\rm Im}f \}.$$
Moreover, $\widehat{\X}$ is a partial tilting subcategory in $\E^{\bot}$.
\end{lem}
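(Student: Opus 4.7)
The plan is to transport the statement to the triangulated category $\C=D^b(\widehat\A)$, apply Theorem~\ref{main} and Theorem~\ref{main1}, and transport the conclusions back. Take $\R:=\mathcal P$, so Proposition~\ref{733} supplies an (abelian) equivalence $\mathbb G\colon\overline\A\xrightarrow{\sim}\widehat\A$ with quasi-inverse $\mathbb F$; set $\X:=\add(\mathbb F(\widehat\X)\cup\E[1])\subseteq\A=\mathcal P\ast\mathcal P[1]$. Theorem~\ref{thm1}(1) together with Lemma~\ref{lem0} shows that $\X$ is two-term $\mathcal P[1]$-rigid. From the defining triangle $A[1]\to\mathbb F(\widehat X)\to\widehat X\to A[2]$ used in the proof of Proposition~\ref{733}, one reads $\Hom_{\C}(P,\mathbb F(\widehat X))\cong\Hom_{\widehat\A}(P,\widehat X)$ for every $P\in\mathcal P$; combined with (X3) this gives $\R(\X)=\E$, and together with Corollary~\ref{cor0} and the inclusion $\E[1]\subseteq\X$ it forces $\X[-1]\cap\mathcal P=\E=\R(\X)$.

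The pivotal step is to show that $\X$ is $\mathcal P[1]$-functorially finite. Left $\X$-approximations of objects of $\mathcal P$ are provided by Lemma~\ref{lefta} via (X2). For right $\X$-approximations of objects of $\mathcal P[1]$, the strategy is to reduce to the injective-approximation setting of Lemma~\ref{tech} through the Serre subcategory $\widehat\A':=\E^{\perp}$ from Lemma~\ref{lPZZ}. Inside $\widehat\A'$, which carries enough projectives $\mathcal P_{\widehat\X}$, every $K\in\mathcal P_{\widehat\X}$ comes by construction with an injective left $\widehat\X$-approximation $K\hookrightarrow\widehat X$, so the hypotheses of Lemma~\ref{tech} are verified and its proof yields right $\mathbb F(\widehat\X)$-approximations of $K[1]$. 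For an arbitrary $P\in\mathcal P$, factor a left $\widehat\X$-approximation $\hat x$ of $P$ as $P\twoheadrightarrow K\hookrightarrow\widehat X$ with $K=\Im\hat x\in\mathcal P_{\widehat\X}$; then an octahedral diagram built from the triangle $\ker\hat x\to P\to K\to\ker\hat x[1]$ together with the right $\X$-approximation of $K[1]$ assembles into a right $\X$-approximation of $P[1]$, with the summand $\E[1]\subseteq\X$ handled by projectivity.

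Once $\X$ is $\mathcal P[1]$-functorially finite, Theorem~\ref{main} applies and supplies two support $\tau$-tilting pairs $(\overline\M_\X,\overline\E)$ and $(\overline\N_\X,\overline\E)$ in $\overline\A$ satisfying
\[
\Fac\overline\M_\X=\Fac\overline\X,\qquad \Fac\overline\N_\X={^{\perp}(\tau\overline\X)}\cap\overline\E^{\perp}.
\]
Transporting through the abelian equivalence $\mathbb G$ (which identifies $\Fac$, $\Ext^1$ and the right $\perp$) produces $(\widehat\M,\E)$ and $(\widehat\N,\E)$ with the stated $\Fac$ descriptions. Condition (X0) together with Theorem~\ref{thm1}(2) forbids $\X$ from being two-term weak $\mathcal P[1]$-cluster tilting, so the corollary following Proposition~\ref{partial} gives $\Fac\overline\M_\X\subsetneq\Fac\overline\N_\X$, whence $\Fac\widehat\M\subsetneq\Fac\widehat\N$.

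For the moreover part, suppose $(\widehat\X,\E)$ is almost complete in the sense of Definition~\ref{taupair}. In either branch of that definition---adding an indecomposable module to $\widehat\X$ or an indecomposable projective to $\E$---the translated subcategory $\X$ gains exactly one new indecomposable summand (respectively $\mathbb F$ of the new module, or the shift $[1]$ of the new projective) that promotes it to a two-term weak $\mathcal P[1]$-cluster tilting subcategory; hence $\X$ is almost complete in the sense of Definition~\ref{almost}. Theorem~\ref{main1} then asserts that $\X$ has exactly the two completions $\M_\X$ and $\N_\X$, and Lemma~\ref{cap} identifies both of their projective parts as $\E$, so $(\widehat\M,\E)$ and $(\widehat\N,\E)$ are the only support $\tau$-tilting pairs in $\widehat\A$ containing $(\widehat\X,\E)$. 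The main obstacle is the construction of right $\X$-approximations of objects of $\mathcal P[1]$ when the given left $\widehat\X$-approximation of $P$ fails to be injective; the Serre-subcategory reduction via Lemma~\ref{lPZZ}, followed by a careful octahedral argument, is what makes this step tractable.
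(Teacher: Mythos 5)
Your proposal does not prove the statement it was assigned. The statement is Lemma \ref{lPZZ}, which makes three claims: (i) $\E^{\bot}$ is a Serre subcategory of $\widehat{\A}$; (ii) it has enough projectives, given explicitly by the subcategory $\mathcal P_{\widehat{\X}}$ of images of left $\widehat{\X}$-approximations of projectives; and (iii) $\widehat{\X}$ is a partial tilting subcategory of $\E^{\bot}$. Your text establishes none of these. Instead it is an argument for the final theorem of Section 6 (Theorem \ref{last}): you transport $\widehat{\X}$ to $\X\subseteq\mathcal P\ast\mathcal P[1]$, argue for $\mathcal P[1]$-functorial finiteness, invoke Theorems \ref{main} and \ref{main1}, and conclude with the two support $\tau$-tilting pairs $(\widehat{\M},\E)$ and $(\widehat{\N},\E)$. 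Worse, the argument is circular as a proof of the lemma: in your pivotal step you explicitly route the construction of right $\X$-approximations ``through the Serre subcategory $\widehat{\A}':=\E^{\perp}$ from Lemma \ref{lPZZ}'', i.e.\ you use the very statement you were asked to prove as an input.

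For comparison, the paper's actual proof of Lemma \ref{lPZZ} is short and elementary: since $\E\subseteq\mathcal P$ consists of projective objects, the functor $\Hom_{\widehat{\A}}(\E,-)$ kills subobjects, quotients and extensions of objects it kills, so $\E^{\bot}$ is closed under subobjects, factors and extensions, hence is a Serre subcategory; the enough-projectives claim with the explicit description $\mathcal P_{\widehat{\X}}$ is then quoted from \cite[Lemma 4.2]{PZZ}, and the partial-tilting claim from \cite[Theorem 4.4]{PZZ}. To repair your submission you would need either to reproduce those two cited arguments or to cite them, and to supply the (easy) Serre-subcategory verification; none of the triangulated-category machinery in your proposal is relevant to this lemma.
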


\begin{proof}
Since $\E\subseteq \mathcal P$, $\E^{\bot}$ is closed under factors, subobjects and extensions. Hence $\E^{\bot}$ is a Serre subcategory of $\widehat{A}$. By \cite[Lemma 4.2]{PZZ}, $\E^{\bot}$ has enough projectives $\mathcal P_{\widehat{\X}}$. The ``Moreover" part is followed by \cite[Theorem 4.4]{PZZ}.
\end{proof}

Now we can show the following proposition.

\begin{prop}\label{PN}
$(\widehat{\X},\E)$ is contained in a support $\tau$-tilting pair $(\widehat{\mathcal N},\E)$ such that every projective object $P\in \mathcal P$ admits an exact sequence
$$P\xrightarrow{\hat{n}} \widehat{N}\to \widehat{X}\to 0$$
where $\widehat{X}\in \widehat{\X}$ and $\hat{n}$ is a left $\widehat{\N}$-approximation.
\end{prop}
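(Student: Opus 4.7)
The plan is to reduce to Lemma \ref{tech} by working inside the Serre subcategory $\E^{\bot}$ of $\widehat{\A}$. By Lemma \ref{lPZZ}, $\E^{\bot}$ has enough projectives $\mathcal P_{\widehat{\X}}$, and each $K \in \mathcal P_{\widehat{\X}}$ is the image of a left $\widehat{\X}$-approximation $P \twoheadrightarrow K \hookrightarrow \widehat{X}$ of some $P \in \mathcal P$. I would first check that $K \hookrightarrow \widehat{X}$ is itself a left $\widehat{\X}$-approximation in $\E^{\bot}$: any $\alpha\colon K \to \widehat{X}'$ composed with $P \twoheadrightarrow K$ factors through $P \to \widehat{X}$ by the original approximation property, and the epic $P \twoheadrightarrow K$ then descends the factor to an extension $\widehat{X} \to \widehat{X}'$ of $\alpha$. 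The cokernel $\widehat{X}/K$ lies in $\E^{\bot}$ since Serre subcategories are closed under quotients. Thus the hypotheses of Lemma \ref{tech} are satisfied inside $\E^{\bot}$, viewed as an Ext-finite abelian $R$-category with enough projectives, yielding a support $\tau$-tilting pair $(\widehat{\N},0)$ in $\E^{\bot}$ containing $\widehat{\X}$, together with, for each $K\in\mathcal P_{\widehat{\X}}$, an exact sequence $K \xrightarrow{\hat{g}_K}\widehat{N}_K \to \widehat{X}'_K \to 0$ in $\E^{\bot}$ in which $\hat{g}_K$ is a left $\widehat{\N}$-approximation and $\widehat{X}'_K\in\widehat{\X}$.

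Next I would lift $(\widehat{\N},0)$ from $\E^{\bot}$ back to a pair $(\widehat{\N},\E)$ in $\widehat{\A}$. Since $\E^{\bot}$ is closed under quotients and extensions, $\Fac\widehat{\N}\subseteq\E^{\bot}$ and $\Ext^1_{\widehat{\A}}(\widehat{\N},\Fac\widehat{\N}) = \Ext^1_{\E^{\bot}}(\widehat{\N},\Fac\widehat{\N}) = 0$, so $\widehat{\N}$ remains $\tau$-rigid in $\widehat{\A}$. Using $\widehat{\X}\subseteq\widehat{\N}\subseteq\E^{\bot}$ together with (X3), one obtains $\E = \{P\in\mathcal P\mid \Hom(P,\widehat{\N})=0\}$. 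For $P\in\E$, the trivial sequence $P\to 0 \to 0$ is a left $\widehat{\N}$-approximation. For $P\notin\E$, take the left $\widehat{\X}$-approximation $P\xrightarrow{\hat{f}}\widehat{X}_P$ and set $K = \Im(\hat{f})\in\mathcal P_{\widehat{\X}}$; the composition $\hat{n}\colon P \twoheadrightarrow K \xrightarrow{\hat{g}_K} \widehat{N}_K$ then sits in an exact sequence $P\xrightarrow{\hat{n}}\widehat{N}_K\to\widehat{X}'_K\to 0$ in $\widehat{\A}$, exactness being inherited from $\E^{\bot}$ because $P\twoheadrightarrow K$ is epic.

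The main obstacle is to verify that $\hat{n}$ is a left $\widehat{\N}$-approximation in $\widehat{\A}$, not merely in $\E^{\bot}$. Given $\alpha\colon P\to\widehat{N}_0$ with $\widehat{N}_0\in\widehat{\N}\subseteq\E^{\bot}$, I would factor $\alpha$ as $P\twoheadrightarrow\Im(\alpha)\hookrightarrow\widehat{N}_0$ with $\Im(\alpha)\in\E^{\bot}$, cover $\Im(\alpha)$ by a projective $K_0 = \Im(P_0\to\widehat{X}_0)\in\mathcal P_{\widehat{\X}}$ of $\E^{\bot}$, and lift using projectivity of $P$ in $\widehat{\A}$ to $\tilde{\alpha}\colon P\to K_0\hookrightarrow\widehat{X}_0$. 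The left $\widehat{\X}$-approximation property of $\hat{f}$ factors the composite $P\to\widehat{X}_0$ as $\beta\circ\hat{f}$; combining the epic-mono decomposition $P\twoheadrightarrow K\hookrightarrow\widehat{X}_P$ with the monicity of $K_0\hookrightarrow\widehat{X}_0$ forces $\beta|_K$ to have image in $K_0$, yielding a factorization $\tilde{\alpha} = \gamma\circ(P\twoheadrightarrow K)$ for some $\gamma\colon K\to K_0$. Hence $\alpha$ factors through $P\twoheadrightarrow K$, and the left $\widehat{\N}$-approximation property of $\hat{g}_K$ in $\E^{\bot}$ then produces the required factorization of $\alpha$ through $\hat{n}$.
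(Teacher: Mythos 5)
Your proposal is correct and follows the same overall architecture as the paper's proof: pass to the Serre subcategory $\E^{\bot}$ via Lemma \ref{lPZZ}, verify that its projectives $\mathcal P_{\widehat{\X}}$ admit monic left $\widehat{\X}$-approximations so that Lemma \ref{tech} applies there, and then transport the resulting $\widehat{\N}$ back to $\widehat{\A}$, checking $\tau$-rigidity via the Serre property and identifying $\E$ using (X3). Where you genuinely diverge is in the final (and hardest) step, proving that $\hat{n}=\hat{k}g$ is a left $\widehat{\N}$-approximation in $\widehat{\A}$ and not just in $\E^{\bot}$. The paper establishes this by a long diagram chase: it corrects an arbitrary $\hat{n}_0\colon P\to\widehat{N}_0$ by a morphism factoring through $\hat{n}$ so that the difference lands in the kernel of the presentation $\widehat{N}_0\to\widehat{X}_0\to 0$, and then lifts that difference through the projective presentation of $K_0$ in $\E^{\bot}$, producing the factorization as a sum $\hat{n}_0=(n_2+n_3)\hat{n}$. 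Your argument instead shows directly that \emph{every} morphism $P\to\widehat{N}_0$ already factors through the epimorphism $g\colon P\twoheadrightarrow K$: factor through the image, lift along a projective cover $K_0=\Im(P_0\to\widehat{X}_0)$ of the image using projectivity of $P$, push into $\widehat{X}_0$, apply the left $\widehat{\X}$-approximation property of $\hat{f}$, and use monicity of $K_0\hookrightarrow\widehat{X}_0$ together with the epimorphism $g$ to force the two maps $P\to K_0$ to coincide; the approximation property of $\hat{k}$ at $K$ then finishes. This is cleaner and avoids the paper's auxiliary morphisms entirely. Two small points you should make explicit: a general object of $\mathcal P_{\widehat{\X}}$ is only a direct summand of a finite direct sum of images $\Im(P_0\to\widehat{X}_0)$, so in choosing the cover $K_0$ of $\Im(\alpha)$ you should take such a direct sum (which is again the image of a left $\widehat{\X}$-approximation of a projective); and the vanishing condition ``$\E=0$'' required by Lemma \ref{tech} inside $\E^{\bot}$ holds because every nonzero projective of $\E^{\bot}$ embeds into an object of $\widehat{\X}$. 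Neither is a gap, just a line each.
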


\begin{proof}
For convenience, denote $\E^{\bot}$ by $\B$. By Lemma \ref{lPZZ}, $\B$ is a Serre subcategory of $\widehat{A}$ with enough projectives and $\widehat{\X}$ is partial tilting in $\B$. Moreover, any non-zero indecomposable object $K_0\in \mathcal P_{\widehat{\X}}$ admits a commutative diagram
$$\xymatrix{
P_1\ar[r]^{f_1} \ar[dr]_{g_1} &\widehat{X}_1\\
K_0 \ar[r]_{\kappa} &K_1 \ar[u]_{k_1}
}
$$
where $f_1$ is a left $\widehat{\X}$-approximation, $g_1$ is epic, $k_1$ is monic, $K_1={\rm Im} f_1$ and $\kappa$ is a section. Then we can get that $k_1\kappa$ is a left $\widehat{\X}$-approximation and a monomorphism. Then by Lemma \ref{tech}, $\widehat{\X}$ is contained in a support $\tau$-tilting subcategory $\widehat{\N}$ in $\B$ such that any object $K\in \mathcal P_{\widehat{\X}}$ admits an exact sequence
$$K\xrightarrow{\hat{k}} \widehat{N}\to \widehat{X}\to 0$$
where $\widehat{X}\in \widehat{\X}$ and $\hat{k}$ is a left $\widehat{\N}$-approximation. Since $\B$ is a Serre subcategory, $\Ext^1_{\B}(\widehat{\N},\Fac \widehat{\N})=0$ implies that $\Ext^1_{\widehat{\A}}(\widehat{\N},\Fac \widehat{\N})=0$. Hence $\widehat{\N}$ is $\tau$-rigid in $\A$. We show that $\widehat{\N}$ is a support $\tau$-tilting subcategory in $\widehat{\A}$. Since $\widehat{\N}\subseteq \E^{\bot}$, we show that any indecomposable object $P\in \mathcal P \backslash \E$ admits an exact sequence
$$P\xrightarrow{\hat{n}} \widehat{N}\to \widehat{X}\to 0$$
where $\widehat{X}\in \widehat{\X}$ and $\hat{n}$ is a left $\widehat{\N}$-approximation.

Since $P\in \mathcal P \backslash \E$ admits a commutative diagram
$$\xymatrix{
P\ar[rr]^{f} \ar[dr]_{g} &&\widehat{X}'\\
 &K \ar[ur]_{k}
}
$$
where $f\neq 0$ is a left $\widehat{\X}$-approximation, $g$ is epic, $k$ is monic, $K={\rm Im} f$. By the argument above, $K$ admits an exact sequence
$$K\xrightarrow{\hat{k}} \widehat{N}\xrightarrow{\widehat{x}} \widehat{X}\to 0$$
where $\widehat{X}\in \widehat{\X}$ and $\hat{k}$ is a left $\widehat{\N}$-approximation. We show that
$$P\xrightarrow{\hat{k}g=\hat{n}} \widehat{N}\to \widehat{X}\to 0$$
is the exact sequence we need. In fact, we only need to show that $\hat{n}$ is a left $\widehat{\N}$-approximation.

Since $\hat{k}$ is a left $\widehat{\N}$-approximation, there is a morphism $n: \widehat{N}\to \widehat{X}'$ such that $k=n\hat{k}$.
Let $\widehat{N}'_0$ be an indecomposable object in $\widehat{\N}\backslash \widehat{\X}$. Then by Lemma \ref{tech}, it admits an exact sequence
$$K_0\xrightarrow{\hat{k}_0} \widehat{N}_0\xrightarrow{\widehat{x}_0} \widehat{X}_0\to 0$$
where $K_0\in \mathcal P_{\widehat{\X}}$, $\widehat{X}_0\in \widehat{\X}$, $\hat{k}_0$ is a left $\widehat{\N}_0$-approximation and $\widehat{N}_0'$ is a direct summand of $\widehat{N}_0$. Let $\hat{n}_0:P\to \widehat{N}_0$ be any morphism, we show it factors through $\hat{n}$. For convenience, we can assume that $K_0$ admits a commutative diagram
$$\xymatrix{
P_0\ar[rr]^{f_0} \ar[dr]_{g_0} &&\widehat{X}'_0\\
 &K_0 \ar[ur]_{k_0}
}
$$
where $P_0\in \mathcal P$, $f_0\neq 0$ is a left $\widehat{\X}$-approximation, $g_0$ is epic, $k_0$ is monic, $K_0={\rm Im} f_0$.

Since $f$ is a left $\widehat{\X}$-approximation, there is a morphism $x_1:\widehat{X}'\to X_0$ such that $\hat{x}_0\hat{n}_0=x_1f$. Since $\hat{k}$ is a left $\widehat{\N}$-approximation, there is a morphism $n_1:\widehat{N}\to \widehat{X}_0$ such that $x_1k=n_1\hat{k}$. Since $K_0\in \mathcal P_{\widehat{\X}}$, there is a morphism $k_0:K\to \widehat{N}_0$ such that $\hat{x}_0k_0=n_1\hat{k}$. Since $\hat{k}$ is a left $\widehat{\N}$-approximation, there is a morphism $n_2:\widehat{N}\to \widehat{N}_0$ such that $k_0=n_2\hat{k}$. Then $$\hat{x}_0(\hat{n}_0-n_2\hat{n})=x_1f-\hat{x}_0n_2\hat{k}g=x_1f-n_1\hat{k}g=x_1f-x_1kg=0.$$
Let $l_0:L_0\to \widehat{N}_0$ be the kernel of $\widehat{x}_0$. We have a commutative diagram
$$\xymatrix{
K_0 \ar[rr]^-{\hat{k}_0} \ar[dr]^{l_1} &&\widehat{N}_0 \ar[r]^{\hat{x}_0} &\widehat{X}_0.\\
&L_0 \ar[ur]^{l_0}
}
$$
Then there is a morphism $l:P\to L_0$ such that $l_0l=\hat{n}_0-n_2\hat{n}$. Since $P$ is projective, there is a morphism $p_0:P\to P_0$ such that $l_1g_0p_0=l$. Since $f$ is a left $\widehat{\X}$-approximation, there is a morphism $x_0:\widehat{X}'\to \widehat{X}'_0$ such that $f_0p_0=x_0f$. Then there is a morphism $k_1: K\to K_0$ such that $k_0k_1=x_0k$ and $g_0p_0=k_1g$. Since $\hat{k}$ is a left $\widehat{\N}$-approximation, there is a morphism $n_3:\widehat{N}\to \widehat{N}_0$ such that $\hat{k}_0k_1=n_3\hat{k}$. Hence
$$\hat{n}_0-n_2\hat{n}=l_0l=l_0l_1g_0p_0=\hat{k}_0k_1g=n_3\hat{k}g=n_3\hat{n},$$
which implies that $\hat{n}_0=(n_2+n_3)\hat{n}$. Thus $\widehat{\N}$ is a support $\tau$-tilting subcategory. Since $\widehat{\N}\in \E^{\bot}$, $\Hom_{\widehat{\A}}(\E,\widehat{\N})=0$. Moreover,
$$\{P\in \mathcal P~|~ \Hom_{\widehat{\A}}(P,\widehat{\N})=0\}\subseteq \{P\in \mathcal P~|~ \Hom_{\widehat{\A}}(P,\widehat{\X})=0\}=\E,$$
by definition $(\widehat{\N},\E)$ is a support $\tau$-tilting pair.
\end{proof}

%By Theorem \ref{thm1} and Theorem \ref{main}, we have the following corollary.
%
%\begin{cor}
%We have $\Fac\widehat{\N}={^{\bot}}(\tau\widehat{\X})\cap \E^{\bot}$.
%\end{cor}

By Theorem \ref{thm1}, Theorem \ref{main}, Proposition \ref{733} and Proposition \ref{PN}, we can get the following theorem.

\begin{thm}\label{last}
Let $R$ be a commutative noetherian ring which is complete and local. Let $\widehat{\A}$ be an {\rm Ext}-finite abelian category over $R$ with enough projectives. Let $\mathcal P$ be the subcategory of projective objects. Assume that $(\widehat{\X},\E)$ is a $\tau$-rigid pair satisfying the following conditions:
\begin{itemize}
\item[(X0)] $\widehat{\X}$ is not support $\tau$-tilting;
\item[(X1)] $\widehat{\X}$ is contravariantly finite;
\item[(X2)] every projective object admits a left $\widehat{\X}$-approximation;
\item[(X3)] $\E=\{P\in \mathcal P~|~\Hom_{\widehat{\A}}(P,\widehat{\X})=0\}$.
\end{itemize}
Then it is contained in two support $\tau$-tilting pairs $(\widehat{\M},\E)$ and $(\widehat{\N}, \E)$ such that
$$\Fac\widehat{\M}=\Fac\widehat{\X}, \quad \Fac\widehat{\N}={^\perp(\tau\widehat{\X})}\cap{{\E}^{\perp}} \text{ and } \Fac \widehat{\M}\subsetneq \Fac \widehat{\N}.$$
Moreover, if $(\widehat{\X},\E)$ is an almost complete support $\tau$-tilting pair, then $(\widehat{\M},\E)$ and $(\widehat{\N},\E)$ are the only support $\tau$-tilting pairs which contain $(\widehat{\X},\E)$.
\end{thm}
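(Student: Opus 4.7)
\medskip

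The plan is to transfer the problem from $\widehat{\A}$ to the triangulated category $\C = D^b(\widehat{\A})$ and appeal to the general theory developed earlier in the paper. Setting $\R := \mathcal{P}$ (which is rigid, in fact $2$-rigid, in $\C$), $\A := \R \ast \R[1]$, and $\overline{\A} := \A/\R[1]$, Proposition \ref{733} gives an equivalence $\mathbb{G}:\overline{\A}\xrightarrow{\simeq}\widehat{\A}$. Let $\X \subseteq \A$ be a lift of $\widehat{\X}$ with $\X\cap\R[1]=0$ (so that $\overline{\X} = \mathbb{G}^{-1}(\widehat{\X})$). By Theorem \ref{thm1}(1), the assumption that $(\widehat{\X},\E)$ is a $\tau$-rigid pair, together with (X3), translates into $\X$ being two-term $\R[1]$-rigid with $\R(\X)=\E$; combined with Lemma \ref{cor0} this gives $\X[-1]\cap\R=\E$.

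\medskip

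Next I would verify that $\X$ is $\R[1]$-functorially finite, which is the prerequisite for invoking Theorem \ref{main}. Left $\X$-approximations of objects in $\R$ exist by (X2) together with Lemma \ref{lefta}. For right $\X$-approximations of $\R[1]$, I would use (X1): for $P\in\mathcal{P}$, a right $\widehat{\X}$-approximation of an appropriate object in $\widehat{\A}$ associated with $P[1]$ lifts, via $\mathbb{F}$ and standard triangle manipulations in $\A$ (as in the proofs of Lemma \ref{conf} and Proposition \ref{PN}), to a right $\X$-approximation of $P[1]$ in $\C$. Once $\X$ is $\R[1]$-functorially finite, Theorem \ref{main} produces two support $\tau$-tilting pairs $(\overline{\M}_\X,\overline{\E})$ and $(\overline{\N}_\X,\overline{\E})$ in $\overline{\A}$ with $\Fac\overline{\M}_\X=\Fac\overline{\X}$ and $\Fac\overline{\N}_\X={}^{\perp}(\tau\overline{\X})\cap\overline{\E}^{\perp}$. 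Applying $\mathbb{G}$ yields the required pairs $(\widehat{\M},\E)$ and $(\widehat{\N},\E)$ in $\widehat{\A}$; in fact $(\widehat{\N},\E)$ is exactly the pair constructed directly in Proposition \ref{PN}, which I would use as a shortcut, and $(\widehat{\M},\E)$ is obtained by applying Proposition \ref{facmax} inside $\C$ and transporting back via $\mathbb{G}$. The strict inclusion $\Fac\widehat{\M}\subsetneq\Fac\widehat{\N}$ follows from the corollary just after Proposition \ref{partial}, since (X0) guarantees that $\X$ is not two-term weak $\R[1]$-cluster tilting (otherwise $\widehat{\X}$ would already be support $\tau$-tilting by Theorem \ref{thm1}(2) and Proposition \ref{733}).

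\medskip

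For the \emph{moreover} clause, assume $(\widehat{\X},\E)$ is almost complete. Then $\X$ is an almost complete two-term weak $\R[1]$-cluster tilting subcategory in the sense of Definition \ref{almost}: the complement in $\widehat{\A}$ lifts, through $\mathbb{F}$, to an indecomposable complement of $\X$ in $\C$. By Theorem \ref{main1}, $\X$ has exactly two completions, namely $\M_\X$ and $\N_\X$. Any support $\tau$-tilting pair $(\widehat{\U},\E)$ containing $(\widehat{\X},\E)$ lifts, by Theorem \ref{thm1}(3) and (X3), to a two-term weak $\R[1]$-cluster tilting subcategory $\U$ of $\C$ containing $\X$; hence $\U\in\{\M_\X,\N_\X\}$, and descending via $\mathbb{G}$ gives $\widehat{\U}\in\{\widehat{\M},\widehat{\N}\}$.

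\medskip

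The principal obstacle I anticipate is the careful bookkeeping around the equivalence $\mathbb{G}$: matching (X3) with the identity $\R(\X)=\E$, matching $\widehat{\X}$-approximations in $\widehat{\A}$ with $\X$-approximations in $\C$ (and in particular producing right $\X$-approximations of $\R[1]$ from (X1)), and verifying that $\mathbb{G}$ respects the relevant constructions ($\Fac$, the left orthogonals ${}^{\perp}(\tau-)$, and Hom-orthogonals to $\mathcal{P}$). Once these dictionary entries are secured, the theorem follows essentially by combining Proposition \ref{facmax}, Proposition \ref{PN}, Theorem \ref{main}, and Theorem \ref{main1} with the equivalence $\mathbb{G}$.
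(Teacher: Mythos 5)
Your overall architecture matches the paper's: the published proof of Theorem \ref{last} is the single sentence preceding it, citing Theorem \ref{thm1}, Theorem \ref{main}, Proposition \ref{733} and Proposition \ref{PN}, and you assemble exactly these ingredients (plus Proposition \ref{facmax}, the corollary after Proposition \ref{partial} for the strict inclusion, and Theorem \ref{main1} for the ``moreover''). Two points, however, need repair. First, the lift: you take $\X$ with $\X\cap\R[1]=0$ and then claim Lemma \ref{cor0} yields $\X[-1]\cap\R=\E$. It does not --- for that lift $\X[-1]\cap\R=0$, and Lemma \ref{cor0} only gives the inclusion $\X\cap\R[1]\subseteq\R(\X)[1]$. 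The correct lift is $\add(\mathbb{F}(\widehat{\X})\cup\E[1])$; otherwise Theorem \ref{main}, whose $\E$ is \emph{defined} as $\X[-1]\cap\R$, returns the pair $(\overline{\N}_\X,0)$ rather than $(\overline{\N}_\X,\overline{\E})$, and the translation of ``almost complete'' in Definition \ref{taupair} (where the complement may land in the $\E$-part) to Definition \ref{almost} also breaks. This is fixable bookkeeping, but as written the dictionary entry is wrong.

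Second, and more seriously, the step ``verify that every $P[1]$ admits a right $\X$-approximation, then invoke Theorem \ref{main}'' is exactly the hard point of this subsection, and the mechanism you sketch --- lift a right $\widehat{\X}$-approximation of the cokernel of the left $\widehat{\X}$-approximation of $P$ --- only works when that left approximation is a monomorphism: one needs the short exact sequence $0\to P\to\widehat{X}\to\widehat{M}\to 0$ to give a triangle in $\C$, so that the connecting morphism $M\to P[1]$ factors through $\widehat{M}$; this is precisely the extra hypothesis, and the final computation, of Lemma \ref{tech}. Under (X0)--(X3) the left approximations need not be monic, which is why the paper first passes to the Serre subcategory $\E^{\perp}$ with projectives $\mathcal P_{\widehat{\X}}$ (Lemma \ref{lPZZ}), applies Lemma \ref{tech} there, and extends back to $\widehat{\A}$ (Proposition \ref{PN}); only once $\widehat{\N}$ is in hand do the triangles $P\to N\to X\to P[1]$ with $N\in\N$, $X\in\X$ show that each $P[1]$ has a right $\X$-approximation, unlocking Theorem \ref{main}. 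Since you also propose to take $(\widehat{\N},\E)$ directly from Proposition \ref{PN}, the circle can be closed, but then the $\R[1]$-functorial finiteness is a \emph{consequence} of Proposition \ref{PN}, not an independent verification preceding Theorem \ref{main}, and your argument should be reordered accordingly.
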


In $\widehat{\A}$, for two subcategories $\B_1,\B_2$, let
$$\B_1*\B_2=:\{A\in \widehat{\A}~|~ \exists\text{ exact sequence } 0\to B_1\to A\to B_2\to 0,~B_i\in \B_i,i=1,2\}.$$

As an application of our results, we show the following proposition, which is a categorical version of left Bongartz completion (see \cite[Proposition 3.4]{CWZ} when $\widehat{\A}$ is a module category).

\begin{prop}\label{CWZ2}
Let $\mathcal L$ be a support $\tau$-tilting subcategory in $\widehat{\A}$. Denote $\Fac\widehat{\X}*\Fac \mathcal L$ by $\T$. If $\widehat{\mathcal N} \geq \mathcal L$, then $({^{\bot_1}}\T,\T)$ is a $\tau$-cotorsion pair. Denote ${^{\bot_1}}\T\cap\T$ by $\mathcal L^-$ and $\{P\in \mathcal P ~|~ \Hom_{\widehat{\A}}(P,\mathcal L^-)=0 \}$ by $\mathcal Q^-$. Then $(\mathcal L^-,\mathcal Q^-)$ is a support $\tau$-tilting pair that contains $(\widehat{\X},\E)$.
\end{prop}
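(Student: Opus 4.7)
The plan is to show that $\T = \Fac\widehat{\X}*\Fac\mathcal L$ is a functorially finite torsion class in $\widehat{\A}$; then by Corollary \ref{main722-c} and Theorem \ref{main722}, $({^{\bot_1}}\T,\T)$ is automatically a $\tau$-cotorsion pair (in fact a $\tau$-cotorsion torsion pair), and by Theorem \ref{PZZ} the subcategory $\mathcal L^- = {^{\bot_1}}\T\cap\T$ is a contravariantly finite support $\tau$-tilting subcategory with $\Fac\mathcal L^-=\T$. The engine driving everything is the chain of inclusions $\Fac\mathcal L\subseteq\Fac\widehat{\N} = {^\perp(\tau\widehat{\X})}\cap\E^\perp$ coming from the hypothesis $\widehat{\N}\geq\mathcal L$ together with Theorem \ref{last}, which yields the two vanishings $\Ext^1_{\widehat{\A}}(\widehat{\X},\Fac\mathcal L)=0$ and $\Hom_{\widehat{\A}}(\E,\Fac\mathcal L)=0$. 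Closure of $\T$ under quotients is direct: for an epimorphism $A\twoheadrightarrow C$ with $A\in\T$ and decomposition $0\to B_1\to A\to B_2\to 0$, the image of $B_1$ in $C$ lies in $\Fac\widehat{\X}$ and the corresponding quotient of $C$ is a quotient of $B_2\in\Fac\mathcal L$.

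For extension-closure, given $0\to T'\to T\to T''\to 0$ with $T',T''\in\T$, I would take the preimage $T_0=\phi^{-1}(B_1'')$ in $T$ under $\phi\colon T\to T''$, giving $0\to T'\to T_0\to B_1''\to 0$ and $0\to T_0\to T\to B_2''\to 0$. Inside $T_0$ we have $0\to B_2'\to T_0/B_1'\to B_1''\to 0$ with $B_2'\in\Fac\mathcal L$ and $B_1''\in\Fac\widehat{\X}$. Choosing an epimorphism $X\twoheadrightarrow B_1''$ with $X\in\widehat{\X}$ and using $\Ext^1(X,B_2')=0$ (from the key vanishing), I lift to a map $X\to T_0/B_1'$; its image supplies a sub of $T_0/B_1'$ in $\Fac\widehat{\X}$ with quotient in $\Fac\mathcal L$. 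Pulling back to $T_0$ and using that $\Fac\widehat{\X}$ is extension-closed (Lemma \ref{7-0}) yields $T_0'\subseteq T_0$ with $T_0'\in\Fac\widehat{\X}$ and $T_0/T_0'\in\Fac\mathcal L$. Then $T/T_0'$ is an extension of $B_2''\in\Fac\mathcal L$ by $T_0/T_0'\in\Fac\mathcal L$, hence in $\Fac\mathcal L$ by Lemma \ref{7-0} for $\mathcal L$, so $T\in\T$. Functorial finiteness of $\T$ combines the fact that $\Fac\widehat{\X}$ is functorially finite (Lemma \ref{conf-1}) with the analogous statement for $\Fac\mathcal L$ (Theorem \ref{main722} applied to the torsion class generated by $\mathcal L$, using $\mathcal L$'s support $\tau$-tilting structure), so that for each object left/right approximations into the two pieces can be combined via pullback and pushout constructions to give approximations into $\T$.

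Once $\T$ is established as a functorially finite torsion class, Theorem \ref{main722} produces the $\tau$-cotorsion torsion pair $({^{\bot_1}}\T,\T)$ and Theorem \ref{PZZ} turns this into the support $\tau$-tilting subcategory $\mathcal L^-$. To check $\widehat{\X}\subseteq\mathcal L^-$, observe $\widehat{\X}\subseteq\Fac\widehat{\X}\subseteq\T$, and the long exact sequence for $\Ext^1(\widehat{\X},-)$ applied to $0\to B_1\to T\to B_2\to 0$ together with $\Ext^1(\widehat{\X},\Fac\widehat{\X})=0$ ($\tau$-rigidity of $\widehat{\X}$) and the key vanishing gives $\Ext^1(\widehat{\X},\T)=0$, so $\widehat{\X}\subseteq{^{\bot_1}}\T\cap\T=\mathcal L^-$. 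For $\E\subseteq\mathcal Q^-$, it suffices to show $\Hom(\E,\T)=0$: for $P\in\E$, the left-exactness of $\Hom(P,-)$ on $0\to B_1\to T\to B_2\to 0$ reduces this to $\Hom(P,B_1)=0$ (lift any map to $X\in\widehat{\X}$ using projectivity and the definition of $\E$) and $\Hom(P,B_2)=0$ (the key $\Hom$-vanishing, again combined with projectivity). The main obstacle is the functorial finiteness of $\T$, or more concretely axiom (a2) of Definition \ref{lcp} for $P\in\mathcal P$: combining the left $\widehat{\X}$-approximation $P\to X_P$ from (X2) with the left $\mathcal L$-approximation $P\to L_1$ from Definition \ref{taupair}(b) via a pushout construction to produce $V^P\in\mathcal L^-$ and $U^P\in{^{\bot_1}}\T$ requires careful book-keeping with the key vanishing to ensure the universal property for arbitrary targets $T\in\T$.
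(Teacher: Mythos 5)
Your overall route is the paper's: verify that $\T$ is closed under quotients and extensions and that every projective admits a left $\T$-approximation, then invoke Lemma \ref{co} and Proposition \ref{tri} (via Proposition \ref{733}) together with Theorem \ref{PZZ}. Your verifications of quotient-closure and extension-closure are correct and rest on exactly the key vanishing $\Ext^1_{\widehat{\A}}(\widehat{\X},\Fac\mathcal L)=0$ that the paper extracts from $\Fac\mathcal L\subseteq\Fac\widehat{\N}={^\perp(\tau\widehat{\X})}\cap{\E^\perp}$; indeed your quotient argument is more direct than the paper's, which routes through the torsion pair $(\Fac\widehat{\X},\widehat{\X}^{\bot})$, and your extension argument (swapping $\Fac\mathcal L*\Fac\widehat{\X}$ into $\Fac\widehat{\X}*\Fac\mathcal L$ by lifting an epimorphism from $\widehat{\X}$) is the same idea as the paper's. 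The inclusions $\widehat{\X}\subseteq\mathcal L^-$ and $\E\subseteq\mathcal Q^-$ are also handled as in the paper.

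The genuine gap is the step you yourself flag as ``the main obstacle'': producing a left $\T$-approximation of a projective $P$. You leave it at ``combine $P\to X_P$ and $P\to L_1$ via a pushout \dots requires careful book-keeping,'' but a pushout of those two approximations does not do the job: for a target $T\in\T$ with $0\to W_T\to T\to K_T\to 0$, a morphism $P\to T$ induces $P\to K_T$ which factors through a left $\Fac\mathcal L$-approximation of $P$, and to lift the resulting map back into $T$ one needs the approximating object to satisfy $\Ext^1_{\widehat{\A}}(-,\Fac\widehat{\X})=0$; the pushout of $X_P$ and $L_1$ over $P$ has no reason to lie in ${^{\bot_1}}(\Fac\widehat{\X})$. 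The paper's construction is genuinely different: take the left $(\Fac\mathcal L)$-approximation $P\xrightarrow{k}K$ with $K\in\Fac\mathcal L\cap{^{\bot_1}}(\Fac\mathcal L)$ supplied by Theorem \ref{PZZ}, then a special short exact sequence $0\to W_K\to T_K\to K\to 0$ with $W_K\in\Fac\widehat{\X}$ and $T_K\in{^{\bot_1}}(\Fac\widehat{\X})$ coming from the left weak cotorsion torsion pair $({^{\bot_1}}(\Fac\widehat{\X}),\Fac\widehat{\X})$ of Theorem \ref{main722}, and show that $P\xrightarrow{\svecv{p}{w}}T_K\oplus W_P$, with $w$ a left $(\Fac\widehat{\X})$-approximation, is a left $\T$-approximation. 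Without some such construction the appeal to Lemma \ref{co} is not justified. A secondary overreach: you aim to show $\T$ is a functorially finite torsion class, but $\mathcal L$ is not assumed contravariantly finite, so $\Fac\mathcal L$ (hence $\T$) need not be a torsion class; the statement only claims a $\tau$-cotorsion pair, and the weaker hypotheses of Lemma \ref{co} are both sufficient and what is actually attainable here.
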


\begin{proof}
By Lemma \ref{conf}, Proposition \ref{tri} and Proposition \ref{733}, we need to check that $\T$ satisfies the following conditions:
\begin{itemize}
\item[\rm (1)] any object $P\in \mathcal P$ admits a left $\T$-approximation;
\item[\rm (2)] $\Fac \T=\T$;
\item[\rm (3)] $\T$ is closed under extensions.
\end{itemize}

Now we give the proof for the above three conditions.
\medskip

(1) Let $P\in \mathcal P$. By Theorem \ref{PZZ}, $P$ admits an exact sequence
$$P\xrightarrow{k} K\to J\to 0$$
where $K\in \Fac\mathcal L\cap {^{\bot_1}}(\Fac\mathcal L)$, $J\in {^{\bot_1}}(\Fac\mathcal L)$ and $k$ is a left $(\Fac\mathcal L)$-approximation. By Lemma \ref{conf-1} and Theorem \ref{main722}, $({^{\bot_1}}(\Fac\widehat{\X}),\Fac\widehat{\X})$ is a left weak cotorsion pair. Hence $K$ admits a short exact sequence
$$0\to W_K\to T_K\xrightarrow{t_0} K\to 0$$
where $W_K\in \Fac \widehat{\X}$ and $T_K\in {^{\bot_1}}(\Fac\widehat{\X})$. Note that $T_K\in \T$ and there is a morphism $p:P\to T_K$ such that $t_0p=k$. By Theorem \ref{main722} and Corollary \ref{main722-c}, $P$ also admits an exact sequence
$$P\xrightarrow{w} W_P\to U_P\to 0$$
where $w$ is a left $(\Fac\widehat{\X})$-approximation. We show that $P\xrightarrow{\svecv{p}{w}} T_K\oplus W_P$ is a left $\T$-approximation. Let $T\in \T$ and $t\in \Hom_{\widehat{\A}}(P,T)$. $T$ admits a short exact sequence
$$0\to W_T\xrightarrow{w_0} T\xrightarrow{t_1} K_T\to 0$$
with $W_T\in \Fac\widehat{\X}$ and $K_T\in \Fac \mathcal L$. Since $k$ is a left $(\Fac\mathcal L)$-approximation, we have the following commutative diagram.
$$\xymatrix{
&&P \ar[r]^k \ar[d]^t &K \ar[r] \ar[d]^{k_1} &J \ar[r] &0\\
0\ar[r] &W_T \ar[r]^{w_0} &T \ar[r]^{t_1} &K_T \ar[r] &0
}$$
Since $T_K\in {^{\bot_1}}(\Fac\widehat{\X})$, there is a morphism $w_1:T_K\to T$ such that $k_1t_0=t_1w_1$. Hence $t_1t=k_1k=k_1t_0p=t_1w_1p$, which implies that there is a morphism $p_0:P\to W_T$ such that $w_0p_0=t-w_1p$. Since $p_0$ factors through $w$, we can find that $t$ factors through $P\xrightarrow{\svecv{p}{w}} T_K\oplus W_P$.

(2) Let $S\in \Fac \T$. It admits an epimorphism $t_0:T_S\to S$ with $T_S\in \T$. $T_S$ admits a short exact sequence
$$0\to W\to T_S\to K\to 0$$
with $W\in \Fac\widehat{\X}$ and $K\in \Fac \mathcal L$. $S$ admits a short exact sequence
$$0\to W_S\to S\xrightarrow{s} Y_S\to 0$$
with $W_S\in \Fac\widehat{\X}$ and $Y_S\in \widehat{\X}^{\bot}$. Then we have a commutative diagram
$$\xymatrix{
0 \ar[r] &W \ar[r] \ar@{.>}[d] &T_S \ar[d]^{t_0} \ar[r] &K \ar[r] \ar@{.>}[d]^y &0\\
0 \ar[r] &W_S \ar[r]  &S  \ar[r]^s &Y_S \ar[r] &0.
}
$$
Since $st_0$ is an epimorphism, so is $y$. Hence $Y_S\in \Fac \mathcal L$, which implies that $S\in \T$.

(3) Let $T\in \T*\T=(\Fac \widehat{\X}*(\Fac\mathcal L*\Fac \widehat{\X}))*\Fac \mathcal L$. Then we have the following short exact sequences
$$0\to A_1\to T\to K_1\to 0, \quad 0\to W_1\to A_1\to B_1\to 0,\quad 0\to K_2\to B_1\to W_2\to 0$$
with $K_1,K_2\in \Fac\mathcal L$ and $W_1,W_2\in \Fac \widehat{\X}$. Since $K_2\in \Fac\mathcal L$, we have $\Ext^1_{\widehat{\A}}(\widehat{\X},K_2)=0$. Since $W_2$ admits an epimorphism $x_2:X_2\to W_2$ with $X_2\in \widehat{\X}$, we have the following commutative diagram
$$\xymatrix{
0 \ar[r] &K_2 \ar[r]^-{\svecv{1}{0}} \ar@{=}[d] &K_2\oplus X_2 \ar[r]^-{\svech{0}{1}} \ar[d]^{\alpha} &X_2 \ar[d]^{x_2} \ar[r] &0\\
0 \ar[r] &K_2 \ar[r] &B_1 \ar[r] &W_2 \ar[r] &0
}
$$
where $\alpha$ is an epimorphism. Then we have the following commutative diagram
$$\xymatrix{
0 \ar[r] &W_1 \ar[r] \ar@{=}[d] &C_1 \ar[r] \ar[d]^{c_1} &K_2\oplus X_2 \ar[d]^{\alpha} \ar[r] &0\\
0 \ar[r] &W_1 \ar[r] &A_1 \ar[r] &B_1 \ar[r] &0
}
$$
where $c_1$ is an epimorphism. Since we have the following commutative diagram
$$\xymatrix{
0 \ar[r] &W_1 \ar[r] \ar@{=}[d] &W_3 \ar[r] \ar[d] &X_2 \ar[r] \ar[d]^-{\svecv{0}{1}} &0\\
0 \ar[r] &W_1 \ar[r]  &C_1 \ar[r] \ar[d] &K_2\oplus X_2 \ar[d]^-{\svech{1}{0}} \ar[r] &0\\
&&K_2 \ar@{=}[r] \ar[d] &K_2 \ar[d]\\
&&0 &0
}
$$
where $W_3\in \Fac \widehat{\X}$ by Lemma \ref{7-0} and Proposition \ref{733}, we have $C_1\in \T$. By (2), $A_1\in \Fac\T=\T$. Hence $T\in \T*\Fac\mathcal L=\Fac\widehat{\X}*(\Fac\mathcal L*\Fac\mathcal L)=\T$.

By Proposition \ref{CWZ2} and Theorem \ref{PZZ}, $(\mathcal L^-,\mathcal Q^-)$ is a support $\tau$-tilting pair. Since $\Ext^1_{\widehat{\A}}(\widehat{\X},\Fac\widehat{\X})=0$ and $\Ext^1_{\widehat{\A}}(\widehat{\X},\Fac\mathcal L)=0$, we have $\widehat{\X}\in {^{\bot_1}}\T$. Hence $\widehat{\X}\subseteq \mathcal L^-$. Since $\Fac\widehat{\mathcal N}={^\perp(\tau\widehat{\X})}\cap{{\E}^{\perp}}$ by Theorem \ref{last}, we have $\Hom_{\widehat{\A}}(\E,\mathcal L)=0$. Hence $\Hom_{\widehat{\A}}(\E,\T)=0$, which implies that $\Hom_{\widehat{\A}}(\E,\mathcal L^-)=0$. Then $\E\subseteq \mathcal Q^-$. Thus by definition, $(\mathcal L^-,\mathcal Q^-)$ is a support $\tau$-tilting pair that contains $(\widehat{\X},\E)$.
\end{proof}

\begin{rem}
According to \cite{CWZ}, we can call $(\mathcal L^-,\mathcal Q^-)$ the left Bongartz completion of $(\widehat{\X},\E)$ with respect to $\mathcal L$.
\end{rem}

\section{Examples}

In this section we give several examples about our results. Let $k$ be a field.

\begin{exm}
Let $\C$ be a cluster category of type $\mathbb{A}_\infty$ in \emph{\cite{HJ, Ng}}.
It is a $2$--Calabi--Yau triangulated category.
The Auslander--Reiten quiver of $\C$ is as follows:
$$\xymatrix @-2.1pc @! {
    \rule{2ex}{0ex} \ar[dr] & & \vdots \ar[dr]  & & \vdots \ar[dr]& & \vdots \ar[dr]& & \vdots \ar[dr] & & \vdots \ar[dr] & & \vdots \ar[dr] & & \vdots \ar[dr] & & \vdots \ar[dr] && \rule{2ex}{0ex}\\
    & \bullet \ar[ur] \ar[dr] & & \circ \ar[ur] \ar[dr] & &\circ \ar[ur] \ar[dr]  & & \circ \ar[ur] \ar[dr]& & \circ \ar[ur] \ar[dr] & & \circ \ar[ur] \ar[dr] & & \circ \ar[ur] \ar[dr] & & \circ \ar[ur] \ar[dr] && \bullet \ar[ur] \ar[dr] \\
    \cdots \ar[ur]\ar[dr]& & \bullet \ar[ur] \ar[dr] & & \circ \ar[ur] \ar[dr] &  & \circ \ar[ur] \ar[dr] && \circ \ar[ur] \ar[dr] & & \circ \ar[ur] \ar[dr] & & \circ \ar[ur] \ar[dr] & & \circ \ar[ur] \ar[dr] & & \bullet \ar[ur] \ar[dr] &&\cdots\\
    & \circ \ar[ur] \ar[dr] & & \bullet \ar[ur] \ar[dr] & &\circ \ar[ur] \ar[dr] & &\circ \ar[ur] \ar[dr] & & \circ \ar[ur] \ar[dr] & & \circ \ar[ur] \ar[dr] & & \circ\ar[ur] \ar[dr] & & \bullet \ar[ur] \ar[dr] && \circ \ar[ur] \ar[dr] \\
    \cdots \ar[ur]\ar[dr]& & \circ \ar[ur] \ar[dr] &  & \bullet \ar[ur] \ar[dr] && \circ \ar[ur] \ar[dr]& & \circ \ar[ur] \ar[dr] & & \circ \ar[ur] \ar[dr] & & \circ \ar[ur] \ar[dr] & & \bullet \ar[ur] \ar[dr] & & \circ \ar[ur] \ar[dr] &&\cdots\\
    & \circ \ar[ur] & & \circ \ar[ur] & &\bullet \ar[ur] & &\circ \ar[ur] & & \clubsuit \ar[ur] & & \circ \ar[ur] & & \bullet \ar[ur] & & \circ \ar[ur] && \circ \ar[ur]\\ }$$
Let $\X$ the subcategory whose indecomposable objects are marked by $\bullet$ here. Let $\R=\add(\X\cup\{\clubsuit\})$. $\R$ is a maximal rigid subcategory in $\C$. Note that $\R$ is not cluster tilting. $\X$ is two-term $\R[1]$-rigid, and $\R$ is a two-term weak $\R[1]$-cluster tilting subcategory which contains $\X$. But on the other hand, $\R$ is the only two-term weak $\R[1]$-cluster tilting subcategory which contains $\X$. The reason is the following:
\begin{itemize}
  \item[(1)] According to Remark \ref{rem0}, any two-term $\R[1]$-rigid subcategory is rigid.
  \item[(2)] If we want to get a rigid subcategory which contains $\X$ by adding some indecomposable object, $\clubsuit$ is the only choice.
\end{itemize}
This example shows that not every two-term $\R[1]$-rigid subcategory is contained in two different two-term weak $\R[1]$-cluster tilting subcategories. Hence $\R[1]$-functorially finite is a reasonable assumption.

\end{exm}
\vspace{1mm}

\begin{exm}
Let $Q\colon 1\to 2\to 3$ be the quiver of type $A_3$ and $\C:=\mathrm{D}^b(kQ)$ the bounded derived category of $kQ$ whose Auslander-Reiten quiver is the following:
$$\xymatrix@C=0.3cm@R0.2cm{
\cdot\cdot\cdot \ar[dr] &&\bullet \ar[dr] &&\circ \ar[dr] &&\bullet \ar[dr] &&\circ \ar[dr] &&\clubsuit \ar[dr] &&\circ \ar[dr] &&\bullet \ar[dr] &&\circ \ar[dr] &&\bullet \ar[dr]\\
&\circ \ar[ur] \ar[dr] &&\circ \ar[ur] \ar[dr] &&\circ \ar[ur] \ar[dr] &&\circ \ar[ur]  \ar[dr] &&\circ\ar[ur] \ar[dr]  &&\circ \ar[ur] \ar[dr] &&\circ \ar[ur]  \ar[dr] &&\circ \ar[ur] \ar[dr] &&\circ \ar[ur] \ar[dr] &&\cdot\cdot\cdot\\
\cdot\cdot\cdot  \ar[ur] &&\circ \ar[ur]  &&\bullet \ar[ur]  &&\circ \ar[ur]  &&\bullet \ar[ur]  &&\circ \ar[ur] &&\bullet  \ar[ur]  &&\circ \ar[ur]  &&\bullet \ar[ur]  &&\circ \ar[ur]
}
$$
Let $\X$ be the subcategory whose indecomposable objects are marked by $\bullet$ in the diagram. Let $\R$ be the cluster tilting subcategory whose indecomposable objects are the objects in $\X$ and the object marked by $\clubsuit$. Then $\X$ is an almost complete two-term weak $\R[1]$-cluster tilting subcategory. By Theorem \ref{main1}, $\mathcal N_\X=\R$ and $\M_\X$ is the subcategory whose indecomposable objects are the objects in $\X$ and the object marked by $\spadesuit$ in the diagram:
$$\xymatrix@C=0.3cm@R0.2cm{
\cdot\cdot\cdot \ar[dr] &&\bullet \ar[dr] &&\circ \ar[dr] &&\bullet \ar[dr] &&\circ \ar[dr] &&\circ \ar[dr] &&\circ \ar[dr] &&\bullet \ar[dr] &&\circ \ar[dr] &&\bullet \ar[dr]\\
&\circ \ar[ur] \ar[dr] &&\circ \ar[ur] \ar[dr] &&\circ \ar[ur] \ar[dr] &&\circ \ar[ur]  \ar[dr] &&\circ\ar[ur] \ar[dr]  &&\circ \ar[ur] \ar[dr] &&\spadesuit \ar[ur]  \ar[dr] &&\circ \ar[ur] \ar[dr] &&\circ \ar[ur] \ar[dr] &&\cdot\cdot\cdot\\
\cdot\cdot\cdot  \ar[ur] &&\circ \ar[ur]  &&\bullet \ar[ur]  &&\circ \ar[ur]  &&\bullet \ar[ur]  &&\circ \ar[ur] &&\bullet  \ar[ur]  &&\circ \ar[ur]  &&\bullet \ar[ur]  &&\circ \ar[ur]
}
$$
We can find that $\M_\X$ is indeed two-term weak $\R[1]$-cluster tilting, but it is not cluster tilting (even not rigid).  On the other hand, we can find no more two-term weak $\R[1]$-cluster tilting subcategory which contains $\X$.
\end{exm}

\vspace{1mm}

\begin{exm}
Let $Q$ be the following infinite quiver:
$$\xymatrix@C=0.5cm@R0.5cm{\cdots \ar[r]^{x_{-5}} &-4 \ar[r]^{x_{-4}} &-3 \ar[r]^{x_{-3}} &-2 \ar[r]^{x_{-2}} &-1 \ar[r]^{x_{-1}} &0 \ar[r]^{x_{0}} &1 \ar[r]^{x_1} &2 \ar[r]^{x_2} &3 \ar[r]^{x_3} &4 \ar[r]^{x_4} &\cdots}$$
Let $\Lambda=kQ/(x_ix_{i+1}x_{i+2})$. Then the AR-quiver of $\mod \Lambda$ is the following.
$$\xymatrix@C=0.2cm@R0.4cm{
&\bullet \ar[dr] &&\bullet \ar[dr] &&\bullet \ar[dr] &&\bullet \ar[dr] &&\circ \ar[dr]  &&\bigstar \ar[dr] &&\bullet \ar[dr] &&\bullet \ar[dr] &&\bullet \ar[dr] &&\bullet \ar[dr]\\
\cdots \ar[ur] \ar[dr] &&\circ \ar[ur] \ar[dr] &&\circ \ar[ur] \ar[dr] &&\circ \ar[ur]  \ar[dr] &&\circ \ar[ur] \ar[dr]  &&\circ \ar[ur] \ar[dr] &&\circ \ar[ur]  \ar[dr] &&\clubsuit \ar[ur] \ar[dr] &&\circ \ar[ur] \ar[dr] &&\circ \ar[ur] \ar[dr] &&\cdots\\
&\circ \ar[ur]  &&\circ \ar[ur]  &&\circ \ar[ur]  &&\circ \ar[ur]  &&\circ \ar[ur] &&\circ  \ar[ur]  &&\bullet \ar[ur]  &&\circ \ar[ur]  &&\circ \ar[ur] &&\circ \ar[ur]
}
$$
Let $\widehat{\X}$ be the subcategory whose indecomposable objects are marked by $\bullet$ in the diagram. Then $(\widehat{\X},0)$ is an almost complete support $\tau$-tilting pair which is contained in exactly two support $\tau$-tilting pairs:
$$(\add(\widehat{\X}\cup\{\clubsuit\}),0), \quad (\add(\widehat{\X}\cup\{\bigstar\}),0)$$
where $\add(\widehat{\X}\cup\{\clubsuit\})=\widehat{\M}$ and $\add(\widehat{\X}\cup\{\bigstar\})=\widehat{\N}$ {\rm (}see Theorem \ref{last}{\rm)}.
\end{exm}

\vspace{4mm}

\hspace{-4mm}\textbf{Data Availability}\hspace{2mm} Data sharing not applicable to this article as no datasets were generated or analysed during
the current study.
\vspace{2mm}

\hspace{-4mm}\textbf{Conflict of Interests}\hspace{2mm} The authors declare that they have no conflicts of interest to this work.

\vspace{4mm}

\end{document}